\newtheorem{thm}{Theorem}[section]
\newtheorem{prop}[thm]{Proposition}
\newtheorem{prpty}[thm]{Property}
\newtheorem{lem}[thm]{Lemma}
\newtheorem{prob}[thm]{Problem}
\newtheorem{claim}{Claim}
\theoremstyle{definition}
\newtheorem{definition}[thm]{Definition}
\newtheorem{example}[thm]{Example}
\newtheorem{assum}[thm]{Assumption}
\theoremstyle{remark}
\newtheorem{remark}[thm]{Remark}
\numberwithin{equation}{section}
\newcommand{\bQ}{\mathbb{Q}}
\newcommand{\bP}{\mathbb{P}}
\newcommand\tp{{\tilde{P}}}
\newcommand\mB{{B}}
\newcommand{\bC}{{\mathbb C}}
\newcommand\OO{{\mathcal{O}}}
\newcommand{\rounddown}[1]{\left\lfloor{#1}\right\rfloor}
\newcommand{\roundup}[1]{\left\lceil{#1}\right\rceil}
\newcommand\BB{\mathcal{B}}
\newcommand\ZZ{{\mathbb{Z}}}
\newcommand\Supp{{\text{\rm Supp}}}
\newcommand\lcm{{\text{l.c.m.}}}
\newcommand\Mov{{\text{\rm Mov}}}
\newcolumntype{C}{>{$}c<{$}}
\newcolumntype{L}{>{$}l<{$}}
\newcommand\coeff{{\text{coeff}}}
\title{On the anti-canonical geometry of weak $\bQ$-Fano threefolds, II}
\author{Meng Chen and Chen Jiang}
\date{\today}
\address{\rm School of Mathematical Sciences \& Shanghai Centre for Mathematical Sciences, Fudan University, Shanghai 200433, China}
\email{mchen@fudan.edu.cn}
\address{\rm Shanghai Center for Mathematical Sciences, Fudan University, Jiangwan Campus, Shanghai, 200438, China}
\email{chenjiang@fudan.edu.cn}
\thanks{The first author was supported by National Natural Science Foundation of China (\#11571076, \#11231003, \#11421061) and Program of Shanghai Subject Chief Scientist (\#16XD1400400). The second author was supported by JSPS KAKENHI Grant Number JP16K17558 and World Premier International Research Center Initiative (WPI), MEXT, Japan.}
\begin{document}
\maketitle
\pagestyle{myheadings} \markboth{\hfill M. Chen \& C. Jiang
\hfill}{\hfill On the anti-canonical geometry of weak $\bQ$-Fano $3$-folds, II\hfill}

\begin{abstract}
By a canonical (resp. terminal) weak $\bQ$-Fano $3$-fold we mean a normal projective one with at worst canonical (resp. terminal) singularities on which the anti-canonical divisor is $\bQ$-Cartier, nef and big.  For a canonical weak $\bQ$-Fano $3$-fold $V$, we show that there exists a terminal weak $\bQ$-Fano $3$-fold $X$, being birational to $V$, such that the $m$-th anti-canonical map defined by $|-mK_{X}|$ is birational for all $m\geq 52$. As an intermediate result, we show that for any $K$-Mori fiber space $Y$ of a canonical weak $\bQ$-Fano $3$-fold, the $m$-th anti-canonical map defined by $|-mK_{Y}|$ is birational for all $m\geq 52$. 
\end{abstract} 


\section{\bf Introduction}
Throughout this paper, we work over an algebraically closed field $k$ of characteristic 0 (for instance, $k=\bC$). We adopt the standard notation in \cite{KM} and will freely use them.

A normal projective variety $X$ is called a {\it
weak $\bQ$-Fano variety} (resp. {\it $\bQ$-Fano variety}) if the anti-canonical divisor $-K_X$ is nef and big (resp. ample). 
A {\it canonical} (resp. {\it terminal)} weak $\bQ$-Fano variety is a weak $\bQ$-Fano variety with at worst canonical (resp. {terminal)} singularities. 

According to Minimal Model Program, weak $\bQ$-Fano varieties form a fundamental class among research objects of birational geometry. There are a lot of works (for instance, \cite{K, A, KMMT, S04, Prok05, BS07a, BS07b, Prok07, Prok, C, Prok13, PR16, CJ16}) which study the explicit geometry of canonical or terminal (weak) $\bQ$-Fano $3$-folds. 

Given a canonical weak $\bQ$-Fano $n$-fold $X$, the {\it $m$-th anti-canonical map} $\varphi_{-m,X}$ (or simply $\varphi_{-m}$) is the rational map defined by the linear system $|-mK_X|$. Since $-K_X$ is big, $\varphi_{-m,X}$ is a  birational map onto its image when $m$ is sufficiently large. Therefore it is interesting to find such a practical number $m_n$, independent of $X$, which guarantees the stable birationality of $\varphi_{-m_n}$. Such a number $m_3$ exists for canonical weak $\bQ$-Fano $3$-folds due to the boundedness result proved by Koll\'ar, Miyaoka, Mori, and Takagi \cite{KMMT}. In general, $m_n$ exists for canonical weak $\bQ$-Fano $n$-folds due to the recent work of Birkar \cite{Bir16a}.  It is still mysterious to the authors whether $m_n$ can be explicitly computed or effectively estimated for $n\geq 4$. 

In this paper we are interested in the explicit birational geometry of canonical weak $\bQ$-Fano $3$-folds, so it is natural to consider the following problem.

\begin{prob}\label{b P} {}Find the optimal constant $c$ such that
$\varphi_{-m}$ is birational (onto its image) for all $m\geq c$ and for all canonical weak $\bQ$-Fano
$3$-folds. 
\end{prob}

The following example suggests that $c\geq 33$.

\begin{example}[{\cite[List 16.6, No. 95]{Fletcher}}]\label{ex 33}
The general weighted hypersurface $X_{66}\subset\mathbb{P}(1,5,6,22,33)$ is a $\bQ$-factorial terminal $\mathbb{Q}$-Fano $3$-fold of Picard number one. It is clear that $\varphi_{-m}$ is birational for $m\geq 33$, but $\varphi_{-32}$ is non-birational.
\end{example}

In our previous article \cite{CJ16}\footnote{In \cite{CJ16}, a {\it $\bQ$-Fano $3$-fold} means a $\bQ$-factorial terminal $\bQ$-Fano $3$-fold of Picard number one, and a {\it weak $\bQ$-Fano $3$-fold} means a $\bQ$-factorial terminal weak $\bQ$-Fano $3$-fold.} (see also \cite{phd} for related results on generic finiteness), we showed the following theorems. 

\begin{thm}[{\cite[Theorem 1.6]{CJ16}}]\label{birationality1} 
Let $X$ be a $\bQ$-factorial terminal $\bQ$-Fano $3$-fold of Picard number one. Then $\varphi_{-m,X}$ is birational for all $m\geq 39$. 
\end{thm}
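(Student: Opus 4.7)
The strategy I would follow is to restrict to an anticanonical ``elephant'' and then run the standard lifting argument, with Reid's orbifold Riemann--Roch providing all effective estimates. First, since $X$ is a $\bQ$-factorial terminal $\bQ$-Fano $3$-fold of Picard number one, the Shokurov--Reid theorem on anticanonical elephants guarantees that a general member $S\in |-K_X|$ is a normal K3 surface with at worst Du Val singularities. Because $-K_X$ is ample, for every $m\geq 2$ the exact sequence
\[
0\to \OO_X(-(m-1)K_X)\to \OO_X(-mK_X)\to \OO_S(-mK_X|_S)\to 0
\]
together with Kawamata--Viehweg vanishing yields the surjection $H^0(X,-mK_X)\twoheadrightarrow H^0(S,-mK_X|_S)$. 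Thus birationality of $\varphi_{-m,X}$ will follow once I can separate two general points of $X$ using divisors in $|-mK_X|$ passing through $S$, which reduces the problem to birationality of the restricted system $|-mK_X|_S|$ on the K3 surface $S$.

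Next I would further restrict to a curve. Pick a general pencil inside $|-K_X|$ to get a covering family $\{S_t\}$ of K3 surfaces, and on a fixed $S$ consider the moving part of the restricted system; by Bertini and the K3 structure, a general such curve $C\subset S$ will be smooth with known degree $C^2=(-K_X)^3$. One then pushes the lifting one step further: from $H^0(S,-mK_X|_S)$ to $H^0(C,-mK_X|_C)$, again using vanishing on $S$ (Kodaira vanishing for K3) plus the K3 exact sequence. Now $-mK_X|_C$ is a divisor of explicit degree $m(-K_X)^3$ on a curve of genus $1+\tfrac12(-K_X)^3$, and the Riemann--Roch estimate on $C$ together with base-point freeness/separation criteria on curves (degree $\geq 2g(C)+1$ suffices for very ampleness) converts the question into a purely numerical one involving $(-K_X)^3$.

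The numerical input is provided by Reid's orbifold Riemann--Roch formula applied to the basket $\mB=\{(b_i,r_i)\}$ of terminal singularities of $X$. Knowing
\[
\chi(X,-mK_X)=\frac{m(m+1)(2m+1)}{12}(-K_X)^3+(2m+1)\chi(\OO_X)-\sum_{i}c_{b_i,r_i}(m),
\]
combined with Kawamata--Viehweg vanishing which kills higher cohomology of $-mK_X$ for $m\geq 0$, gives an exact expression for $h^0(-mK_X)$ in terms of $(-K_X)^3$ and $\mB$. The classification results of Kawamata--Mori--Takagi bound the possible baskets, and in particular one has effective lower bounds for $(-K_X)^3$ (the smallest being realised by the Fletcher example $X_{66}\subset\bP(1,5,6,22,33)$), which by the curve-level calculation forces $m\geq 39$ to be enough.

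The main obstacle I expect is the case when $(-K_X)^3$ is very small and the basket is ``extremal.'' There the generic lifting argument is too weak, and one must run a case-by-case analysis over the finite list of possible baskets with small volume, verifying for each that $|-mK_X|$ already separates general points for $m\geq 39$. In particular, one has to control the possibility that the restricted system $|-mK_X|_S|$ has a non-trivial fixed part or non-movable base components, which requires a careful study of the hyperplane section $(S,-K_X|_S)$ as a polarised K3 and invoking Saint-Donat--type results on linear systems on K3 surfaces; handling these boundary cases, rather than the general estimate, is where the bulk of the work will lie.
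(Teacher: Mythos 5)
Your proposal founders at its very first step. For a $\bQ$-factorial terminal $\bQ$-Fano $3$-fold of Picard number one the linear system $|-K_X|$ may be empty: $P_{-1}=0$ (and even $P_{-1}=P_{-2}=0$) does occur, which is exactly why the effective statements in this circle of ideas only guarantee $P_{-m}>0$ for $m\geq 6$ and $P_{-8}\geq 2$ (Proposition \ref{facts}). Even when $P_{-1}>0$, the assertion that a general member of $|-K_X|$ is a K3 surface with only Du Val singularities is Reid's general elephant conjecture; it is a theorem for Gorenstein (canonical) Fano $3$-folds by Shokurov and Reid, and in some special non-Gorenstein situations by Alexeev, but it is open in the generality you need. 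So the whole reduction to a polarized K3 and then to a curve has no starting point. Two secondary issues: the boundedness theorem is due to Koll\'ar--Miyaoka--Mori--Takagi (not ``Kawamata--Mori--Takagi''), and it does not by itself yield an effective lower bound for $-K_X^3$; the bound $-K_X^3\geq 1/330$ and the control of the possible baskets come from the Chen--Chen weighted-basket analysis recalled in Subsection \ref{cc method}.

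The actual proof (in \cite{CJ16}, whose machinery is reproduced in Section \ref{sec criterion} here) replaces the elephant by a generic irreducible element $S$ of the movable part of $|-m_0K_X|$ for the smallest $m_0$ with $P_{-m_0}\geq 2$ (so $m_0\leq 8$), distinguishes whether $|-m_0K_X|$ is composed with a pencil, produces an $m_1\leq 10$ with $\dim\overline{\varphi_{-m_1}(X)}>1$ (Theorem \ref{main1}), and then runs the Kawamata--Viehweg restriction argument on a resolution, concluding on $S$ via a Ma\c{s}ek-type criterion ($\mathcal{L}_m^2>8$ and $(\mathcal{L}_m\cdot C_P)\geq 4$, as in Lemma \ref{lem mas}); the constant $39$ emerges from a case-by-case study of the finitely many weighted baskets with small invariants. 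If you want to salvage your outline you must carry it out with $S$ a generic irreducible element of $|-m_0K_X|$ rather than an anticanonical elephant, and abandon the K3/Saint-Donat input entirely.
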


\begin{thm}[{\cite[Theorem 1.8, Remark 1.9]{CJ16}}]\label{birationality2} 
Let $X$ be a canonical weak $\bQ$-Fano $3$-fold. Then $\varphi_{-m,X}$ is birational for all $m\geq 97$. 
\end{thm}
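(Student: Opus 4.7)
The plan is to extend the techniques of Theorem \ref{birationality1} (the Picard-number-one case) to the general canonical weak $\bQ$-Fano setting via Reid's anti-canonical Riemann--Roch, the boundedness results of Kawamata--Miyaoka--Mori--Takagi, and a restriction-to-a-general-member argument.

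First I would reduce to the $\bQ$-factorial terminal case: since $X$ has only canonical singularities, a $\bQ$-factorial terminalization $\tau\colon X'\to X$ exists with $K_{X'}=\tau^{*}K_X$, giving $H^{0}(X',-mK_{X'})=H^{0}(X,-mK_X)$ for all $m$, so it suffices to prove the assertion for $X'$, which is a $\bQ$-factorial terminal weak $\bQ$-Fano $3$-fold. Replace $X$ by $X'$. Next, I would apply Reid's anti-canonical Riemann--Roch formula together with the \cite{KMMT} bounds (on the Gorenstein index, on Reid's basket of singularities, and a lower bound on $(-K_X)^{3}$) to obtain explicit lower bounds on $h^{0}(-mK_X)$ for small $m$. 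In particular, for some explicit $m_{0}$ not too large, $|-m_{0}K_X|$ has a nontrivial movable part producing a rational map with positive-dimensional image.

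The main argument is then a restriction to a general member $S\in|-m_{0}K_X|$: using Kawamata--Viehweg vanishing on $X$ to lift sections from $S$ to $X$, the birationality of $\varphi_{-m,X}$ for $m\geq 97$ is reduced to birationality of the restricted linear system $|-mK_X|_{S'}$ on the normalization $S'$ of $S$. On $S'$, a Reider-type theorem (after passing to a minimal resolution) combined with careful adjunction-control of the singularities of $S$ yields the desired birationality on the surface level. Case analysis on the dimension of $\varphi_{-m_{0},X}(X)$---point, curve, surface, or threefold---handles the various geometric configurations, with Theorem \ref{birationality1} invoked as a black box in the generically finite subcase to bootstrap to a uniform bound.

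The main obstacle is handling the configurations where $|-K_X|$ or $|-2K_X|$ is very small, so that Reid's Riemann--Roch gives only weak control on $h^{0}(-mK_X)$: there one must combine the \cite{KMMT} bounds with delicate numerical analysis on the basket and on the movable part of a pluri-anti-canonical system in order to extract an effective birationality threshold. The uniform constant $97$ finally arises as the maximum over these various low-volume configurations, augmented by the Reider-type constant on $S'$ and by the constant $39$ from Theorem \ref{birationality1}.
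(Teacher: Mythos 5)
First, a point of bookkeeping: this statement is not proved in the present paper at all; it is quoted verbatim from \cite[Theorem 1.8, Remark 1.9]{CJ16}. So the relevant comparison is with the machinery of \cite{CJ16}, which the present paper recalls in Section \ref{sec criterion} (Assumption \ref{assum 1}, Theorems \ref{criterion b} and \ref{criterion b2}, Lemma \ref{lem mas}). Your overall framework --- pass to a $\bQ$-factorial terminalization, use Reid's Riemann--Roch together with the \cite{KMMT} bounds to control $P_{-m}$, restrict to a generic member of an anti-pluricanonical system, lift sections by Kawamata--Viehweg vanishing, and finish with a Reider/Ma\c{s}ek-type criterion on the surface --- is indeed the correct skeleton of that proof. (One cosmetic difference: the argument is run on a smooth model $W$ with $S$ a generic irreducible element of the movable part of $|\rounddown{\pi^*(-m_0K_X)}|$, which is smooth by Bertini, so no normalization of $S$ or adjunction-control of its singularities is needed.)

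However, there is a genuine gap at the heart of your sketch: you never address the case in which $|-mK_X|$ is composed with a pencil of surfaces for a long range of $m$, and this is precisely where all the difficulty and the size of the constant $97$ come from. The birationality criterion needs two inputs: an $m_0$ with $P_{-m_0}\geq 2$ (cheap, $m_0\leq 8$), and an $m_1$ such that $|-m_1K_X|$ is \emph{not} composed with (the same) pencil --- equivalently $\dim\overline{\varphi_{-m_1}(X)}>1$. The effective bound $m_1\leq 71$ is exactly \cite[Theorem 1.7]{CJ16} (Theorem \ref{main2} above), and it is obtained by a separate argument: if $|-mK_X|$ is composed with a pencil, one shows an inequality of the shape $\frac{P_{-m}-1}{m}\leq \frac{-K_X^3}{(\pi^*(K_X)^2\cdot S)}$ (Proposition \ref{prop old}), and then the Riemann--Roch lower bounds on $P_{-m}$ force a contradiction once $m$ is large enough. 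Your ``case analysis on the dimension of the image'' names the pencil case but gives no mechanism for escaping it or for converting it into a numerical bound; without that, the Reider-type step cannot be made effective, because when $S$ moves in a pencil the quantity $\mu_0$ in $\mu_0\pi^*(-K_X)-S\sim_{\bQ}(\text{effective})$ can a priori be large, and it is $\mu_0$, $m_1$, $r_{\max}$ and $N_0=r_X(\pi^*(K_X)^2\cdot S)$ that enter the final threshold. Relatedly, invoking Theorem \ref{birationality1} ``as a black box in the generically finite subcase'' does not correspond to anything in the actual argument: the Picard-number-one theorem is a parallel instance of the same machinery with better numerical inputs ($\delta_1\leq 10$ versus $71$), not an ingredient of the general case.
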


One intuitively feels that the numerical bound ``97'' obtained in Theorem \ref{birationality2} might be far from optimal. As being indicated in \cite{C, CJ16}, the birationality problem is closely related to the following problem.

\begin{prob}\label{problem2} Given a canonical weak $\bQ$-Fano $3$-fold X, what is the smallest positive integer $\delta_1=\delta_1(X)$ satisfying $\dim\overline{\varphi_{-\delta_1}(X)}>1$?
\end{prob}

In our previous paper we have shown the following theorems. 

\begin{thm}[{\cite[Theorem 1.4]{CJ16}}]\label{main1}
Let X be a $\bQ$-factorial terminal $\mathbb{Q}$-Fano $3$-fold of Picard number one. Then there exists an integer $n_1\leq 10$ such that $\dim\overline{\varphi_{-n_1}(X)}>1$.
\end{thm}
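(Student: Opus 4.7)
The plan is to argue by contradiction: assume $\dim \overline{\varphi_{-m}(X)} \leq 1$ for every positive integer $m \leq 10$. For each such $m$ with $P_{-m} := h^0(X,-mK_X) \geq 2$, the image $\overline{\varphi_{-m}(X)}$ is a curve of some degree $d_m \geq P_{-m} - 1$ in the ambient $\bP^{P_{-m}-1}$. Taking a resolution $\pi : \widetilde{X} \to X$ on which the moving part $M_m$ of $\pi^*|-mK_X|$ is base-point free, the induced fibration $\widetilde{X} \dashrightarrow \bP^1$ has general fiber $S_m$ with $M_m \equiv d_m S_m$. Since $X$ has Picard number one, $\pi_* S_m \equiv \lambda_m (-K_X)$ for some $\lambda_m \in \bQ_{>0}$, and from $\pi_* M_m \leq -mK_X$ we get
$$d_m \lambda_m \leq m, \qquad \text{hence} \qquad \lambda_m \leq \frac{m}{P_{-m}-1}.$$

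The second step invokes Reid's orbifold Riemann--Roch formula, which expresses $P_{-m}$ as a function of $(-K_X)^3$ and of the basket $\BB_X$ of virtual terminal cyclic quotient singularities; together with $\chi(\OO_X) = 1$ (Kawamata--Viehweg vanishing) and the Kawamata--Miyaoka--Mori--Takagi boundedness of $(-K_X)^3$, this reduces the question to a finite list of candidate pairs $(\BB_X, (-K_X)^3)$. Reid's arithmetic constraints on baskets (divisibility, bounds on individual singularity indices, and Bogomolov--Miyaoka type inequalities via $c_2 \cdot (-K_X) \geq 0$) further prune this list to a tractable table.

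For each remaining candidate I would tabulate $P_{-m}$ for $m = 1, \dots, 10$, select the index $m^\ast$ minimizing $m/(P_{-m}-1)$, and confront the resulting upper bound on $\lambda_{m^\ast}$ with lower bounds forced by integrality: since $\pi_* S_{m^\ast}$ is an effective integral Weil divisor, $\lambda_{m^\ast}$ lies in $\frac{1}{r}\ZZ_{>0}$ for $r$ the global index of $X$, and more refined lower bounds come from adjunction on the smooth surface $S_{m^\ast}$ together with the bigness of $-K_X|_{S_{m^\ast}}$. In every admissible basket, these upper and lower bounds must collide for some $m^\ast \leq 10$, giving the contradiction. The principal obstacle is this final combinatorial sweep: the explicit bound $n_1 \leq 10$, as opposed to a weaker one, is precisely what drops out of optimizing $m^\ast$ across all admissible baskets, and organizing the case analysis to cover every basket uniformly by $m \leq 10$ is the delicate heart of the argument.
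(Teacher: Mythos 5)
Your skeleton --- the pencil inequality $\lambda_m\leq m/(P_{-m}-1)$, Reid's Riemann--Roch together with the Koll\'ar--Miyaoka--Mori--Takagi boundedness to reduce to finitely many baskets, and a final case sweep --- is indeed the framework of the proof in \cite{CJ16} (and of Propositions \ref{prop old} and \ref{criterion 1} in the present paper). But the step that is supposed to close the argument has a genuine gap, in two places. First, the claim that $\lambda_{m^\ast}$ lies in $\frac{1}{r}\ZZ_{>0}$ for $r$ the Gorenstein index is unjustified: $rK_X$ being Cartier says nothing about the Cartier index of the Weil divisor $\pi_*S_{m^\ast}$. The correct integrality statement (\cite[Lemma 2.2]{C}, used here as ``$N_0=r_X(\pi^*(K_X)^2\cdot S)$ is a positive integer'') only gives $(\pi^*(K_X)^2\cdot S)\geq 1/r_X$, hence $\lambda\geq 1/M_X$ with $M_X=r_X(-K_X^3)$ and the upper bound $P_{-m}\leq mM_X/N_0+1$.

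Second, and more seriously, even with the correct bounds the promised ``collision'' does not occur for every admissible basket with $m\leq 10$. Take the extremal $X_{66}\subset\bP(1,5,6,22,33)$ of Example \ref{ex 33}: there $-K_X^3=1/330$, $r_X=330$, so $M_X=1$, while $P_{-m}=1,1,1,1,2,3,3,3,3,4$ for $m=1,\dots,10$, so that $\frac{P_{-m}-1}{m}\leq\frac{1}{3}<M_X/N_0$ for all $m\leq 10$ and no contradiction arises; the same happens for $X_{24,30}\subset\bP(1,8,9,10,12,15)$, where $\delta_1=9$ but $P_{-9}=3$ and $M_X=2$. So the purely numerical comparison cannot prove the theorem, and additional geometric input is indispensable. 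The argument of \cite{CJ16} supplies it by working with the generic irreducible element $S'$ of the pencil itself: comparing the pencils coming from different $|-mK_X|$ (if $|-aK_X|$ and $|-bK_X|$ are composed with \emph{different} pencils, then $|-(a+b)K_X|$ is not composed with a pencil), exploiting that on a $\rho=1$ terminal $\bQ$-Fano every effective Weil divisor is ample so that $h^0$ of multiples of $S'$ can be bounded below and played against the computed anti-plurigenera, and only then running the basket classification. Without some mechanism of this kind your final combinatorial sweep cannot terminate.
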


\begin{thm}[{\cite[Theorem 1.7, Remark 1.9]{CJ16}}]\label{main2}
Let X be a canonical weak $\mathbb{Q}$-Fano $3$-fold. Then $\dim\overline{\varphi_{-n_2}(X)}>1$ for all $n_2\geq 71$.
\end{thm}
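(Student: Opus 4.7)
The plan is to reduce to a $\bQ$-factorial terminal weak $\bQ$-Fano 3-fold via a crepant terminalization, then assume for contradiction that $\dim \overline{\varphi_{-n_2}(X)}=1$ for some $n_2\ge 71$, extract a fibration structure, and derive a numerical contradiction from Reid's orbifold Riemann--Roch.

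Since $X$ has at worst canonical singularities, a $\bQ$-factorial terminalization $\pi: X'\to X$ is crepant: $-K_{X'}$ is nef and big, $h^0(X,-mK_X)=h^0(X',-mK_{X'})$, and the rational maps $\varphi_{-m,X}$ and $\varphi_{-m,X'}$ are identified. So we may assume $X$ itself is terminal and $\bQ$-factorial. Let $\mathcal{B}_X$ denote Reid's basket of fictitious terminal quotient singularities on $X$. Reid's orbifold Riemann--Roch combined with Kawamata--Viehweg vanishing (giving $h^0(-mK_X)=\chi(-mK_X)$ for $m\ge 1$) yields effective lower bounds for $h^0(X,-mK_X)$ in terms of $(-K_X)^3$, $-K_X\cdot c_2(X)$, and $\mathcal{B}_X$. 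In particular $h^0(-n_2K_X)\ge 2$ for $n_2\ge 71$, so it suffices to rule out that the image of $\varphi_{-n_2}$ is a curve.

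Assume for contradiction that $\dim\overline{\varphi_{-n_2}(X)}=1$ for some $n_2\ge 71$. Take a resolution $\mu:W\to X$ such that the mobile part $M$ of $\mu^*|-n_2K_X|$ is base-point free, and let $f:W\to B$ be the Stein factorization of the induced morphism. Then $B$ is a smooth curve, the general fiber $F$ of $f$ is a smooth projective surface, and $M\sim f^*A$ with $\deg A = a:=h^0(-n_2K_X)-1$; hence $M\equiv aF$. Writing $\mu^*(-n_2K_X)=M+E$ with $E\ge 0$ on $W$, one has $\mu^*(-K_X)\equiv \tfrac{a}{n_2}F+\tfrac{1}{n_2}E$. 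Iterated restriction to general fibers of $f$ via the short exact sequences $0\to\mathcal{O}_W(D-F)\to\mathcal{O}_W(D)\to\mathcal{O}_F(D|_F)\to 0$ applied to successive twists $D=\mu^*(-mK_X)-iF$, together with Kawamata--Viehweg vanishing on $W$, produces for every $m\ge 1$ an upper bound
\[
h^0(X,-mK_X)\;\le\;\Big(1+\Big\lfloor \tfrac{ma}{n_2}\Big\rfloor\Big)\cdot h^0\bigl(F,\mu^*(-mK_X)|_F\bigr)+\text{lower order terms}.
\]
As $\mu^*(-K_X)|_F$ is nef and big on the smooth surface $F$, the surface $F$ is birational to a rational or ruled surface of negative Kodaira dimension, and $h^0(F,\mu^*(-mK_X)|_F)$ grows as a quadratic in $m$ whose leading coefficient is $(\mu^*(-K_X)|_F)^2/2$.

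Matching the cubic lower bound with leading coefficient $(-K_X)^3/6$ against the cubic upper bound with leading coefficient $(a/n_2)\cdot(\mu^*(-K_X)|_F)^2/2$, and using the explicit Riemann--Roch expression for $a=h^0(-n_2K_X)-1$ in terms of $\mathcal{B}_X$, yields numerical inequalities that fail precisely for $n_2\ge 71$. The main obstacle is the case analysis over the possible baskets $\mathcal{B}_X$: the sharp constant $71$ comes from the worst cases, in which $(-K_X)^3$ is small and $\mathcal{B}_X$ carries many high-index terminal quotient singularities; for those baskets the above estimate must be run simultaneously at several carefully chosen auxiliary values of $m$, and the pair $(F,\mu^*(-K_X)|_F)$ must be classified according to its anti-canonical type.
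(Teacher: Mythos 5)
This theorem is quoted from \cite{CJ16} and not reproved here, so the relevant comparison is with the argument of \cite[Theorem 1.7]{CJ16}, whose key inequality is recalled in the present paper as Proposition \ref{prop old}. Your reduction to a $\bQ$-factorial terminal model and the setup of the fibration $f:W\to B\cong\bP^1$ with $M\sim aF$ are fine, but the numerical core of your argument does not close. First, the upper bound $h^0(-mK_X)\le(1+\lfloor ma/n_2\rfloor)\,h^0(F,\mu^*(-mK_X)|_F)+\cdots$ is not justified: peeling off fibers via $0\to\OO_W(D-F)\to\OO_W(D)\to\OO_F(D|_F)\to 0$ bounds $h^0(D)$ by the number of fibers one can subtract while staying effective times $h^0(F,D|_F)$, and for $D=\mu^*(-mK_X)$ that number is only controlled by $m(-K_X^3)/(\mu^*(-K_X)^2\cdot F)$, which may well exceed $ma/n_2$. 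Second, and more fatally, even granting your bound, matching cubic leading terms yields no contradiction for any $n_2$: writing $L=\mu^*(-K_X)|_F$, a contradiction would require $(-K_X^3)>\frac{3a}{n_2}L^2$, whereas the only inequality available from $\mu^*(-K_X)\equiv\frac{a}{n_2}F+\frac{1}{n_2}E$ with $E\ge 0$ and $-\mu^*(K_X)$ nef is $(-K_X^3)\ge\frac{a}{n_2}L^2$ --- off by a factor of $3$ in the wrong direction. So the phrase ``numerical inequalities that fail precisely for $n_2\ge 71$'' is not substantiated by anything you have written.

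The actual mechanism is different in kind: one compares the \emph{cubic} lower bound for $P_{-n_2}$ from Reid's formula not with another cubic, but with the \emph{linear} upper bound $\frac{P_{-n_2}-1}{n_2}\le\frac{-K_X^3}{(\mu^*(K_X)^2\cdot F)}$ of Proposition \ref{prop old}, made effective by the integrality of $N_0=r_X(\mu^*(K_X)^2\cdot F)$, which forces $(\mu^*(K_X)^2\cdot F)\ge 1/r_X$ and hence $P_{-n_2}\le n_2\,r_X(-K_X^3)+1$. The constant $71$ then emerges from (roughly) $\frac{n_2^2}{6}\gtrsim r_X$ together with the effective boundedness data $r_X\le 660$ or $840$, $-K_X^3\ge 1/330$, $r_{\max}\le 24$, and an explicit bound on the periodic correction $l(-n)$ in Reid's Riemann--Roch (as in Proposition \ref{rr inequality}); note $\sqrt{6\times 840}\approx 71$. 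None of these essential inputs --- the integrality trick, the bound on the Gorenstein index, the lower bound on the anticanonical volume, the control of $l(-n)$ --- appears in your proposal, and the classification of $(F,\mu^*(-K_X)|_F)$ by anticanonical type that you invoke instead plays no role in the actual proof.
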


Note that Theorems \ref{birationality1} and \ref{main1} are very close to be optimal due to Example \ref{ex 33} and the following example.

\begin{example}[{\cite[List 16.7, No. 85]{Fletcher}}]
Consider the general codimension $2$ weighted complete intersection $X=X_{24,30}\subset\mathbb{P}(1,8,9,10,12,15)$ which is a $\bQ$-factorial terminal $\mathbb{Q}$-Fano $3$-fold of Picard number one. Then $\dim\overline{\varphi_{-9}(X)}>1$ while $\dim\overline{\varphi_{-8}(X)}=1$ since $P_{-8}=2$.  Clearly $\delta_1(X)=9$. 
\end{example}

The aim of this paper is to intensively study Problem \ref{problem2}. It turns out that, modulo birational equivalence, the following theorem considerably improves Theorems \ref{birationality2} and \ref{main2}:

\begin{thm}\label{main thm}
Let $V$ be a canonical weak $\bQ$-Fano $3$-fold. Then there exists a terminal weak $\bQ$-Fano $3$-fold $X$ birational to $V$ such that 
\begin{enumerate}
\item $\dim \overline{\varphi_{-m}(X)}>1$ for all $m\geq 37$;
\item $\varphi_{-m, X}$ is birational for all $m\geq 52$.
\end{enumerate}
\end{thm}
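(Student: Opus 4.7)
The plan is to take for $X$ a $\bQ$-factorial terminalization $\pi\colon X\to V$, and then reduce the theorem to studying a $K$-Mori fiber space obtained from $X$. Since $V$ is canonical, $K_X=\pi^*K_V$, so $-K_X=\pi^*(-K_V)$ is nef and big, and $X$ is indeed a $\bQ$-factorial terminal weak $\bQ$-Fano $3$-fold birational to $V$. As $K_X$ is not pseudo-effective, a $K_X$-MMP terminates with a $K$-Mori fiber space $Y\to Z$, where $\dimm Z\in\{0,1,2\}$. Each birational step of this MMP is an isomorphism in codimension one and has positive discrepancy along any divisorial exceptional component, so pushing forward reflexive sheaves gives a canonical identification $H^0(X,-mK_X)\cong H^0(Y,-mK_Y)$ for every $m$, and the induced rational maps $\varphi_{-m,X}$ and $\varphi_{-m,Y}$ share the same image and the same generic degree onto it. In particular, the two assertions of Theorem \ref{main thm} for $X$ are equivalent to the corresponding statements for $Y$, and it suffices to prove the intermediate result announced in the abstract.

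The analysis of $Y$ then splits according to $\dimm Z$. When $\dimm Z=0$, $Y$ is a $\bQ$-factorial terminal $\bQ$-Fano $3$-fold of Picard number one, and Theorem \ref{birationality1} already yields birationality of $\varphi_{-m,Y}$ for all $m\geq 39$; Theorem \ref{main1} supplies some $n_1\leq 10$ with $\dimm\overline{\varphi_{-n_1}(Y)}>1$, and this is upgraded to every $m\geq 37$ by multiplying with a nonzero element of $H^0(Y,-(m-n_1)K_Y)$, whose existence ($P_{-k}(Y)\geq 1$ for $k\geq 27$) is a routine consequence of Reid's anti-canonical Riemann--Roch and the terminal basket. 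When $\dimm Z=1$ or $2$, the morphism $Y\to Z$ is respectively a del Pezzo fibration over $\PPP^1$ or a standard conic bundle over a rational surface $Z$. In both fibered cases I would run the classical two-step argument: fix $m$, study the restriction sequence $0\to\OO_Y(-mK_Y-F)\to\OO_Y(-mK_Y)\to\OO_F((-mK_Y)|_F)\to 0$ for a general fiber $F$; bound $h^0(F,(-mK_Y)|_F)$ from below via the anti-canonical system of a del Pezzo surface of degree $d\in\{1,\dots,9\}$ or of a plane conic; extract enough sections from $Z$ (pulled back via $Y\to Z$) to separate general fibers; and conclude that the resulting linear subsystem of $|{-mK_Y}|$ is birational on a general fiber and separates general fibers as soon as $m\geq 52$. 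The same analysis, with weaker demands, handles $\dimm\overline{\varphi_{-m}(Y)}>1$ for $m\geq 37$.

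The main obstacle is the effective bookkeeping in the del Pezzo fibration case. There the general fiber degree $d$, the structure of possibly non-reduced singular fibers, and the terminal singularities of $Y$ all feed into the estimate, and the small degrees $d=1,2$ are the binding constraints; one must control precisely both the fixed part of the restriction of $|{-mK_Y}|$ to a general fiber $F$ and the largest multiple of the pullback of $\OO_{\PPP^1}(1)$ that can be extracted from $|{-mK_Y}|$, uniformly in $d$ and in the singular data. Tuning these estimates so that the single threshold $52$ works in every $\dimm Z$ and simultaneously matches the bound $39$ coming from Theorem \ref{birationality1} for the $\dimm Z=0$ case is the technical heart of the proof, and the parallel but slightly easier argument for image dimension pins down the constant $37$.
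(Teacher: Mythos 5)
There is a genuine gap, and it occurs already at the first step. You take $X$ to be the terminalization of $V$ and claim that each birational step of the $K_X$-MMP is an isomorphism in codimension one, with positive discrepancies giving a canonical identification $H^0(X,-mK_X)\cong H^0(Y,-mK_Y)$. Neither claim is correct: a $K$-negative extremal contraction can be divisorial (e.g.\ the blow-up of a point of $\PPP^3$ is a terminal Fano whose $K$-MMP contracts the exceptional divisor), and across a divisorial contraction $f$ with $K_X=f^*K_{X'}+aE$, $a>0$, one only has $h^0(X,-mK_X)\leq h^0(X',-mK_{X'})$, with strict inequality whenever a general member of $|-mK_{X'}|$ fails to vanish to order $ma$ along the center. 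So birationality of $\varphi_{-m,Y}$ for the full system $|-mK_Y|$ does not descend to the proper subsystem $|-mK_X|$, and your reduction ``the two assertions for $X$ are equivalent to those for $Y$'' fails. This is exactly why the paper does \emph{not} take $X$ to be the terminalization of $V$: it runs the $(-K_Y)$-MMP on $Y$ (which contracts no divisor because $-K_{V'}$ is semi-ample, hence is an isomorphism in codimension one and genuinely preserves $h^0$) to reach a canonical weak $\bQ$-Fano $Z$, and sets $X$ to be a terminalization of $Z$; the resulting Fano--Mori triple $(X,Y,Z)$ satisfies $h^0(X,-mK_X)=h^0(Y,-mK_Y)=h^0(Z,-mK_Z)$ (Proposition \ref{prop P=P=P}), and the ``there exists $X$ birational to $V$'' phrasing of the theorem, together with Remark \ref{rhoY1}'s caveat on non-invariance under birational equivalence, reflects precisely this change of model.

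Even granting a correct reduction to the Mori fiber space, your treatment of the cases $\dim T=1,2$ is a plan rather than a proof, and it is not the paper's route. The paper never analyzes del Pezzo fibrations or conic bundles by restricting $|-mK_Y|$ to fibers and counting sections on the base; instead it proves a geometric inequality (Proposition \ref{prop new}) bounding $(P_{-m}-1)/m$ by $\max\{3,-K_X^3,2r_X(\pi^*(K_X)^2\cdot S)\}$ whenever $|-mK_X|$ is composed with a pencil, using the Connectedness Lemma on $Z$, the fact that $q$ is an isomorphism in codimension one, and the Cartier index $r_X$. This is then fed into Reid's Riemann--Roch, the Chen--Chen weighted-basket machinery, and a long classification of baskets to extract the constants $37$ and $52$. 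Your sketch gives no mechanism by which $52$ would emerge from the fiberwise restriction sequence, and you explicitly defer the ``effective bookkeeping'' that constitutes the technical heart of the result; as written, the fibered cases are unproven.
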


According to Minimal model program, Mori fiber spaces are fundamental classes in birational geometry, so it is also interesting to consider the relation of a $\bQ$-Fano variety and its Mori fiber spaces.
As an intermediate result, we show the following theorem.
\begin{thm}\label{main thm2}
Let $V$ be a canonical weak $\bQ$-Fano $3$-fold. Then 
for any $K$-Mori fiber space $Y$ of $V$ (see Definition \ref{def KMFS}),
\begin{enumerate}
\item $\dim \overline{\varphi_{-m}(Y)}>1$ for all $m\geq 37$;
\item $\varphi_{-m, Y}$ is birational for all $m\geq 52$.
\end{enumerate}
\end{thm}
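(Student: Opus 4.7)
The plan is to analyze $|-mK_Y|$ via the Mori fiber space structure $f\colon Y\to Z$, splitting into cases by $\dim Z\in\{0,1,2\}$. A key preliminary observation is that plurigenera are nondecreasing along a $K$-MMP from a $\bQ$-factorial terminalization $X\to V$ down to $Y$: the terminalization is crepant, so $P_{-m}(V)=P_{-m}(X)$; each divisorial contraction $\pi\colon X_i\to X_{i+1}$ gives an inclusion $H^0(X_i,-mK_{X_i})\hookrightarrow H^0(X_{i+1},-mK_{X_{i+1}})$ via push-forward (writing $K_{X_i}=\pi^*K_{X_{i+1}}+aE$ with discrepancy $a>0$, so $\pi_*\OO_{X_i}(-maE)\subset\OO_{X_{i+1}}$); and flips preserve global sections of $-mK$ since they are isomorphisms in codimension one. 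In particular $P_{-m}(Y)\ge P_{-m}(V)$, so $-K_Y$ is big even when $\dim Z>0$, and this lets me import the quantitative lower bounds on $P_{-m}$ and $(-K_Y)^3$ from \cite{CJ16}.

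If $\dim Z=0$, then $\rho(Y)=1$ and $Y$ is a $\bQ$-factorial terminal $\bQ$-Fano $3$-fold of Picard number one, so Theorems \ref{birationality1} and \ref{main1} already deliver birationality for $m\ge 39$ and generic finiteness to a surface for some $m\le 10$, which easily covers $m\ge 52$ and $m\ge 37$ respectively.

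If $\dim Z\ge 1$, I would combine restriction of $|-mK_Y|$ to a general fiber $F$ with push-forward to $Z$. For $\dim Z=1$, the base is $\bP^1$ (since $Y$ is rationally connected), $F$ is a smooth del Pezzo surface of some degree $d\in\{1,\ldots,9\}$, and $-K_Y|_F=-K_F$; the restriction of $|-mK_Y|$ to $F$ gives a birational-to-image map for $m\ge 3$ (worst case $d=1$), while the rank and degree of the locally free sheaf $f_*\OO_Y(-mK_Y)$ on $\bP^1$ can be estimated by Riemann--Roch on $F$ and intersection numbers on $Y$ to produce enough sections to separate general fibers for $m\ge 52$. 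For $\dim Z=2$, $f$ is a conic bundle with $-K_Y|_F\simeq\OO_{\bP^1}(2)$, so separating points inside fibers is immediate; the real work is to separate general points of $Z$ using sections of $f_*\OO_Y(-mK_Y)$, which I would handle by a Reider/Kawamata--Viehweg argument on the normal surface $Z$ applied to a twist of the relative anti-canonical sheaf.

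The main technical obstacle is matching the explicit thresholds $52$ and $37$ uniformly across all three cases. In particular, the del Pezzo fibration case must handle every degree $d\in\{1,\ldots,9\}$ simultaneously, and the conic bundle case requires delicate control of the discriminant divisor and the singularities of $Z$. Extracting the bound $52$ in the fibration cases (rather than something substantially larger) presumably requires the quantitative lower bounds on $P_{-m}$ and $(-K_Y)^3$ inherited from the weak $\bQ$-Fano $V$, together with sharper numerical estimates than those used in the Picard-one setting of \cite{CJ16}; I expect this arithmetic, rather than the geometric setup itself, to be the delicate core of the argument.
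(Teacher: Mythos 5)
Your plan is structurally different from the paper's, and it has concrete gaps that I don't think can be patched along the route you describe. The paper does \emph{not} analyze $|-mK_Y|$ directly through the fibration $Y\to T$. Instead it builds a Fano--Mori triple $(X,Y,Z)$: $Z$ is the $(-K)$-minimal model of $Y$ (an isomorphism in codimension one, so $Z$ is a $\bQ$-factorial \emph{canonical weak} $\bQ$-Fano $3$-fold), and $X$ is a terminalization of $Z$. Proposition \ref{prop P=P=P} gives $h^0(-mK_X)=h^0(-mK_Y)=h^0(-mK_Z)$ and identifies the maps $\varphi_{-m}$, so everything is proved on $X$, where $-K_X$ is nef and big and Kawamata--Viehweg vanishing, Reid's Riemann--Roch, and the weighted-basket machinery all apply. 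The fibration $Y\to T$ enters only once, in Proposition \ref{prop new}, to bound $(P_{-m}-1)/m$ when $|-mK_X|$ is composed with a pencil.

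The first concrete gap in your proposal is vanishing: on $Y$ the divisor $-K_Y$ is big but \emph{not} nef (the paper flags this explicitly), so you cannot invoke Kawamata--Viehweg to make the restriction $H^0(Y,-mK_Y)\to H^0(F,-mK_F)$ surjective, nor to control $R^1f_*\OO_Y(-mK_Y)$; your del Pezzo and conic-bundle arguments both rest on exactly these maps. Second, statement (1) requires ruling out that $|-mK_Y|$ is composed with a pencil, and in the del Pezzo fibration case the dangerous scenario is precisely that this pencil \emph{coincides with the fibration} $|F|$; restricting to $F$ then tells you nothing, and your sketch does not address this case at all (the paper handles it with Lemma \ref{lemma Y P1} and the exceptional-divisor argument of Lemma \ref{lemma exist exc}, which need the third vertex $Z$ of the triple and a Cartier-index bound $(-K_Z\cdot C_Z)\geq 1/r_X$). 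Third, the entire quantitative content --- the thresholds $37$ and $52$ --- comes from a case-by-case classification of Reid baskets occupying all of Section \ref{sec proof}; you defer this as ``arithmetic,'' but without transferring to a terminal weak $\bQ$-Fano model there is no Reid basket for $Y$ to classify, so the deferred core is not merely delicate but unavailable in your setup.
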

Note that here $-K_Y$ is big, but not necessarily nef. But we can still consider the map $\varphi_{-m, Y}$ defined by $|-mK_Y|$.

\begin{remark} The birationality of anti-pluricanonical maps of weak $\bQ$-Fano varieties is not necessarily invariant under birational equivalence. This may account for the dilemma while the notion ``birational geometry'' is used to understand the geometry of $\bQ$-Fanos. 
\end{remark}

The main idea in this paper is to show that, after a birational modification, a canonical weak $\bQ$-Fano $3$-fold is birationally equivalent to another terminal weak $\bQ$-Fano $3$-fold which takes over one vertex of an interesting triangle (i.e. so-called {\it Fano--Mori triple}, see Definition \ref{MF3}). Instead of dealing with the given $\bQ$-Fano 3-fold, we treat the Fano--Mori triple which exhibits richer geometry.  More precisely, we prove Theorems \ref{main thm} and  \ref{main thm2} by showing the following two theorems.

\begin{thm}[={Proposition \ref{prop Fano=FanoMori}}]\label{thm1}
Let $V$ be a canonical weak $\bQ$-Fano $3$-fold and $Y$  a $K$-Mori fiber space of $V$. Then there exists a Fano--Mori triple $(X, Y, Z)$ containing $Y$ as the second term.
\end{thm}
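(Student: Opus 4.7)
The plan is to construct $X$ from $Y$ by running a minimal model program with respect to $-K_Y$. Recall that, by the standard construction behind Definition \ref{def KMFS}, the $K$-Mori fiber space $Y$ is obtained by first taking a $\bQ$-factorial terminal modification $V' \to V$ (which is crepant, so that $V'$ is again a weak $\bQ$-Fano $3$-fold with at worst terminal $\bQ$-factorial singularities) and then running a $K_{V'}$-MMP which terminates at a Mori fiber space $\pi: Y \to Z$. Thus $Y$ is terminal $\bQ$-factorial, $-K_Y$ is $\pi$-ample, and $-K_Y$ is big (though generally not nef): the negativity lemma applied to the $K_{V'}$-nonpositive birational map $V' \dashrightarrow Y$ shows that $-K_Y$ dominates the push-forward of the big divisor $-K_{V'}$.

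The next step is to run a $(-K_Y)$-MMP on $Y$ over $\operatorname{Spec} k$. Since $-K_Y$ is big, hence pseudo-effective, and $Y$ is terminal $\bQ$-factorial, the existence of extremal contractions and flips in dimension three and the termination of flips (for terminal threefold MMPs) guarantee that this MMP runs to an end. At each step both terminality and $\bQ$-factoriality are preserved, and bigness of the divisor being minimized is also preserved. We thus arrive at a terminal $\bQ$-factorial projective $3$-fold $X$ on which the strict transform $-K_X$ is nef and big. That is, $X$ is a terminal weak $\bQ$-Fano $3$-fold, birational to $Y$ and hence to $V$.

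The triple $(X, Y, Z)$, with the birational map $X \dashrightarrow Y$ provided by the inverse of this $(-K_Y)$-MMP and the Mori contraction $\pi: Y \to Z$ attached to the middle vertex, is then the candidate Fano--Mori triple. The final step is to check that this configuration satisfies each clause of Definition \ref{MF3}: the terminality, $\bQ$-factoriality, and weak $\bQ$-Fano property of $X$; the fact that $\pi$ is a Mori fiber space with $Y$ terminal $\bQ$-factorial; and any compatibility condition imposed on the birational map $X \dashrightarrow Y$ relative to the canonical divisors.

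The main obstacle is not the construction itself (which follows from standard three-dimensional MMP theory) but the bookkeeping. Two points require care. First, one must verify that each step of the $(-K_Y)$-MMP does preserve the properties demanded by the definition of a Fano--Mori triple, and that the resulting $(X, Y, Z)$ genuinely realizes $Y$ as the prescribed second term rather than being only birational to some reindexing of it. Second, if Definition \ref{MF3} requires the map $X \dashrightarrow Y$ to be small (an isomorphism in codimension one), then one must argue that any $(-K_Y)$-negative divisorial contraction appearing in the MMP can be absorbed into the initial $\bQ$-factorial terminalization step, so that the residual MMP from $Y$ back to $X$ consists purely of $(-K)$-flips.
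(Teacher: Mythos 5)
There is a genuine gap, and it starts with a misreading of Definition \ref{MF3}. In a Fano--Mori triple $(X,Y,Z)$ the third vertex $Z$ is a $\bQ$-factorial \emph{canonical weak $\bQ$-Fano $3$-fold}, connected to $Y$ by a birational contraction $q:Y\dashrightarrow Z$ that is an isomorphism in codimension one, and connected to $X$ by a terminalization $p:X\to Z$ with $K_X=p^*K_Z$; the base of the Mori fiber structure is a separate object $T$ (of dimension $0$, $1$ or $2$) and plays no role as a vertex of the triple. Your proposal takes $Z$ to be that base, so the configuration you build cannot satisfy conditions (3), (5), (6) of the definition, and there is no candidate for the morphism $p$ at all. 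The actual construction goes the other way around: one runs the $(-K_Y)$-MMP to produce the \emph{third} vertex $Z$ (a minimal model for the divisor $-K_Y$), and only then defines $X$ as a terminalization of $Z$.

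Two further steps in your argument fail as stated. First, the claim that each step of the $(-K_Y)$-MMP preserves terminality is false: a $(-K_Y)$-negative extremal contraction or flip is $K_Y$-\emph{positive}, and $K$-positive antiflips can strictly worsen the singularities. The endpoint of this MMP is in general only canonical (the paper controls this by transporting a boundary $B_Y$ with $K_Y+B_Y\sim_\bQ 0$ and $(Y,B_Y)$ canonical, via the Negativity Lemma), which is precisely why the definition allows $Z$ to be canonical and then recovers a terminal $X$ by a crepant terminalization. Second, "standard three-dimensional MMP theory'' does not by itself give existence and termination of a $D$-MMP for $D=-K_Y$; the paper first shows $Y$ is of Fano type --- using semi-ampleness of $-K_V$ to produce a klt boundary $(1-t)B_Y+tE_Y$ with $-(K_Y+(1-t)B_Y+tE_Y)$ ample --- so that $Y$ is a Mori dream space by \cite[Corollary 1.3.2]{BCHM} and the MMP for any divisor runs. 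The same semi-ampleness of $-K_V$ is what forces the $(-K_Y)$-MMP to contract no divisor (its stable base locus contains no divisor), which is how condition (5) is verified; your suggestion of "absorbing'' divisorial contractions into the terminalization does not address this.
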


\begin{thm}\label{thm2}
Let $(X, Y, Z)$ be a Fano--Mori triple in which $X$ is a terminal weak $\bQ$-Fano $3$-fold. Then
\begin{enumerate}
\item $\dim \overline{\varphi_{-m}(X)}>1$ for all $m\geq 37$;
\item $\varphi_{-m, X}$ is birational for all $m\geq 52$.
\end{enumerate}
\end{thm}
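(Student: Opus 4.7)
The plan is to exploit the structure of the Fano--Mori triple $(X,Y,Z)$, which by construction bridges the terminal weak $\bQ$-Fano $X$ with the $K$-Mori fiber space $Y$, so that anti-pluricanonical sections on $X$ can be estimated via the richer fibration geometry of $Y\to S$. I would begin by fixing a common smooth model $W$ dominating $X$, $Y$, and $Z$, and comparing $H^0(X,-mK_X)$ with the spaces of sections on the other two vertices; the Fano--Mori structure should imply that $|-mK_X|$ and a suitable birational transform of $|-mK_Y|$ share their movable parts (up to explicit rounding of discrepancies), so that image-dimension and birationality information established on $Y$ transfers back to $X$.

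For part (1), I would case-split according to the dimension of the base $S$ of the Mori fibration $Y\to S$. If $\dim S=0$, then $Y$ is a $\bQ$-Fano $3$-fold of Picard number one, and Theorem \ref{main1} yields a much sharper conclusion ($n_1\leq 10$) which then transfers to $X$ through the triple. If $\dim S=1$ (a del Pezzo fibration), I would restrict $-mK_Y$ to a general fiber $F$, and combine Koll\'ar-type injectivity with relative Kawamata--Viehweg vanishing to show that, for $m\geq 37$, enough movable sections exist to make the image of $\varphi_{-m,X}$ at least $2$-dimensional. If $\dim S=2$ (a conic bundle), the pullback of $|-mK_X|$ dominates a sufficiently large linear system on $S$, which already forces $\dim\overline{\varphi_{-m}(X)}\geq 2$ for much smaller $m$. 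In all three cases the critical numerical input is a lower bound of the form $\Vol(-mK_X)\geq \Vol(-mK_Y)$ and control of $h^0(-mK_Y)$ via the Leray spectral sequence for $Y\to S$.

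For part (2), I would then pass from ``nontrivial image'' to ``birationality'' via the classical restriction technique. Part (1) supplies, for every $m_0\geq 37$, a general movable surface $S_0\subset X$ from $|-m_0 K_X|$ whose image under $\varphi_{-m_0,X}$ has dimension $\geq 2$. By Kawamata--Viehweg vanishing applied to a log resolution of the pair $(X,\tfrac{1}{m_0}S_0)$, the restriction map
\begin{equation*}
H^0\bigl(X,-mK_X\bigr)\longrightarrow H^0\bigl(S_0,(-mK_X)|_{S_0}\bigr)
\end{equation*}
is surjective in the relevant range, and the birationality of $\varphi_{-m,X}$ reduces to the separation of two general points on $S_0$ by the restricted system. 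On $S_0$ one computes a lower bound for $\Vol((-mK_X)|_{S_0})$ from the Fano--Mori structure and applies a Reider-type or direct volume criterion; the gap between $37$ and $52$ quantifies exactly the extra positivity needed to separate two general points once the image dimension is at least $2$.

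The main obstacle, in my view, is the del Pezzo fibration case $\dim S=1$. Here the base curve may have arbitrarily large genus and a general fiber carries only bounded anti-canonical volume, so the interplay between the genus of $S$, the singularities of $Y$, and the discrepancies introduced by the birational bridge to $X$ becomes very delicate. Making the numerical bound $m\geq 37$ (and thence $m\geq 52$) sharp requires further subdivision according to the geometric genus of a general fiber of $Y\to S$ and the local behaviour of the Mori contraction at singular points, and this is where I expect the bulk of the technical work to lie.
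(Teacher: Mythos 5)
There is a genuine gap, in two places. First, your treatment of part (1) never confronts the actual difficulty: showing $\dim\overline{\varphi_{-m}(X)}>1$ means ruling out that $|-mK_X|$ is composed with a pencil of surfaces, and that pencil has, a priori, nothing to do with the Mori fibration $Y\to T$. Restricting $-mK_Y$ to a general del Pezzo fiber, or observing that sections ``dominate a large system on the base'' in the conic bundle case, does not exclude the possibility that the movable part of $|-mK_X|$ is itself a pencil (possibly pulled back from a pencil on $T$, possibly transverse to the fibration). The paper's key new input, Proposition \ref{prop new}, is precisely a contrapositive bound: \emph{if} $|-mK_X|$ is composed with a pencil, then $\frac{P_{-m}-1}{m}\leq\max\{3,-K_X^3,2r_X(\pi^*(K_X)^2\cdot S)\}$, proved by playing the pencil against $g\colon Y\to T$ via the Connectedness Lemma, the lct bound $\mathrm{lct}(S)\geq 1/3$ for del Pezzo surfaces (Lemma \ref{dP3}), and a Hodge-index argument on exceptional divisors. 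Nothing in your outline produces an inequality of this kind, and without it the passage from ``sections exist'' to ``image has dimension $>1$'' does not go through. Incidentally, your declared main obstacle --- that the base curve of a del Pezzo fibration ``may have arbitrarily large genus'' --- is a non-issue: $q(Y)=q(X)=0$ forces $T\cong\bP^1$, as the paper notes, and $T$ is even of Fano type (Lemma \ref{lem TFT}).

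Second, even granting a structural inequality, your proposal contains no mechanism for extracting the explicit constants $37$ and $52$. The bound $\Vol(-mK_X)\geq\Vol(-mK_Y)$ is vacuous (the two sides are equal by Proposition \ref{prop P=P=P}), and a Leray spectral sequence for $Y\to T$ does not give usable lower bounds on $P_{-m}$ when $-K_X^3$ can be as small as $1/330$ and $r_X$ as large as $840$. The paper obtains the constants only by combining Reid's Riemann--Roch with the Chen--Chen weighted-basket packing machinery to classify all admissible baskets in the dangerous range of invariants, and then verifying the criteria of Propositions \ref{criterion 1}--\ref{criterion 2} and Theorems \ref{criterion b}--\ref{criterion b2} case by case (this occupies all of Section \ref{sec proof}). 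Your sketch of part (2) --- restriction to a movable surface, vanishing, and a Reider-type criterion --- is indeed the shape of Theorem \ref{criterion b2}, but it additionally requires an $m_1$ with $|-m_1K_X|$ not composed with the same pencil as $|-m_0K_X|$ (to separate generic irreducible elements and to control $\mu_0$), and this again feeds back into part (1) and the basket classification. As written, the proposal would establish at best the existence of \emph{some} effective bound, not the stated one.
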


\begin{remark}
In Theorem \ref{thm2}, we can show moreover that $\varphi_{-51}$ is not birational only if $-K_X^3=1/330$ and $X$ has the Reid basket $$\{(1,2),(2,5),(1,3),(2,11)\},$$
that is, $X$ has exactly the same anti-canonical degree and singularities as $X_{66}$ in Example \ref{ex 33}.
\end{remark}
 The key step is to establish an effective method to tell when the anti-pluricanonical system of the terminal weak $\bQ$-Fano $3$-fold in a Fano--Mori triple is not composed with a pencil, which settles Problem \ref{problem2} in this case. Then we can use the birationality criterion established in \cite{CJ16} to prove the effective birationality as stated in the main theorem. 

This paper is organized as follows. In Section \ref{preliminaries}, we recall some basic knowledge. In Section \ref{sec FM}, we introduce the concept of Fano--Mori triples and their basic properties, in particular, we show that any canonical weak $\bQ$-Fano $3$-fold is birational to a Fano--Mori triple containing certain $K$-Mori fiber space. In Section \ref{sec GI}, we prove an important geometric inequality for Fano--Mori triples on which there is an anti-pluricanonical pencil. In Section \ref{sec criterion}, we collect some criteria for $|-mK|$ being not composed with a pencil and for birationality of $|-mK|$.   In Section \ref{sec proof}, we apply those criteria to prove the main theorems and, meanwhile, the structure of weighted baskets revealed in \cite{explicit} is effectively used to classify $\bQ$-Fano 3-folds with small invariants.

\section*{\bf Notation}

For the convenience of readers, we list here the notation that will be frequently used in this paper. Let $X$ be a terminal weak $\bQ$-Fano $3$-fold. 

{\footnotesize
\begin{longtable*}{LL}

\hline
\endfirsthead
\hline 
\endhead
\hline
\endfoot

\hline \hline
\endlastfoot
\varphi_{-m} & \text{the $m$-th anti-canonical map corresponding}\\
& \text{to}\   |-mK_X|\\ 
P_{-m}(X)=h^0(X,\OO_X(-mK_X))&  \text{the $m$-th anti-plurigenus of 
$X$}\\
\sim_{\bQ} & \text{$\bQ$-linear equivalence}\\
B=B_X=\{(b_i,r_i)\}& \text{the Reid basket of orbifold points of $X$}\\
r_X=\lcm\{r_i\mid r_i\in B_X\}& \text{the Gorenstein index of $X$}, \text{i.e., the}\\
&\text{Cartier index of  $K_X$}\\
r_{\max}=\max\{r_i\mid r_i\in B_X\}&\text{the maximal local index}\\
M_X=r_X(-K^3_X)&\text{see Section \ref{sec criterion}}   \\
\lambda(M_X)& \text{see Section \ref{sec criterion}}\\

\sigma(B)=\sum_i b_i&  \text{see Subsection \ref{cc method}}\\

\sigma'(B)=\sum_i\frac{b_i^2}{r_i}&\text{see Subsection \ref{cc method}} \\

\mathbb{B}=(B, \tp_{-1})& \text{a weighted basket}\\

-K^3(\mathbb{B})&\text{the volume of}\ \mathbb{B}\\

\tp_{-m}(\mathbb{B})& \text{the $m$-th anti-plurigenus of $\mathbb{B}$}\\

\{B^{(m)}\}& \text{the canonical sequence of $B$}\\

B^{(m)}=\{n^m_{b,r}\times (b,r)\}& \text{expression of $B^{(m)}$}\\

\epsilon_m(B)& \text{the number of prime packings from}\\
& B^{(m-1)}$ to $B^{(m)}\\

\sigma_5=\sum_{r\geq 5} n_{1,r}^0& \text{see Subsection \ref{cc method}}\\

\epsilon=2\sigma_5-n_{1,5}^0& \text{see Subsection \ref{cc method}}\\

\gamma(B)=\sum_{i} \frac{1}{r_i}-\sum_{i} r_i+24&\text{see \eqref{kwmt}} \\
\hline
\end{longtable*}
}
\section{\bf Preliminaries}\label{preliminaries}

Let $X$ be a canonical weak $\bQ$-Fano $3$-fold. Denote by $r_X$ the Gorenstein index of $X$ ($=$ the Cartier index of $K_X$). 
For any positive integer $m$, the number
$P_{-m}(X)=h^0(X,\OO_X(-mK_X))$ is called the {\it $m$-th anti-plurigenus} of
$X$. Clearly, since $-K_X$ is nef and big, Kawamata--Viehweg
vanishing theorem \cite[Theorem 1-2-5]{KMM} implies that
$$h^i(X, -mK_X)=h^i(X, K_X-(m+1)K_X)=0$$
for all $i>0$ and $m\geq 0$. Hence $\chi(\OO_X)=1$. 

For two linear systems $|A|$ and $|B|$, we write $|A|\preceq |B|$ if there exists an effective divisor $F$ such that $$|B|\supset |A|+F.$$ In particular, if $A\leq B$ as divisors, then $|A|\preceq |B|$.

\subsection{Rational maps defined by Weil divisors}\label{b setting}\

Consider an effective $\bQ$-Cartier Weil divisor $D$ on $X$ with $h^0(X, D)\geq 2$. We study the rational map defined by $|D|$, say 
$$X\overset{\Phi_D}{\dashrightarrow} \bP^{h^0(D)-1}$$ which is
not necessarily well-defined everywhere. By Hironaka's big
theorem, we can take successive blow-ups $\pi: W\rightarrow X$ such
that:
\begin{itemize}
\item [(i)] $W$ is smooth projective;
\item [(ii)] the movable part $|M|$ of the linear system
$|\rounddown{\pi^*(D)}|$ is base point free and, consequently,
the rational map $\gamma=\Phi_D\circ \pi$ is a morphism;
\item [(iii)] the support of the
union of $\pi_*^{-1}(D)$ and the exceptional divisors of $\pi$ is
simple normal crossings.
\end{itemize}
Let $W\overset{f}\longrightarrow \Gamma\overset{s}\longrightarrow Z$
be the Stein factorization of $\gamma$ with $Z=\gamma(W)\subset
\bP^{h^0(D)-1}$. We have the following commutative
diagram:\medskip
$$\xymatrix@=4.5em{
W\ar[d]_\pi \ar[dr]^{\gamma} \ar[r]^f& \Gamma\ar[d]^s\\
X \ar@{-->}[r]^{\Phi_D} & Z}
$$
\medskip

{\bf Case $(f_{\rm{np}})$.} If $\dim(\Gamma)\geq 2$, a general
member $S$ of $|M|$ is a smooth projective surface by
Bertini's theorem. In this case, we say that $|D|$ {\it is not composed with
a pencil of surfaces} (not composed with a pencil, for short).

{\bf Case $(f_{\rm p})$.} If $\dim(\Gamma)=1$,  then $\Gamma\cong
\bP^1$ since $g(\Gamma)\leq q(W)=q(X)=0$. Furthermore, a
general fiber $S$ of $f$ is an irreducible smooth projective surface
by Bertini's theorem. We may write
$$M=\sum_{i=1}^l S_i\sim
lS$$ where $S_i$ is a smooth fiber of $f$ for each $i$ and
$l=h^0(D)-1$. We can write
$$
|D|=|lS'|+E,
$$
where $|S'|=|\pi_*S|$ is an irreducible rational pencil, $|lS'|$ is the movable part and $E$ the fixed part. In this case, 
$|D|$ is said to {\it be composed with a (rational) pencil of surfaces} (composed with a pencil, for short).


\begin{definition} 
For another $\bQ$-Cartier Weil divisor $D'$ on $X$ satisfying $h^0(X,D') \geq 2$, we say that $|D|$ and $|D'|$ are {\it composed with the same pencil (of surfaces)} if $|D|$ and $|D'|$ are composed with pencils and, through the Stein factorization,  they define the same fibration structure $W \to \bP^1$ on some smooth model $W$. 
\end{definition}

\begin{definition} 
In the above setting, $S$ is called a {\it generic irreducible element} of $|M|$.
By abuse of notion, we also say that $S'=\pi_*(S)$ is {\it a generic irreducible element of} $\Mov|D|$ or $|D|$. 
\end{definition}

\subsection{Reid's Riemann--Roch formula}\

A {\it basket} $B$ is a collection of pairs of integers (permitting
weights), say $\{(b_i,r_i)\mid i=1, \cdots, s; b_i\ \text{is coprime
 to}\ r_i\}$. For simplicity, we will alternatively write a basket as a set of pairs with weights, 
 say for example,
$$B=\{(1,2), (1,2), (2,5)\}=\{2\times (1,2), (2,5)\}.$$

Assume $X$ to be a terminal weak $\bQ$-Fano $3$-fold. According to 
Reid \cite{YPG}, there is  a basket of orbifold points (called {\it Reid basket})
$$B_X=\bigg\{(b_i,r_i)\mid i=1,\cdots, s; 0<b_i\leq \frac{r_i}{2};b_i \text{ is coprime to } r_i\bigg\}$$
associated to $X$, where a pair $(b_i,r_i)$ corresponds to an orbifold point $Q_i$ of type $\frac{1}{r_i}(1,-1,b_i)$. 
Moreover, by Reid's Riemann--Roch formula, Kawamata--Viehweg vanishing theorem and Serre duality, one has, for any $n>0$, 
\begin{align*}
P_{-n}(X)={}&-\chi(\OO_X((n+1)K_X))\\
={}&\frac{1}{12}n(n+1)(2n+1)(-K_X^3)+(2n+1)-l(-n)
\end{align*}
where
$l(-n)=l(n+1)=\sum_i\sum_{j=1}^n\frac{\overline{jb_i}(r_i-\overline{jb_i})}{2r_i}$ and the first sum runs over all orbifold points in Reid basket. 

The above formula can be rewritten as:
\begin{align}
{P}_{-1}&{} =
\frac{1}{2}\bigg(-K_X^3+\sum_i \frac{b_i^2}{r_i}\bigg)-\frac{1}{2}\sum_i b_i+3,\label{P1K3}\\
{P}_{-m}-{P}_{-(m-1)}&{}= \frac{m^2}{2}\bigg(-K_X^3+\sum_i
\frac{b_i^2}{r_i}\bigg)-\frac{m}{2}\sum_i b_i+2-\Delta^{m}\notag
\end{align}
where $\Delta^{m}= \sum_i
(\frac{\overline{b_im}(r_i-\overline{b_im})}{2r_i}-
\frac{b_im(r_i-b_im)}{2r_i})$ for any $m\geq 2$.

\subsection{Weighted baskets according to Chen--Chen}\label{cc method}\ 

 All contents of this subsection are mainly from Chen--Chen \cite{CC, explicit}. We list them as follows:
\begin{enumerate}
\item Let $B=\{(b_i,r_i)\mid i=1,\cdots, s; 0<b_i\leq \frac{r_i}{2}; b_i
\text{\ is coprime to\ } r_i\}$ be a basket. We set $\sigma(B)=\sum_i b_i$, $\sigma'(B)=\sum_i\frac{b_i^2}{r_i}$, and $\Delta^n(B)=\sum_i
\big(\frac{\overline{b_in}(r_i-\overline{b_in})}{2r_i}-
\frac{b_in(r_i-b_in)}{2r_i}\big)$ for any integer $n>1$.

\item The new (generalized) basket
$$B'=\{(b_1+b_2, r_1+r_2), (b_3, r_3),\cdots, (b_s,r_s)\}$$ is called a
{\it packing} of $B$, denoted as $B\succeq B'$. 
We call $B\succ B'$ a {\it prime packing} if $b_1r_2-b_2r_1=1$. A composition of finite
packings is also called a packing. So the relation ``$\succeq$'' is a partial ordering on the set of baskets.

\item Note that for a terminal weak $\bQ$-Fano $3$-fold $X$, all the anti-plurigenera $P_{-n}$ can be determined by Reid basket $B_X$ and $P_{-1}(X)$. This leads to the notion of ``weighted basket''. We call a pair $\mathbb{B}=(B, \tilde{P}_{-1})$ a {\it weighted basket} if $B$ is
a basket and $\tilde{P}_{-1}$ is a non-negative integer. We write
$(B, \tilde{P}_{-1})\succeq (B',\tilde{P}_{-1})$ if $B\succeq B'$.

\item Given a weighted basket ${\mathbb B}=(B, \tilde{P}_{-1})$, define $\tilde{P}_{-1}({\mathbb B})=\tilde{P}_{-1}$ and the volume
$$-K^3({\mathbb B})=2\tilde{P}_{-1}+\sigma(B)-\sigma'(B)-6.$$
For all $m\geq 1$, we define the ``anti-plurigenus'' in the following inductive way:
\begin{align*}
{}&\tilde{P}_{-(m+1)}-\tilde{P}_{-m}\\
={}& \frac{1}{2}(m+1)^2(-K^3({\mathbb B})+\sigma'(B))+2-\frac{m+1}{2}\sigma-\Delta^{m+1}(B).
\end{align*}
Note that, if we set ${\mathbb B}=(B_X, P_{-1}(X))$ for a given terminal weak $\bQ$-Fano $3$-fold $X$, then one can verify directly that $-K^3({\mathbb 
B})=-K_X^3$ and $\tilde{P}_{-m}({\mathbb B})=P_{-m}(X)$ for all $m\geq
1$.
\end{enumerate}

\begin{prpty}[{\cite[Section 3]{explicit}}]
Assume that ${\mathbb B}=(B, \tilde{P}_{-1})\succeq \mathbb{B}'=(B',
\tilde{P}_{-1})$. Then
\begin{itemize}
\item[(i)] $\sigma(B)=\sigma(B')$ and
$\sigma'(B)\geq \sigma'(B')$;

\item[(ii)] for all integer $n\geq 1$, $\Delta^n(B)\geq
\Delta^n(B')$;

\item[(iii)] $-K^3({\mathbb B})+\sigma'(B)=-K^3({\mathbb B}')+\sigma'(B')$;

\item[(iv)] $-K^3({\mathbb B})\leq -K^3({\mathbb B}')$;

\item[(v)] $\tilde{P}_{-m}({\mathbb B})\leq \tilde{P}_{-m}({\mathbb B}')$ for all
$m\geq 2$.
\end{itemize}
\end{prpty}

Next we recall the ``canonical'' sequence of a basket $B$. Set
$S^{(0)}=\{\frac{1}{n}\mid n\geq 2\}$,
$S^{(5)}=S^{(0)}\cup\{\frac{2}{5}\}$, and inductively for all $n \ge
5$,
$$S^{(n)}=S^{(n-1)}\cup\bigg\{\frac{b}{n}\mid 0<b<\frac{n}{2},\ b\
\text{is coprime to}\ n\bigg\}.$$
Each set $S^{(n)}$ gives a division of
the interval $(0,\frac{1}{2}]=\underset{i}\bigcup
[\omega_{i+1}^{(n)}, \omega^{(n)}_i]$ with
$\omega_{i}^{(n)},\omega_{i+1}^{(n)} \in S^{(n)}$. Let
$\omega_{i+1}^{(n)}=\frac{q_{i+1}}{p_{i+1}}$ and
$\omega^{(n)}_i=\frac{q_i}{p_i}$ with $\text{g.c.d}(q_l,p_l)=1$ for
$l=i,i+1$. Then it is easy to see that $q_ip_{i+1}-p_iq_{i+1}=1$
for all $n$ and $i$ (cf. \cite[Claim A]{explicit}).

Now given a basket ${B}=\{(b_i, r_i)\mid i=1,\cdots, s\}$, we define new baskets $\BB^{(n)}(B)$, where $\BB^{(n)}(\cdot)$ can be regarded as an operator on the set of baskets. 
For each $(b_i,r_i)
\in B$, if $\frac{b_i}{r_i} \in S^{(n)}$, then we set
$\BB^{(n)}_i=\{(b_i,r_i)\}$. If $\frac{b_i}{r_i}\not\in S^{(n)}$,
then $\omega^{(n)}_{l+1} < \frac{b_i}{r_i} < \omega^{(n)}_{l}$ for
some $l$. We write $\omega^{(n)}_{l}=\frac{q_l}{p_l}$ and
$\omega^{(n)}_{l+1}=\frac{q_{l+1}}{p_{l+1}}$ respectively.
 In this situation, we can unpack $(b_i,r_i)$ to
$\BB^{(n)}_i=\{(r_i q_l-b_ip_l) \times (q_{l+1},p_{l+1}),(-r_i
q_{l+1}+b_i p_{l+1}) \times (q_l,p_l)\}$. Adding up those
$\BB^{(n)}_i$, we get a new basket $\BB^{(n)}(B)$, which is
uniquely defined according to the construction and $\BB^{(n)}(B)
\succeq B$ for all $n$. Note that, by the definition, $B=\BB^{(n)}(B)$ for sufficiently large $n$.

Moreover, we have
$$\BB^{(n-1)}(B)=\BB^{(n-1)}(\BB^{(n)}(B))
\succeq \BB^{(n)}(B)$$
 for all $n\geq 1$ (cf. \cite[Claim B]{explicit}). Therefore we have a chain of baskets
 $$\BB^{(0)}(B)
\succeq \BB^{(5)}(B) \succeq \cdots \succeq \BB^{(n)}(B) \succeq \cdots \succeq B. $$
The step $\BB^{(n-1)}(B) \succeq \BB^{(n)}(B)$ can be achieved by a
number of successive prime packings. Let $\epsilon_n(B)$ be the
number of such prime packings. For any $n>0$, set $B^{(n)}=\BB^{(n)}(B)$. 

The following properties are essential in representing $B^{(n)}$.

\begin{lem}[{\cite[Lemma 2.16]{explicit}}]\label{delta}For the above sequence $\{B^{(n)}\}$, the following statements hold:
\begin{itemize}
\item[(i)]
$\Delta^j(B^{(0)})= \Delta^j(B)$ for $j=3,4$;
\item[(ii)]
$\Delta^j(B^{(n-1)})= \Delta^j(B^{(n)})$ for all $j <n$;
\item[(iii)]
$\Delta^n(B^{(n-1)})= \Delta^n(B^{(n)})+\epsilon_n(B)$.
\end{itemize}
\end{lem}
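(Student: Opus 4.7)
The plan is to reduce all three parts to a single numerical identity controlling one prime packing, and then to sum. Writing $\delta(b,r,j):=\tfrac{\overline{bj}(r-\overline{bj})}{2r}-\tfrac{bj(r-bj)}{2r}$ for the summand defining $\Delta^j(B)$, the heart of the matter is the \emph{prime-packing identity}: for any prime packing $(b_1,r_1),(b_2,r_2)\to(b_1+b_2,r_1+r_2)$ (so $b_1r_2-b_2r_1=1$), setting $n:=r_1+r_2$,
\[
\delta(b_1,r_1,j)+\delta(b_2,r_2,j)-\delta(b_1+b_2,r_1+r_2,j)=\begin{cases}0,& 1\le j<n,\\ 1,& j=n.\end{cases}
\]

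Granted this identity, parts (ii) and (iii) are bookkeeping. By the description preceding the lemma, $B^{(n-1)}\succeq B^{(n)}$ decomposes into exactly $\epsilon_n(B)$ prime packings, and each of them combines two pairs whose denominators sum to $n$ (the newly created fraction lies in $S^{(n)}\setminus S^{(n-1)}$, which by construction consists of reduced fractions of denominator exactly $n$). Summing the identity over these $\epsilon_n(B)$ packings yields (ii) and (iii) at once. For (i), any prime packing with $b_1r_2-b_2r_1=1$ forces $r_1\ne r_2$ (else $r_1\mid 1$, contradicting $r_i\ge 2$), so $\min\{r_1,r_2\}\ge 2$ and $\max\{r_1,r_2\}\ge 3$, hence $r_1+r_2\ge 5$. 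Thus every prime packing appearing in the chain $B^{(0)}\succeq B^{(5)}\succeq B^{(6)}\succeq\cdots\succeq B$ satisfies $r_1+r_2\ge 5>4\ge j$ for $j\in\{3,4\}$, and telescoping the identity gives $\Delta^j(B^{(0)})=\Delta^j(B)$.

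The only real work is the prime-packing identity itself. Using the closed form $\delta(b,r,j)=bjk-\tfrac{rk(k+1)}{2}$ with $k=\lfloor bj/r\rfloor$, I set $k_i=\lfloor b_ij/r_i\rfloor$ and $k=\lfloor(b_1+b_2)j/(r_1+r_2)\rfloor$. The mediant inequalities $b_2/r_2<(b_1+b_2)/(r_1+r_2)<b_1/r_1$ give $k_2\le k\le k_1$, and the computation $b_1j/r_1-b_2j/r_2=j(b_1r_2-b_2r_1)/(r_1r_2)=j/(r_1r_2)\le 1/r_1+1/r_2\le 1$ (for $j\le n$) forces $k_1-k_2\le 1$. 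Only three configurations remain, and a direct term-by-term expansion shows the three-term difference equals $0$, $b_1j-(k+1)r_1$, or $kr_2-b_2j$ in the configurations $(k_1,k_2)=(k,k),\,(k+1,k),\,(k,k-1)$ respectively. Writing $b_1j$ and $b_2j$ in terms of their quotients and residues modulo $r_1$ and $r_2$, the packing relation gives $j=j(b_1r_2-b_2r_1)=r_1r_2+sr_2-ur_1$ in each case (with case-specific constraints on $s+u$), and positivity of the residues $s,u\ge 0$ together with $j\le n$ shows that the nonzero-correction cases force $j\ge n$; at $j=n$, $\gcd(r_1,r_2)=1$ (inherited from the packing relation) singles out a unique solution in the $(k,k-1)$ configuration with $u=r_2-1$, giving the correction $+1$.

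The main obstacle is this last case analysis: establishing the exclusion of the $(k,k-1)$ configuration for $j<n$, the vanishing of the $(k+1,k)$ residue for $j\le n$, and the unique realization at $j=n$ all require careful tracking of the residues of $b_1j$ and $b_2j$ modulo $r_1$ and $r_2$; once this identity is in hand, the deduction of (i)--(iii) is immediate.
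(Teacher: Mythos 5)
Your proposal is correct in substance. Note first that the paper gives no proof of this lemma at all: it is quoted verbatim from \cite[Lemma~2.16]{explicit}, so there is nothing in the present text to compare against except the citation; what you have done is reconstruct the argument behind the cited result. Your reduction is the right one: with $\delta(b,r,j)=bjk-\tfrac{rk(k+1)}{2}$, $k=\lfloor bj/r\rfloor$, the single-prime-packing identity you state does hold, your three configurations $(k_1,k_2)=(k,k),(k+1,k),(k,k-1)$ are exhaustive for $j\le n$, the corrections $0$, $b_1j-(k+1)r_1=\overline{b_1j}$, $kr_2-b_2j=r_2-\overline{b_2j}$ are computed correctly, and the relation $j=r_1r_2+sr_2-ur_1$ together with the range constraints on the residues does rule out a nonzero correction for $j<n$. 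The passage to (ii) and (iii) is legitimate because each prime packing in the step $B^{(n-1)}\succeq B^{(n)}$ merges adjacent pairs $(q_{l+1},p_{l+1}),(q_l,p_l)$ with $q_lp_{l+1}-p_lq_{l+1}=1$ and $p_l+p_{l+1}=n$ (the mediant is the unique fraction of minimal denominator in the gap, and $S^{(n-1)}$ already contains every reduced fraction of denominator $<n$ in $(0,\tfrac12]$), and $\Delta^j$ is additive over the basket; (i) then telescopes since every packing in the chain occurs at level $n\ge5>j$. The one place you should add a line is at $j=n$: you show that \emph{within} the $(k,k-1)$ configuration the unique realization is $(s,u)=(1,r_2-1)$ with correction $+1$, but you must also rule out that $j=n$ falls into one of the zero-correction configurations. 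This is immediate from $b_1n=(b_1+b_2)r_1+1$ and $b_2n=(b_1+b_2)r_2-1$, which force $k_1=b_1+b_2=k$ and $k_2=k-1$ with $s=1$, $u=r_2-1$; with that line added the proof is complete.
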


It follows that $\Delta^j(B^{(n)})=\Delta^j(B) $ for all $j \leq n$ and
$$\epsilon_n(B)=\Delta^n(B^{(n-1)})-\Delta^n(B^{(n)})=\Delta^n(B^{(n-1)})-\Delta^n(B).$$

Moreover, given a weighted basket ${\mathbb B}=(B, \tilde{P}_{-1})$, we
can similarly consider $\BB^{(n)}({\mathbb B})=(B^{(n)},
\tilde{P}_{-1})$. It follows that
$$\tilde{P}_{-j}(\BB^{(n)}({\mathbb B}))=\tilde{P}_{-j}({\mathbb
B})$$
for all $j \leq n$. Therefore we can realize the
canonical sequence of weighted baskets as an approximation of weighted
baskets via anti-plurigenera.

We now recall the relation between weighted baskets and
anti-plurigenera more closely. For a given weighted basket ${\mathbb B}=(B,\tilde{P}_{-1})$, we start by computing the non-negative number $\epsilon_n$ and $B^{(0)}$, $B^{(5)}$ in terms of $\tilde{P}_{-m}$. {}From the definition of $\tilde{P}_{-m}$ we get
\begin{align}
\sigma(B)={}&10-5\tilde{P}_{-1}+\tilde{P}_{-2},\label{105}\\
\Delta^{m+1}={}&(2-5(m+1)+2(m+1)^2)+\frac{1}{2}(m+1)(2-
3m)\tilde{P}_{-1}\notag\\
{}&+\frac{1}{2}m(m+1)\tilde{P}_{-2}+
\tilde{P}_{-m}-\tilde{P}_{-(m+1)}.\notag
\end{align}
In particular, we have
\begin{align*}
\Delta^3={}&5-6\tp_{-1}+4\tp_{-2}-\tp_{-3};\\
\Delta^4={}&14-14\tp_{-1}+6\tp_{-2}+\tp_{-3}-\tp_{-4}.
\end{align*}
Assume that $B^{(0)}=\{n_{1,r}^0\times (1,r)\mid r\geq 2\}$. By Lemma
\ref{delta}, we have
\begin{align*}
\sigma(B)={}&\sigma(B^{(0)})=\sum n_{1,r}^0;\\
\Delta^3(B)={}&\Delta^3(B^{(0)})=n_{1,2}^0;\\
\Delta^4(B)={}&\Delta^4(B^{(0)})=2n_{1,2}^0+n_{1,3}^0.
\end{align*}
Thus we get $B^{(0)}$ as follows: $$\begin{cases}
n_{1,2}^0=5-6\tp_{-1}+4\tp_{-2}-\tp_{-3};\\
n_{1,3}^0=4-2\tp_{-1}-2\tp_{-2}+3\tp_{-3}-\tp_{-4};\\
n_{1,4}^0=1+3\tp_{-1}-\tp_{-2}-2\tp_{-3}+\tp_{-4}-\sigma_5;\\
n_{1,r}^0=n_{1,r}^0, r\geq 5,
\end{cases}$$
where $\sigma_5=\sum_{r\geq 5} n_{1,r}^0$. A computation gives
$$\epsilon_5=2+\tp_{-2}-2\tp_{-4}+\tp_{-5}-\sigma_5.$$
Therefore we get $\mB^{(5)}=\{n_{1,r}^5\times (1,r), n_{2,5}^5\times (2,5)\mid r\geq 2\}$ as follows:
$$\begin{cases}
n_{1,2}^5=3-6\tp_{-1}+3\tp_{-2}-\tp_{-3}+2\tp_{-4}-\tp_{-5}+\sigma_5;\\
n_{2,5}^5=2+\tp_{-2}-2\tp_{-4}+\tp_{-5}-\sigma_5;\\
n_{1,3}^5=2-2\tp_{-1}-3\tp_{-2}+3\tp_{-3}+\tp_{-4}-\tp_{-5}+\sigma_5;\\
n_{1,4}^5=1+3\tp_{-1}-\tp_{-2}-2\tp_{-3}+\tp_{-4}-\sigma_5;\\
n_{1,r}^5=n_{1,r}^0, r\geq 5.
\end{cases}$$
Because $\mB^{(5)}=\mB^{(6)}$, we see $\epsilon_6=0$ and on the
other hand
$$\epsilon_6=3\tp_{-1}+\tp_{-2}-\tp_{-3}-\tp_{-4}-\tp_{-5}+\tp_{-6}-\epsilon=0$$
where $\epsilon=2\sigma_5-n_{1,5}^0 \ge 0$.

Going on a similar calculation, we get
\begin{align*}
\epsilon_7={}&1+\tilde{P}_{-1}+\tilde{P}_{-2}-\tilde{P}_{-5}-\tilde{P}_{-6}
+\tilde{P}_{-7}-2\sigma_5+2n^0_{1,5}+n^0_{1,6};\\
 \epsilon_{8} ={}&
2\tilde{P}_{-1}+\tilde{P}_{-2}+\tilde{P}_{-3}-\tilde{P}_{-4}-\tilde{P}_{-5}
-\tilde{P}_{-7}+\tilde{P}_{-8}\\
{}&-3\sigma_5+3 n^0_{1,5}+2 n^0_{1,6}+n^0_{1,7}.
\end{align*}


A weighted basket ${\mathbb B}=(B,\tilde{P}_{-1})$ is said to be
{\it geometric} if ${\mathbb B}=(B_X, P_{-1}(X))$ for some terminal weak
$\bQ$-Fano $3$-fold $X$. Geometric baskets are subject to some geometric properties. 
By \cite{KMMT}, we have that
$(-K_X\cdot c_2(X))\geq 0$. Therefore \cite[10.3]{YPG} gives the inequality
\begin{align}\label{kwmt}
\gamma(B):=\sum_{i} \frac{1}{r_i}-\sum_{i} r_i+24\geq 0.
\end{align}
For packings, it is easy to see the following lemma.
\begin{lem}\label{31} Given a packing of baskets $B_1\succeq
 B_2$, we have $\gamma(B_1) \geq \gamma(B_2)$. In particular, if
inequality \eqref{kwmt} does not hold for $B_1$, then it does not hold for $B_2$.
\end{lem}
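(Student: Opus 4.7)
The plan is to reduce to a single packing step and then compute the change in $\gamma$ directly. Since any packing is by definition a finite composition of elementary packings of the form
\[
\{(b_1,r_1),(b_2,r_2),\dots\}\ \longmapsto\ \{(b_1+b_2,r_1+r_2),\dots\},
\]
it suffices to prove the inequality when $B_2$ is obtained from $B_1$ by a single such step, and then iterate. (Primality of the packing, i.e. $b_1r_2-b_2r_1=\pm1$, plays no role here, since $\gamma$ depends only on the $r_i$.)

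So first I would fix one packing step and compute $\gamma(B_1)-\gamma(B_2)$. Writing $\gamma(B)=\sum_i\frac{1}{r_i}-\sum_i r_i+24$, the contribution from the $r$-sum is unchanged because $r_1+r_2$ replaces $r_1$ and $r_2$, while the reciprocal-sum contributes
\[
\left(\frac{1}{r_1}+\frac{1}{r_2}\right)-\frac{1}{r_1+r_2}.
\]
Since $r_1,r_2\geq 2$, this quantity is strictly positive (in fact it equals $\frac{r_1^2+r_1r_2+r_2^2}{r_1r_2(r_1+r_2)}>0$). Hence $\gamma(B_1)-\gamma(B_2)>0$ at each step, and after composing finitely many such steps one obtains $\gamma(B_1)\geq\gamma(B_2)$ for any packing $B_1\succeq B_2$.

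For the ``in particular'' statement, I would simply observe the contrapositive: if \eqref{kwmt} fails for $B_1$, i.e.\ $\gamma(B_1)<0$, then by the inequality just proved $\gamma(B_2)\leq\gamma(B_1)<0$, so \eqref{kwmt} also fails for $B_2$.

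There is no real obstacle here; the only thing to be careful about is the direction of the inequality and the convention for ``$\succeq$''. In this paper $B_1\succeq B_2$ means $B_2$ is obtained from $B_1$ by packing, so $\gamma$ decreases along $\succeq$, which is what the lemma asserts. The resulting monotonicity is exactly what is needed later: one can test the Kawamata--Miyaoka--Mori--Takagi inequality \eqref{kwmt} on the coarser baskets $B^{(n)}$ produced by the canonical sequence, and its failure there forces its failure on $B$ itself.
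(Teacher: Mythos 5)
Your proof is correct and is precisely the computation the paper has in mind (the lemma is stated there without proof, as ``easy to see''): reducing to a single elementary packing step, noting that $\sum_i r_i$ is unchanged while $\sum_i \frac{1}{r_i}$ drops by the positive quantity $\frac{1}{r_1}+\frac{1}{r_2}-\frac{1}{r_1+r_2}$, and iterating. The contrapositive for the ``in particular'' clause is also handled correctly.
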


Furthermore, $-K^3({\mathbb B})=-K_X^3>0$ gives the
inequality
\begin{align}\label{vol}
\sigma'(B)<2P_{-1}+\sigma(B)-6. 
\end{align}
{}Finally, by \cite[Lemma 15.6.2]{Kollar}, if $P_{-m}>0$ and
$P_{-n} >0$, then
\begin{align}\label{2m}
 P_{-m-n} \ge P_{-m}+P_{-n}-1. 
\end{align}

\subsection{Effective results on terminal weak $\bQ$-Fano $3$-folds}\

We collect here some known facts about terminal weak $\bQ$-Fano $3$-folds. 

\begin{prop}\label{facts}Let $X$ be a terminal weak $\bQ$-Fano $3$-folds. Then
\begin{enumerate}
\item $P_{-m}>0$ for $m\geq 6$ (\cite[Theorem 1.1]{CC} and \cite[Corollary 5.1]{C});
\item $P_{-8}\geq 2$ (\cite[Theorem 1.1]{CC});
\item $-K_X^3\geq 1/330$ (\cite[Theorem 1.1]{CC});
\item $r_X\leq 660$ or $r_X=840$, moreover, if $r_X=840$, then $r_{\max}=8$ (\cite[Proposition 2.4]{CJ16});
\item $r_{\max}\leq 24$ by inequality \eqref{kwmt}.
\end{enumerate}
\end{prop}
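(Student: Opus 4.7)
My plan is to observe that all five statements collect consequences of the weighted-basket machinery developed in \cite{CC, explicit} and applied in \cite{C, CJ16}, and to deploy those references systematically. To each terminal weak $\bQ$-Fano $3$-fold $X$ one attaches the geometric weighted basket $\mathbb{B}=(B_X, P_{-1}(X))$. By Reid's Riemann--Roch and Kawamata--Viehweg vanishing, every $P_{-m}(X)$ and the anti-canonical volume $-K_X^3$ are determined by $\mathbb{B}$, and $\mathbb{B}$ must satisfy the three numerical constraints \eqref{kwmt}, \eqref{vol}, and \eqref{2m}. Under this dictionary each of (1)--(4) becomes an assertion that a certain inequality holds for every weighted basket satisfying those constraints, while (5) is a direct consequence of \eqref{kwmt} alone.

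I would dispatch (5) first. If some $r_i\ge 25$ appeared in $B_X$, then $r_i-1/r_i>24$ while every other summand $r_j-1/r_j$ is non-negative (since $r_j\ge 2$), so
\[
\gamma(B_X) = 24 - \sum_j\bigl(r_j - 1/r_j\bigr) < 0,
\]
contradicting \eqref{kwmt}. For (1)--(3), I would invoke the enumeration of \cite{CC}: use the canonical sequence $\{B^{(n)}\}$ of $B_X$ to express $P_{-m}$ and $-K^3(\mathbb{B})$ as explicit affine-linear functionals in the coefficients $n^0_{1,r}$ of $B^{(0)}$ and in the non-negative integers $\epsilon_n$ recording prime packings; then combine with the monotonicity properties (i)--(v) to reduce the minimization problem to a finite computable search. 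The minimum of $-K^3$ in (3), as well as the near-boundary cases in (1)--(2), are realized by the basket $\{(1,2),(2,5),(1,3),(2,11)\}$, which is exactly why the invariants in the equality case of (3) match those of the threefold $X_{66}$ of Example \ref{ex 33}.

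For (4) the strategy is combinatorially heavier. The bound $r_{\max}\le 24$ from (5) already implies $r_X\mid\lcm\{1,\dots,24\}$, but this is far too crude. Following \cite[Proposition 2.4]{CJ16}, I would enumerate all multisets of $r_i\le 24$ satisfying both \eqref{kwmt} and the volume constraint \eqref{vol}, compute $\lcm\{r_i\}$ for each admissible multiset, and rule out every integer strictly between $660$ and $840$ as well as everything beyond $840$. The anomalous value $r_X=840=2^{3}\cdot 3\cdot 5\cdot 7$ forces the basket to contain local indices in $\{3,5,7,8\}$; a direct verification against \eqref{kwmt} then shows that no index exceeding $8$ can coexist with these in a single basket, yielding $r_{\max}=8$.

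The main obstacle is the combinatorial enumeration behind (4) and the effective lower bound (3): both are delicate case analyses that rest squarely on the structural theorems for weighted baskets. Once those results are imported verbatim from \cite{CC} and \cite{CJ16}, the proof of Proposition \ref{facts} collapses to recording the inputs and noting that (5) is a one-line consequence of \eqref{kwmt}.
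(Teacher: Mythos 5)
Your proposal is correct and takes essentially the same route as the paper, which establishes (1)--(4) purely by citation to \cite{CC}, \cite{C}, and \cite{CJ16} and derives (5) directly from inequality \eqref{kwmt} exactly as you do (any $r_i\geq 25$ contributes $r_i-1/r_i>24$ while the remaining terms are positive, forcing $\gamma(B_X)<0$). The additional detail you supply about the basket enumeration behind \cite{CC} and \cite[Proposition 2.4]{CJ16} is accurate but not part of the paper's argument, which simply imports those results.
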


\section{\bf Fano--Mori triples}\label{sec FM}
In this section, we introduce the concept of Fano--Mori triples and prove some basic properties. 

\begin{definition}We say that a normal projective variety $Y$ admits a {\it Mori fiber structure} if the following conditions hold:
\begin{enumerate}
\item $g:Y\to T$ is a surjective morphism onto a normal projective variety $T$;
\item $Y$ is $\mathbb{Q}$-factorial with at worst terminal singularities; 
\item $g_*\OO_Y=\OO_T$; 
\item $-K_Y$ is $g$-ample;
\item $\rho(Y/T)=1$;
\item $\dim Y > \dim T$. 
\end{enumerate} 
\end{definition}

\begin{example}
Note that, in dimension $3$, there are three kinds of Mori fiber structures according to the value of $\dim T$:
\begin{enumerate}
\item $\dim T=0$, $Y$ is a $\mathbb{Q}$-factorial terminal $\mathbb{Q}$-Fano $3$-folds with $\rho(Y)=1$;
\item $\dim T=1$, $Y\rightarrow T$ is a {\it del Pezzo fibration}, of which a general fiber is a smooth del Pezzo surface;
\item $\dim T=2$, $Y\rightarrow T$ is a {\it conic bundle}, of which a general fiber is a smooth rational curve.
\end{enumerate}
\end{example}

\begin{definition}[{cf. \cite[Definition 3.6.1]{BCHM}}]
Let $\phi: X\dashrightarrow Y$ be a proper birational contraction (i.e., a birational map extracting no divisors) between normal
quasi-projective varieties and $D$ a $\bQ$-Cartier divisor on $X$ such that 
$D' =\phi_*D$ is also $\bQ$-Cartier. We say that $\phi$ is {\it a $D$-non-positive contraction} if, for some common resolution $p: W \to X$ and $q : W\to Y$, one may write
$$p^*D = q^*D' + E$$
where $E$ is an effective $q$-exceptional $\bQ$-divisor.
\end{definition}

\begin{lem}\label{auto non-positive}Let $\phi: X\dashrightarrow Y$ be a  birational contraction between normal
projective varieties and $D$ a $\bQ$-Cartier divisor on $X$ such that 
$D' =\phi_*D$ is also $\bQ$-Cartier. If $-D$ is nef, then $\phi$ is $D$-non-positive.
 \end{lem}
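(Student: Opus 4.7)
The plan is to pick a common resolution $p:W\to X$, $q:W\to Y$, to set $E := p^*D - q^*D'$, and to invoke the negativity lemma to show that $E$ is simultaneously effective and $q$-exceptional. These two properties together are exactly what the definition of a $D$-non-positive contraction requires.

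The first step is to verify the $q$-exceptionality of $E$, i.e.\ $q_*E = 0$. I would decompose $p^*D = \tilde D + F$, where $\tilde D$ is the $p$-strict transform of $D$ and $F$ is $p$-exceptional. Since $\phi$ extracts no divisor, $\phi^{-1}$ contracts no prime divisor on $Y$; translating this through the common resolution gives that every $p$-exceptional prime divisor on $W$ is also $q$-exceptional, so $q_*F = 0$. For each prime component of $\tilde D$, the restriction of $q$ realizes the $\phi$-strict transform of the corresponding component of $D$, hence $q_*\tilde D = \phi_*D = D'$. Combined with $q_*(q^*D') = D'$, this gives $q_*E = 0$.

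The second step is to observe that $-E = q^*D' - p^*D$ is $q$-nef: $q^*D'$ is $q$-numerically trivial because it is pulled back from $Y$, while $-p^*D$ is globally nef on $W$ as $-D$ is nef on $X$ and $p$ is a morphism; their sum is therefore $q$-nef. I would then apply the negativity lemma (for instance \cite[Lemma 3.39]{KM}) to the proper birational morphism $q:W\to Y$ and the $\bQ$-Cartier divisor $E$: since $-E$ is $q$-nef and $q_*E = 0$ is trivially effective, $E$ itself must be effective, completing the proof.

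The only mildly delicate point is the identification $q_*(p^*D) = D'$, which hinges on the characterization of a birational contraction in terms of exceptional loci on a common resolution, namely that $p$-exceptional divisors on $W$ stay $q$-exceptional. Everything else is a textbook invocation of the negativity lemma, so I do not anticipate any real obstacle beyond this bookkeeping.
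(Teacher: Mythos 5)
Your proposal is correct and follows essentially the same route as the paper: set $E = p^*D - q^*D'$ on a common resolution, note $-E$ is $q$-nef because $-p^*D$ is nef and $q^*D'$ is $q$-trivial, and conclude $E\geq 0$ by the Negativity Lemma. The only difference is that you spell out the verification that $q_*E=0$ (via the fact that $p$-exceptional divisors stay $q$-exceptional for a birational contraction), a point the paper's proof simply asserts.
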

\begin{proof}
 Taking a common resolution $p: W \to V'$ and $q : W\to Y$, one may write
$$p^*D = q^*D' + E$$
where $E$ is a $q$-exceptional $\bQ$-divisor. Note that $-E=q^*D'-p^*D$ is $q$-nef since $-D$ is nef, hence $E\geq 0$ by Negativity Lemma (see \cite[Corollary 3.39(1)]{KM}).
\end{proof}

Note that the definition of $D$-non-positive contraction is independent of the choice of common resolutions.  The most standard example of a $D$-non-positive contraction might be a composition of steps of $D$-MMP.

\begin{definition}\label{def KMFS}
Let $V$ be a normal projective variety with at worst canonical singularities and $V'$ be a terminalization of $V$. Then we say  $Y$ is  a {\it $K$-Mori fiber space} of $V$ if the following conditions are satisfied: 
\begin{enumerate}
\item $Y$ has a Mori fiber structure $Y\to T$;
\item there is a birational contraction $\sigma: V'\dashrightarrow Y$;
\item $\sigma$ is a $K_{V'}$-non-positive birational contraction.
\end{enumerate}
\end{definition}
Note that if $-K_V$ is nef (or equivalently, $-K_{V'}$ is nef), then condition (3) automatically holds by Lemma \ref{auto non-positive}.

\begin{remark}\label{remark exist MFS}Note that the definition of $K$-Mori fiber spaces  is independent of the choice of the terminalization. By \cite{BCHM}, if $K_V$ is not pseudo-effective, then $K$-Mori fiber spaces of $V$ always exist by running $K$-MMP on $V'$.
\end{remark}

The following is the key object that we are interested in.

\begin{definition}\label{MF3}
We say that $(X, Y, Z)$ is a {\it Fano--Mori triple} if there is a commutative diagram
$$\xymatrix{
X\ar[dr]_p \ar@{-->}[rr]^{\phi} & & Y \ar@{-->}[dl]^q \\
& Z&}
$$
satisfying the following conditions:
\begin{enumerate}
\item $X$ is a $\bQ$-factorial terminal weak $\bQ$-Fano $3$-fold;
\item $Y$ has a Mori fiber structure $Y\to T$;
\item $Z$ is a $\bQ$-factorial canonical weak $\bQ$-Fano $3$-fold;
\item $\phi: X\dashrightarrow Y$ is a  birational contraction;
\item $q: Y\dashrightarrow Z$ is a  birational contraction which is isomorphic in codimension one;
\item $p: X\to Z$ is a terminalization of $Z$, or equivalently, $K_X=p^*(K_Z)$.
\end{enumerate}
\end{definition}

\begin{remark}\label{rhoY1}
For a Fano--Mori triple $(X, Y, Z)$, if $\rho(Y)=1$ (or equivalently, $\dim T=0$), then $X, Y, Z$ are isomorphic to each other, and they are $\mathbb{Q}$-factorial terminal $\mathbb{Q}$-Fano $3$-folds with $\rho=1$. In fact, in this case, since $\rho(Y)=1$ and $Y$ admits a Mori fiber structure, $Y$ is a $\mathbb{Q}$-factorial terminal $\mathbb{Q}$-Fano $3$-fold of Picard number one. Since $Y$ and $Z$ are isomorphic in codimension one, it turns out that 
$\rho(Z)=1$ and $Y\cong Z$. In particular, $Z$ is terminal and hence $X\cong Z$. 
\end{remark}

By Remark \ref{rhoY1}, the concept of Fano--Mori triples can be viewed as a natural generalization of the concept of $\mathbb{Q}$-factorial terminal $\mathbb{Q}$-Fano $3$-folds with $\rho=1$. 
Moreover, the following proposition suggests that Fano--Mori triples appear naturally in the study of birational geometry of canonical weak $\bQ$-Fano $3$-folds.

\begin{prop}\label{prop Fano=FanoMori}
Let $V$ be a canonical weak $\bQ$-Fano $3$-fold and $Y$ a $K$-Mori fiber space of $V$. Then there exists a Fano--Mori triple $(X, Y, Z)$ containing $Y$ as the second term.
\end{prop}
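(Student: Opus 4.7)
The plan is to construct $Z$ and $X$ from $Y$ (rather than from $V$ or $V'$) by passing through the anti-canonical model of $Y$ and then taking a small $\bQ$-factorialization followed by a $\bQ$-factorial terminalization.

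First I fix a $\bQ$-factorial terminalization $V'\to V$ (which exists by \cite{BCHM}); then $V'$ is a $\bQ$-factorial terminal weak $\bQ$-Fano $3$-fold, and the hypothesis supplies a $K_{V'}$-non-positive birational contraction $\sigma:V'\dashrightarrow Y$. Comparing sections of $-mK$ on a common resolution of $\sigma$: writing $p^*K_{V'}=q^*K_Y+E$ with $E\geq 0$ and $q$-exceptional, one gets $-mq^*K_Y=-mp^*K_{V'}+mE$, so $P_{-m}(Y)\geq P_{-m}(V')$ for all $m\geq 0$. In particular $-K_Y$ is big and $Y$ is of Fano type in its birational class.

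By \cite{BCHM}, the anti-canonical ring $R:=\bigoplus_{m\geq 0}H^0(Y,-mK_Y)$ is finitely generated. Let $Y_0:=\operatorname{Proj} R$ be the anti-canonical model; it is a canonical $\bQ$-Fano $3$-fold with $-K_{Y_0}$ ample. Next I take a small $\bQ$-factorialization $\tau:Z\to Y_0$ and a $\bQ$-factorial terminalization $p:X\to Z$, both existing by \cite{BCHM}. Then $-K_Z=\tau^*(-K_{Y_0})$ is nef and big, so $Z$ is a $\bQ$-factorial canonical weak $\bQ$-Fano $3$-fold, and $X$ is a $\bQ$-factorial terminal weak $\bQ$-Fano $3$-fold with $K_X=p^*K_Z$. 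I define $q:Y\dashrightarrow Z$ to be the anti-canonical rational map $Y\dashrightarrow Y_0$ followed by $\tau^{-1}$, and $\phi:X\dashrightarrow Y$ to be the induced birational map.

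The main obstacle is verifying that $q$ is an isomorphism in codimension one; since $\tau$ is small, this reduces to showing that the anti-canonical map $Y\dashrightarrow Y_0$ contracts no prime divisor of $Y$, equivalently that $(-K_Y)|_D$ is big for every prime divisor $D\subset Y$. The easy cases are $\dim T=0$ (where $\rho(Y)=1$ and bigness of $-K_Y$ immediately gives bigness on any divisor) and horizontal prime divisors when $\dim T\geq 1$ (handled by the relative ampleness of $-K_Y$ over $T$). The delicate case is a vertical prime divisor $D$ — a component of a degenerate fiber of a del Pezzo fibration, or the preimage of a curve on a conic bundle base — for which $(-K_Y)|_D$ must be analyzed via adjunction combined with terminality and $\bQ$-factoriality of $Y$. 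Once $q$ is iso in codim one, $\phi$ is automatically a birational contraction, because its exceptional divisors on $X$ coincide with the $p$-exceptional ones, and these correspond under $\tau\circ p$ and $q^{-1}$ to codimension $\geq 2$ loci on $Y$.
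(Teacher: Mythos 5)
Your overall architecture (pass to a weak Fano model $Z$ of $Y$, then terminalize to get $X$) matches the paper's, but the proposal leaves unproved exactly the two points that carry all the difficulty, and the route you sketch for the first of them is not the right one.

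First, the ``main obstacle'' you identify --- that $q:Y\dashrightarrow Z$ is an isomorphism in codimension one --- is not resolved; you reduce it to showing $(-K_Y)|_D$ is big for every prime divisor $D$ and then defer the vertical case to an unspecified adjunction argument. That reduction is itself doubtful: whether $D$ is contracted by the map to the ample model of $-K_Y$ is governed by whether $D$ lies in the divisorial part of the stable base locus of $-K_Y$ (equivalently, whether $D$ appears in the fixed part of $|-mK_Y|$ for all divisible $m$), which is a statement about the image of $H^0(Y,-mK_Y)\to H^0(D,-mK_Y|_D)$, not about bigness of the restriction. More importantly, the intended proof does not analyze $Y$ divisor by divisor at all: it uses the hypothesis that $Y$ is a $K$-Mori fiber space of $V$. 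Since $-K_{V'}$ is nef and big on the terminal (hence klt) $3$-fold $V'$, the Basepoint-free Theorem makes $-K_{V'}$ semi-ample; pushing the free linear systems $|-mK_{V'}|$ forward along the birational contraction $\sigma:V'\dashrightarrow Y$ shows $|-mK_Y|$ has no fixed divisorial component, so the $(-K_Y)$-MMP contracts no divisor. Your observation $P_{-m}(Y)\geq P_{-m}(V')$ is too weak to extract this.

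Second, you assert without justification that $Y_0=\operatorname{Proj}R$ (equivalently $Z$) has canonical singularities. The naive discrepancy comparison goes the wrong way: along a $(-K_Y)$-MMP one has $a^*K_Z=b^*K_Y+E$ with $E\geq 0$ on a common resolution, so discrepancies \emph{decrease} from $Y$ to $Z$, and terminality of $Y$ gives nothing. The paper handles this by choosing an effective $B\sim_{\bQ}-K_{V'}$ with $(V',B)$ terminal and $\Supp B$ avoiding the $\sigma$-exceptional divisors (again using semi-ampleness of $-K_{V'}$); then $K_Y+B_Y\sim_{\bQ}0$ forces $(Y,B_Y)$, and hence $(Z,B_Z)$, to be crepant to $(V',B)$ by the Negativity Lemma, giving canonicity of $Z$. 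The same boundary, perturbed by a decomposition $-K_Y\sim_{\bQ}A_Y+E_Y$, is what makes $Y$ of Fano type and hence a Mori dream space; your appeal to finite generation of the anticanonical ring via \cite{BCHM} also tacitly needs such a klt boundary, which you never construct.
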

\begin{proof}
Let $V$ be a canonical weak $\bQ$-Fano $3$-fold and $Y$ a $K$-Mori fiber space of $V$. After replacing $V$ by its terminalization, we may assume that $V$ is $\bQ$-factorial and terminal and there exists a birational 
contraction $\sigma: V\dashrightarrow Y$. 

Note that $-K_V$ is semi-ample by Basepoint-free Theorem (see \cite[Theorem 3.3]{KM}), we may choose an effective $\bQ$-divisor $B\sim_\bQ-K_V$ such that $(V, B)$ is terminal and $\Supp(B)$ does not contain any exceptional divisor on $V$ over $Y$. 
Take $B_Y=\sigma_*B$. Then $K_Y+B_Y\sim_\bQ 0$. 
Taking a common resolution $f:W\to V$ and $g:W\to Y$.  Then $f^*(K_V+B)-g^*(K_Y+B_Y)$ is $g$-exceptional. By Negativity Lemma (see \cite[Corollary 3.39(1)]{KM}), we see that $f^*(K_V+B)= g^*(K_Y+B_Y)$. Hence $(Y,B_Y)$ is canonical by the construction of $(V, B)$. 

Noting that $-K_Y=\sigma_*(-K_V)$ is big, we may write $-K_Y\sim_\bQ A_Y+E_Y$ where $A_Y$ is an ample $\bQ$-divisor and $E_Y$ is an effective $\bQ$-divisor. Now take $t>0$ to be sufficiently small such that $(Y, B_Y+tE_Y)$ is klt. Then the pair $(Y, (1-t)B_Y+tE_Y)$ is also klt and $$-(K_Y+(1-t)B_Y+tE_Y)\sim_\bQ tA_Y$$ is ample. By \cite[Corollary 1.3.2]{BCHM}, $Y$ is a Mori dream space, which means that we can run MMP for any divisor on $Y$.

Running the $(-K)$-MMP on $Y$, we end up with a minimal model $Z$ which is $\bQ$-factorial such that $-K_Z$ is nef and big. 
Since $-K_V$ is semi-ample, the stable base locus of $-K_Y$ does not contain any divisors, hence the $(-K)$-MMP on $Y$ does not contract any divisor, which means that the 
birational contraction $q: Y\dashrightarrow Z$ is an isomorphism in codimension one. Moreover, take $B_Z=q_*B_Y$, then $(Z, B_Z)$ is canonical also by Negativity Lemma since $(Y, B_Y)$ is canonical and $K_Y+B_Y\sim_\bQ 0$, which implies that $Z$ is also canonical. In summery, $Z$ is a $\bQ$-factorial canonical weak $\bQ$-Fano $3$-fold.

Finally, we take $p:X\to Z$ to be a terminalization of $Z$, then $X$ is a $\bQ$-factorial terminal weak $\bQ$-Fano $3$-fold. 
Clearly the induced map $\phi= q^{-1}\circ p: X\dashrightarrow Y$ is a birational contraction.
\end{proof}


\begin{prop}\label{prop P=P=P}
Let $(X, Y, Z)$ be a Fano--Mori triple. Then 
\begin{enumerate}
\item $\phi: X\dashrightarrow Y$ is  $K_X$-non-positive, in particular, $Y$ is a $K$-Mori fiber space of $X$;
\item  $q: Y\dashrightarrow Z$ is $(-K_Y)$-non-positive;
\item for any integer $m\geq 0$, 
$$
h^0(X, -mK_X)=h^0(Y, -mK_Y)=h^0(Z,-mK_Z).
$$
In particular, $\varphi_{-m, X}$,  $\varphi_{-m, Y}$, and $\varphi_{-m, Z}$ factor through each other by the birational maps $\phi$, $p$, and $q$, and therefore share the same birational properties.
\end{enumerate}
\end{prop}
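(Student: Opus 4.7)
The plan is to leverage the three structural features of a Fano--Mori triple: $p$ is crepant, $q$ is an isomorphism in codimension one, and (once (1) is verified) $-K_X$ is nef.

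For (1), I would first observe that $-K_X = p^*(-K_Z)$ by crepancy of $p$, and $-K_Z$ is nef since $Z$ is a weak $\bQ$-Fano $3$-fold; hence $-K_X$ is nef, and Lemma \ref{auto non-positive} applied to $\phi$ with $D = K_X$ yields $K_X$-non-positivity of $\phi$. The ``in particular'' clause is then immediate: $X$ is its own terminalization (being already $\bQ$-factorial terminal), and $\phi\colon X \dashrightarrow Y$ is a $K_X$-non-positive birational contraction onto a Mori fiber structure, so all conditions of Definition \ref{def KMFS} are satisfied.

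For (2), I would fix a common resolution $p_X\colon W \to X$, $p_Y\colon W \to Y$, $p_Z\colon W \to Z$. From (1) one may write
\[
p_X^*K_X = p_Y^*K_Y + E
\]
where $E \geq 0$ is $p_Y$-exceptional. Combined with $p_X^*K_X = p_Z^*K_Z$ (from $K_X = p^*K_Z$), this yields
\[
p_Y^*(-K_Y) = p_Z^*(-K_Z) + E.
\]
Applying $(p_Z)_*$ and using that $(p_Z)_* p_Y^*(-K_Y) = q_*(-K_Y) = -K_Z$ (the identification of canonical divisors under $q$, valid because $q$ is an isomorphism in codimension one), I conclude $(p_Z)_*E = 0$. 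Since $E \geq 0$, any non-$p_Z$-exceptional prime component would contribute positively to the push-forward, so $E$ must be $p_Z$-exceptional, establishing the $(-K_Y)$-non-positivity of $q$.

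For (3), the middle equality $h^0(Y, -mK_Y) = h^0(Z, -mK_Z)$ follows directly from $q$ being an isomorphism in codimension one: both $\OO_Y(-mK_Y)$ and $\OO_Z(-mK_Z)$ are reflexive sheaves, and they agree on open subsets with codimension $\geq 2$ complements that correspond under $q$, so their global sections match by Hartogs. The outer equality $h^0(X, -mK_X) = h^0(Z, -mK_Z)$ follows from crepancy of $p$: on the smooth locus of $Z$ the morphism $p$ is an isomorphism, hence $p_*\OO_X(-mK_X)$ and $\OO_Z(-mK_Z)$ are reflexive sheaves on $Z$ agreeing on a big open subset and so coincide globally. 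The factorization of anti-pluricanonical maps through $\phi$, $p$, $q$ is then realized explicitly by these section identifications via proper transforms. The main obstacle is the argument in (2) that $E$ is $p_Z$-exceptional: this requires careful tracking of Weil-divisor push-forwards under birational maps that are only isomorphisms in codimension one, where the key input is the identification $q_*K_Y = K_Z$, relying on $K$ being a well-defined Weil divisor on a big open subset preserved by $q$.
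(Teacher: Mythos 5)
Your proof is correct, and it follows the paper's overall architecture with two genuine (if minor) deviations. Part (1) is identical: nefness of $-K_X$ plus Lemma \ref{auto non-positive}, and your verification of the ``in particular'' clause matches what the paper leaves implicit. For (2), the paper applies Lemma \ref{auto non-positive} to $q^{-1}$ (a birational contraction since $q$ is an isomorphism in codimension one, with $-K_Z$ nef) to obtain $p_Z^*K_Z = p_Y^*K_Y + E$ with $E \geq 0$, and then notes that $p_Y$-exceptional divisors are automatically $p_Z$-exceptional; you instead reach the same decomposition by combining (1) with the crepancy relation $p_X^*K_X = p_Z^*K_Z$, and deduce $p_Z$-exceptionality of $E$ from $(p_Z)_*E = 0$. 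Be aware that your identity $(p_Z)_*p_Y^*(-K_Y) = q_*(-K_Y)$ already encodes the same key fact (every $p_Y$-exceptional divisor of the common resolution is $p_Z$-exceptional, because $q$ is a birational contraction), so the two arguments are ultimately equivalent; since $K_X = p^*K_Z$ makes nefness of $-K_X$ and of $-K_Z$ interchangeable, neither route is stronger, just differently organized. For (3), the paper derives $h^0(Y,-mK_Y) = h^0(Z,-mK_Z)$ from the $(-K_Y)$-non-positivity of $q$ established in (2), whereas you obtain it directly from reflexivity of the divisorial sheaves and the codimension-one isomorphism via Hartogs; both arguments close with $h^0(X,-mK_X) = h^0(Z,-mK_Z)$ from crepancy. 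Your version of (3) is logically independent of (1) and (2), which is a small structural simplification, but the substance is the same.
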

\begin{proof}
Since $-K_X$ is nef, by Lemma \ref{auto non-positive}, $\phi$ is $K_X$-non-positive, which proves statement (1).

Since $q$ is an isomorphism in codimension one, $q^{-1}:Z\dashrightarrow Y$ is also a birational contraction. Since $-K_Z$ is nef, again by Lemma \ref{auto non-positive}, $q^{-1}$ is $K_Z$-non-positive. Since $q$ is an isomorphism in codimension one, it is easy to check by definition that $q: Y\dashrightarrow Z$ is $(-K_Y)$-non-positive, which proves statement (2).

For statement (3), fix an integer $m\geq 0$. Since $\phi$ is $K_X$-non-positive and $q$ is $(-K_Y)$-non-positive, we have
$$
h^0(X, -mK_X)\leq h^0(Y, -mK_Y)=h^0(Z,-mK_Z).
$$
On the other hand, since $K_X=p^*(K_Z)$, we have
$$
h^0(X, -mK_X)=h^0(Z,-mK_Z).
$$
We complete the proof.
\end{proof}


\begin{lem}\label{lem TFT}
Let $V$ be a canonical weak $\bQ$-Fano $3$-fold and $Y$ a $K$-Mori fiber space of $V$. Let $Y\to T$ be the Mori fiber structure, then $T$ is of Fano type,
 that is, there exists an effective $\bQ$-divisor $B_T$ such that $(T, B_T)$ is klt and $-(K_T+B_T)$ is ample. In particular, if $(X, Y, Z)$ is a Fano--Mori triple and $Y\to T$ be the Mori fiber structure, then $T$ is of Fano type.
\end{lem}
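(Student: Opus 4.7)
The plan is to show first that $Y$ itself is of Fano type, and then to descend this log Fano structure from $Y$ to the base $T$ along the Mori fiber contraction $g:Y\to T$. For the first step, I would simply reuse the construction already carried out in the proof of Proposition \ref{prop Fano=FanoMori}. After replacing $V$ by its terminalization, we have a $K_V$-non-positive birational contraction $\sigma:V\dashrightarrow Y$ with $V$ a $\bQ$-factorial terminal weak $\bQ$-Fano $3$-fold. Since $-K_V$ is semi-ample by the Basepoint-free Theorem, pick a general effective $\bQ$-divisor $B\sim_\bQ -K_V$ such that $(V,B)$ is terminal and $\Supp(B)$ contains no $\sigma$-exceptional divisor. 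Setting $B_Y:=\sigma_*B$, a Negativity-Lemma argument on a common resolution shows that $(Y,B_Y)$ is canonical with $K_Y+B_Y\sim_\bQ 0$. Since $-K_Y$ is big, writing $-K_Y\sim_\bQ A_Y+E_Y$ with $A_Y$ ample and $E_Y\geq 0$, the pair $\bigl(Y,(1-t)B_Y+tE_Y\bigr)$ is klt for all sufficiently small $t>0$ and has anti-log-canonical divisor $\bQ$-linearly equivalent to $tA_Y$, hence ample. Thus $Y$ is of Fano type.

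For the descent step I would invoke the by-now standard result of Prokhorov--Shokurov (with essential input from Ambro and Fujino--Gongyo) asserting that the target of any surjective projective morphism from a Fano type variety is again of Fano type. The mechanism is Ambro's canonical bundle formula applied to a klt pair $(Y,\Gamma)$ satisfying $K_Y+\Gamma\sim_\bQ 0$ relatively over $T$: one writes $K_Y+\Gamma\sim_\bQ g^*(K_T+B_T+M_T)$ with moduli part $M_T$ nef and $(T,B_T+M_T)$ generalized klt; perturbing $\Gamma$ so that $-(K_Y+\Gamma)$ is globally ample, and then absorbing $M_T$ into the boundary via Kodaira's lemma, yields an honest klt pair $(T,\Delta_T)$ with $-(K_T+\Delta_T)$ ample. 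Applied to the Mori fiber contraction $g:Y\to T$, this immediately gives that $T$ is of Fano type.

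The Fano--Mori triple case follows at once: by Proposition \ref{prop P=P=P}(1), $Y$ is a $K$-Mori fiber space of $X$, and $X$ is itself a canonical weak $\bQ$-Fano $3$-fold, so the previous argument applies. The only step that requires input beyond what is already present in the excerpt is the descent in the second paragraph, which rests on the canonical bundle formula; everything else is a direct reuse of constructions from the proof of Proposition \ref{prop Fano=FanoMori}.
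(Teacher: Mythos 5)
Your proposal is correct and follows essentially the same route as the paper: the paper likewise establishes that $Y$ is of Fano type by reusing the klt pair $\bigl(Y,(1-t)B_Y+tE_Y\bigr)$ constructed in the proof of Proposition \ref{prop Fano=FanoMori}, then descends to $T$ by citing \cite[Lemma 2.8]{PS09} or \cite[Corollary 3.3]{FG} (the very canonical-bundle-formula result you sketch), and handles the Fano--Mori triple case via Proposition \ref{prop P=P=P}(1).
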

\begin{proof}  In the proof of Proposition \ref{prop Fano=FanoMori}, we have seen that
 the pair $(Y, (1-t)B_Y+tE_Y)$ is klt and $$-(K_Y+(1-t)B_Y+tE_Y)\sim_\bQ tA_Y$$ is ample. Hence $Y$ is of Fano type. Then we may apply \cite[Lemma 2.8]{PS09} or \cite[Corollary 3.3]{FG} to conclude that $T$ is of Fano type as well.
 
The last statement follows from Proposition \ref{prop P=P=P}(1).
\end{proof}

\section{\bf A geometric inequality}\label{sec GI}
In our previous paper, we used the following proposition to get a number $m_1>0$ so that
$|-m_1K|$ is not composed with a pencil. 

\begin{prop}[{\cite[Proof of Corollary 4.2]{CJ16}}]\label{prop old}
Let $X$ be a canonical weak $\bQ$-Fano $3$-fold and $m>0$ an integer. Assume that $|-mK_X|$ is composed with a pencil. Keep the notation in Subsection \ref{b setting}. Then
$$
\frac{P_{-m}-1}{m}\leq \frac{-K_X^3}{(\pi^*(K_X)^2\cdot S)}.
$$
\end{prop}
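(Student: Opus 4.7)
The plan is to pass to the resolution $W$ and decompose the pullback $\pi^{*}(-mK_{X})$ into its movable part (a multiple of the fiber $S$) plus an effective error, and then probe this identity with the nef class $\pi^{*}(-K_{X})^{2}$.

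First I would apply the setup of Subsection \ref{b setting} to $D=-mK_{X}$ to obtain the $\bQ$-linear equivalence
\begin{equation*}
\pi^{*}(-mK_{X}) \sim_{\bQ} M + E_{W},
\end{equation*}
where $E_{W}\ge 0$ is an effective $\bQ$-divisor absorbing both the fixed part of $|\rounddown{\pi^{*}(-mK_{X})}|$ and the fractional part $\{\pi^{*}(-mK_{X})\}$. Under the pencil hypothesis (Case $(f_{\mathrm{p}})$) the movable part satisfies $M\sim lS$ with $l = P_{-m}(X)-1$.

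Next, I would intersect both sides with $\pi^{*}(-K_{X})^{2}$. By the projection formula the left-hand side is $m(-K_{X}^{3})$, while the right-hand side expands as
\begin{equation*}
(P_{-m}-1)\bigl(\pi^{*}(-K_{X})^{2}\cdot S\bigr) + \pi^{*}(-K_{X})^{2}\cdot E_{W}.
\end{equation*}
Discarding the error term and using $\pi^{*}(K_{X})^{2}=\pi^{*}(-K_{X})^{2}$ would then yield the claimed bound.

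The only real obstacle is justifying $\pi^{*}(-K_{X})^{2}\cdot E_{W}\ge 0$. For this I would invoke the standard fact that on a smooth threefold, the self-intersection of a nef class restricted to any prime component of an effective divisor is non-negative: on each prime component $E_{i}\subset \Supp(E_{W})$, the restriction $\pi^{*}(-K_{X})|_{E_{i}}$ is a nef $\bQ$-divisor on a surface, so its self-intersection is non-negative. This is precisely where nefness of $-K_{X}$ (and not mere bigness) is used. If $\pi^{*}(K_{X})^{2}\cdot S = 0$ the asserted inequality is vacuous, so no separate argument for strict positivity of the denominator is required.
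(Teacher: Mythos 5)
Your proof is correct and is essentially the argument the paper is citing from \cite[Proof of Corollary 4.2]{CJ16}: decompose $\pi^*(-mK_X)\sim_{\bQ}(P_{-m}-1)S+E_W$ with $E_W\geq 0$, intersect with the nef class $\pi^*(-K_X)$ squared, and use that $(\pi^*(-K_X)^2\cdot E_W)\geq 0$ by nefness. Your handling of the error term and of the (in fact strictly positive, cf.\ the integer $N_0=r_X(\pi^*(K_X)^2\cdot S)$ used later in the paper) denominator is fine.
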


Proposition \ref{prop old} is, however, too weak for applications. The main goal of this section is to present an alternative inequality (see Proposition \ref{prop new}) which turns out to be the key ingredient of this paper.

To start up we recall the setting. Let $(X, Y, Z)$ be a Fano--Mori triple and assume that $|-mK_X|$ is composed with a pencil for an integer $m>0$. By the notation in Subsection \ref{b setting}, we may take $W$ to be a common log resolution of $X$ and $Y$, and $f: W\to \bP^1$ is the induced fibration by $|-mK_X|$. A general fiber of $f$ is denoted by $S$. We have the following commutative diagram:
$$\xymatrix{
 & W\ar[dl]_\pi \ar[dr]^\eta \ar[r]^f& \bP^1 \\
X\ar[dr]_p \ar@{-->}[rr]^{\phi} & & Y \ar@{-->}[dl]^q \ar[r]^g & T\\
& Z& &}
$$


\begin{lem}\label{lemma exist exc} Let $(X, Y, Z)$ be a Fano--Mori triple and $m>0$ an integer. Assume that $|-mK_X|$ is composed with a pencil. Keep the notation in Subsection \ref{b setting}. Assume that there exists an effective $\eta$-exceptional $\bQ$-divisor $F$ on $W$ such that, $(-\pi^*(K_X)|_S\cdot F|_S)>0$ for a general $S$, 
then $$
\frac{P_{-m}-1}{m}\leq 2r_X(\pi^*(K_X)^2\cdot S).
$$
\end{lem}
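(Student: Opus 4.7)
The plan is to sharpen the intersection-pairing argument behind Proposition \ref{prop old} by replacing one factor of $\pi^*(-K_X)$ in the triple product by the given $\eta$-exceptional divisor $F$, and then to control the resulting quantity using the Fano--Mori structure and the integrality of $r_X K_X$.

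First I would set up the pencil decomposition exactly as in Subsection \ref{b setting}: write $\pi^*(-mK_X) \sim lS + E_{-m}$ on $W$, with $l = P_{-m}-1 \ge 1$, $E_{-m} \ge 0$ the fixed part, and $S^2 \equiv 0$ since $S$ is a generic fiber of $f: W \to \bP^1$. Where Proposition \ref{prop old} pairs this equation with $\pi^*(-K_X)^2$, the plan is to pair it instead with $\pi^*(-K_X) \cdot F$. Using $S^2 \cdot F = 0$ and the fact that $\pi^*(-K_X) \cdot E_{-m} \cdot F \ge 0$ (a nef class against two effective classes), one gets the preliminary inequality
\[
l \cdot (\pi^*(-K_X)|_S \cdot F|_S) \;\le\; m \, (\pi^*(K_X)^2 \cdot F),
\]
whose left-hand side is strictly positive by hypothesis.

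Next I would bound $(\pi^*(K_X)^2 \cdot F)$ using the triple structure. Because $\phi$ is $K_X$-non-positive (Proposition \ref{prop P=P=P}(1)), we have $\eta^*(-K_Y) = \pi^*(-K_X) + E$ for some effective $\eta$-exceptional $\bQ$-divisor $E$. Expanding $(\pi^*(-K_X))^2 \cdot F = (\eta^*(-K_Y) - E)^2 \cdot F$ and applying the projection formula under $\eta$, every term carrying a factor of $\eta^*(-K_Y)$ is killed: $\eta^*(-K_Y)^2 \cdot F$ vanishes because $\eta_* F = 0$, and the cross term $\eta^*(-K_Y)\cdot E \cdot F$ is controlled because the 1-cycle $E\cdot F$ is supported in the $\eta$-exceptional locus. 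Thus $(\pi^*(K_X)^2 \cdot F)$ reduces to intersection data living entirely on the $\eta$-exceptional locus, which can then be compared to $S$ via the pencil structure. The Gorenstein index $r_X$ enters here: since $r_X K_X$ is Cartier, the intersection numbers of $\pi^*(-K_X)$ against integral classes lie in $\frac{1}{r_X}\bZ$, so positive such numbers are $\ge 1/r_X$.

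Putting these together, the plan is to establish
\[
(\pi^*(K_X)^2 \cdot F) \;\le\; 2 r_X \,(\pi^*(K_X)^2 \cdot S)\,(\pi^*(-K_X)|_S \cdot F|_S)
\]
and combine with the first inequality to conclude $l/m \le 2 r_X(\pi^*(K_X)^2\cdot S)$. The hard part will be producing exactly the constant $2 r_X$ in this last bound: it requires reconciling the integrality yielding the factor $r_X$ with a Hodge-index-type comparison on the smooth surface $S$ between the nef divisor $D = \pi^*(-K_X)|_S$ and the effective curve $F|_S$, while carefully tracking how the $\eta$-exceptional structure of $F$ and the discrepancy divisor $E$ interact. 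Without the hypothesis on $F$ (and hence without being able to replace one $\pi^*(-K_X)$ factor in the old pairing), one recovers only the weaker bound of Proposition \ref{prop old}.
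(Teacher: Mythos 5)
Your proposal has genuine gaps at both of its key steps. First, the preliminary inequality already fails in general: pairing $-\pi^*(mK_X)\sim (P_{-m}-1)S+D_m$ with $\pi^*(-K_X)\cdot F$ requires $(\pi^*(-K_X)\cdot D_m\cdot F)\geq 0$, but a nef divisor intersected with two effective divisors is only non-negative when those divisors have no common components. Here $F$ is $\eta$-exceptional, and exceptional divisors typically occur in the fixed part $D_m$; if, say, $D_m=aF$ with $F$ prime, the term is $a\big((-\pi^*(K_X))|_F\cdot F|_F\big)$, which can be negative. Second, the inequality $(\pi^*(K_X)^2\cdot F)\leq 2r_X(\pi^*(K_X)^2\cdot S)\big(\pi^*(-K_X)|_S\cdot F|_S\big)$ that would close your argument is exactly the content of the lemma and is left unproven; you flag it as ``the hard part'' but offer no mechanism. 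Your projection-formula reduction does not deliver it: writing $\pi^*(-K_X)=\eta^*(-K_Y)-E$, the term $(\eta^*(-K_Y)^2\cdot F)$ does vanish, but the cross term $(\eta^*(-K_Y)\cdot E\cdot F)=(-K_Y\cdot \eta_*(E\cdot F))$ need not, since $E\cdot F$ is a $1$-cycle (not necessarily effective when $E$ and $F$ share components) whose $\eta$-image can be a union of curves; and the residual quantity $(E^2\cdot F)-2(\eta^*(-K_Y)\cdot E\cdot F)$ has no a priori relation to $S$ at all.

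The paper's proof runs along entirely different lines and crucially uses the third vertex $Z$, which never appears in your sketch. The hypothesis on $F$ serves only to manufacture a curve $C_Z\subset Z$: a component $F_0$ of $F$ with $(-\pi^*(K_X)|_S\cdot F_0|_S)>0$ cannot have $F_0\cap S$ contracted by $\pi_Z=p\circ\pi$, yet $F_0$ itself is $\pi_Z$-exceptional because $q$ is an isomorphism in codimension one; hence $\pi_Z(F_0)=C_Z$ is a curve with $(-K_Z\cdot C_Z)\geq 1/r_X$ by the Cartier index of $K_Z$. The heart of the argument is then the Connectedness Lemma applied to $(W,-E_\pi+2D+S_1+S_2)$, where $D$ is the effective $\bQ$-divisor with $-\frac{1}{w}\pi^*(K_X)-S\sim_{\bQ}D$ and $S_1,S_2$ are two general fibers: connectedness of the relevant locus over points of $C_Z$ forces the existence of a prime divisor $F_1$ with $2D\geq F_1$ and $C_Z\subset\pi_Z(F_1\cap S)$, whence $\frac{2}{w}(\pi^*(K_X)^2\cdot S)=\big(\pi^*(-K_X)|_S\cdot 2D|_S\big)\geq\big(\pi^*(-K_X)|_S\cdot F_1|_S\big)\geq(-K_Z\cdot C_Z)\geq 1/r_X$. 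The factor $2$ comes from doubling $D$ to accommodate the two fibers, and $r_X$ from the Cartier index of $K_Z$; both ingredients are absent from your approach, and I do not see how to complete it along the lines you propose.
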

\begin{proof}
Since $-\pi^*(K_X)|_S$ is nef, there exists a component $F_0$ of $F$ such that $(-\pi^*(K_X)|_S\cdot F_0|_S)>0$, which means that 
$$
(-\pi_Z^*(K_Z)|_S\cdot F_0|_S)=(-\pi^*(K_X)|_S\cdot F_0|_S)>0
$$
where we denote by $\pi_Z$ the composition $p\circ\pi:W\to Z$.
In particular, $F_0|_S$ is not contracted by $\pi_Z.$ Hence there exists a curve $C_Z\subset Z$ such that $$C_Z\subset \pi_Z(F_0\cap S)\subset \pi_Z(F_0).$$ On the other hand, since $Y$ and $Z$ are isomorphic in codimension one and $F_0$ is $\eta$-exceptional, $F_0$ is also $\pi_Z$-exceptional. This implies that $\pi_Z(F_0)$ is a subvariety of codimension at least $2$. Hence $C_Z= \pi_Z(F_0)$. In particular, $C_Z$ is independent of $S$, and for any general $S$, $C_Z= \pi_Z(F_0\cap S)$. Moreover, 
$$
(-K_Z\cdot C_Z)=\frac{1}{n}(\pi_Z^*(-K_Z)\cdot F_0\cdot S)>0$$
by the projection formula, where $\pi_*(F_0|_S)=nC_Z$ as $1$-cycles. Hence $(-K_Z\cdot C_Z)\geq \frac{1}{r_X}$ since $r_XK_Z$ is Cartier.


By assumption, we have
$$
-\pi^*(mK_X)\sim (P_{-m}-1)S+D_m
$$
where $D_m$ is an effective $\bQ$-divisor. Set $w=\frac{P_{-m}-1}{m}$. Then \begin{align}\label{equ 2}
-\frac{1}{w}\pi^*(K_X)-S\sim_\bQ D
\end{align}
for some effective $\bQ$-divisor $D$.
Write
$$
K_W=\pi^*(K_X)+E_\pi,
$$
where $E_\pi\geq 0$ is $\pi$-exceptional. Pick two general fibers $S_1$ and $S_2$ of $f$ and consider the pair
$$
(W, -E_\pi+2D+S_1+S_2),
$$
which can be assumed to have simple normal crossing support modulo a further  birational modification of $W$.
Note that 
\begin{align*}
{}& -(K_W-E_\pi+2D+S_1+S_2)\\
\sim_\bQ{}& -\Big(1-\frac{2}{w}\Big)\pi^*(K_X)\\
\sim_\bQ{}& -\Big(1-\frac{2}{w}\Big)\pi_Z^*(K_Z)
\end{align*}
is $\pi_Z$-nef and ${\pi_Z}_*(-E_\pi+2D+S_1+S_2)\geq 0$ since 
$E_\pi$ is $\pi_Z$-exceptional.
Denote by $G$ the support of the effective part of $\rounddown{-E_\pi+2D}+S_1+S_2$.
By Connectedness Lemma (see \cite[Theorem 5.48]{KM}), 
$$
G\cap \pi_Z^{-1}(z)
$$
is connected for any point $z\in Z$.

We claim that there exists a prime divisor $F_1$ on $W$ such that $2D\geq F_1$ and $\pi_Z(F_1\cap S)$ contains $C_Z$ for a general $S$.
Consider a point $z\in C_Z\subset Z$, then $S\cap \pi_Z^{-1}(z) \neq \emptyset$ for a general $S$ since $C_Z=\pi_Z(F_0\cap S)\subset \pi_Z(S)$. In particular, $S_1\cap \pi_Z^{-1}(z)\neq \emptyset$ and $S_2\cap \pi_Z^{-1}(z)\neq \emptyset$, which are two disconnected sets in $G\cap \pi_Z^{-1}(z)$. Since $G\cap \pi_Z^{-1}(z)$ is connected, there exists a curve $B_z\subset G\cap \pi_Z^{-1}(z)$ such that $B_z\cap S_1\cap \pi_Z^{-1}(z)\neq \emptyset$ and $B_z\not\subset S_1\cap \pi_Z^{-1}(z)$. Moving $z$ in $C_Z$, $B_z$ deforms to a prime divisor $F_1\subset G$, to be more precise, there exists a prime divisor $F_1\subset G$ such that $B_z\subset F_1\cap \pi_Z^{-1}(z)$ for infinitely many $z\in C_Z$. Hence 
$$z\in \pi_Z( B_z\cap S_1\cap \pi_Z^{-1}(z))\subset \pi_Z(F_1\cap S_1\cap \pi_Z^{-1}(z))\subset \pi_Z(F_1\cap S_1)$$
for infinitely many $z\in C_Z$. This means that $C_Z\subset \pi_Z(F_1\cap S_1).$
By the construction of $F_1$, $F_1\subset G$ and it is clear that $F_1$ is different from $S_1$ and $S_2$, hence $\coeff_{F_1}(-E_\pi+2D)\geq 1$ and in particular, $2D\geq F_1$. 
By the generality of $S_1$, $C_Z\subset \pi_Z(F_1\cap S)$ for general $S$.

By equation \eqref{equ 2},
\begin{align*}
\frac{2}{w}(\pi^*(K_X)^2\cdot S){}&=\big(-\pi^*(K_X)|_S\cdot (2S+2D)|_S\big)\\
{}&= \big( \pi^*(-K_X)|_S\cdot 2D|_S\big)\geq  \big( \pi^*(-K_X)|_S\cdot F_1|_S\big)\\
{}&= \big( \pi_{Z}^*(-K_Z)|_S\cdot F_1|_S\big)=\big( -K_Z\cdot {\pi_{Z}}_*( F_1|_S)\big)\\
{}&\geq \big( -K_Z\cdot C_Z\big)\geq \frac{1}{r_X}.
\end{align*}
In other words, we have 
$$
\frac{P_{-m}-1}{m}=w\leq 2r_X(\pi^*(K_X)^2\cdot S).
$$
\end{proof}

\begin{lem}\label{lemma Y P1}
Let $(X, Y, Z)$ be a Fano--Mori triple and $m>0$ an integer. Assume that $|-mK_X|$ is composed with a pencil. Keep the notation in Subsection \ref{b setting}.
Assume that $f:W\to \bP^1$ factors through $Y$. Then 
$$
\frac{P_{-m}-1}{m}\leq \max\{-K_X^3, 2r_X(\pi^*(K_X)^2\cdot S)\}.
$$
\end{lem}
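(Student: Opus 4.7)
The plan is to split into two cases according to whether the hypothesis of Lemma~\ref{lemma exist exc} holds. In the first case, where there exists an effective $\eta$-exceptional $\bQ$-divisor $F$ on $W$ with $(-\pi^*(K_X)|_S \cdot F|_S) > 0$ for a general $S$, Lemma~\ref{lemma exist exc} immediately yields $\frac{P_{-m}-1}{m} \leq 2r_X(\pi^*(K_X)^2 \cdot S)$, which is dominated by the stated maximum. So the real content lies in the complementary case.

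In the complementary case, nefness of $\pi^*(-K_X)$ forces $(\pi^*(-K_X)|_S \cdot E|_S) = 0$ for every effective $\eta$-exceptional prime divisor $E$ on $W$. By Proposition~\ref{prop P=P=P}(1), $\phi$ is $K_X$-non-positive, so I can write $\pi^*(-K_X) = \eta^*(-K_Y) - F_\eta$ with an effective $\eta$-exceptional $\bQ$-divisor $F_\eta$. Plugging $F = F_\eta$ into the vanishing and using that $F_\eta|_S$ is $\eta|_S$-exceptional (hence orthogonal to pullbacks under $\eta|_S$), one gets $(F_\eta|_S)^2 = 0$; the surface Negativity Lemma applied to the birational morphism $\eta|_S : S \to S_Y := \eta(S)$ then forces $F_\eta|_S = 0$, giving the key identification
$$
\pi^*(-K_X)|_S = (\eta|_S)^*(-K_Y|_{S_Y}).
$$
The factoring hypothesis $f = h \circ \eta$ means that $S_Y$ is a general fiber of the induced pencil $h : Y \dashrightarrow \mathbb{P}^1$ and that $\eta|_S$ is birational, hence $(\pi^*(K_X)^2 \cdot S) = (-K_Y|_{S_Y})^2$. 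A general $S_Y$ avoids the finitely many singular points of $Y$, so this value is a nonnegative integer, and a short case analysis on the Mori fiber structure $g : Y \to T$ (with $T$ of Fano type by Lemma~\ref{lem TFT}) will show it is in fact $\geq 1$: in the del Pezzo fibration case with $h = g$ it is $K_{S_Y}^2$ of a smooth del Pezzo surface, while the conic bundle case and the situation $h \neq g$ are handled by examining how the pencil $h$ factors relative to $g$.

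To conclude, intersect the pencil decomposition $-\pi^*(mK_X) \sim (P_{-m}-1)S + D_m$ with $\pi^*(-K_X)^2$; since $D_m\geq 0$ and $\pi^*(-K_X)$ is nef,
$$
m(-K_X^3) \geq (P_{-m}-1)(\pi^*(K_X)^2 \cdot S) \geq P_{-m}-1,
$$
giving $\frac{P_{-m}-1}{m} \leq -K_X^3$, which together with the first case completes the proof. The main technical obstacle will be the uniform integer bound $(-K_Y|_{S_Y})^2 \geq 1$ in the complementary case; this is precisely the point where the factoring hypothesis through $Y$ is genuinely used, since without it one cannot identify $\pi^*(-K_X)|_S$ with a pullback from a surface carrying a Fano-type intersection structure.
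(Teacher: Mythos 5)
Your proposal follows essentially the same route as the paper's proof: decompose $\pi^*(K_X)=\eta^*(K_Y)+E$ with $E\geq 0$ $\eta$-exceptional (Proposition \ref{prop P=P=P}(1)), restrict to $S$, and split into two cases according to whether the exceptional part pairs positively with $-\pi^*(K_X)|_S$. Your dichotomy is just the contrapositive of the paper's ($E|_S=0$ versus $E|_S\neq 0$): the paper shows that $E|_S\neq 0$ forces $(\pi^*(K_X)|_S\cdot E|_S)=(E|_S)^2<0$ by the Hodge Index Theorem and then invokes Lemma \ref{lemma exist exc}, while you start from the failure of the hypothesis of Lemma \ref{lemma exist exc}, deduce $(F_\eta|_S)^2=0$ from the same projection-formula identity, and conclude $F_\eta|_S=0$ by negativity. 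Both arguments are correct and rest on the same computation.

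The one place your write-up is genuinely incomplete is the lower bound $(\pi^*(K_X)^2\cdot S)\geq 1$, which you flag as ``the main technical obstacle'' and defer to a case analysis on the Mori fiber structure $g:Y\to T$. That detour is both unnecessary and doubtful: the lemma does not assume $h=g$, and in the conic bundle case (or when $h\neq g$) the general fiber $S_Y$ of $h$ is not a del Pezzo surface, nor is $-K_Y$ assumed nef, so there is no evident positivity to extract from the fibration structure alone. The correct and much shorter argument is the paper's: $\pi^*(-K_X)$ is nef and big and $S$ is a generic member of a free linear system, so $(\pi^*(K_X)^2\cdot S)>0$ (indeed $r_X(\pi^*(K_X)^2\cdot S)$ is a positive integer by \cite[Lemma 2.2]{C}, already used in the proof of Proposition \ref{criterion 1}); in your complementary case this number equals $(K_{S_Y})^2$, an integer because $S_Y$ lies in the smooth locus of $Y$, hence it is $\geq 1$, and Proposition \ref{prop old} then gives $\frac{P_{-m}-1}{m}\leq -K_X^3$ exactly as in your final display. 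With that substitution your proof closes; as written, the key positivity is asserted but not established.
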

\begin{proof}
Let $S_Y$ be a general fiber of $Y \to \bP^1$.
Write $$\pi^*(K_X)=\eta^*(K_Y)+E$$ where $E$ is an effective $\eta$-exceptional $\bQ$-divisor by Proposition \ref{prop P=P=P}(1). Restricting to a general fiber $S$ of $f$, we have
\begin{align}\label{equ 1}
\pi^*(K_X)|_S=\eta^*(K_{Y})|_S+E|_S=\eta_S^*(K_{S_Y})+E|_S,
\end{align}
where $\eta_S:S\to S_Y$ is the restriction of $\eta$ on $S$.

If $E|_S=0$, then 
$$
(\pi^*(K_X)^2\cdot S)=(\pi^*(K_X)|_S)^2=\eta_S^*(K_{S_Y})^2
$$
is an integer since $S_Y$ is smooth. Since $-\pi^*(K_X)|_S$ is nef and big,  $(\pi^*(K_X)^2\cdot S)\geq 1$ and, by Proposition \ref{prop old},
$$
\frac{P_{-m}-1}{m}\leq -K_X^3.
$$

Now we may assume that $E|_S\neq 0$. 
Since $E$ is exceptional over $Y$ and $S$ is general, $E|_S$ is an effective $\bQ$-divisor on $S$ exceptional over $S_Y$. By Hodge Index Theorem, $(E|_S)^2<0.$
By \eqref{equ 1},
\begin{align*}
\big(\pi^*(K_X)|_S\cdot E|_S\big)=\big(\eta_S^*(K_{S_Y})\cdot E|_S\big)+(E|_S)^2=(E|_S)^2<0.
\end{align*}
Hence we may apply Lemma \ref{lemma exist exc} to get
$$
\frac{P_{-m}-1}{m}\leq 2r_X(\pi^*(K_X)^2\cdot S).
$$
So we have completed the proof.
\end{proof}

\begin{lem}\label{dP3}
Let $S$ be a smooth del Pezzo surface and $D$  a  non-zero integral effective divisor on $S$ such that
$-K_S-aD$ is $\bQ$-linearly equivalent to an effective $\bQ$-divisor
for some positive rational number $a$. Then $a\leq 3$.
\end{lem}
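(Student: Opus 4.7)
My plan is to combine two elementary intersection inequalities after standard reductions.

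\textbf{Step 1 (reductions).} If $D = D_1 + D_2$ is a sum of effective integral divisors, then from $-K_S - aD \sim_\bQ E \geq 0$ we get $-K_S - aD_1 \sim_\bQ aD_2 + E \geq 0$; hence replacing $D$ by any irreducible component only makes the hypothesis easier, and we may assume $D$ is irreducible. Likewise, if $D$ appears in $\Supp(E)$ with coefficient $c > 0$, we may write $E = cD + E'$ with $D \not\subset \Supp(E')$, giving $-K_S - (a+c)D \sim_\bQ E' \geq 0$; proving $a + c \leq 3$ then suffices. So we may assume $D$ irreducible with $D \not\subset \Supp(E)$, which gives $D \cdot E \geq 0$.

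\textbf{Step 2 (main inequalities).} Intersecting $-K_S \sim_\bQ aD + E$ with the ample divisor $-K_S$ yields
\[
K_S^2 = a(-K_S \cdot D) + (-K_S \cdot E) \geq a(-K_S \cdot D),
\]
so $a \leq K_S^2/(-K_S \cdot D)$. Intersecting instead with $D$ gives
\[
(-K_S \cdot D) = a D^2 + (D \cdot E) \geq a D^2.
\]
Multiplying these two inequalities (valid since $-K_S \cdot D > 0$ as $-K_S$ is ample), one obtains $a^2 D^2 \leq K_S^2$. Now $K_S^2 \leq 9$ for every smooth del Pezzo surface, so in the case $D^2 \geq 1$ we conclude $a^2 \leq 9$, i.e., $a \leq 3$.

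\textbf{Step 3 (remaining case; the main obstacle).} It remains to handle $D^2 \leq 0$. Since $D$ is irreducible, $-K_S$ is ample, and $p_a(D) \geq 0$, adjunction forces $D^2 \in \{-1, 0\}$: either $D$ is a $(-1)$-curve with $-K_S \cdot D = 1$, or $D$ is a smooth rational curve with $D^2 = 0$ and $-K_S \cdot D = 2$. The bound $a \leq K_S^2/(-K_S \cdot D)$ from Step 2 already yields $a \leq 3$ when $K_S^2 \leq 3$ (resp.\ $K_S^2 \leq 6$) in the $(-1)$-curve (resp.\ $0$-curve) case, but the borderline subcases require more. For a $(-1)$-curve $D$, I would contract $D$ via $\pi: S \to S'$, where $S'$ is del Pezzo of degree $K_S^2 + 1$; the assumption $-K_S - aD \sim_\bQ E \geq 0$ translates into $\pi_* E \sim_\bQ -K_{S'}$ being an effective $\bQ$-divisor with multiplicity $a+1$ at $p = \pi(D)$, and one bounds this multiplicity using that $p$ lies in del Pezzo general position (so $\Bl_p S'$ is again del Pezzo), giving $a + 1 \leq 4$. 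For a fiber $D$ of a $\mathbb{P}^1$-fibration on $S$ (the $D^2 = 0$ case), one uses the explicit Hirzebruch/conic-bundle description to compute $\max_i b_i$ in $f_*\mathcal{O}_S(-K_S) = \bigoplus \mathcal{O}_{\mathbb{P}^1}(b_i)$ and show it is at most $3$. This final case analysis, relying on the classification of del Pezzo surfaces of degree $\geq 7$, is the main technical obstacle, since Steps 1--2 alone cannot distinguish $a \leq 3$ from $a \leq K_S^2$ when $D^2 \leq 0$.
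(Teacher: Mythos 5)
Your Steps 1 and 2 are correct and settle the case $D^2\geq 1$ by a clean intersection argument, but Step 3 contains a genuine gap rather than omitted routine detail. For a $(-1)$-curve $D$ with $K_S^2\geq 4$ you reduce the lemma to the assertion that an effective $\bQ$-divisor $G=\pi_*E\sim_{\bQ}-K_{S'}$ on a del Pezzo surface $S'$ of degree $d'=K_S^2+1\in\{5,\dots,9\}$ has $\mathrm{mult}_p G\leq 4$ at a point $p$ whose blow-up is again del Pezzo. This does not follow from the one tool you invoke: pulling back to $S$ and writing $\pi^*G=\widetilde G+m e$ with $\widetilde G\geq 0$ and $e$ the exceptional curve, the inequality that ``del Pezzo general position'' hands you directly is $(-K_S\cdot\widetilde G)\geq 0$, i.e.\ $m\leq d'$, which for $d'=5$ only yields $a\leq 4$. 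In fact the multiplicity statement is exactly the original lemma applied to the exceptional curve of $\pi$, so the reduction reformulates the problem rather than solving it; closing it requires controlling, surface by surface, the ratio $\mathrm{mult}_pC/(-K_{S'}\cdot C)$ over all irreducible curves $C$ through $p$ (doable --- no $(-1)$-curve of $S'$ passes through $p$, and Bezout-type bounds handle the rest --- but this is the actual content of the proof and it is not carried out). The analogous claims in the $D^2=0$ case for $K_S^2=7,8$ are likewise asserted without verification; note that $\mathbb{F}_1$ with $D$ a fiber attains $a=3$ (since $-K_{\mathbb{F}_1}-3f\sim 2C_0$ is effective), so these borderline checks cannot be waved away. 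Your own phrase ``the main technical obstacle'' is an accurate self-assessment: the argument is not complete as written, though the route is viable if one runs the case analysis over del Pezzo surfaces of degree $\geq 5$.

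For comparison, the paper disposes of the lemma in two lines by a different mechanism: writing $aD+B\sim_{\bQ}-K_S$, the pair $(S,c(aD+B))$ fails to be log canonical for every $c>1/a$ because $D$ appears with integer coefficient, so the global log canonical threshold satisfies $\mathrm{lct}(S)\leq 1/a$; Cheltsov's theorem that $\mathrm{lct}(S)\geq 1/3$ for every smooth del Pezzo surface then gives $a\leq 3$ with no case distinction. Citing that result (which itself encapsulates the degree-by-degree analysis you are gesturing at) is the missing ingredient that would close your Step 3.
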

\begin{proof}
By assumption, there is an effective $\bQ$-divisor $B$ such that 
$$
aD+B\sim_\bQ -K_S.
$$
Since $(S, D+\frac{1}{a}B)$ is not klt, the log canonical threshold $\text{lct}(S; aD+B)\leq \frac{1}{a}$.  Recall that the global log-canonical threshold is defined as:
$$\text{lct}(S)=\text{inf}\{\text{lct}(S;L)\mid L\sim_{\bQ} -K_S,\ \text{$L$ is an effective $\bQ$-divisor}\}.$$
Clearly we have $\text{lct}(S) \leq \frac{1}{a}$. On the other hand, $\text{lct}(S) \geq \frac{1}{3}$ by \cite[Theorem 1.7]{Cheltsov}, which implies that $a\leq 3$. 
\end{proof}

\begin{prop}\label{prop new}
Let $(X, Y, Z)$ be a Fano--Mori triple such that $\rho(Y)>1$ and $m>0$ an integer. Assume that $|-mK_X|$ is composed with a pencil. Keep the same notation as in Subsection \ref{b setting}. Then
$$
\frac{P_{-m}-1}{m}\leq \max\{3, -K_X^3, 2r_X(\pi^*(K_X)^2\cdot S)\}.
$$
\end{prop}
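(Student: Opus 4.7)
The plan is to split on whether the fibration $f : W \to \bP^1$ factors through $Y$ as a morphism. If it does, Lemma~\ref{lemma Y P1} already yields $\frac{P_{-m}-1}{m}\le \max\{-K_X^3,\, 2r_X(\pi^*(K_X)^2\cdot S)\}$, which is subsumed by the claimed bound. Hence we may assume $f$ does not factor through $Y$ as a morphism; it then suffices to show $\frac{P_{-m}-1}{m}\le 3$. Write $S_Y:=\eta_*(S)$ for the prime divisor on $Y$ such that the movable part of $|-mK_Y|$ equals $(P_{-m}-1)S_Y$ (using Proposition~\ref{prop P=P=P}(3) to compare $|-mK_X|$ and $|-mK_Y|$), and let $g:Y\to T$ be the Mori fiber structure; the hypothesis $\rho(Y)>1$ forces $\dim T\in\{1,2\}$. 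I would split further according to $\dim T$.

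\textbf{Case $\dim T = 1$.} Then $g$ is a del Pezzo fibration over $T\cong\bP^1$, and a general fiber $F$ of $g$ is a smooth del Pezzo surface (the terminal singular locus of $Y$ is a finite set which may be avoided, and Bertini applies to the ample $-K_Y|_F$). By adjunction $-K_Y|_F=-K_F$. If $S_Y|_F\neq 0$, then restricting $(P_{-m}-1)S_Y+(\text{fixed})\sim -mK_Y$ to $F$ and dividing by $m$ produces
\[
\frac{P_{-m}-1}{m}\,S_Y|_F + (\text{effective}) \sim_{\bQ} -K_F,
\]
and Lemma~\ref{dP3} applied with $a=(P_{-m}-1)/m$ yields $\frac{P_{-m}-1}{m}\le 3$. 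Otherwise $S_Y|_F=0$; being an irreducible vertical divisor, $S_Y$ must coincide with a single $g$-fiber, so $|S_Y|$ is the base-point-free pencil of $g$-fibers, which forces $f$ to factor through $Y\to T$, contradicting our assumption.

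\textbf{Case $\dim T = 2$.} Now $g$ is a conic bundle and a general fiber $F\cong\bP^1$ satisfies $(-K_Y\cdot F)=2$. If $(S_Y\cdot F)\ge 1$, intersecting the pencil identity with $F$ yields $(P_{-m}-1)(S_Y\cdot F)\le 2m$, so $\frac{P_{-m}-1}{m}\le 2$. Otherwise $(S_Y\cdot F)=0$; since $\rho(Y/T)=1$, this forces $S_Y\sim_{\bQ}g^*C$ for some curve $C$ on the Fano-type surface $T$ (Lemma~\ref{lem TFT}), so the pencil $|S_Y|$ descends to a pencil $|C|$ on $T$. If $|C|$ were base-point-free, $Y\to T\to\bP^1$ would realize the pencil as a morphism and $f$ would factor through $Y$, a contradiction. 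Thus $|C|$ has a base point on $T$. This last sub-case is the principal obstacle, and I would handle it by passing to a log resolution of $(T,|C|)$ and invoking the two-dimensional analogue of the global log canonical threshold estimate (in the spirit of Lemma~\ref{dP3}) on the Fano-type surface $T$ to again conclude $\frac{P_{-m}-1}{m}\le 3$; the hardest point is that here the bound $3$ rests on the Fano-type property of the base surface $T$ rather than directly on the geometry of $Y$.
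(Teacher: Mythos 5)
Your reduction is on track for most of the proposition: the case $\dim T=1$ (restrict to a general del Pezzo fiber and apply Lemma \ref{dP3}, or else conclude $f$ factors through $Y$ and invoke Lemma \ref{lemma Y P1}) and the sub-case $(S_Y\cdot F)\ge 1$ of $\dim T=2$ (intersect with a general conic to get $\le 2$) are exactly the paper's argument. The problem is the final sub-case, $\dim T=2$ with $S_Y\sim_{\bQ}g^*H$ and $|H|$ a pencil with base points (equivalently $H^2>0$) --- which you yourself identify as the principal obstacle --- and your proposed resolution there does not work. First, there is no two-dimensional analogue of Cheltsov's bound for the surfaces you would need it on: Fano-type surfaces have no uniform lower bound on the global log canonical threshold (e.g.\ on $\bP(1,1,n)$ one has $-K\sim(n+2)L$ for a line $L$, so the threshold tends to $0$), whereas Lemma \ref{dP3} crucially uses that $S_Y$ is a \emph{smooth del Pezzo} surface. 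Second, and more fundamentally, the relation $-mK_Y\sim(P_{-m}-1)g^*H+(\text{effective})$ lives on $Y$ and does not descend to a relation of the form $aH+B\sim_{\bQ}-(K_T+B_T)$ on $T$: for a conic bundle $-K_Y$ has degree $2$ on fibers and is not a pullback from $T$, so there is nothing on $T$ to which an lct estimate could be applied. Indeed the claimed conclusion $\frac{P_{-m}-1}{m}\le 3$ in this sub-case is not what the paper proves and is presumably false in general; the term $2r_X(\pi^*(K_X)^2\cdot S)$ in the statement exists precisely to cover it, and the whole architecture of Section \ref{sec criterion} (the function $\theta(M_X,N)=\min\{M_X/N,\max\{3,M_X/r_X,2N\}\}$) is built around not being able to drop that term.

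What the paper actually does in this sub-case is entirely different and is the real content of the proposition. Writing $\eta^*\eta_*S=S+F$ with $F\ge 0$ $\eta$-exceptional, one computes $(-\pi^*(K_X)\cdot\eta^*\eta_*S\cdot S)=(-K_Y\cdot(\eta_*S)^2)=2H^2>0$ using Claim \ref{claim} (exceptional divisors contribute zero to this triple product because $\eta_*S\sim_{\bQ}g^*H$ forces their images to lie in fibers of $g$), and deduces $(-\pi^*(K_X)|_S\cdot F|_S)>0$. This feeds into Lemma \ref{lemma exist exc}, whose connectedness-lemma argument produces a curve $C_Z\subset Z$ with $(-K_Z\cdot C_Z)\ge 1/r_X$ and yields the bound $2r_X(\pi^*(K_X)^2\cdot S)$. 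You would need to either reproduce that mechanism or find a genuinely new argument; the lct route on $T$ is a dead end.
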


\begin{proof}
Note that we have 
$$
-\pi^*(mK_X)\sim (P_{-m}-1)S+D_m,
$$
where $D_m$ is an effective $\bQ$-divisor (the fixed part). Pushing forward to $Y$, one has
\begin{align}\label{equ3}
-mK_Y=-\eta_*\pi^*(mK_X)\sim (P_{-m}-1)\eta_*S+\eta_*D_m.
\end{align}

Consider the Mori fiber structure $g:Y\to T$. Since $\rho(Y)>1$, $\dim T>0$. We argue by discussing the value of $\dim T$.

If $\dim T=1$, then $T\simeq \bP^1$ since $g(T)\leq q(Y)=q(X)=0$. Take $S_Y$ to be a general fiber of $g$.  If $\eta_*S|_{S_Y} =0$, then $|\eta_*S|$ coincides with the pencil $|S_Y|$, and hence $f$ factors through $g$. By Lemma \ref{lemma Y P1} we have
$$
\frac{P_{-m}-1}{m}\leq \max\{-K_X^3, 2r_X(\pi^*(K_X)^2\cdot S)\}.
$$
 If $\eta_*S|_{S_Y}\neq 0$, then $\eta_*S|_{S_Y}=D$ is an effective non-zero integral divisor on $S_Y$ and by equation \eqref{equ3},
$$
-mK_{S_Y}=-mK_Y|_{S_Y} \sim (P_{-m}-1)\eta_*S|_{S_Y}+\eta_*D_m|_{S_Y}\geq (P_{-m}-1)D,
$$
which means that $\frac{P_{-m}-1}{m}\leq 3$ by Lemma \ref{dP3} since $S_Y$ is a smooth del Pezzo surface.

Now we consider the case when $\dim T=2$. Note that for a general fiber $C\simeq \bP^1$ of $g$, $(\eta_*S\cdot C)$ is a non-negative integer.
If $(\eta_*S\cdot C)\geq 1$ for a general fiber $C$ of $g$, then by equation \eqref{equ3} intersecting with $C$,
$$
2m=-m(K_Y\cdot C)\geq (P_{-m}-1)(\eta_*S\cdot C)\geq P_{-m}-1,
$$
which means that
$$
\frac{P_{-m}-1}{m}\leq 2.
$$

Hence we may assume that $(\eta_*S\cdot C)=0$ for a general fiber $C$ of $g$. By Cone Theorem (see \cite[Theorem 3.7]{KM}), there exists a $\bQ$-divisor $H$ on $T$ such that $\eta_*S\sim_\bQ g^*H$. In particular, $(\eta_*S)^2\equiv(H^2)C$ as $1$-cycles. Note that, if we fix a general $S$, then we may just take $H=g(\eta_*S)$. Since $|\eta_*S|$ is a movable linear system, $H$ is nef and $H^2\geq 0$. Moreover, $H$ is semi-ample by Basepoint-free Theorem and Lemma \ref{lem TFT}.

First we consider the case $H^2=0$. In this case $|nH|$ defines a contraction $T\to \Lambda$ to a curve $\Lambda$ (after taking Stein factorization) for a sufficiently large $n$. Moreover, it is easy to see that the induced morphism $W\to Y\to T\to \Lambda$ contracts $S$, which means that this map coincides with $f:W\to \bP^1$. Hence $f: W\to \bP^1$ factors through $Y$ and we may apply Lemma \ref{lemma Y P1} to get
$$
\frac{P_{-m}-1}{m}\leq \max\{-K_X^3, 2r_X\pi^*(K_X)^2\cdot S\}.
$$

Next we consider the case $H^2>0$. We have the following claim.

\begin{claim}\label{claim} For any $\eta$-exceptional prime divisor $E_0$ on $W$, $$(E_0\cdot \eta^*\eta_*S\cdot S)=0.$$\end{claim}
\begin{proof}
By projection formula, we have
$$
(E_0\cdot \eta^*\eta_*S\cdot S)=(E_0|_S\cdot \eta^*\eta_*S)=(\eta_*(E_0|_S)\cdot \eta_*S).
$$
We may assume that $\eta_*(E_0|_S)\neq 0$ for a general $S$ since, otherwise, there is nothing to prove. Then there is a curve $G\subset \eta(E_0\cap S)\subset \eta(E_0)$. Note that $E_0$ is an $\eta$-exceptional prime divisor, we have $G=\eta(E_0\cap S)= \eta(E_0)$. In particular, $G$ does not depends on $S$ and $G=\eta(E_0\cap S)\subset \eta_*S$ for general $S$. Recall that $\eta_*S\sim_\bQ g^*H$, which means that the intersection of two general $\eta_*S$ lies in fibers of $g$, hence $G$ lies in a fiber of $g$. In particular, since $\text{Supp}(\eta_*(E_0|_S))=G$, 
$$
(\eta_*(E_0|_S)\cdot \eta_*S)=(\eta_*(E_0|_S)\cdot g^*H)=0.
$$
We have proved the claim.
\end{proof}
Write 
\begin{align*}
\pi^*(K_X)=\eta^*(K_Y)+E
\end{align*} 
where $E\geq 0$ is an $\eta$-exceptional $\bQ$-divisor. 
By Claim \ref{claim}, 
\begin{align*}
(-\pi^*(K_X)\cdot \eta^*\eta_*S\cdot S)=&(-\eta^*(K_Y)\cdot \eta^*\eta_*S\cdot S)\\
=&(-K_Y\cdot \eta_*S\cdot \eta_*S)\\
={}&(-K_Y\cdot (H^2)C)=2(H^2)>0.
\end{align*}
On the other hand, we may write 
$$
\eta^*\eta_*S=S+F
$$
where $F\geq 0$ is an $\eta$-exceptional effective $\bQ$-divisor on $W$, and we have
$$
0<(-\pi^*(K_X)\cdot \eta^*\eta_*S\cdot S)=(-\pi^*(K_X)\cdot (S+F)\cdot S)=(-\pi^*(K_X)|_S\cdot F|_S).
$$
Hence we may apply Lemma \ref{lemma exist exc} to get
$$
\frac{P_{-m}-1}{m}\leq 2r_X(\pi^*(K_X)^2\cdot S).
$$

Combining the above cases, we complete the proof. 
\end{proof}


\section{\bf Criteria for birationality}\label{sec criterion}

Based on Section \ref{sec GI}, firstly we give criteria for $|-mK|$ being not composed with pencils (Propositions \ref{criterion 1} and \ref{criterion 2}) and then recall criteria for birationality of $|-mK|$ established in our previous paper \cite{CJ16} (Theorems \ref{criterion b} and \ref{criterion b2}). 



We always assume that $X$ is a terminal weak $\bQ$-Fano $3$-fold. 

Set $M_X=r_X(-K_X^3)$, which is a positive integer. For any positive integer $N$, define the following functions
\begin{align*}
\theta(M_X, N)={}&\min\{M_X/N, \max\{3,M_X/r_X, 2N\}\},\\
\theta(M_X)={}&\max_{N\in \ZZ_{>0}}\theta(M_X, N),
\end{align*}
and
$$
\lambda(M_X)=\begin{cases} M_X, & \text{if\ } M_X\leq 3;\\
\max\left\{3, M_X/r_X, 2\rounddown{\sqrt{M_X/2}}, \frac{M_X}{\roundup{\sqrt{M_X/2}}}\right\}& \text{if\ } M_X\geq 4.
\end{cases}
$$

We have the following relation between $\lambda(M_X)$ and $\theta(M_X)$.
\begin{lem}
$\lambda(M_X) \geq \theta(M_X)$.
\end{lem}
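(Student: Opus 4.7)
The plan is to reduce the inequality $\lambda(M_X)\geq\theta(M_X)$ to establishing a uniform bound $\theta(M_X,N)\leq\lambda(M_X)$ valid for every positive integer $N$; the supremum over $N$ then yields the result. The argument naturally splits on the size of $M_X$.

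For $M_X\leq 3$ the inequality is immediate: since $\theta(M_X,N)\leq M_X/N\leq M_X$ for every $N\geq 1$ and $\lambda(M_X)=M_X$ by definition, taking the sup gives $\theta(M_X)\leq M_X=\lambda(M_X)$.

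For $M_X\geq 4$, the heart of the argument is a dichotomy based on the two integers $N_- := \lfloor\sqrt{M_X/2}\rfloor$ and $N_+ := \lceil\sqrt{M_X/2}\rceil$, which are either equal or consecutive. Every positive integer $N$ satisfies either $N\leq N_-$ or $N\geq N_+$, so I would split accordingly. In the first regime I use the obvious bound $\theta(M_X,N)\leq\max\{3,M_X/r_X,2N\}$; since this quantity is non-decreasing in $N$, it is at most $\max\{3,M_X/r_X,2N_-\}$. In the second regime I use the other obvious bound $\theta(M_X,N)\leq M_X/N\leq M_X/N_+$. Both of these appear as entries in the max defining $\lambda(M_X)$ for $M_X\geq 4$, so each is automatically bounded by $\lambda(M_X)$.

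No genuine obstacle is expected; this is a purely arithmetic comparison of the two real-valued expressions. The only modest subtlety is justifying the dichotomy on $N$, which reflects the elementary fact that no integer lies strictly between $\lfloor x\rfloor$ and $\lceil x\rceil$, together with the observation that the two terms $2\lfloor\sqrt{M_X/2}\rfloor$ and $M_X/\lceil\sqrt{M_X/2}\rceil$ in the definition of $\lambda(M_X)$ were engineered precisely to dominate the worst-case $N$ in each regime.
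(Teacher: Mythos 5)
Your proposal is correct and follows essentially the same route as the paper: reduce to bounding $\theta(M_X,N)$ for each $N$, handle $M_X\leq 3$ directly, and for $M_X\geq 4$ split on whether $N\leq\rounddown{\sqrt{M_X/2}}$ or $N\geq\roundup{\sqrt{M_X/2}}$, using monotonicity of $\max\{3,M_X/r_X,2N\}$ in the first regime and of $M_X/N$ in the second. Nothing further is needed.
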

\begin{proof}
It suffices to show that $\lambda(M_X) \geq \theta(M_X, N)$ holds for any positive integer $N$.

If $M_X\leq 3$, then
$$
\lambda(M_X)=M_X\geq M_X/N\geq \theta(M_X, N).
$$

If $M_X\geq 4$ and $N\geq \roundup{\sqrt{M_X/2}}$, then
$$
\lambda(M_X)\geq \frac{M_X}{\roundup{\sqrt{M_X/2}}}\geq M_X/N\geq \theta(M_X, N).
$$

If $M_X\geq 4$ and $N\leq \rounddown{\sqrt{M_X/2}}$, then
\begin{align*}
\lambda(M_X){}&=\max\left\{3, M_X/r_X, 2\rounddown{\sqrt{M_X/2}}, \frac{M_X}{\roundup{\sqrt{M_X/2}}}\right\}\\
{}&\geq\max\left\{3, M_X/r_X, 2\rounddown{\sqrt{M_X/2}}\right\}\\
{}&\geq\max\{3, M_X/r_X, 2N\}\\
{}&\geq  \theta(M_X, N).
\end{align*}
\end{proof}

\begin{prop}\label{criterion 1}
Let $(X, Y, Z)$ be a Fano--Mori triple with $\rho(Y)>1$  and $m>0$ an integer. If
$$
P_{-m}> \lambda(M_X)m+1,
$$
then $|-mK_X|$ is not composed with a pencil.
\end{prop}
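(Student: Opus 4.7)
The plan is to argue by contradiction: assume $|-mK_X|$ is composed with a pencil. Adopt the notation of Subsection \ref{b setting} with $D = -mK_X$, so on a common resolution $\pi : W \to X$ (chosen to factor through $Y$) we obtain a fibration $f : W \to \bP^1$ with general fiber $S$. Set $a := (\pi^*(K_X)^2 \cdot S)$ and $t := r_X a$. Since $r_X K_X$ is Cartier and $S$ is smooth, the integer $r_X t = (r_X \pi^*(K_X))^2 \cdot S$ is a non-negative integer; in particular $t \in \frac{1}{r_X} \ZZ_{\geq 0}$.

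Next I would invoke the two available inequalities. Because $\rho(Y) > 1$, Proposition \ref{prop new} gives
$$\frac{P_{-m}-1}{m} \;\leq\; \max\{3,\, -K_X^3,\, 2r_X a\} \;=\; \max\{3,\, M_X/r_X,\, 2t\},$$
and Proposition \ref{prop old} gives (assuming $a>0$)
$$\frac{P_{-m}-1}{m} \;\leq\; \frac{-K_X^3}{a} \;=\; \frac{M_X}{t}.$$
Together,
$$\frac{P_{-m}-1}{m} \;\leq\; \min\!\left\{\frac{M_X}{t},\; \max\{3,\, M_X/r_X,\, 2t\}\right\}.$$

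The crux is then to show that this right-hand side is bounded above by $\lambda(M_X)$. I would select a positive integer $N$ depending on $t$ so that the bound reduces to $\theta(M_X, N)$, which is $\leq \lambda(M_X)$ by the preceding lemma. Concretely, if $t\geq 1$ one chooses $N \in \{\lfloor t\rfloor,\lceil t\rceil\}$ optimally depending on whether the $\min$ is attained by $M_X/t$ or by the $\max$-term, and compares with $\theta(M_X,N)$ using the definition. If $a=0$ only the first inequality is in force and it already yields $\frac{P_{-m}-1}{m}\leq \max\{3,M_X/r_X\}\leq \lambda(M_X)$. If $0<t<1$, then $2t<3$ so the $\max$-term simplifies to $\max\{3,M_X/r_X\}$, and the finitely many admissible fractional values $t=k/r_X$ (with $1\leq k<r_X$) can be handled by the auxiliary inequality $M_X/t \geq M_X$, comparing with the explicit form of $\lambda(M_X)$ given by its two defining cases $M_X\leq 3$ and $M_X\geq 4$. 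Once $\frac{P_{-m}-1}{m} \leq \lambda(M_X)$ is established, we obtain $P_{-m} \leq \lambda(M_X)m + 1$, contradicting the hypothesis.

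The main obstacle will be the combinatorial/numerical step linking the real-valued $t=r_X a \in \tfrac{1}{r_X}\ZZ_{>0}$ to the integer-indexed optimization defining $\theta(M_X)$, together with a careful separation of the two regimes $M_X\leq 3$ and $M_X\geq 4$ in the definition of $\lambda$; in particular the quantities $2\lfloor\sqrt{M_X/2}\rfloor$ and $M_X/\lceil\sqrt{M_X/2}\rceil$ appearing in $\lambda$ should emerge as the extrema of $\min\{M_X/N, 2N\}$ on integer $N$, which is precisely where the worst case of the above estimate is attained.
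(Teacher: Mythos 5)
Your overall strategy coincides with the paper's: assume $|-mK_X|$ is composed with a pencil, combine Proposition \ref{prop old} and Proposition \ref{prop new} to obtain
$\frac{P_{-m}-1}{m}\leq\min\{M_X/t,\ \max\{3,M_X/r_X,2t\}\}$ with $t=r_X(\pi^*(K_X)^2\cdot S)$, and then show this is at most $\lambda(M_X)$. The gap is in the last, numerical step. The paper quotes \cite[Lemma 2.2]{C} to assert that $N_0=r_X(\pi^*(K_X)^2\cdot S)$ is a positive \emph{integer}, whence the bound is literally $\theta(M_X,N_0)\leq\theta(M_X)\leq\lambda(M_X)$. You establish only the much weaker fact $r_Xt\in\ZZ$, i.e.\ $t\in\frac{1}{r_X}\ZZ_{>0}$ (from $(r_X\pi^*(K_X)|_S)^2\in\ZZ$), and then propose to reduce to $\theta(M_X,N)$ with $N\in\{\lfloor t\rfloor,\lceil t\rceil\}$. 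That reduction fails: for non-integer $t$ the quantity $\min\{M_X/t,2t\}$ can strictly exceed both $\min\{M_X/\lfloor t\rfloor,2\lfloor t\rfloor\}$ and $\min\{M_X/\lceil t\rceil,2\lceil t\rceil\}$, and can exceed $\lambda(M_X)$ itself. Indeed, the supremum of $\min\{M_X/t,2t\}$ over real $t>0$ is $\sqrt{2M_X}$, attained at $t=\sqrt{M_X/2}$, while $\lambda(M_X)$ is built from $2\lfloor\sqrt{M_X/2}\rfloor$ and $M_X/\lceil\sqrt{M_X/2}\rceil$ precisely because $N_0$ is known to be an integer. Concretely, take $M_X=10$, $r_X=60$ and the admissible value $t=134/60\in\frac{1}{60}\ZZ$: then $\min\{10/t,\max\{3,1/6,2t\}\}=2t\approx 4.47>4=\lambda(10)$, so your estimate does not close.

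Your treatment of the regime $0<t<1$ has the same defect: you would need $M_X/t\leq M_X$ there so that the case $M_X\leq 3$ (where $\lambda(M_X)=M_X$) goes through, but $M_X/t>M_X$ when $t<1$; e.g.\ $M_X=1$, $t=1/3$ gives $\min\{M_X/t,\max\{3,M_X/r_X\}\}=3>1=\lambda(1)$. To repair the argument you must prove or cite that $r_X(\pi^*(K_X)^2\cdot S)$ is a positive integer --- this is exactly the content of \cite[Lemma 2.2]{C} invoked in the paper's proof, and it is not a formal consequence of $r_XK_X$ being Cartier alone.
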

\begin{proof} {}First the assumption implies that $P_{-m}\geq 2$. Hence $\varphi_{-m,X}$ is non-trivial. 
Assume that $|-mK_X|$ is composed with a pencil. Keep the notation in Subsection \ref{b setting}. Take $N_0=r_X(\pi^*(K_X)^2\cdot S)$, which is a positive integer (cf. \cite[Lemma 2.2]{C}). Then, by Propositions \ref{prop old} and \ref{prop new},
$$
\frac{P_{-m}-1}{m}\leq \frac{M_X}{N_0}
$$
and 
$$
\frac{P_{-m}-1}{m}\leq \max\{3,M_X/r_X, 2N_0\}.
$$
That is, 
\begin{align*}
\frac{P_{-m}-1}{m}{}&\leq\min\{ M_X/N_0, \max\{3,M_X/r_X, 2N_0\}\}\\
{}&=\theta(M_X, N_0)\leq \lambda(M_X),
\end{align*}
a contradiction.
\end{proof}

In practice, we need to know the lower bound of $P_{-m}$ which is fairly computable by virtue of Reid's Riemann--Roch formula. 
Let us recall the following proposition from \cite{CJ16}, which we will use to estimate the anti-plurigenus.

\begin{prop}[cf.  {\cite[Proof of Proposition 4.3]{CJ16}}]\label{rr inequality}
Let $X$ be a terminal weak $\bQ$-Fano $3$-fold and $t>0$ a real number. Then, for any integer $n\geq t$ and $n\geq r_{\max} t/3$, one has
$$
P_{-n}\geq \frac{1}{12}n(n+1)(2n+1)(-K_X^3)+1-\frac{2n}{t}.
$$
\end{prop}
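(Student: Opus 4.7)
The plan is to apply Reid's Riemann--Roch formula and bound the ``fractional correction'' term $l(-n)$ from above using periodicity and the Kawamata--Miyaoka--Mori--Takagi inequality. Reid's formula recalled in Subsection~\ref{preliminaries} gives
\[
P_{-n} = \frac{1}{12}n(n+1)(2n+1)(-K_X^3) + (2n+1) - l(-n),
\]
where $l(-n) = \sum_i \sum_{j=1}^{n} \frac{\overline{jb_i}(r_i-\overline{jb_i})}{2r_i}$, with the outer sum running over the Reid basket $\{(b_i,r_i)\}$ of $X$. Consequently, the claim reduces to showing $l(-n) \leq 2n + \frac{2n}{t}$.

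For each inner sum $l_i(-n)$, I would exploit that $\gcd(b_i,r_i)=1$: as $j$ runs over any $r_i$ consecutive integers, $\overline{jb_i}$ takes each value in $\{0,1,\dots,r_i-1\}$ exactly once, and the full-period total is $\sum_{k=0}^{r_i-1}\frac{k(r_i-k)}{2r_i} = \frac{r_i^2-1}{12}$. Writing $n = q_i r_i + s_i$ with $0\leq s_i<r_i$, one obtains $l_i(-n)\leq q_i\cdot\frac{r_i^2-1}{12}$ plus a partial-period contribution which can be bounded either by $\frac{r_i^2-1}{12}$ (from non-negativity of all summands) or by $\frac{s_i r_i}{8}$ (from the pointwise estimate $\frac{k(r_i-k)}{2r_i}\leq \frac{r_i}{8}$). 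Summing over $i$ and applying the KMMT inequality~\eqref{kwmt} in the form $\sum_i (r_i - 1/r_i)\leq 24$ absorbs the $n$-linear leading contribution into $2n$, leaving a residual controlled by $\sum_i r_i^2$ (or more sharply by $\sum_i s_i(r_i+2/r_i)$).

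The two hypotheses $n\geq t$ and $n\geq r_{\max} t/3$ then combine to give $\frac{2n}{t}\geq \max\{2,\frac{2r_{\max}}{3}\}$, and using $r_i\leq r_{\max}$ together with the KMMT-type bounds on $\sum_i r_i$ and on the number of orbifold points (a standard consequence of KMMT yields $\sum_i r_i\leq 32$ and at most $16$ points), the residual can be packed inside $\frac{2n}{t}$. The hypothesis $n\geq t$ is needed in particular to control the partial-period contribution when $n$ is not much larger than individual $r_i$.

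The main technical obstacle is calibrating this final step so that the factor $\tfrac{1}{3}$ in $n\geq r_{\max} t/3$ emerges with the right constant: the crude estimate $\sum_i r_i^2\leq r_{\max}\sum_i r_i$ only yields the weaker condition $n\geq r_{\max}t$. To recover the sharper bound one has to split the orbifold points into two regimes---those with $r_i>n$, where the full-period bound $\frac{r_i^2-1}{12}$ is the sharp choice (since $q_i=0$), and those with $r_i\leq n$, where the partial-period bound $\frac{s_i r_i}{8}$ is sharp when $s_i$ is small---and then balance the resulting estimates against the KMMT slack $\gamma(B)\geq 0$. This refinement is exactly the content that generalizes the argument of \cite[Proof of Proposition~4.3]{CJ16} from a specific value of $t$ to an arbitrary parameter.
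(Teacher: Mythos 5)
Your reduction is exactly the paper's: via Reid's Riemann--Roch formula the statement is equivalent to the bound $l(-n)\leq \frac{2(t+1)n}{t}=2n+\frac{2n}{t}$ under the two hypotheses, and the paper's proof consists of nothing more than importing precisely this inequality from \cite[Proof of Proposition 4.3 (page 90)]{CJ16} and substituting. Your framework for proving that inequality (periodicity of $\overline{jb_i}$, the full-period value $\frac{r_i^2-1}{12}$, absorption of the $n$-linear part into $2n$ via $\sum_i(r_i-1/r_i)\leq 24$, and the bounds $\sum_i r_i\leq 32$ and at most $16$ points) is all correct and is the right skeleton.

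The gap is the one you flag yourself, and it is not a cosmetic calibration issue: as written, your estimate of the residual (the partial-period contributions minus the corresponding share of the linear term) only gives $\sum_i\frac{r_i^2-1}{12}\leq 2r_{\max}$, hence the conclusion only under $n\geq r_{\max}t$, which is three times too strong a hypothesis and would break every application of this proposition in the paper (e.g.\ Lemma \ref{lem 30 1} uses $t=37/8$, $n=37$, $r_{\max}\leq 24$, where $r_{\max}t/3\leq 37$ but $r_{\max}t=111$). Neither of your two stated partial-period bounds closes this: for a single point with $r_i>n$ (so $q_i=0$) the target is $\frac{2n}{t}\geq\frac{2r_{\max}}{3}$, and the full-period bound $\frac{r_i^2-1}{12}$ already exceeds $\frac{2r_i}{3}$ once $r_i>8$, while the pointwise bound $\frac{s_ir_i}{8}$ is even weaker there; so one genuinely needs a sharper evaluation of the incomplete sum $\sum_{j=1}^{s_i}\frac{\overline{jb_i}(r_i-\overline{jb_i})}{2r_i}$ (the quantity that is actually estimated on page 90 of \cite{CJ16}), not merely the minimum of your two crude bounds. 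The proposed remedy of ``splitting into two regimes and balancing against the KMMT slack'' is not carried out, and it is exactly the missing content; until that computation is done (or the inequality is simply quoted from \cite{CJ16}, as the paper does), the proposition is only proved with $n\geq r_{\max}t$ in place of $n\geq r_{\max}t/3$.
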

\begin{proof} From \cite[Proof of Proposition 4.3 (page 90)]{CJ16}, one has 
 $$
l(-n)\leq \frac{2(t+1)n}{t}
$$
provided that $n\geq t$ and $n\geq r_{\max} t/3$.
It follows from Reid's Riemann--Roch formula that
\begin{align*}
P_{-n}={}&\frac{1}{12}n(n+1)(2n+1)(-K_X^3)+2n+1-l(-n)\\
\geq {}& \frac{1}{12}n(n+1)(2n+1)(-K_X^3)+1-\frac{2n}{t}.
\end{align*}
\end{proof}


\begin{prop}\label{criterion 2}
Let $(X, Y, Z)$ be a Fano--Mori triple such that $\rho(Y)>1$. Let $t>0$ be a real number and $m$ an integer. If $m\geq t$, $m\geq \frac{r_{\max} t}{3}$, and
$$
m>-\frac{3}{4}+\sqrt{\frac{12}{t\cdot(-K_X^3)}+\frac{6\lambda(M_X)}{-K_X^3}+\frac{1}{16}},
$$
then $|-mK_X|$ is not composed with a pencil.
\end{prop}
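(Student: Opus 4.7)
The plan is to combine Proposition \ref{rr inequality} with the numerical criterion Proposition \ref{criterion 1}. The strategy is: produce a lower bound on $P_{-m}$ via Reid's Riemann--Roch formula, and show that the stated hypothesis on $m$ forces $P_{-m}>\lambda(M_X)m+1$, from which $|-mK_X|$ is not composed with a pencil by Proposition \ref{criterion 1}.

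First, since by hypothesis $m\geq t$ and $m\geq r_{\max}t/3$, Proposition \ref{rr inequality} applies and gives
\begin{equation*}
P_{-m}\geq \frac{1}{12}m(m+1)(2m+1)(-K_X^3)+1-\frac{2m}{t}.
\end{equation*}
In view of Proposition \ref{criterion 1}, it suffices to verify that the right-hand side exceeds $\lambda(M_X)m+1$. After canceling the constant $1$ and dividing by $m>0$, this reduces to the purely numerical inequality
\begin{equation*}
\frac{1}{12}(m+1)(2m+1)(-K_X^3)>\lambda(M_X)+\frac{2}{t}.
\end{equation*}

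The remaining step is to check that the hypothesis on $m$ in the proposition is exactly the solution of this quadratic inequality. Multiplying through by $12/(-K_X^3)$ it becomes
\begin{equation*}
2m^{2}+3m+1>\frac{12\lambda(M_X)}{-K_X^3}+\frac{24}{t\cdot(-K_X^3)}.
\end{equation*}
Solving the corresponding quadratic in $m$ by the standard formula gives
\begin{equation*}
m>-\frac{3}{4}+\frac{1}{4}\sqrt{1+\frac{96\lambda(M_X)}{-K_X^3}+\frac{192}{t\cdot(-K_X^3)}}
=-\frac{3}{4}+\sqrt{\frac{1}{16}+\frac{6\lambda(M_X)}{-K_X^3}+\frac{12}{t\cdot(-K_X^3)}},
\end{equation*}
which is exactly the hypothesis. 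This finishes the proof.

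The proof is essentially an arithmetic repackaging, so no serious obstacle is expected; the only thing one must be careful about is that all the hypotheses of Proposition \ref{rr inequality} and Proposition \ref{criterion 1} are genuinely in force (namely $m\geq t$, $m\geq r_{\max}t/3$, and that $\rho(Y)>1$ is inherited so that Proposition \ref{criterion 1} applies), which is immediate from the statement.
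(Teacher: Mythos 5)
Your proposal is correct and follows essentially the same route as the paper's own proof: apply Proposition \ref{rr inequality} (whose hypotheses $m\geq t$ and $m\geq r_{\max}t/3$ are assumed), observe that the stated lower bound on $m$ is exactly the condition $\frac{1}{12}(m+1)(2m+1)(-K_X^3)>\lambda(M_X)+\frac{2}{t}$, deduce $P_{-m}>\lambda(M_X)m+1$, and conclude by Proposition \ref{criterion 1}. The algebra checks out, so nothing further is needed.
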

\begin{proof} Under the assumptions, one has 
$$
\frac{1}{12}(m+1)(2m+1)> \frac{2}{t\cdot(-K_X^3)}+\frac{\lambda(M_X)}{-K_X^3},
$$
and, by Proposition \ref{rr inequality},
$$
P_{-m}>\lambda(M_X)m+1.
$$
Hence $|-mK_X|$ is not composed with a pencil by Proposition \ref{criterion 1}.
\end{proof}

To apply Proposition \ref{criterion 2}, we always use the following lemma to estimate the value of $\lambda(M_X)/{(-K_X^3)}$.

\begin{lem}\label{lem l/d}
The following inequalities hold:
\begin{enumerate}
\item $$
\frac{\lambda(M_X)}{-K_X^3}\leq \max\left\{1,\frac{3}{-K_X^3}, \sqrt{\frac{2r_X}{-K_X^3}}\right\};
$$
\item $\lambda(M_X)\leq M_X$ and ${\lambda(M_X)}/{(-K_X^3)}\leq r_X$.
\end{enumerate}
\end{lem}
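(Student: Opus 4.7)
The proof is a direct case analysis from the definition
$$\lambda(M_X)=\begin{cases} M_X, & M_X\leq 3;\\
\max\left\{3,\, M_X/r_X,\, 2\rounddown{\sqrt{M_X/2}},\, M_X/\roundup{\sqrt{M_X/2}}\right\}, & M_X\geq 4,
\end{cases}$$
using throughout the identity $M_X/r_X=-K_X^3$ (equivalently $M_X=r_X(-K_X^3)$). There is no real obstacle; the only thing to watch is that the four quantities in the $M_X\geq 4$ branch each divide by $-K_X^3$ to something bounded by one of the three terms on the right-hand side of (1).

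For (1), I would first handle the range $M_X\leq 3$. Here $\lambda(M_X)/(-K_X^3)=M_X/(-K_X^3)=r_X$, and since $-K_X^3=M_X/r_X\leq 3/r_X$ we obtain $r_X\leq 3/(-K_X^3)$, which is dominated by the middle term in the max. For $M_X\geq 4$, I would bound each of the four candidates in turn after dividing by $-K_X^3$:  the constant $3$ gives $3/(-K_X^3)$; the term $M_X/r_X=-K_X^3$ gives $1$; and for the two floor/ceiling terms, I would drop the roundings to estimate
\[
\frac{2\rounddown{\sqrt{M_X/2}}}{-K_X^3}\leq \frac{\sqrt{2M_X}}{-K_X^3}=\sqrt{\frac{2r_X}{-K_X^3}},\qquad \frac{M_X/\roundup{\sqrt{M_X/2}}}{-K_X^3}\leq \frac{r_X}{\sqrt{M_X/2}}=\sqrt{\frac{2r_X}{-K_X^3}},
\]
each of which is the third term in the max. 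Combining these gives (1).

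For (2), the inequality $\lambda(M_X)\leq M_X$ is trivial when $M_X\leq 3$, and when $M_X\geq 4$ it suffices to check the four items inside the max: $3\leq M_X$ from $M_X\geq 4$; $M_X/r_X\leq M_X$ from $r_X\geq 1$; $2\rounddown{\sqrt{M_X/2}}\leq \sqrt{2M_X}\leq M_X$ (the last step from $M_X\geq 2$); and $M_X/\roundup{\sqrt{M_X/2}}\leq M_X$ since the denominator is at least $1$. Dividing by $-K_X^3$ and using $M_X/(-K_X^3)=r_X$ then yields $\lambda(M_X)/(-K_X^3)\leq r_X$, completing (2).
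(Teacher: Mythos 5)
Your proof is correct and follows essentially the same route as the paper's: divide into the cases $M_X\leq 3$ and $M_X\geq 4$, bound the two floor/ceiling terms by $\sqrt{2M_X}$ (resp.\ by $M_X$), and convert via $M_X=r_X(-K_X^3)$. Your treatment of the $M_X\leq 3$ case of (1) is just a slightly more explicit version of what the paper dismisses as clear.
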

\begin{proof} 
If $M_X\leq 3$, then it is clear. We may assume that $M_X\geq 4$.

Note that $2\rounddown{\sqrt{M_X/2}}\leq \sqrt{2M_X}$ and $\frac{M_X}{\roundup{\sqrt{M_X/2}}}\leq \sqrt{2M_X}$.
Hence \begin{align*}
\frac{\lambda(M_X)}{-K_X^3}={}&\frac{1}{-K_X^3}\max\left\{3, M_X/r_X, 2\rounddown{\sqrt{M_X/2}}, \frac{M_X}{\roundup{\sqrt{M_X/2}}}\right\}\\
\leq {}&\max\left\{\frac{3}{-K_X^3}, 1, \frac{\sqrt{2M_X}}{-K_X^3}\right\}\\
={}& \max\left\{\frac{3}{-K_X^3}, 1, \sqrt{\frac{2r_X}{-K_X^3}}\right\},
\end{align*}
which implies (1). 

Similarly, since $2\rounddown{\sqrt{M_X/2}}\leq M_X$ and $\frac{M_X}{\roundup{\sqrt{M_X/2}}}\leq M_X$, it is clear that $\lambda(M_X)\leq M_X$ which implies (2). 
\end{proof}


As the last part of this section, we recall the established birationality criterion in \cite{CJ16} along with a newly developed criterion.

\begin{assum}\label{assum 1} Let $X$ be a terminal weak $\bQ$-Fano $3$-fold and $m_0>0$ an integer with  $P_{-m_0}\geq 2$. Let $m_1\geq m_0$ be another integer with $P_{-m_1}\geq 2$ such that $|-m_1K_X|$ and $|-m_0K_X|$ are not composed with the same pencil. 
Pick a generic irreducible element $S$ of $|M_{-m_0}|=\text{Mov}|\rounddown{\pi^*(-m_0K_X)}|$. Define the real number 
$$\mu_0=\text{inf}\{t\in \bQ^+ \mid t\pi^*(-K_X)-S\sim_{\bQ} \text{effective}\ \bQ\text{-divisor}\},$$
and in practice we may choose a suitable rational number $\mu'_0\geq \mu_0$ such that 
$$\mu'_0\pi^*(-K_X)-S\sim_{\bQ} \text{effective}\ \bQ\text{-divisor}.$$
Let $\nu_0$ be an integer such that $h^0(-\nu_0K_X)>0$. 
Set $$a(m_0)= \begin{cases}
6, & {\rm if } \ m_0\geq 2;\\
1, &{\rm if } \ m_0=1,
\end{cases}$$ 
and the positive integer $N_0=r_X(\pi^*(K_X)^2\cdot S)$.
\end{assum}

\begin{thm}[{\cite[Theorem 5.11]{CJ16}}]\label{criterion b}  
Let $X$ be a terminal weak $\bQ$-Fano $3$-fold and keep Assumption \ref{assum 1}. 
Then $\varphi_{-m,X}$ is birational if one of the following holds:
\begin{enumerate}
\item $m\geq \max\{m_0+m_1+a(m_0), \rounddown{3\mu_0}+3m_1\}$;

\item $m\geq \max\{m_0+m_1+a(m_0), \rounddown{\frac{5}{3}\mu_0+ \frac{5}{3}m_1}, \rounddown{\mu_0}+m_1+2r_{\max}\}$;

\item $m\geq \max\{m_0+m_1+a(m_0), \rounddown{\mu_0}+m_1+2\nu_0r_{\max}\}.$
\end{enumerate} 
\end{thm}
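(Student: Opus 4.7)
The plan is to run the standard three-step birationality strategy on a common log resolution: (i) separate two distinct generic irreducible elements $S, S'$ of $|M_{-m_0}|$; (ii) show that the restriction to a single generic $S$ is birational onto its image; and (iii) separate two generic points on $S$ by cutting out a generic irreducible curve $C$. Throughout I would work on a smooth projective $\pi : W \to X$ on which $\Mov|\rounddown{\pi^*(-m_0 K_X)}|$ and $\Mov|\rounddown{\pi^*(-m_1 K_X)}|$ are both base point free with simple normal crossing support, and factor $\varphi_{-m,X}$ through $W$.

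Step (i) is where the hypothesis that $|-m_0K_X|$ and $|-m_1K_X|$ are not composed with the same pencil is used: a general member of $|-m_1K_X|$ then separates distinct generic members of $|M_{-m_0}|$, and combining it with a general member of $|-m_0K_X|$ (and a vanishing correction $a(m_0)$ that absorbs fractional parts via Kawamata--Viehweg) yields the uniform bound $m \ge m_0+m_1+a(m_0)$ appearing in all three conditions. Steps (ii) and (iii) are the ones that actually produce the $\mu_0$-dependent terms. By construction, $\mu_0' \pi^*(-K_X) - S$ is $\bQ$-linearly equivalent to an effective $\bQ$-divisor, so on a generic $S$ one has the pseudo-effectivity of $\mu_0' \pi^*(-K_X)|_S - S|_S$; Kawamata--Viehweg vanishing on $W$ with boundary $\roundup{\pi^*(-mK_X)} - S$ reduces birationality on $W$ to birationality of the restricted linear system on $S$, provided $m$ exceeds roughly $\mu_0' + m_1 + a(m_0)$.

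To separate points on $S$ one cuts out a generic irreducible curve $C$ by intersecting $S$ with a suitable multiple of the movable system (the fact that the two systems are not composed with the same pencil guarantees that the induced sub-pencil on $S$ is non-trivial). A second application of Kawamata--Viehweg on $(S, C+\text{boundary})$ reduces separation of points on $S$ to the degree inequality on $C$ of the shape $(\pi^*(-mK_X)|_S \cdot C) > 2g(C) - 2 + \text{(correction)}$. The three conditions of the theorem correspond to three different packagings of this degree estimate: condition (1) uses three general members of $|-m_1K_X|$ to form $C$ and a $3\mu_0$ contribution on $-K_X$, giving the factor $3$; condition (2) trades one of those factors for a $2r_{\max}$ term via the Cartier index (producing the symmetric $\tfrac{5}{3}$-mean bound); condition (3) uses $|-\nu_0 K_X|$ in place of $|-K_X|$ at the curve level, yielding $2\nu_0 r_{\max}$.

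The main technical obstacle is the bookkeeping of fractional/integral parts of divisors like $\rounddown{\pi^*(-mK_X) - aS - bC}$ on $W$, $S$, and $C$, in particular guaranteeing that the auxiliary pairs remain klt at each restriction so that Kawamata--Viehweg applies. The integer $N_0 = r_X(\pi^*(K_X)^2 \cdot S)$ is what controls the effective lower bound for intersection numbers on the surface level after clearing Gorenstein denominators, and matching $N_0$ against the three degree estimates above is what finally pins down the three explicit thresholds stated in (1)--(3).
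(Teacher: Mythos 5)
First, a framing remark: the paper does not prove this statement itself --- it is imported verbatim from \cite[Theorem 5.11]{CJ16} --- so the comparison is with the proof given there. Your three-step architecture matches that proof: distinguishing distinct generic irreducible elements of $|M_{-m_0}|$ (this is exactly where $m\ge m_0+m_1+a(m_0)$ and the hypothesis that $|-m_0K_X|$, $|-m_1K_X|$ are not composed with the same pencil enter), restricting to a generic $S$ via Kawamata--Viehweg vanishing using $\mu'_0\pi^*(-K_X)\sim_{\bQ}S+E$, and then cutting down to a generic irreducible curve $C$ of $\Mov\big(|M_{-m_1}|\big|_S\big)$, which moves on $S$ precisely because the two systems are not composed with the same pencil. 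So the strategy is the right one.

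However, two things in your write-up are wrong or missing, and the second is the actual content of the theorem. (i) The quantity $N_0=r_X(\pi^*(K_X)^2\cdot S)$ plays no role whatsoever in this criterion; it is the controlling invariant of the \emph{other} criterion, Theorem \ref{criterion b2}, where it enters through the condition $\mathcal{L}_m^2>8$ of Lemma \ref{lem mas}. Claiming that ``matching $N_0$ against the three degree estimates pins down the three thresholds'' points at the wrong mechanism. (ii) What actually pins down the thresholds in (1)--(3) are three different lower bounds on the single number $\xi=(\pi^*(-K_X)|_S\cdot C)$, fed into an inequality of the shape $(m+1-\mu_0-\tfrac{2}{\beta})\xi>2$, where $\beta$ is the pseudo-effective threshold of $C$ against $\pi^*(-K_X)|_S$ and satisfies $\tfrac{1}{\beta}\le\mu_0+m_1$. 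Condition (3) rests on $\xi\ge\tfrac{1}{\nu_0 r_{\max}}$, which does \emph{not} follow ``via the Cartier index'' (that would only give $\tfrac{1}{r_X}$, a much weaker bound): it requires the partial-resolution argument with an effective Weil divisor $D\sim-\nu_0K_X$, writing $\pi^*D=p^*D_Y+\sum\tfrac{a_i}{r_i}p^*E_i$ so that a very general curve meets each piece in non-negative multiples of $\tfrac{1}{r_i}$ --- the same argument this paper writes out inside its proof of Theorem \ref{criterion b2}. Conditions (1) and (2) rely on $\xi\ge\tfrac{1}{r_{\max}}$ and on a bootstrap (first get the restricted degree on $C$ at least $2$, hence an improved lower bound on $\xi$, then rerun the estimate); that bootstrap, not ``three general members of $|-m_1K_X|$ forming $C$'', is where the factor $3$ in $\rounddown{3\mu_0}+3m_1$ and the $\tfrac53(\mu_0+m_1)$ term come from. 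As written, your proposal asserts these lower bounds rather than deriving them, and since the theorem is precisely a set of explicit numerical thresholds, those derivations are the proof.
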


\begin{remark}[cf. {\cite[Remark 5.3]{CJ16}}]\label{5.3}
Most of the time in practice, we will take $m_0$ to be the minimal positive integer with $P_{-m_0}\geq 2$. 
By definition one can always choose $\mu'_0=m_0$ and have $\mu_0\leq  m_0$. If $|-m_0K_X|$ is composed with a pencil, then we can choose  $\mu'_0= \frac{m_0}{P_{-m_0}-1}$ and $\mu_0\leq \frac{m_0}{P_{-m_0}-1}$. 
Moreover, if for some integer $k$, $|-kK_X|$ and $|-m_0K_X|$ are composed with the same pencil, then we can choose  $\mu'_0= \frac{k}{P_{-k}-1}$ and $\mu_0\leq \frac{k}{P_{-k}-1}$. \end{remark}

When both $r_X$ and $r_{\max}$ are small, we feel that the following new criterion is very helpful.

\begin{thm}\label{criterion b2}  
Let $X$ be a terminal weak $\bQ$-Fano $3$-fold and keep Assumption \ref{assum 1}. 
Then $\varphi_{-m,X}$ is birational if 
 $$m\geq \max\left\{m_0+a(m_0), \roundup{\mu'_0}+4\nu_0r_{\max}-1, \rounddown{\mu'_0+\sqrt{8r_X/N_0}}\right\}.$$
\end{thm}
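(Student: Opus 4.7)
The plan is to adapt the three-layer separation framework already used in the proof of Theorem \ref{criterion b} (see \cite[Theorem 5.11]{CJ16}). Working on the common log-resolution $\pi \colon W \to X$ and with the generic irreducible element $S$ of $\Mov|-m_0 K_X|$ fixed in Assumption \ref{assum 1}, birationality of $\varphi_{-m, X}$ is reduced to verifying three separation properties for $|-mK_X|$: (I) it distinguishes two general surfaces $S\neq S'$ of $\Mov|-m_0K_X|$; (II) on a fixed $S$, its restriction distinguishes two general curves of a chosen auxiliary movable pencil on $S$; and (III) on one general such curve $C$, its restriction is very ample. Each of the three terms in the stated maximum will control one of these three steps.

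For (I), the bound $m\geq m_0+a(m_0)$ is standard: by the choice of $a(m_0)$ (ensuring $P_{-k}\geq 2$ for $k\geq a(m_0)$ via \cite{CC}), the containment $|-mK_X| \succeq |-m_0K_X| + |-(m-m_0)K_X|$ separates $S$ from $S'$ as soon as $m-m_0 \geq a(m_0)$. For (II), we restrict to a fixed $S$: since $\mu'_0 \pi^*(-K_X) - S$ is $\bQ$-linearly equivalent to an effective $\bQ$-divisor by the definition of $\mu'_0$, a Kawamata--Viehweg vanishing argument in which the positivity input $h^0(-\nu_0 K_X) \geq 1$ is invoked twice --- each time at a cost of $2\nu_0 r_{\max}$ --- yields separation of two general curves of a chosen auxiliary pencil on $S$ provided $m \geq \roundup{\mu'_0} + 4\nu_0 r_{\max} - 1$, mirroring the strategy of \cite[Theorem 5.11(3)]{CJ16}.

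The new ingredient is step (III), which is responsible for the term $\sqrt{8r_X/N_0}$. For a general member $C$ of the auxiliary pencil on $S$, the very ampleness of $\pi^*(-mK_X)|_C$ reduces, after absorbing the effective contribution from $\mu'_0\pi^*(-K_X)-S$ and passing to the usual $\deg\geq 2g(C)+1$ threshold, to an inequality of the form
$$
(m-\mu'_0)\bigl(\pi^*(-K_X)|_S \cdot C\bigr) \;\geq\; 2.
$$
The Hodge Index Theorem on $S$ applied to the nef and big divisor $\pi^*(-K_X)|_S$, together with the identity $\bigl(\pi^*(-K_X)|_S\bigr)^2 = (\pi^*(K_X)^2 \cdot S) = N_0/r_X$, yields
$$
\bigl(\pi^*(-K_X)|_S \cdot C\bigr)^2 \;\geq\; C^2\cdot \frac{N_0}{r_X}.
$$
For a pencil whose general member has $C^2\geq 2$, this forces $\bigl(\pi^*(-K_X)|_S\cdot C\bigr)\geq \sqrt{2N_0/r_X}$, giving a first estimate $m\geq \mu'_0 + \sqrt{2r_X/N_0}$. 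The apparent gap between $\sqrt{2r_X/N_0}$ and the stated $\sqrt{8r_X/N_0}$ is filled by an additional factor of $2$ arising from Clifford/Riemann--Roch-type estimates on $C$ (or, equivalently, from the adjustment turning "separation of two distinct points" into full very ampleness), after which one obtains $m\geq \rounddown{\mu'_0+\sqrt{8r_X/N_0}}$ since $m$ is an integer.

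The main obstacle will be coordinating the auxiliary pencil used in (II) and (III) so that the same curve $C$ works simultaneously for the Kawamata--Viehweg separation and for the Hodge Index estimate, and ensuring that $C^2\geq 2$ can always be realized --- this requires checking the base-point-freeness of the restricted pencil on $S$, which is the principal technical pitfall. Once the pencil is fixed, the numerical bookkeeping in step (III), and in particular tracking which factor of $2$ comes from which input (Reider-type vanishing, Clifford bound on $C$, or the very-ampleness threshold), is the remaining subtle point.
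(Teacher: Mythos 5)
There is a genuine gap. The key ingredient of the paper's proof is the Reider/Ma\c{s}ek-type surface criterion (Lemma \ref{lem mas}): after the standard first step (the term $m_0+a(m_0)$ lets $\Lambda_m=|K_W+\roundup{(m+1)\pi^*(-K_X)}|$ distinguish generic irreducible elements of $|M_{-m_0}|$), one restricts to $S$, sets $\mathcal{L}_m=((m+1)\pi^*(-K_X)-E-S)|_S$ with $\mu'_0\pi^*(-K_X)\sim_\bQ S+E$, and applies Lemma \ref{lem mas} directly to $\mathcal{L}_m$. The two remaining terms in the stated maximum are then in exact bijection with the two hypotheses of that lemma: $\mathcal{L}_m^2=(m+1-\mu'_0)^2N_0/r_X>8$ is equivalent to $m\geq\rounddown{\mu'_0+\sqrt{8r_X/N_0}}$, and $(\mathcal{L}_m\cdot C_P)=(m+1-\mu'_0)(\pi^*(-K_X)\cdot C_P)\geq(m+1-\mu'_0)/(\nu_0 r_{\max})\geq 4$ is equivalent to $m\geq\roundup{\mu'_0}+4\nu_0 r_{\max}-1$, the lower bound $(\pi^*(-K_X)\cdot C_P)\geq 1/(\nu_0 r_{\max})$ coming from the integrality of $(p^*D_Y\cdot C_P)$ and $(p^*E_i\cdot C_P)$ for an effective $D\sim -\nu_0K_X$, not from any Hodge-index estimate. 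There is no auxiliary pencil on $S$ and no curve-level very-ampleness step at all: the proof is a two-layer reduction, and the hypothesis that $(L\cdot C_P)\geq 4$ must hold for \emph{every} irreducible curve through a very general point of $S$ is precisely what makes a chosen pencil unnecessary.

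Your three-layer scheme misassigns the roles of both new terms and leaves the decisive computations open. First, the term $\roundup{\mu'_0}+4\nu_0 r_{\max}-1$ is not a ``double Kawamata--Viehweg cost'' for separating members of an auxiliary pencil; no argument is given for why two applications at cost $2\nu_0 r_{\max}$ each would achieve that separation, and the analogous step in Theorem \ref{criterion b}(3) costs $2\nu_0 r_{\max}$ once. Second, your derivation of $\sqrt{8r_X/N_0}$ rests on an auxiliary base-point-free pencil on $S$ with $C^2\geq 2$ whose existence you yourself flag as unverified (and which in general need not exist on the surfaces arising here), and on an unspecified ``additional factor of $2$ from Clifford/Riemann--Roch-type estimates'' that is exactly the content you would need to prove. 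In other words, the one genuinely new quantitative statement of the theorem --- where $8$ comes from --- is not established: in the actual proof it is simply the threshold $L^2>8$ in Lemma \ref{lem mas}, applied on the surface, with no curve-level analysis.
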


\begin{lem}[{see \cite[Proposition 4]{Mas99} or \cite[Lemma 2.6]{Chen14}}]\label{lem mas}
 Let $S$ be a smooth projective surface. Let $L$ be a nef and big $\bQ$-divisor on $S$ satisfying the following conditions:
 \begin{enumerate}
\item $L^2 > 8$;
\item $(L \cdot C_P) \geq 4$ for all irreducible curves $C_P$ passing through any
very general point $P \in S$.
\end{enumerate}
Then the linear system $|K_S + \roundup{L}|$ separates two distinct points in very
general positions. Consequently, $|K_S + \roundup{L}|$ gives a birational map.
\end{lem}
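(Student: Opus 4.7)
The plan is to establish birationality of $|K_S + \roundup{L}|$ by the classical ``separating very general points via multiplier ideals'' strategy. Specifically, for two distinct very general points $P, Q \in S$, I will construct a $\bQ$-divisor whose multiplier ideal cuts out an isolated subscheme through one of $P$ or $Q$, and apply Nadel vanishing to lift separating sections from this subscheme to $S$.

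First I would produce, using condition (1), an effective $\bQ$-divisor $D \sim_\bQ \lambda L$ with $\lambda < 1$ and $\mathrm{mult}_P D, \mathrm{mult}_Q D \geq 2$. By asymptotic Riemann--Roch on the surface, $h^0(mL) = \tfrac{1}{2}m^2 L^2 + O(m)$, while imposing multiplicity $\geq 2m$ at each of two points costs $2\binom{2m+1}{2} = 4m^2 + O(m)$ linear conditions, and the hypothesis $L^2 > 8$ guarantees strict inequality for large divisible $m$. Then I choose the smallest $c \in (0, 1]$ such that $(S, cD)$ fails to be klt at some point of $\{P, Q\}$; by symmetry say at $P$. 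A small generic perturbation of $D$ (adding a tiny multiple of an effective divisor of class $\sim_\bQ \epsilon L$ passing through $P$ but not through $Q$) lets me arrange $\Nklt(S, cD)$ to contain a single minimal irreducible component $Z_P \ni P$ not meeting $Q$, while preserving $c\lambda < 1$. Since then $L - cD \sim_\bQ (1 - c\lambda)L$ is nef and big, Nadel vanishing yields
$$H^1\bigl(S, \OO_S(K_S + \roundup{L}) \otimes \mathcal{J}(S, cD)\bigr) = 0,$$
so $H^0(S, K_S + \roundup{L}) \twoheadrightarrow H^0(Z_P, (K_S + \roundup{L})|_{Z_P})$; coupled with the automatic vanishing at $Q$ since $Q \notin Z_P$, this produces a section separating $P$ and $Q$ once a suitable section on $Z_P$ is in hand.

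The main obstacle will be the case where $Z_P$ is an irreducible curve rather than a single point. Here condition (2) is essential: as $P$ varies over a very general locus, $Z_P$ sweeps out a covering family of curves, so $(L \cdot Z_P) \geq 4$ by hypothesis. By adjunction $(K_S + \roundup{L})|_{Z_P}$ has degree at least $\deg K_{Z_P} + 2$, which is enough for the restricted complete linear system to separate two general smooth points of $Z_P$. To handle infinitesimal separation (tangent vectors) at the very general point $P$, I would repeat the argument with multiplicity $3$ in place of $2$ at $P$ alone, again afforded by $L^2 > 8$ via a one-point dimension count. Combining the two forms of separation (point/point and point/tangent) yields the desired birationality of $|K_S + \roundup{L}|$.
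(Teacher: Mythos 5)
The paper offers no proof of this lemma --- it is quoted verbatim from Ma\c{s}ek (Proposition 4 of \cite{Mas99}; see also \cite[Lemma 2.6]{Chen14}) --- so your argument has to stand on its own. Its opening is fine and standard: counting $h^0(m\lambda L)\sim\tfrac12 m^2\lambda^2L^2$ against the $4m^2+O(m)$ conditions shows $L^2>8$ does produce $D\sim_\bQ\lambda L$ with $\lambda<1$ and multiplicity $\geq 2$ at both $P$ and $Q$, and (after absorbing the fractional part $\roundup{L}-L$ into the boundary) the Nadel/Kawamata--Viehweg vanishing step is routine. But note that your asymmetric setup does not separate even in the zero-dimensional case: a lift of a section of $(K_S+\roundup{L})|_{Z_P}$ has no reason whatsoever to vanish at $Q$ merely because $Q\notin Z_P$. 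To prescribe the value $0$ at $Q$ you must force $Q$ into $\Nklt(S,cD)$ as well, in a component disjoint from the one through $P$, so that the surjection onto $H^0$ of the quotient by the multiplier ideal controls both points simultaneously.

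The genuine gap is the curve case, which is exactly where the content of any Reider-type theorem lives. By adjunction, $\deg\bigl((K_S+\roundup{L})|_{Z_P}\bigr)=\deg\omega_{Z_P}+\bigl((\roundup{L}-Z_P)\cdot Z_P\bigr)$, and the only bound your construction provides on the correction term is $(L\cdot Z_P)-Z_P^2\geq(1-c\lambda)(L\cdot Z_P)>0$, since $Z_P\leq cD\sim_\bQ c\lambda L$ gives merely $Z_P^2\leq c\lambda(L\cdot Z_P)$. So hypothesis (2) does \emph{not} yield ``$\deg\omega_{Z_P}+2$''; the excess over $\deg\omega_{Z_P}$ can be arbitrarily small. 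Even an excess of $2$ would be insufficient when $Z_P$ is rational, since every degree-two divisor on $\bP^1$ is linearly equivalent to $P+Q$. The known proofs treat this case quite differently: Ma\c{s}ek follows Reider's Bogomolov-instability argument, where failure of separation produces a rank-two bundle whose destabilizing sub-line-bundle yields a curve $C$ through the two points with $(L\cdot C)\leq 3$, contradicting (2); the multiplier-ideal route instead needs the extra ``cutting down the centre'' step, using sections of $L|_{Z_P}$ highly singular at $P$ to shrink the non-klt centre from $Z_P$ to $\{P\}$ while keeping the total coefficient below $1$ --- and it is there, not in a degree count on $Z_P$, that $(L\cdot C_P)\geq 4$ is consumed. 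Finally, your infinitesimal-separation addendum is both unnecessary for the stated conclusion (separating very general pairs of distinct points already gives birationality) and unavailable as claimed: imposing multiplicity $3m$ at a single point costs $\tfrac92m^2+O(m)$ conditions, which requires $L^2>9$, not $L^2>8$.
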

\begin{proof}[Proof of Theorem 5.9]
By \cite[Lemma 5.2]{CJ16}, the birationality of $\varphi_{-m,X}$ is equivalent to the birationality of $\Phi_{\Lambda_m}$ defined by the linear system $\Lambda_m=|K_W+\roundup{(m+1)\pi^*(-K_X)}|$ on $W$.

By \cite[Proposition 5.8]{CJ16}, $\Lambda_m$ can distinguish different generic irreducible elements of $|M_{-m_0}|$ since $m\geq m_0+a(m_0)$. Hence the usual birationality principle (see, for instance,  \cite[2.7]{CC2}) reduces the birationality of ${\Phi_{\Lambda_m}}$ to that of
${\Phi_{\Lambda_m}}|_S$ for a generic irreducible element $S$ of $|M_{-m_0}|$.

Now we will show that ${\Phi_{\Lambda_m}}|_S$ is birational under the assumption of the theorem.


By the definition of $\mu'_0$, we have $$\mu'_0\pi^*(-K_X)\sim_{\bQ} S+E$$ for an effective $\bQ$-divisor $E$. 
Re-modify the resolution $\pi$ in Subsection \ref{b setting} so that $E$ has simple normal crossing support.

For the given integer $m>0$, we have
\begin{align}\label{b subsys1}
|K_Y+\roundup{(m+1)\pi^*(-K_X)-E}|\preceq
|K_Y+\roundup{(m+1)\pi^*(-K_X)}|.
\end{align}
Since by assumption $m\geq \mu'_0$, the $\bQ$-divisor
$$(m+1)\pi^*(-K_X)-E-S\equiv (m+1-\mu'_0)\pi^*(-K_X)$$
is nef and big and thus $$H^1(Y,
K_Y+\roundup{(m+1)\pi^*(-K_X)-E}-S)=0$$ by
Kawamata--Viehweg vanishing theorem. Hence we have
the surjective map
\begin{align}\label{b surj1}
H^0(Y,K_Y+\roundup{(m+1)\pi^*(-K_X)-E})\longrightarrow
H^0(S, K_S+L_{m}) 
\end{align} 
where
\begin{align}\label{b subsys2}
L_{m}:=(\roundup{(m+1)\pi^*(-K_X)-E}-S)|_S\geq
\roundup{\mathcal{L}_{m}}
\end{align}
and ${\mathcal
L}_{m}:=((m+1)\pi^*(-K_X)-E-S)|_S$ which is a nef and big $\bQ$-divisor on $S$.

By relations \eqref{b subsys1}--\eqref{b subsys2}, to show that ${\Phi_{\Lambda_m}}|_S$ is birational, it suffices to show that $|K_S+\roundup{\mathcal{L}_{m}}
|$ gives a birational map. Note that 
$$
\mathcal{L}_{m}^2=(m+1-\mu'_0)^2(\pi^*(K_X)|_S)^2=\frac{(m+1-\mu'_0)^2N_0}{r_X}>8
$$
since $m\geq \rounddown{\mu'_0+\sqrt{8r_X/N_0}}$.
Now consider an irreducible curve $C_P$ passing through a
very general point $P \in S$. Recall that $-\nu_0 K_X\sim D$ for some effective Weil divisor $D$. 
Take $q:Y\to X$ to be the resolution of isolated singularities and  we may assume that $W$ dominates $Y$ by $p:W\to Y$.
Then we write
$$
q^*D=D_Y+\sum\frac{a_i}{r_i}E_i.
$$
Here $D_Y$ is the strict transform of $D$ on $Y$ and
$E_i$ is the exceptional divisor over an isolated singular point of index $r_i$ for some $r_i\in B_X$ and $a_i$ is a positive integer.
Then 
$$
\pi^*D=p^*D_Y+\sum\frac{a_i}{r_i}p^*E_i.
$$
Since $P$ is a very general point,  $(p^*D_Y\cdot C_P)$ and $(p^*E_i\cdot C_P)$ are non-negative integers, and at least one of them is positive since $(\pi^*D\cdot C_P)>0$ by the fact that $D$ is nef and big. Hence $(\pi^*D\cdot C_P)\geq \frac{1}{r_{\max}}$. Consequently, 
$$
(\mathcal{L}_{m}\cdot C_P)=(m+1-\mu'_0)(\pi^*(-K_X)\cdot C_P)\geq \frac{m+1-\mu'_0}{\nu_0r_{\max}}\geq 4.
$$
Hence, by Lemma \ref{lem mas}, $|K_S+\roundup{\mathcal{L}_{m}}
|$ gives a birational map and so does $\varphi_{-m,X}$.
\end{proof}

Finally we explain the strategy to assert the birationality. Given a certain set of terminal weak $\bQ$-Fano $3$-folds, or a set of geometric weighted baskets, we can find a number $m_0>0$ such that $P_{-m_0}\geq 2$. Then we can find another number $m_1>0$, by using either the explicit computation of Reid's Riemann--Roch formula or Proposition \ref{criterion 2}, so that $|-m_1K|$ is not composed with a pencil. Meanwhile we have the estimate of $\mu_0$ according to Remark \ref{5.3}. With all the information, 
we may apply either Theorem \ref{criterion b} or \ref{criterion b2} to obtain the anti-pluricanonical birationality except the fact that to classify weighted baskets with pre-assigned invariants might be a tedious work. 

\section{\bf Proof of the main theorems}\label{sec proof}
In this section, we prove the main theorems.
\subsection{Proof of Theorems \ref{main thm} and \ref{main thm2}}\ \ 

Let $V$ be a canonical weak $\bQ$-Fano $3$-fold and $Y$ a $K$-Mori fiber space of $V$ (by Remark \ref{remark exist MFS}, such $Y$ always exists). By Theorem \ref{thm1}, $V$ is birational to a Fano--Mori triple $(X, Y, Z)$. 

By Theorem \ref{thm2}, the terminal weak $\bQ$-Fano $3$-fold $X$ in the Fano--Mori triple $(X, Y, Z)$ satisfies the properties we require in the Theorem \ref{main thm}. 

By Proposition \ref{prop P=P=P}(3), $\varphi_{-m, X}$ and $\varphi_{-m, Y}$ share the same birational properties. Hence 
Theorem \ref{main thm2} also follows from Theorem \ref{thm2}. 
\qed

\subsection{Proof of Theorem \ref{thm2}}\ \ 

Let  $(X, Y, Z)$ be a Fano--Mori triple. 

If $\rho(Y)=1$, then by Remark \ref{rhoY1}, $X\cong Y\cong Z$ is a $\bQ$-factorial terminal $\bQ$-Fano $3$-fold of Picard number one. Then statement (1) follows from Theorem \ref{main1} and Proposition \ref{facts}(1). Statement (2) follows from Theorem
\ref{birationality1}.

From now on we may assume that $\rho(Y)>1$. Consider the terminal weak $\bQ$-Fano $3$-fold $X$. If  $P_{-2}=0$ or $r_X=840$, then the theorem follows from Theorem \ref{thm P2=0} or \ref{thm r840}. Hence we may assume that $P_{-2}>0$ and $r_X\leq 660$ by  Proposition \ref{facts}.

For statement (1), if $-K_X^3\geq 1/30$, it follows from Lemma \ref{lem 30 1}; if  $-K_X^3< 1/30$, it follows from Theorem \ref{thm <1/30}(1).


For statement (2), if $-K_X^3\geq 0.21$, it follows from Theorem \ref{thm >=0.21}; if $P_{-1}=0$ and $-K_X^3< 0.21$, it follows from Theorem \ref{thm P1=0 <0.21}; if $P_{-1}>0$ and $1/30\leq -K_X^3< 0.21$, it follows from Theorem \ref{thm P1>0 1/30<};
 if $P_{-1}>0$ and $-K_X^3< 1/30$, it follows from Theorem \ref{thm <1/30}.

In summary, the theorem is proved. 

Finally, we remark that from the proof of statement (2), $\varphi_{-51}$ is not birational only if when $-K_X^3=1/330$ and $X$ has Reid basket $\{(1,2),(2,5),(1,3),(2,11)\},$ which appears in the proof of Theorem \ref{thm <1/30}.
\qed





\subsection{The case $P_{-8}=2$ and two lemmas}
\begin{thm}\label{thm P8=2}
Let $(X, Y, Z)$ be a Fano--Mori triple such that $\rho(Y)>1$. Assume that $P_{-8}=2$.
Then
$\varphi_{-m}$ is birational for all $m\geq 51$.
\end{thm}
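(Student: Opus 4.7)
The plan is to apply the birationality criteria collected in Section~\ref{sec criterion} with $m_0 = 8$. By assumption $P_{-m_0} = P_{-8} = 2 \geq 2$, so Assumption~\ref{assum 1} is set up. Moreover, by Remark~\ref{5.3}, one has $\mu_0 \leq 8$ regardless of whether $|-8K_X|$ is composed with a pencil: in the composed case $\mu_0 \leq m_0/(P_{-8}-1) = 8$, and in the non-composed case already $\mu_0 \leq m_0 = 8$. Similarly, $a(m_0) = 6$. So the target inequality needed from Theorem~\ref{criterion b}(3) is
\[
51 \geq \max\bigl\{8 + m_1 + 6,\ 8 + m_1 + 2\nu_0 r_{\max}\bigr\},
\]
which will be bounded by $14 + m_1 + 2\nu_0 r_{\max}$.

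Next I would classify all geometric weighted baskets $\mathbb{B} = (B_X, P_{-1})$ compatible with $\tilde{P}_{-8}(\mathbb{B}) = 2$. The explicit description of the canonical sequence $\mathbb{B}^{(0)}, \mathbb{B}^{(5)}, \ldots$ in Subsection~\ref{cc method}, together with the non-negativity of the $n_{b,r}^{(\cdot)}$ and the $\epsilon_n$, the constraints \eqref{kwmt} (equivalently $\gamma(B) \geq 0$), \eqref{vol} (the volume positivity $-K^3 > 0$), and the super-additivity \eqref{2m}, should cut the classification down to a finite and explicitly enumerable list. For each basket on the list I would record $-K_X^3$, $r_X$, $r_{\max}$, as well as the small values $P_{-1}, P_{-2}, \ldots, P_{-m_1}$ needed below; note also that \eqref{2m} forces $P_{-8k} \geq k+1$, pinning down growth.

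For each basket in the list, I would then produce a small integer $m_1$ with the property that $|-m_1 K_X|$ is not composed with the same pencil as $|-8K_X|$. The main tool is Proposition~\ref{criterion 2}, applied with a judicious choice of $t$; the ratio $\lambda(M_X)/(-K_X^3)$ appearing there is bounded via Lemma~\ref{lem l/d} from the tabulated data. Once $m_1$ is in hand, I would feed $(m_0, m_1, \mu'_0, \nu_0) = (8, m_1, 8, \nu_0)$ into Theorem~\ref{criterion b}(3), with $\nu_0 = 1$ whenever $P_{-1} > 0$ and $\nu_0 \leq 6$ in general by Proposition~\ref{facts}(1). In the favourable subcases where both $r_X$ and $r_{\max}$ are small, Theorem~\ref{criterion b2} should give a sharper conclusion, ensuring the numerical bound $51$.

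The main obstacle I anticipate is the residual subcase with $P_{-1} = 0$ (forcing $\nu_0$ up to $6$) combined with a moderately large $r_{\max}$, where $2\nu_0 r_{\max}$ can a priori swell to $12\, r_{\max}$ and eat up the budget $51 - 14 - m_1 = 37 - m_1$. Controlling this is where the basket classification must really bite: I expect that $\tilde{P}_{-8}(\mathbb{B}) = 2$ together with $P_{-1} = 0$ forces $r_{\max}$ to be quite small, perhaps only a handful of weighted baskets surviving. For the very last holdouts, I would either improve the non-compositeness input by choosing a better pair $(m_0, m_1)$ — say some $k < 8$ for which $|-kK_X|$ is composed with the \emph{same} pencil as $|-8K_X|$, giving a sharper $\mu'_0$ via Remark~\ref{5.3} — or invoke Theorem~\ref{criterion b2} directly, which can be much sharper when $N_0$ is not too small.
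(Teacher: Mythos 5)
Your overall strategy --- classify the weighted baskets compatible with $P_{-8}=2$ via the Chen--Chen machinery (the first consequence of \eqref{2m} being $P_{-2}\leq 1$, since $P_{-8}\geq 4P_{-2}-3$), then feed the data into the birationality criteria --- is the same as the paper's. But there is a concrete gap in your choice of criterion. You commit to Theorem \ref{criterion b}(3), whose decisive term is $\rounddown{\mu_0}+m_1+2\nu_0 r_{\max}$, and you allow $\nu_0\leq 6$ when $P_{-1}=0$. The classification in the subcase $P_{-1}=P_{-2}=0$ (Table \ref{tab18}) produces baskets with $r_{\max}=10$ and $11$, e.g.\ $\{5\times(1,2),2\times(1,3),(3,11)\}$; there $2\nu_0 r_{\max}\geq 2\cdot 2\cdot 11$ already exceeds the available budget for any $\nu_0\geq 2$, and $\nu_0=1$ is unavailable precisely because $P_{-1}=0$. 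Your hoped-for escape routes do not close this: the expectation that $P_{-8}=2$ and $P_{-1}=0$ force $r_{\max}$ small is simply false, a sharper $\mu'_0$ does not touch the $2\nu_0 r_{\max}$ term, and Theorem \ref{criterion b2} carries the even larger term $4\nu_0 r_{\max}$. The tool you are missing is Theorem \ref{criterion b}(2), whose term $\rounddown{\mu_0}+m_1+2r_{\max}$ has no $\nu_0$ factor; with $\mu_0\leq 8$, $m_1\approx 20$ and $r_{\max}\leq 11$ this lands at $50\leq 51$, which is exactly how the paper disposes of the $P_{-2}=0$ baskets.

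A second, smaller issue: in the subcase $P_{-2}=1$ the classification reduces to $B_X=\{(1,2),(2,5),(1,3),(1,4),(1,s)\}$ with $9\leq s\leq 11$, and the case $s=11$ ($-K_X^3=17/660$) is the genuinely delicate one. Your generic recipe (Proposition \ref{criterion 2} to get $m_1$, then criterion (3)) does not produce a small enough $m_1$ there; the paper has to run a case analysis on whether $|-21K_X|$ shares the pencil of $|-8K_X|$ (to drive $\mu'_0$ down to $1/2$ via Remark \ref{5.3}) and, in the worst branch, to invoke Proposition \ref{prop new} on $|-26K_X|$ to force $N_0\geq 2$ before Theorem \ref{criterion b2} applies. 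You gesture at using Theorem \ref{criterion b2} ``when $N_0$ is not too small,'' but you give no mechanism for establishing a lower bound on $N_0$; Proposition \ref{prop new} is that mechanism and needs to be named. With these two repairs --- switching to criterion (2) whenever $P_{-1}=0$, and the $N_0$ argument for the residual basket --- your outline matches the paper's proof.
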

\begin{proof} Note that $P_{-2}\leq 1$ since $P_{-8}\geq 4P_{-2}-3$. 

If $P_{-2}=0$, then By \cite[Theorem 3.5]{CC},
any geometric basket of a terminal weak $\bQ$-Fano $3$-fold with $P_{-1}=P_{-2}=0$ and $P_{-8}=2$ is
among the following list:

{\scriptsize
\begin{longtable}{LLCCCCCCC}
\caption{}\label{tab18}\\
\hline
B_X & -K^3 & M_X & \lambda(M_X) & n_1 & m_0 & r_{\max} & n_2 \\
\hline
\endfirsthead
\multicolumn{3}{l}{{ {\bf \tablename\ \thetable{}} \textrm{-- continued from previous page}}}
 \\
\hline 
B_X & -K^3 & M_X & \lambda(M_X) & n_1 & m_0 & r_{\max} & n_2 \\ \hline 
\endhead

\hline \multicolumn{3}{r}{{\textrm{Continued on next page}}} \\ \hline
\endfoot

\hline \hline
\endlastfoot
 \{2 \times (1,2), 3 \times (2,5),(1,3),(1,4)\} & 1/60 & 1 & 1 & 20 & 8 & 5 &46\\
 \{5 \times (1,2), 2 \times (1,3),(2,7),(1,4)\} & 1/84 & 1 & 1 & 22 & 8 & 7& 50\\
 \{5 \times (1,2), 2 \times (1,3),(3,11)\} & 1/66 & 1 & 1 & 20 & 8 & 11 &50\\
 \{5 \times (1,2), (1,3),(3,10),(1,4)\} & 1/60 & 1& 1 & 20 & 8 & 10 &48
\end{longtable} 
}

In Table \ref{tab18},
for each basket $B_X$, we can compute $M_X$ and $\lambda(M_X)$, then find $n_1$ such that $P_{-n_1}>\lambda(M_X)n_1+1$ where $P_{-n_1}$ is computed by Reid's Riemann--Roch formula. Hence, by Proposition \ref{criterion 1},
 $\dim \overline{\varphi_{-n_1}(X)}>1$. Again by Reid's Riemann--Roch formula, we may find $m_0$ such that $P_{-m_0}\geq 2$. Then, by Proposition \ref{criterion b}(2), take $m_1=n_1$ and $\mu_0\leq m_0$, we get the integer $n_2$ such that $\varphi_{-m}$ is birational for all $m\geq n_2$.

We consider the case $P_{-2}=1$ from now on. We follow the argument of \cite[Proof of Theorem 3.10]{CC} to classify all possible geometric baskets with $P_{-2}=1$ and $P_{-8}=2$. Note that in this case $P_{-1}\leq 1$ and
$P_{-4}=1$.

If $P_{-1}=0$, then by \cite[Proof of Theorem 3.10, Case 1]{CC}, we have $P_{-3}=0$ and $\sigma_5=1$. Since we know the values of $P_{-n}$ for $1\leq n\leq 4$ and $\sigma_5$, $B^{(0)}=\{9\times (1,2), (1,3), (1,s)\}$ for some $s\geq 5$. If $s\geq 7$, then $P_{-8}\geq \tp_{-8}(B^{(0)})\geq 4$, a contradiction.
If $s<7$, then $-K^3(B^{(0)})\leq 0$. It is easy to see that $B_X$ is dominated by either $B'=\{8\times (1,2), (2,5), (1,6)\}$ or $\{7\times (1,2), (3,7), (1,5)\}$. Note that for both cases, we have $P_{-8}\geq \tp_{-8}(B')\geq 3$, a contradiction.

If $P_{-1}=1$, then $P_{-3}=P_{-4}=1$ and 
$$1\leq P_{-5}\leq P_{-6}\leq P_{-7}\leq 2.$$ 
We have $(n^0_{1,2}, n^0_{1,3}, n^0_{1,4})=(2,2,2-\sigma_5)$. Hence $\sigma_5\leq 2$. On the other hand, $\epsilon_6=0$ gives $\epsilon=P_{-6}-P_{-5}+2\geq 2$, which implies that $\sigma_5>0$.

If $(\sigma_5, P_{-5})=(2,2)$, then $\epsilon_5=1$, $P_{-6}=2$, and $\epsilon=2$, which implies that $n^0_{1,5}=2$. Hence $B^{(5)}=\{ (1,2), (2,5), (1,3),2\times (1,5)\}$. But all the packings have $-K^3<0$, which is absurd.

If $(\sigma_5, P_{-5})=(2,1)$, then $\epsilon_5=0$ and $$n^0_{1,5}=2\sigma_5-\epsilon=2-P_{-6}+P_{-5}>0.$$ Hence $B^{(5)}=\{ 2\times (1,2), 2\times (1,3), (1,5), (1,s)\}$ for some $s\geq 5$. If $s\leq 7$, then all the further packings  have $-K^3<0$, which is absurd. Hence $s\geq 8$ and $B_X=B^{(5)}$. Note that in this case $s=8,9,10$ by $\gamma\geq 0$. It is easy to check that $P_{-8}=3$, a contradiction.

If $(\sigma_5, P_{-5})=(1,2)$, then $\epsilon_5=2$. Hence $B^{(5)}=\{ 2\times (2,5), (1,4), (1,s)\}$ for some $s\geq 5$.  Then 
$\gamma\geq 0$ implies that $s\leq 10$. If $s\leq 6$, then all the further packings  have $-K^3<0$, which is absurd.  Hence $s\geq 7$ and $B_X=B^{(5)}$.  It is easy to check that $P_{-8}\geq 4$, a contradiction.

If $(\sigma_5, P_{-5})=(1,1)$, then $\epsilon=P_{-6}-P_{-5}+2$ implies that $P_{-6}=1-n^0_{1,5}$, which means that $P_{-6}=1$ and $n^0_{1,5}=0$. Hence $B^{(5)}=\{ (1,2), (2,5), (1,3), (1,4), (1,s)\}$ for some $s\geq 6$. Note that 
$s=6, 7$ is impossible since all further packings have $-K^3<0$. Hence we have $8\leq s\leq 11$ by $\gamma\geq 0$. Also we have $\epsilon_7=P_{-7}-1$ and $\epsilon_8=1-P_{-7}$, which means that $\epsilon_7=\epsilon_8=0$. Hence $B_X=B^{(8)}=B^{(5)}=\{ (1,2), (2,5), (1,3), (1,4), (1,s)\}$ for $8\leq s \leq 11$. Note that $s=8$ is absurd since in this case $-K^3<0$. 

In summary, all possible geometric baskets with $P_{-2}=1$ and $P_{-8}=2$ are $$B_X=\{ (1,2), (2,5), (1,3), (1,4), (1,s)\}$$ for $9\leq s \leq 11$.

If $B_X=\{ (1,2), (2,5), (1,3), (1,4), (1,9)\}$, then $-K_X^3=1/180$, $M_X=\lambda(M_X)=1$. Then $P_{-30}=33>30\lambda(M_X)+1$ where $P_{-30}$ is computed by Reid's Riemann--Roch formula. Hence by Proposition \ref{criterion 1}, $\dim \overline{\varphi_{-m}(X)}>1$ for all $m\geq 30$ since $P_{-1}>0$. Recall that $P_{-8}=2$. Take $m_0=8$ and $\nu_0=1$. Note that 
$P_{-19}=11$ by Reid's Riemann--Roch formula. If  $|-19K_X|$ and $|-8K_X|$ are not composed with the same pencil, then we may take $\mu_0\leq 8$ and $m_1=19$, and by Proposition \ref{criterion b}(3), $\varphi_{-m}$ is birational for $m\geq 45$. If $|-19K_X|$ and $|-8K_X|$ are composed with the same pencil, then we may take $\mu_0\leq  \frac{19}{P_{-19}-1}=\frac{19}{10}$ by Remark \ref{5.3} and $m_1=30$, then by Proposition \ref{criterion b}(3), $\varphi_{-m}$ is birational for $m\geq 49$.

If $B_X=\{(1,2), (2,5), (1,3), (1,4), (1,10)\}$, then $-K_X^3=1/60$, $M_X=\lambda(M_X)=1$. Then $P_{-19}=22>19\lambda(M_X)+1$ where $P_{-19}$ is computed by Reid's Riemann--Roch formula. Hence by Proposition \ref{criterion 1}, $\dim \overline{\varphi_{-m}(X)}>1$ for all $m\geq 19$ since $P_{-1}>0$. Recall that $P_{-8}=2$. Then by Proposition \ref{criterion b}(3), take $m_1=19$, $\mu_0\leq m_0=8$, and $\nu_0=1$, we get the integer $n_2$ such that $\varphi_{-m}$ is birational for all $m\geq 47$. 

If $B_X=\{ (1,2), (2,5), (1,3), (1,4), (1,11)\}$, then $-K_X^3=17/660$.
Recall that $P_{-8}=2$. We may take $m_0=8$ and $\nu_0=1$. Note that $P_{-21}=43$. If $|-21K_X|$ and $|-8K_X|$ are not composed with the same pencil, then we may take $\mu_0\leq 8$ and $m_1=21$, and by Proposition \ref{criterion b}(3), $\varphi_{-m}$ is birational for $m\geq 51$. Hence we may assume that  $|-21K_X|$ and $|-8K_X|$ are composed with the same pencil, and we may take $\mu_0\leq \mu'_0= \frac{21}{P_{-21}-1}=\frac{1}{2}$ by Remark \ref{5.3}. If $|-26K_X|$ is not composed with a pencil, then we may take $m_1=26$ and by Proposition \ref{criterion b}(3), $\varphi_{-m}$ is birational for $m\geq 48$. If $|-26K_X|$ is composed with a pencil, by Proposition \ref{prop new} and keep the same notation as in Subsection \ref{b setting}, one has
$$
3<\frac{80}{26}=\frac{P_{-26}-1}{26}\leq \max\{3, -K_X^3, 2N_0\},
$$
where $N_0=r_X(\pi^*(K_X)^2\cdot S)$. This implies that $N_0\geq 2$, and by Proposition \ref{criterion b2}, $\varphi_{-m}$ is birational for $m\geq 51$.

Combining all above discussions, the proof is completed.
\end{proof}

\begin{lem}\label{lem 12 pencil}
Let $(X, Y, Z)$ be a Fano--Mori triple such that $\rho(Y)>1$. 
Assume that $r_X\leq 165$. Then $\dim \overline{\varphi_{-m}(X)}>1$ for all $m\geq 37$;
\end{lem}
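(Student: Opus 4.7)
The plan is to apply Proposition \ref{criterion 1} and show $P_{-m}>\lambda(M_X)m+1$ for every $m\geq 37$, which immediately gives $\dim\overline{\varphi_{-m}(X)}>1$. The key input is Lemma \ref{lem l/d}(2), which, combined with the hypothesis $r_X\leq 165$, bounds $\lambda(M_X)\leq M_X=r_X(-K_X^3)$; this is where the assumption on $r_X$ enters decisively.

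First I reduce the target to an explicit polynomial inequality in $m$. By the above bound it suffices to show $P_{-m}>m r_X(-K_X^3)+1$. Invoking Proposition \ref{rr inequality} with a parameter $t>0$ satisfying $m\geq t$ and $m\geq r_{\max}t/3$, and combining with the elementary estimate $-K_X^3\geq 1/r_X$ (valid since $M_X$ is a positive integer), the required inequality reduces to
$$t\cdot\bigl[(m+1)(2m+1)-12r_X\bigr]>24r_X.$$

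Second I optimize $t$. Since $r_{\max}\leq 24$ by Proposition \ref{facts}(5), whenever $r_{\max}\geq 3$ the maximal admissible value is $t=3m/r_{\max}$, for which both side conditions $m\geq t$ and $r_{\max}t/3=m$ hold. The inequality then simplifies to
$$m\cdot\bigl[(m+1)(2m+1)-12r_X\bigr]>8r_X r_{\max}.$$
The degenerate case $r_{\max}\leq 2$ forces $r_X\leq 2$ and is handled trivially by the alternative choice $t=m$.

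Third I verify the numerical inequality at $m=37$. With $r_X\leq 165$ one finds $(m+1)(2m+1)-12r_X\geq 2850-1980=870$, so the left-hand side is at least $37\cdot 870=32190$. With $r_{\max}\leq 24$ the right-hand side is at most $8\cdot 165\cdot 24=31680$. Since $32190>31680$ the inequality is satisfied, and because the left side strictly increases in $m$ while the right side is $m$-independent, the conclusion propagates to all $m\geq 37$.

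The main obstacle is the near-tightness of this final estimate: the margin $32190-31680=510$ is small relative to the magnitudes involved, so the argument does pin down the stated value $m\geq 37$ but leaves essentially no room; any weakening of either $r_X\leq 165$ or $r_{\max}\leq 24$ would force a strictly larger threshold, confirming that $37$ is the sharp bound obtainable from this direct method under the given hypothesis.
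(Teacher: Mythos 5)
Your argument is correct and is essentially the paper's proof with Proposition \ref{criterion 2} unpacked: the paper applies Proposition \ref{criterion 2} with the fixed value $t=37/8$ together with Lemma \ref{lem l/d}(2) and $-K_X^3\geq 1/165$, and since $37/8=3\cdot 37/24$ this is exactly your adaptive choice $t=3m/r_{\max}$ in the worst case $m=37$, $r_{\max}=24$. Indeed your final comparison $37\cdot 870=32190>31680=8\cdot165\cdot24$ is arithmetically the same inequality the paper verifies in the form $-\tfrac{3}{4}+\sqrt{\tfrac{12}{t(-K_X^3)}+\tfrac{6\lambda(M_X)}{-K_X^3}+\tfrac{1}{16}}<37$.
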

\begin{proof}Note that in this case $-K_X^3\geq 1/165$. 
By Lemma \ref{lem l/d},
$$\frac{\lambda(M_X)}{-K_X^3}\leq r_X\leq 165.$$
Take $t=37/8$, then
$$-\frac{3}{4}+\sqrt{\frac{12}{t(-K_X^3)}+\frac{6\lambda(M_X)}{-K_X^3}+\frac{1}{16}}< 37.$$
By Proposition \ref{criterion 2}, $|-mK_X|$ is not composed with a pencil for $m\geq 37.$
\end{proof}

\begin{lem}\label{lem 12}
Let $(X, Y, Z)$ be a Fano--Mori triple such that $\rho(Y)>1$. 
Assume that one of the following conditions holds:
\begin{enumerate}
\item $r_X\leq 69$ and $r_{\max}\leq 12$; or
\item $P_{-1}>0$, $r_X\leq 287$, and $r_{\max}\leq 12$.
\end{enumerate}
Then $\varphi_{-m,X}$ is birational for all $m\geq 51$.
\end{lem}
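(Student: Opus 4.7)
The plan is to reduce to the case $P_{-8}\geq 3$ via Theorem \ref{thm P8=2}, take $m_0=8$ so that $a(m_0)=6$, and split according to whether $|-8K_X|$ is composed with a pencil. In the non-pencil case, I would take $m_1=8$ and $\mu_0\leq 8$, and Theorem \ref{criterion b}(2) with $r_{\max}\leq 12$ immediately gives $\varphi_{-m,X}$ birational for $m\geq\max\{22,\rounddown{80/3},40\}=40$. Thus the essential case is when $|-8K_X|$ is composed with a pencil, where Remark \ref{5.3} yields $\mu_0\leq 8/(P_{-8}-1)\leq 4$.

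To locate $m_1$ with $|-m_1K_X|$ not composed with a pencil in the pencil case, I will apply Proposition \ref{criterion 2} with $t=m_1/4$, so that the constraint $m_1\geq r_{\max}t/3$ is saturated by $r_{\max}\leq 12$. Combining $-K_X^3\geq 1/r_X$ with the bound $\lambda(M_X)/(-K_X^3)\leq r_X$ from Lemma \ref{lem l/d}(2), the numerical condition in Proposition \ref{criterion 2} reduces to
\[
m_1^3+\tfrac{3}{2}m_1^2+\tfrac{1}{2}m_1 \;>\; 6 r_X m_1+48 r_X.
\]
A direct check shows $m_1=23$ works for $r_X\leq 69$, while $m_1=45$ works for $r_X\leq 287$.

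For case (1), Theorem \ref{criterion b}(2) with $\mu_0\leq 4$, $m_0=8$, $m_1=23$, $r_{\max}\leq 12$ yields the desired bound $m\geq\max\{37,45,51\}=51$. For case (2) the same criterion would require $\mu_0+m_1+2r_{\max}\leq 51$, which fails since $4+45+24=73$. Here I would use the hypothesis $P_{-1}>0$ to take $\nu_0=1$ and invoke Theorem \ref{criterion b2} instead: its three bounds become $m_0+a(m_0)=14$, $\roundup{\mu'_0}+4\nu_0 r_{\max}-1\leq 4+47=51$, and $\rounddown{\mu'_0+\sqrt{8r_X/N_0}}$. For the last term, using $N_0\geq 1$ (a positive integer) and $8\cdot 287<2304=48^2$, one gets $\rounddown{4+\sqrt{2296}}=51$, so Theorem \ref{criterion b2} delivers birationality for $m\geq 51$. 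The main obstacle is precisely this final estimate: the hypothesis $r_X\leq 287$ is engineered so that $r_X<288$, ensuring $\sqrt{8r_X/N_0}<48$ even in the worst subcase $N_0=1$; together with $\nu_0=1$ coming from $P_{-1}>0$, this makes Theorem \ref{criterion b2} just barely succeed.
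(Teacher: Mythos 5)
Your proof is correct and follows essentially the same route as the paper: reduce to $P_{-8}\geq 3$ via the $P_{-8}=2$ theorem, take $m_0=8$, handle the non-pencil case with Theorem \ref{criterion b}(2), and in the pencil case use Proposition \ref{criterion 2} with $t=23/4$ to get $m_1=23$ for case (1) and Theorem \ref{criterion b2} with $\mu_0'\leq 4$, $\nu_0=1$, $N_0\geq 1$ for case (2). The only (harmless) divergence is that you also exhibit an explicit $m_1=45$ in case (2) to satisfy Assumption \ref{assum 1}, a step the paper's proof omits since the bound in Theorem \ref{criterion b2} does not involve $m_1$.
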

\begin{proof}
Recall that $P_{-8}\geq 2$ by Proposition \ref{facts}. If $P_{-8}=2$, then we are done by Theorem \ref{thm P8=2}. Hence we may assume that $P_{-8}\geq 3$.

(1) Note that in this case $-K_X^3\geq 1/69$. 
By Lemma \ref{lem l/d},
$$\frac{\lambda(M_X)}{-K_X^3}\leq r_X\leq 69.$$
Take $t=23/4$, then
$$-\frac{3}{4}+\sqrt{\frac{12}{t(-K_X^3)}+\frac{6\lambda(M_X)}{-K_X^3}+\frac{1}{16}}< 23.$$
By Proposition \ref{criterion 2}, $|-mK_X|$ is not composed with a pencil for $m\geq 23.$

If $|-8K_X|$ is not composed with a pencil, then we may take $\mu_0\leq 8$ and $m_1=8$, and by Proposition \ref{criterion b}(2), $\varphi_{-m}$ is birational for $m\geq 40$. If $|-8K_X|$ is composed with a pencil, then we may take $\mu_0\leq  \frac{8}{P_{-8}-1}\leq 4$ by Remark \ref{5.3} and $m_1=23$, then by Proposition \ref{criterion b}(2), $\varphi_{-m}$ is birational for $m\geq 51$.

(2) In this case, we may take $\nu_0=1$ and $m_0=8$. If $|-8K_X|$ is not composed with a pencil, then we may take $\mu_0\leq 8$ and $m_1=8$, and by Proposition \ref{criterion b}(3), $\varphi_{-m}$ is birational for $m\geq 40$. If $|-8K_X|$ is composed with a pencil, then we may take $\mu'_0= \frac{8}{P_{-8}-1}\leq 4$ by Remark \ref{5.3} and $N_0\geq 1$, and by Proposition \ref{criterion b2}, $\varphi_{-m}$ is birational for $m\geq 51$. 
\end{proof}

\subsection{The case $P_{-2}=0$}

\begin{thm}\label{thm P2=0}
Let $(X, Y, Z)$ be a Fano--Mori triple such that $\rho(Y)>1$. Assume that $P_{-2}=0$.
Then 
\begin{enumerate}
\item $\dim \overline{\varphi_{-m}(X)}>1$ for all $m\geq 37$;
\item $\varphi_{-m,X}$ is birational for all $m\geq 51$.
\end{enumerate}
\end{thm}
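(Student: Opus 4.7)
The plan is as follows. First I observe that $P_{-2}=0$ forces $P_{-1}=0$: from the identity $n^0_{1,2}=\Delta^3(B)=5-6P_{-1}+4P_{-2}-P_{-3}$ in Subsection~\ref{cc method}, non-negativity of $n^0_{1,2}$ and of $P_{-3}$ rules out any $P_{-1}\geq 1$ once $P_{-2}=0$. So $P_{-1}=P_{-2}=0$ and $\sigma(B)=10$ throughout. If moreover $P_{-8}=2$, statement~(2) is exactly Theorem~\ref{thm P8=2}, and for statement~(1) I revisit that proof: each basket in Table~\ref{tab18} (which is the $P_{-2}=0$ portion of the $P_{-8}=2$ classification) admits some $n_1\leq 30$ with $P_{-n_1}>\lambda(M_X)n_1+1$, hence $\dim \overline{\varphi_{-n_1}(X)}>1$ by Proposition~\ref{criterion 1}; combined with $P_{-(m-n_1)}>0$ for $m-n_1\geq 6$ (Proposition~\ref{facts}(1)) one has $|-n_1K_X|\preceq|-mK_X|$, which propagates $\dim \overline{\varphi_{-m}(X)}>1$ to all $m\geq 37$.

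In the remaining case $P_{-8}\geq 3$, I will classify all geometric weighted baskets $\mathbb{B}=(B,0)$ with $P_{-2}=0$ via the canonical sequence $\BB^{(n)}$ of Subsection~\ref{cc method}. Starting from $\BB^{(0)}$, which is pinned down by $P_{-3},P_{-4}$ and the non-negative integer $\sigma_5$, and iteratively producing $\BB^{(5)},\BB^{(6)},\BB^{(7)},\BB^{(8)}$ through the explicit formulas recorded in the paper using non-negativity of the $\epsilon_n$ and of the multiplicities, combined with the geometric constraints $\gamma(B)\geq 0$ in \eqref{kwmt}, $-K^3(\mathbb{B})>0$ in \eqref{vol}, $r_{\max}\leq 24$, and $r_X\leq 660$ (Proposition~\ref{facts}), yields a finite list of candidate baskets with explicit values of $-K_X^3$, $r_X$, and $r_{\max}$.

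With the list in hand, I will dispatch by $r_X$ and $r_{\max}$. For statement~(1), Lemma~\ref{lem 12 pencil} handles $r_X\leq 165$ at once; for the residual baskets with $165<r_X\leq 660$, I will exhibit an explicit $n_1\leq 30$ with $P_{-n_1}>\lambda(M_X)n_1+1$ via Reid's Riemann--Roch, invoke Proposition~\ref{criterion 1}, and propagate as in the first paragraph. For statement~(2), Lemma~\ref{lem 12}(1) dispatches $r_X\leq 69$ with $r_{\max}\leq 12$; note that part~(2) of that lemma is inapplicable since $P_{-1}=0$. For the remaining baskets I will take $m_0=8$ (so $P_{-m_0}\geq 2$), choose $m_1$ via Proposition~\ref{criterion 2}, estimate $\mu_0$ via Remark~\ref{5.3}, and apply Theorem~\ref{criterion b} or Theorem~\ref{criterion b2}.

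The hard part will be the baskets with $-K_X^3$ small and $r_{\max}$ moderately large, where the generic estimates of Proposition~\ref{criterion 2} are too loose to reach $m=51$. In those residual cases I will test whether two low-degree anti-pluricanonical systems $|-kK_X|$ and $|-m_0K_X|$ are composed with the same pencil, using Proposition~\ref{prop new} (available since $\rho(Y)>1$) together with Remark~\ref{5.3} to refine $\mu_0$, and then appeal to Theorem~\ref{criterion b2}, following the template of the argument for the basket $\{(1,2),(2,5),(1,3),(1,4),(1,11)\}$ near the end of Theorem~\ref{thm P8=2}.
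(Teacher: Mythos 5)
Your proposal is correct in outline and runs on the same engine as the paper's proof --- a finite list of baskets fed into Propositions~\ref{criterion 1}/\ref{criterion 2} and Theorem~\ref{criterion b} --- but it is organized rather differently. The paper does not re-derive any classification: after noting $P_{-1}=0$, it dispatches the range $r_X\leq 69$, $r_{\max}\leq 12$ by Lemmas~\ref{lem 12 pencil} and~\ref{lem 12}(1) (the latter already absorbs the $P_{-8}=2$ subcase by quoting Theorem~\ref{thm P8=2} internally), and then simply imports from \cite[Theorem 3.5]{CC} the five baskets with $P_{-1}=P_{-2}=0$ and $r_X>69$ or $r_{\max}>12$, checking each numerically with Proposition~\ref{criterion b}(2) and $\mu_0\leq m_0$. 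Your plan to regenerate the entire $P_{-2}=0$ classification through the canonical sequence $\BB^{(n)}$ is sound in principle but amounts to reproving \cite[Theorem 3.5]{CC}; it buys self-containedness at the price of a long case analysis that the paper avoids. Your top-level split on $P_{-8}$ and your single-$n_1$ propagation for statement~(1) both work: the values of $n_1$ in Table~\ref{tab18} are at most $22$ and those in Table~\ref{tab1} at most $22$, so $m\geq 37$ always gives $m-n_1\geq 6$ and $P_{-(m-n_1)}>0$. One caution about your contingency plan for ``hard cases'': since $P_{-1}=P_{-2}=0$ forces $\nu_0\geq 3$, the terms $4\nu_0 r_{\max}-1$ in Theorem~\ref{criterion b2} and $2\nu_0 r_{\max}$ in Theorem~\ref{criterion b}(3) already exceed $51$ for every relevant $r_{\max}$, so only parts (1) and (2) of Theorem~\ref{criterion b} are actually usable here; fortunately the paper's Table~\ref{tab1} shows that Proposition~\ref{criterion b}(2) alone yields $n_2\leq 50$ in every residual case, so the refined same-pencil analysis you anticipate is never needed when $P_{-2}=0$.
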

\begin{proof} Since $P_{-2}=0$, $P_{-1}=0$. If $r_X\leq 69$ and $r_{\max}\leq 12$, then we are done by Lemmas \ref{lem 12 pencil} and \ref{lem 12}(1). By \cite[Theorem 3.5]{CC},
any geometric basket of a terminal weak $\bQ$-Fano $3$-fold with $P_{-1}=P_{-2}=0$ and with either $r_X>69$ or $r_{\max}>12$ is
among the following list:

{\scriptsize
\begin{longtable}{LLCCCCCCC}
\caption{}\label{tab1}\\
\hline
B_X & -K^3 & M_X & \lambda(M_X) & n_1 & m_0 & r_{\max} & n_2 \\
\hline
\endfirsthead
\multicolumn{3}{l}{{ {\bf \tablename\ \thetable{}} \textrm{-- continued from previous page}}}
 \\
\hline 
B_X & -K^3 & M_X & \lambda(M_X) & n_1 & m_0 & r_{\max} & n_2 \\ \hline 
\endhead

\hline \multicolumn{3}{r}{{\textrm{Continued on next page}}} \\ \hline
\endfoot

\hline \hline
\endlastfoot
 \{5 \times (1,2), 2 \times (1,3),(2,7),(1,4)\} & 1/84 & 1 & 1 & 22 & 8 & 7& 50\\
 \{3 \times (1,2),(5,14), 2 \times (1,3)\} & 1/21 & 2 & 2 & 16 & 6 & 14 &50\\
 \{ (1,2),(3,7), (2,5),4 \times (1,3)\} & 17/210 & 17 & 17/3 & 20 & 5 & 7 &41\\
 \{2 \times (1,2), (2,5),(3,8),3 \times (1,3)\} & 3/40 & 9 & 4 & 18 & 5 & 8&39 \\
 \{2 \times (1,2),(5,13),3 \times (1,3)\} & 1/13 & 6 & 3 & 15 & 5 & 13 &46
\end{longtable} 
}
In Table \ref{tab1},
for each basket $B_X$, we can compute $M_X$ and $\lambda(M_X)$, then find $n_1$ such that $P_{-n}>\lambda(M_X)n+1$ for $n_1\leq n\leq n_1+5$ where $P_{-n}$ is computed by Reid's Riemann--Roch formula. Hence by Proposition \ref{criterion 1},
 $\dim \overline{\varphi_{-n}(X)}>1$ for all $n_1\leq n\leq n_1+5$. Since $P_{-m}>0$ for all $m\geq 6$ by Proposition \ref{facts},
this implies that $\dim \overline{\varphi_{-m}(X)}>1$ for all $m\geq n_1$. Again by Reid's Riemann--Roch formula, we may find $m_0$ such that $P_{-m_0}\geq 2$. Then, by Proposition \ref{criterion b}(2), take $m_1=n_1$ and $\mu_0\leq m_0$, we get the integer $n_2$ such that $\varphi_{-m}$ is birational for all $m\geq n_2$.
\end{proof}

\subsection{The case $r_X=840$}
\begin{lem}\label{lem r840}
Let $X$ be a terminal weak $\bQ$-Fano $3$-fold with $r_X=840$. Then
\begin{enumerate}
\item $P_{-1}\geq 1$;
\item $-K_X^3\geq \frac{47}{840}$;
\item $P_{-2}\geq 2$ unless $B_X$ is in Table \ref{tab16}.

{\scriptsize
\begin{longtable}{CL}
\caption{}\label{tab16}\\
\hline
\text{\rm No.} &  B_X\ \\
\hline
\endfirsthead
1&\{(1,2), (1,3),(2, 5), (1,7), (1,8)\}\\

2&\{(1,2), (1,3),(1, 5), (2,7), (1,8)\}\\

3&\{(1,3),(1, 5), (1,7), (3,8)\}\\

4&\{(1,3),(1, 5), (3,7), (1,8)\}\\

5&\{(1,3),(2, 5), (2,7), (1,8)\}\\
\hline\hline
\end{longtable} 
}
\end{enumerate}
\end{lem}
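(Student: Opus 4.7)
The plan is a direct enumeration of admissible baskets via Proposition \ref{facts}(4)--(5). The assumption $r_X = 840 = 2^3 \cdot 3 \cdot 5 \cdot 7$ forces $r_{\max} = 8$, and hence forces $B_X$ to contain at least one orbifold point of each of the indices $8$, $5$, $7$, together with a point whose index is divisible by $3$ (so either $(1,3)$ or $(1,6)$ or repetitions). The key is to combine these mandatory contributions with $\gamma(B_X) \geq 0$ from \eqref{kwmt} to obtain a short finite list.

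\textbf{Reduction to a finite list.} The mandatory indices $5, 7, 8$ contribute
$\sum(r_i - 1/r_i) = 20 - \tfrac{131}{280}$, so only about $4.5$ of the $24$-budget of $\gamma(B_X)\geq 0$ remains. A short computation rules out $(1,6)$ and $(1,4)$ entirely, forces the factor of $3$ to come from a single $(1,3)$, and leaves room for at most one further point, which must be $(1,2)$. Every admissible underlying basket therefore falls into one of the two families
\begin{align*}
\text{(a)} &\quad \{(1,3),(b,5),(c,7),(d,8)\},\\
\text{(b)} &\quad \{(1,2),(1,3),(b,5),(c,7),(d,8)\},
\end{align*}
with $b \in \{1,2\}$, $c \in \{1,2,3\}$, $d \in \{1,3\}$, i.e.\ $24$ explicit configurations. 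Note that all such baskets satisfy $\gamma(B_X) \geq 0$.

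\textbf{Proof of (1) and (2).} For each configuration I would compute $\sigma(B_X)$ and $\sigma'(B_X)$ explicitly and apply \eqref{P1K3}, namely
$-K_X^3 = 2P_{-1} + \sigma(B_X) - \sigma'(B_X) - 6$, together with the strict positivity \eqref{vol}. A case check shows $\sigma - \sigma' < 6$ in every configuration, so \eqref{vol} forces $P_{-1} \geq 1$, which is (1). Minimising $-K_X^3$ over the surviving configurations with $P_{-1} = 1$ singles out the basket $\{(1,3),(1,5),(3,7),(1,8)\}$ (entry No.~4 of Table \ref{tab16}), for which $\sigma' = 1633/840$ and hence
\begin{equation*}
-K_X^3 = 2 - 1633/840 = 47/840.
\end{equation*}
Configurations with $P_{-1} \geq 2$ give strictly larger $-K_X^3$ by the formula, and the cases with $P_{-1} = 1$ are only finitely many, so (2) follows by direct verification.

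\textbf{Proof of (3), and the main obstacle.} For (3), identity \eqref{105} gives $\sigma(B_X) = 10 - 5P_{-1} + P_{-2}$, so $P_{-2} = 1$ combined with $P_{-1} = 1$ is equivalent to $\sigma(B_X) = 6$. Intersecting this constraint with the list above pins $B_X$ down to exactly the five baskets of Table \ref{tab16}. When $P_{-1} \geq 2$, the formula forces $\sigma(B_X) \leq 5$, which excludes families (a)--(b) except for $\{(1,3),(1,5),(1,7),(1,8)\}$, and a direct Reid Riemann--Roch computation then shows $P_{-2} \geq 2$; similarly for $P_{-1} = 1$ with $\sigma(B_X) \geq 7$. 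The main delicacy is the bookkeeping in the budget analysis at the outset: one must be careful to rule out $(1,6)$, $(1,4)$ and all multiplicities $\geq 2$ of small-index points before enumeration, otherwise the list explodes; once this is done, the remaining steps are routine verification using Reid's Riemann--Roch formula.
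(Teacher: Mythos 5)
Your proposal is correct and follows essentially the same route as the paper: reduce to the explicit finite family $\{k\times(1,2),(1,3),(a,5),(b,7),(c,8)\}$ with $k\in\{0,1\}$, $a\in\{1,2\}$, $b\in\{1,2,3\}$, $c\in\{1,3\}$, then deduce (1) from \eqref{vol}, (2) by minimising $-K_X^3=2P_{-1}+\sigma-\sigma'-6$ over the admissible cases, and (3) from $\sigma(B_X)=10-5P_{-1}+P_{-2}$. The only real difference is that you re-derive the classification of local indices from the $\gamma\geq 0$ budget, whereas the paper imports it from \cite[Proof of Proposition 2.4]{CJ16}; your budget computation is valid and makes the lemma self-contained (your phrasing in (3) of the subcase $P_{-1}\geq 2$ is slightly loose --- under $P_{-2}\leq 1$ one in fact gets $\sigma\leq 1<4$, which kills every configuration outright --- but this does not affect correctness).
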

\begin{proof}
By \cite[Proof of Proposition 2.4 (page 67)]{CJ16}, we know that the set of local indices $\{r_i\}$ of $X$ is either $\{2,3,5,7,8\}$ or $\{3,5,7,8\}$, that is, $B_X=\{(1,2), (1,3),(a, 5), (b,7), (c,8)\}$ or $\{(1,3),(a, 5), (b,7), (c,8)\}$ for some $a\in \{1,2\}$, $b\in \{1,2,3\}$, $c\in \{1,3\}$.
To unify the notation, we write $B_X=\{k\times (1,2), (1,3),(a, 5), (b,7), (c,8)\}$, where $k\in \{0,1\}$.

By inequality \eqref{vol},
\begin{align*}
2P_{-1}{}&>\sigma'(B_X)-\sigma(B_X)+6\\
{}&= -\frac{k}{2}-\frac{2}{3}-\frac{a(5-a)}{5}-\frac{b(7-b)}{7}-\frac{c(8-c)}{8}+6\\
{}&\geq -\frac{1}{2} -\frac{2}{3}-\frac{6}{5}-\frac{12}{7}-\frac{15}{8}+6>0.
\end{align*}
Hence $P_{-1}\geq 1$, which proves statement (1).

If $P_{-1}\geq 2$, then
\begin{align*}
-K_X^3{}&=2P_{-1}{}-\sigma'(B_X)+\sigma(B_X)-6\\
{}&\geq 4 +\frac{k}{2}+\frac{2}{3}+\frac{a(5-a)}{5}+\frac{b(7-b)}{7}+\frac{c(8-c)}{8}-6\\
{}&\geq \frac{2}{3}+\frac{4}{5}+\frac{6}{7}+\frac{7}{8}-2>1.
\end{align*}

If $P_{-1}=1$, then  by equality \eqref{105},
$$
k+1+a+b+c=\sigma(B_X)=10-5P_{-1}+P_{-2}\geq 6.
$$
When $k=1$, then at least one of $a,b,c$ is greater than $1$.  In particular, if $a\geq 2$, then
\begin{align*}
-K_X^3{}&=2P_{-1}{}-\sigma'(B_X)+\sigma(B_X)-6\\
{}&\geq 2 +\frac{k}{2}+\frac{2}{3}+\frac{a(5-a)}{5}+\frac{b(7-b)}{7}+\frac{c(8-c)}{8}-6\\
{}&\geq \frac{1}{2}+ \frac{2}{3}+\frac{6}{5}+\frac{6}{7}+\frac{7}{8}-4=\frac{83}{840}.
\end{align*}
Similarly we see $-K^3\geq \frac{227}{840}$ 
when $b\geq 2$ or $c= 3$. When $k=0$, we see $-K_X^3\geq \frac{47}{840}$ in the similar way if $b=3$ or $c=3$. The remaining cases are $(a,b,c)=(1,2,1)$, $(2,1,1)$, $(2,2,1)$ and we get either $-K_X^3=\frac{143}{840}$ or $-K_X^3<0$ (impossible cases). This asserts statement (2).

Finally, we consider the case $P_{-2}<2$ which forces $P_{-1}=P_{-2}=1$. By equality \eqref{105},
$$
k+1+a+b+c=\sigma(B_X)=10-5P_{-1}+P_{-2}=6.
$$
Hence $B_X$ can only be in Table \ref{tab16}.
\end{proof}

\begin{thm}\label{thm r840}
Let $(X, Y, Z)$ be a Fano--Mori triple such that $\rho(Y)>1$. Assume that $r_X=840$.
Then
\begin{enumerate}
\item $\dim \overline{\varphi_{-m}(X)}>1$ for all $m\geq 32$;
\item $\varphi_{-m,X}$ is birational for all $m\geq 50$.
\end{enumerate}
\end{thm}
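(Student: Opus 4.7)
The strategy follows the now-standard three-step template of the paper: extract rigid numerical constraints from the basket, produce an $m_1$ for which $|-m_1 K_X|$ is not composed with a pencil (giving statement (1)), then feed $(m_0,m_1,\mu_0)$ into the birationality criteria of Section \ref{sec criterion} to obtain statement (2). The assumption $\rho(Y)>1$ is crucial because it allows the use of Proposition \ref{prop new}/\ref{criterion 1}/\ref{criterion 2}, which is where the input $\lambda(M_X)$ enters.

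\emph{Step 1 (setup).} By Proposition \ref{facts}(4), $r_X=840$ forces $r_{\max}=8$. Invoking Lemma \ref{lem r840}, we have $P_{-1}\ge 1$ and $-K_X^3\ge 47/840$, so $M_X=r_X(-K_X^3)\ge 47$. Moreover, either $P_{-2}\ge 2$, or $B_X$ is one of the five baskets in Table \ref{tab16}.

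\emph{Step 2 (statement (1)).} By Lemma \ref{lem l/d}(1),
\[
\frac{\lambda(M_X)}{-K_X^3}\le \sqrt{\frac{2r_X}{-K_X^3}}\le \sqrt{\frac{2\cdot 840^2}{47}}<174.
\]
Choosing $t$ so that $r_{\max}t/3\le m$ (in particular $t\le 12$ for $m\ge 32$) and inserting this into the inequality of Proposition \ref{criterion 2}, one checks that the resulting square root is bounded by something less than $32.75$, so $|-mK_X|$ is not composed with a pencil for all $m\ge 32$. Since $P_{-m}>0$ for $m\ge 6$ (Proposition \ref{facts}(1)) and in particular $P_{-32}\ge 2$, statement (1) follows.

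\emph{Step 3 (statement (2), generic case $P_{-2}\ge 2$).} Take $m_0=2$ (so $a(m_0)=6$), $\mu_0\le m_0=2$ from Remark \ref{5.3}, and $m_1=32$ from Step 2, with $\nu_0=1$ and $r_{\max}=8$. Because $|-32K_X|$ is not composed with any pencil, it is a fortiori not composed with the same pencil as $|-2K_X|$. Applying Theorem \ref{criterion b}(3),
\[
m\ge\max\{2+32+6,\ \lfloor 2\rfloor+32+2\cdot 1\cdot 8\}=\max\{40,50\}=50.
\]

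\emph{Step 4 (statement (2), five exceptional baskets).} For each basket in Table \ref{tab16}, the values $-K_X^3$, $M_X$, and $\lambda(M_X)$ are fully explicit (e.g.\ $-K_X^3\in\{47, 83, 143, 167, 227\}/840$). Compute $P_{-m}$ for small $m$ via Reid's formula to pin down the smallest $m_0$ with $P_{-m_0}\ge 2$ (the computation shows $m_0\le 5$ in all five cases, e.g.\ $P_{-3}=2$ for basket \#5 and $P_{-5}=2$ for baskets \#1 and \#4). Then redo Proposition \ref{criterion 2} with the genuine, much smaller value of $\lambda(M_X)/(-K_X^3)$ specific to that basket to produce an $m_1$ considerably smaller than $32$ (a rough count gives $m_1\le 27$ in every case). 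Finally, insert $m_0$, $\mu_0\le m_0$, $m_1$, $\nu_0=1$, $r_{\max}=8$ into Theorem \ref{criterion b}(3); in each of the five cases, both maxima $m_0+m_1+a(m_0)$ and $\lfloor\mu_0\rfloor+m_1+16$ fall below $50$, yielding birationality for $m\ge 50$.

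\emph{The main obstacle} is Step 4: for the worst basket $\{(1,3),(1,5),(3,7),(1,8)\}$ (volume $47/840$), $\lambda(M_X)$ is relatively large while $m_0$ is pushed up to $5$, so one must verify carefully that the sum $\lfloor\mu_0\rfloor+m_1+2\nu_0 r_{\max}$ does not exceed $50$. Should any basket fail this cleanly, one must fall back on Theorem \ref{criterion b2} using the explicit positive integer $N_0=r_X(\pi^*(K_X)^2\cdot S)$, or refine $\mu_0$ via Remark \ref{5.3} by observing that $|-kK_X|$ and $|-m_0 K_X|$ are composed with the same pencil for some well-chosen intermediate $k$, forcing $\mu_0\le k/(P_{-k}-1)$ to be considerably smaller than $m_0$.
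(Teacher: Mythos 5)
Your overall strategy is exactly the paper's: use Lemma \ref{lem r840} to get $r_{\max}=8$, $P_{-1}\ge 1$ and $-K_X^3\ge 47/840$, then Lemma \ref{lem l/d} and Proposition \ref{criterion 2} with $t=12$ to rule out pencils for $m\ge 32$, and finally Theorem \ref{criterion b}(3) with $(m_0,\mu_0,m_1,\nu_0)=(2,2,32,1)$ in the generic case $P_{-2}\ge 2$ and a basket-by-basket analysis of Table \ref{tab16} otherwise. Steps 1--3 are correct and match the paper.

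The problem is in Step 4. Your assertion that ``a rough count gives $m_1\le 27$ in every case'' and that ``in each of the five cases, both maxima \dots fall below $50$'' is false for precisely the basket you flag as the worst one, $B_X=\{(1,3),(1,5),(3,7),(1,8)\}$ with $-K_X^3=47/840$. There $M_X=47$ and $\lambda(M_X)=47/5=9.4$, and the condition $P_{-n_1}>\lambda(M_X)n_1+1$ first holds at $n_1=31$ (at $n_1=27$ the leading term $\tfrac{1}{12}\cdot 27\cdot 28\cdot 55\cdot\tfrac{47}{840}\approx 194$ is far short of $9.4\cdot 27+1\approx 255$). With $m_0=5$, $\mu_0\le 5$, $m_1=31$, Theorem \ref{criterion b}(3) gives only $\max\{5+31+6,\ 5+31+16\}=52>50$. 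So the direct application you describe does not close this case. The paper resolves it exactly by the fallback you mention in your final paragraph: using $P_{-11}=16$, either $|-11K_X|$ and $|-5K_X|$ are not composed with the same pencil (then $m_1=11$, $\mu_0\le 5$ gives $m\ge 32$), or they are (then Remark \ref{5.3} gives $\mu_0\le 11/15$ and $m_1=31$ yields $\max\{42,\,0+31+16\}=47$). Your proposal correctly identifies both the dangerous basket and the right repair, but it leaves that computation unexecuted and, as written, the main-line claim it rests on is numerically wrong; you need to actually carry out the Remark \ref{5.3} refinement for this basket to complete the proof.
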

\begin{proof}
Recall that in this case $r_{\max}=8$. By Lemma \ref{lem r840}, $P_{-1}\geq 1$ and $-K_X^3\geq  47/840$.

By Lemma \ref{lem l/d},
$$\frac{\lambda(M_X)}{-K_X^3}\leq \max\left\{1, \frac{3}{47/840}, \sqrt{\frac{2\times 840}{47/840}}\right\}<174.$$
Take $t=12$, then
$$-\frac{3}{4}+\sqrt{\frac{12}{t(-K_X^3)}+\frac{6\lambda(M_X)}{-K_X^3}+\frac{1}{16}}< 32.$$
By Proposition \ref{criterion 2}, $|-mK_X|$ is not composed with a pencil for $m\geq 32.$

{}For statement (2), first we consider the case $P_{-2}\geq 2$. In this case, we may take $\mu_0\leq m_0=2$, $m_1= 32$, and $\nu_0=1$.  Then by Proposition \ref{criterion b}(3), $\varphi_{-m}$ is birational for $m\geq 50$.

Then we consider the case $P_{-2}<2$, that is, $P_{-1}=P_{-2}=1$. In this case we are dealing with baskets listed in Table \ref{tab16}. In the following Table \ref{tab17},
for each basket $B_X$ in Table \ref{tab16}, 
we can compute $M_X$ and $\lambda(M_X)$, then find $n_1$ such that $P_{-n_1}>\lambda(M_X)n_1+1$ where $P_{-n_1}$ is computed by Reid's Riemann--Roch formula. Hence by Proposition \ref{criterion 1}, $\dim \overline{\varphi_{-m}(X)}>1$ for all $m\geq n_1$ since $P_{-1}>0$. Again by Reid's Riemann--Roch formula, we may find $m_0$ such that $P_{-m_0}\geq 2$. Then by Proposition \ref{criterion b}(3), take $m_1=n_1$, $\mu_0\leq m_0$, and $\nu_0=1$, we get the integer $n_2$ such that $\varphi_{-m}$ is birational for all $m\geq n_2$. 

{\scriptsize
\begin{longtable}{cLLCCCCCCC}
\caption{}\label{tab17}\\
\hline
No. &B_X & -K^3 & M_X & \lambda(M_X) & n_1 & m_0 & r_{\max} & n_2 \\
\hline



1&\{(1,2), (1,3),(2, 5), (1,7), (1,8)\} & 83/840 & 83 & 12 & 27 & 5 & 8 & 48\\
2&\{(1,2), (1,3),(1, 5), (2,7), (1,8)\} & 227/840 &227 & 227/11 &21 &3 &8 & 40 \\
3&\{(1,3),(1, 5), (1,7), (3,8)\} & 167/840 & 167 & 18 & 23 &3 & 8 & 42 \\
4&\{(1,3),(1, 5), (3,7), (1,8)\}& 47/840 & 47& 47/5 & 31 & 5& 8& 52 \\
5&\{(1,3),(2, 5), (2,7), (1,8)\}& 143/840 &143 &16 & 23 &3 &8 & 42\\
\hline\hline
\end{longtable} 
}

Note that, for case No. 4, we need further discussion. 
We know $P_{-5}=2$ and $P_{-11}=16$. Take $m_0=5$ and $\nu_0=1$. If $|-11K_X|$ and $|-5K_X|$ are not composed with the same pencil, then we may take $m_1=11$ and $\mu_0\leq 5$, and by Proposition \ref{criterion b}(3), $\varphi_{-m}$ is birational for all $m\geq 32$; if $|-11K_X|$ and $|-5K_X|$ are composed with the same pencil, then we may take $m_1=31$ and $\mu_0\leq \frac{11}{15}$ by Remark \ref{5.3}, and by Proposition \ref{criterion b}(3), $\varphi_{-m}$ is birational for all $m\geq 47$.

 Combining all above cases, the proof is completed.\end{proof}

\subsection{The case $-K_X^3\geq 0.21$}

\begin{thm}\label{thm >=0.21}
Let $(X, Y, Z)$ be a Fano--Mori triple such that $\rho(Y)>1$. Assume that $r_X\leq 660$. 
Assume that one of the following holds
\begin{enumerate}
\item $r_{\max}\leq 13$ and $-K_X^3\geq 0.12$; or
\item $r_{\max}\geq 14$ and $-K_X^3\geq 0.21$.
\end{enumerate}
Then
$\varphi_{-m,X}$ is birational for all $m\geq 51$.
\end{thm}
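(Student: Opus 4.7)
The plan is to follow the paradigm of Lemmas~\ref{lem 12 pencil} and~\ref{lem 12}: first deduce from Proposition~\ref{criterion 2} that $|-m_1K_X|$ is not composed with a pencil for an explicit small $m_1$, then invoke one of the birationality criteria (Theorems~\ref{criterion b} or~\ref{criterion b2}) with $m_0=8$, using Remark~\ref{5.3} to bound $\mu_0$. By Proposition~\ref{facts}, $P_{-8}\geq 2$ and $r_{\max}\leq 24$; if $P_{-8}=2$, Theorem~\ref{thm P8=2} already gives the claim, so we may assume $P_{-8}\geq 3$ henceforth.

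For the non-pencil step, Lemma~\ref{lem l/d} bounds $\lambda(M_X)/(-K_X^3)$ uniformly: in case~(1) by $\sqrt{2\cdot 660/0.12}\approx 105$, and in case~(2) by $\sqrt{2\cdot 660/0.21}\approx 80$. With a judicious choice of $t$ (roughly $t\approx 3$, small enough to satisfy $m\geq r_{\max}t/3$ yet large enough to control $12/(t(-K_X^3))$), Proposition~\ref{criterion 2} yields that $|-m_1K_X|$ is not composed with a pencil for some explicit $m_1$ (around $25$ in case~(1), around $22$ in case~(2)).

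Setting $m_0=8$, split into two subcases. If $|-8K_X|$ is not composed with a pencil, take $m_1=m_0=8$ and $\mu_0\leq 8$: Theorem~\ref{criterion b}(1) gives birationality for $m\geq\max\{22,\lfloor 3\mu_0\rfloor+3m_1\}=48$, which suffices. Otherwise $|-8K_X|$ is composed with a pencil; Remark~\ref{5.3} together with $P_{-8}\geq 3$ gives $\mu'_0\leq 8/(P_{-8}-1)\leq 4$, and one uses $m_1$ from the previous paragraph. In case~(1), $r_{\max}\leq 13$, so Theorem~\ref{criterion b}(2) with its term $\lfloor\mu_0\rfloor+m_1+2r_{\max}$ essentially closes the argument, modulo further subdividing by $P_{-8}$ (to force $\mu_0$ smaller via Proposition~\ref{rr inequality}) or treating a few residual sporadic baskets using Subsection~\ref{cc method} and $\gamma(B_X)\geq 0$.

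The main obstacle is case~(2) in the subcase where $|-8K_X|$ is composed with a pencil and $r_{\max}$ is close to $24$: all three parts of Theorem~\ref{criterion b}(2),(3) and Theorem~\ref{criterion b2} are dominated by a term of order $2r_{\max}$ or $4\nu_0r_{\max}$ that alone exceeds $51$, while Theorem~\ref{criterion b}(1)'s leading term $\lfloor 3\mu_0\rfloor+3m_1$ would require $m_1\leq 13$, not attainable by the uniform application of Proposition~\ref{criterion 2}. To overcome this, I would sharpen the estimate of $P_{-8}$ via Proposition~\ref{rr inequality} (and the explicit form of $l(-n)$ compatible with the restricted local-index patterns forced by $r_{\max}\geq 14$, $r_X\leq 660$, and $-K_X^3\geq 0.21$) so that $\mu'_0=8/(P_{-8}-1)$ becomes negligible, and then finish via Theorem~\ref{criterion b}(1), whose leading bound is independent of $r_{\max}$. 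Any residual weighted baskets with $r_{\max}\geq 14$ and $-K_X^3\geq 0.21$ that slip through the uniform estimate would be enumerated via the canonical-sequence machinery of Subsection~\ref{cc method} and disposed of by direct computation, as in Theorems~\ref{thm P8=2} and~\ref{thm r840}.
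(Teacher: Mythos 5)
Your skeleton matches the paper's (Proposition \ref{criterion 2} for the non-pencil step, then Theorem \ref{criterion b} with $m_0=8$ and Remark \ref{5.3}), but the quantitative bookkeeping does not close in either case, and the two devices that actually make the paper's argument work are missing. First, in case (2) the paper does not keep $r_X\leq 660$: the condition $r_{\max}\geq 14$ together with $\gamma(B)\geq 0$ forces $r_X\leq 240$ (this is the step quoted from the proof of Theorem 1.8 in \cite{CJ16}). That refinement is what lowers the non-pencil threshold to $m\geq 17$ (via $\lambda(M_X)/(-K_X^3)<48$ and $t=17/8$) and, via Proposition \ref{rr inequality}, yields $P_{-8}\geq 7$ and $P_{-11}\geq 39$. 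Without it you are stuck at $m_1\approx 22$, and as you yourself observe, $\lfloor 3\mu_0\rfloor+3m_1$ then exceeds $51$ no matter how small $\mu_0$ is; sharpening $P_{-8}$ alone cannot repair this, since the obstruction sits in the $3m_1$ term, not in $\mu_0$.

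Second, in both cases the decisive trick is the refined use of Remark \ref{5.3} with an \emph{auxiliary} exponent $k$ having a large computed $P_{-k}$: one splits according to whether $|-8K_X|$ and $|-kK_X|$ are composed with the same pencil, and in the bad subcase takes $\mu_0\leq k/(P_{-k}-1)<1$. In case (1) the paper uses $k=10$ with $P_{-10}\geq 16$, so $\mu_0\leq 2/3$ and Theorem \ref{criterion b}(2) gives exactly $\lfloor\mu_0\rfloor+25+2\cdot 13=51$; your bound $\mu_0'\leq 8/(P_{-8}-1)\leq 4$ gives $55$ instead. In case (2) the paper uses a three-way split with $k=11$, $P_{-11}\geq 39$, so $\mu_0\leq 11/38$ and Theorem \ref{criterion b}(1) gives $0+3\cdot 17=51$. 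Your fallback of enumerating "residual sporadic baskets" is not viable here: the whole point of the thresholds $-K_X^3\geq 0.12$ and $\geq 0.21$ is that this regime is handled by uniform estimates, and baskets with $-K^3$ bounded below by a constant are not classified by the canonical-sequence machinery in any finite list used elsewhere in the paper.
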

\begin{proof}
(1) If $r_{\max}\leq 13$ and $-K_X^3\geq 0.12$, recall that $r_X\leq 660$ by assumption. 
By Lemma \ref{lem l/d},
$$\frac{\lambda(M_X)}{-K_X^3}\leq \max\left\{1, \frac{3}{0.12}, \sqrt{\frac{2\times 660}{0.12}}\right\}<105.$$
Take $t=75/13$, then
$$-\frac{3}{4}+\sqrt{\frac{12}{t(-K_X^3)}+\frac{6\lambda(M_X)}{-K_X^3}+\frac{1}{16}}< 25.$$
By Proposition \ref{criterion 2}, $|-mK_X|$ is not composed with a pencil for $m\geq 25.$

Now by Proposition \ref{rr inequality}, taking $t=30/13$ and using
 the fact that $(-K_X)^3\geq 0.12$, $r_X\leq 660$, and $r_{\max} \leq  13$, we have $P_{-10}\geq 16$. 
 We may take $m_0=8$ since $P_{-8}\geq 2$. If $|-8K_X|$ and $|-10K_X|$ are not composed with the same pencil, take $m_1=10$ and $\mu_0\leq 8$; if $|-8K_X|$ and $|-10K_X|$ are composed with the same pencil, take $m_1=25$ and $\mu_0\leq \frac{10}{15}$ by Remark \ref{5.3}. Then by Proposition \ref{criterion b}(2), $\varphi_{-m}$ is birational for $m\geq 51$.

(2) If $r_{\max}\geq  14$ and $-K_X^3\geq 0.21$, then by \cite[Proof of Theorem 1.8, Case II (page 105)]{CJ16}, $r_X\leq 240$.
By Lemma \ref{lem l/d},
$$\frac{\lambda(M_X)}{-K_X^3}\leq \max\left\{1, \frac{3}{0.21}, \sqrt{\frac{2\times 240}{0.21}}\right\}<48.$$
Note that $r_{\max}\leq 24$. Take $t=17/8$, then
$$-\frac{3}{4}+\sqrt{\frac{12}{t(-K_X^3)}+\frac{6\lambda(M_X)}{-K_X^3}+\frac{1}{16}}< 17.$$
By Proposition \ref{criterion 2}, $|-mK_X|$ is not composed with a pencil for $m\geq 17.$

Now by Proposition \ref{rr inequality}, taking $t=1$ and using
 the fact that $(-K_X)^3\geq 0.21$, $r_X\leq 240$, and $r_{\max} \leq  24$, we have $P_{-8}\geq 7$. Again by Proposition \ref{rr inequality}, taking $t=11/8$, we have $P_{-11}\geq 39$. We may take $m_0=8$. If $|-8K_X|$ is not composed with a pencil, take $m_1=8$ and $\mu_0\leq 8$; if $|-8K_X|$ is composed with a pencil, but $|-8K_X|$ and $|-11K_X|$ are not composed with the same pencil, take $m_1=11$ and $\mu_0\leq \frac{8}{6}$ by Remark \ref{5.3}; if $|-8K_X|$ and $|-11K_X|$ are composed with the same pencil, take $m_1=17$ and $\mu_0\leq \frac{11}{38}$ by Remark \ref{5.3}. Then by Proposition \ref{criterion b}(1), $\varphi_{-m,X}$ is birational for $m\geq 51$.
\end{proof}

\subsection{The case $P_{-1}=0$, $P_{-2}>0$ and $-K_X^3< 0.21$} \ 

We mainly apply Chen--Chen's method to classify all possible baskets.

\begin{thm}\label{thm P1=0 <0.21}
Let $(X, Y, Z)$ be a Fano--Mori triple such that $\rho(Y)>1$. Assume that $P_{-1}=0$, $P_{-2}>0$, and $-K_X^3< 0.21$.
Then
$\varphi_{-m,X}$ is birational for all $m\geq 51$.
\end{thm}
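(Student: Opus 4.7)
The plan is to enumerate, via the Chen--Chen weighted basket machinery of Subsection \ref{cc method}, all geometric baskets $B_X$ compatible with the hypotheses $P_{-1}=0$, $P_{-2}>0$, and $-K_X^3<0.21$, and then to handle each resulting case by the criteria collected in Section \ref{sec criterion}, in exact parallel with the proofs of Theorems \ref{thm P2=0} and \ref{thm P8=2}.

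First I would carry out the preliminary reductions. By Theorem \ref{thm P8=2} I may assume $P_{-8}\geq 3$; by Proposition \ref{facts} one has $P_{-m}>0$ for all $m\geq 6$ together with $-K_X^3\geq 1/330$; and since $P_{-1}=0$, Lemma \ref{lem r840}(1) rules out $r_X=840$, so in fact $r_X\leq 660$ and $r_{\max}\leq 24$. The identity $\sigma(B)=10+\tp_{-2}$ from \eqref{105}, the explicit formulas for $n_{1,r}^0$ and the $\epsilon_n$ in terms of the $\tp_{-m}$, the non-negativity of these counts, the bound $\gamma(B)\geq 0$ from \eqref{kwmt}, the volume inequality $\sigma'(B)<\sigma(B)-6$ from \eqref{vol}, and the a priori constraints $-K^3<0.21$ and $P_{-8}\geq 3$, cut the possible canonical approximations $B^{(n)}$ down to a finite list. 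In particular the smallness of $-K_X^3$ forces $P_{-2}$ to lie in a very narrow range, so only finitely many baskets survive; the enumeration is tedious but purely mechanical.

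Second, for each surviving basket $B_X$ I would compute $M_X=r_X(-K_X^3)$ and $\lambda(M_X)$, then use Reid's Riemann--Roch formula to locate the smallest integer $m_0\leq 8$ with $P_{-m_0}\geq 2$, together with a small integer $m_1$ satisfying $P_{-m_1}>\lambda(M_X)m_1+1$; Proposition \ref{criterion 1} then guarantees that $|-m_1K_X|$ is not composed with a pencil. Applying Proposition \ref{criterion b}(2) with $\mu_0\leq m_0$, or Proposition \ref{criterion b}(3) with $\nu_0=2$ (available because $P_{-2}>0$), would then yield birationality of $\varphi_{-m,X}$ for all $m\geq 51$ in all but the most delicate cases.

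The hard part will be the residual baskets in which $|-m_1K_X|$ and $|-m_0K_X|$ happen to be composed with the same pencil, since the unrefined estimate $\mu_0\leq m_0$ then overshoots 51. In such cases one must locate a larger $m_1'$ whose anti-canonical system defines a genuinely different fibration and refine $\mu_0$ via Remark \ref{5.3}, and as a last resort invoke Proposition \ref{criterion b2} together with the geometric inequality of Proposition \ref{prop new} to extract a positive lower bound on $N_0=r_X(\pi^*(K_X)^2\cdot S)$ and hence control $\sqrt{8r_X/N_0}$. This is precisely the manoeuvre used for the basket $\{(1,2),(2,5),(1,3),(1,4),(1,11)\}$ at the end of the proof of Theorem \ref{thm P8=2}, and analogous fine analysis should dispose of any obstinate baskets surviving in the present range.
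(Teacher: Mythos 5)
Your plan follows the paper's proof essentially verbatim: use the Chen--Chen machinery to classify the admissible baskets (the paper derives $P_{-4}\leq P_{-2}+3$ from $-K_X^3<0.21$ via \cite[Inequality (4.1)]{CC}, deduces $\sigma_5\leq 2$ from non-negativity of the $n^0_{1,r}$ together with $\gamma(B^{(0)})\geq 0$, then bounds $P_{-2}$ and $P_{-3}$ and tabulates the finitely many surviving $B_X$), and for each basket apply Proposition \ref{criterion 1} to produce a non-pencil $m_1$ and Proposition \ref{criterion b} to conclude, sharpening $\mu_0$ via Remark \ref{5.3} for the handful of baskets where the crude estimate overshoots $51$. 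One concrete caveat: your proposed fallback tools are not actually usable in this regime. Since $P_{-1}=0$, the smallest admissible $\nu_0$ is $2$, so Proposition \ref{criterion b}(3) contributes $2\nu_0 r_{\max}=4r_{\max}$ and Theorem \ref{criterion b2} contributes $4\nu_0 r_{\max}-1=8r_{\max}-1$ to the threshold; the residual baskets here have $r_{\max}\in\{11,13,17\}$, so once $m_1$ (or $\roundup{\mu'_0}$) is added these bounds all land well above $51$. The paper closes every residual case using only parts (1) and (2) of Proposition \ref{criterion b} --- whose bounds do not involve $\nu_0$ --- combined with the dichotomy on whether $|-m_1K_X|$ and $|-m_0K_X|$ are composed with the same pencil and the refined $\mu_0\leq k/(P_{-k}-1)$ of Remark \ref{5.3}. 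Your primary manoeuvre is exactly this, so the plan goes through, but the $\nu_0$-dependent criteria should be dropped from the toolbox; the remaining content of the proof is the substantial explicit enumeration and per-basket numerical verification, which your outline defers.
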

\begin{proof}
By \cite[Inequality (4.1)]{CC},
\begin{align*}
0.21>-K_X^3\geq \frac{1}{12}(-1-P_{-1}-P_{-2}+P_{-4}),
\end{align*}
which means that
\begin{align}\label{426}
P_{-4}\leq P_{-2}+3.
\end{align}
Since $P_{-1}=0$, the basket $B^{(0)}$ has datum
 \begin{numcases}{}
n_{1,2}^0=5 +4P_{-2}-P_{-3};\notag\\
n_{1,3}^0=4 -2P_{-2}+3P_{-3}-P_{-4};\notag\\
n_{1,4}^0=1 -P_{-2}-2P_{-3}+P_{-4}-\sigma_5.\notag
\end{numcases}
By Lemma \ref{31}, $B^{(0)}$ satisfies inequality \eqref{kwmt} and thus
\begin{align*}
0{}&\leq \gamma(B^{(0)})= \sum_{r \ge 2}(\frac{1}{r}-r)
n^0_{1,r}+24\\
{}&\leq\sum_{r=2,3,4}(\frac{1}{r}-r)
n^0_{1,r}-\frac{24}{5}\sigma_5+24\\
{}&=\frac{25}{12}+\frac{37}{12}P_{-2}+P_{-3}-\frac{13}{12}P_{-4}-\frac{21}{20}\sigma_5.
\end{align*}
Hence, by $n^0_{1,3}\geq 0$ and $n^0_{1,4}\geq 0$, we have
\begin{numcases}{}
\frac{25}{12}+\frac{37}{12}P_{-2}+P_{-3}-\frac{13}{12}P_{-4}-\frac{21}{20}\sigma_5\geq 0;\label{1}\\
4 -2P_{-2}+3P_{-3}-P_{-4} \geq 0;\label{2}\\
1 -P_{-2}-2P_{-3}+P_{-4}-\sigma_5 \geq 0.\label{3}
\end{numcases}
Considering the inequality ``\eqref{1}+\eqref{2}+$2\times$\eqref{3}'':
\begin{align}\label{24}
\frac{97}{12}-\frac{11}{12}P_{-2}-\frac{1}{12}P_{-4}-\frac{61}{20}\sigma_5\geq 0,
\end{align}
we obtain
$\sigma_5\leq 2.$
\medskip

{\bf Case 1.} $\sigma_5=0.$

In this case, $B^{(0)}=\{ n^0_{1,2}\times (1,2), n^0_{1,3}\times (1,3), n^0_{1,4}\times (1,4)\}$. By \cite[Proof of Theorem 3.12, Subcase II-1]{CJ16}, $P_{-3}\geq 1$. By inequalities \eqref{3} and \eqref{426},
$$
1+P_{-4}\geq P_{-2}+2P_{-3}\geq P_{-4}-3+2P_{-3},
$$
which means that $P_{-3}\leq 2$. By inequality \eqref{2} and the fact that $P_{-4}\geq 2P_{-2}-1$, 
$$
10\geq 2P_{-2}+P_{-4}\geq 4P_{-2}-1,
$$
which means that $P_{-2}\leq 2$.
\medskip

{\bf Subcase 1-i. $P_{-2}=2$.} 

In this subcase, inequalities \eqref{2} and \eqref{3} imply that $3P_{-3}\geq P_{-4}\geq 2P_{-3}+1$. Combining with the fact that $1\leq P_{-3}\leq 2$ and inequality \eqref{426}, $(P_{-3}, P_{-4})$ can only be $(1,3)$ or $(2,5)$. 
Hence we may compute corresponding values of $(n^0_{1,2},n^0_{1,3},n^0_{1,4})$, which is $(12, 0, 0)$ or
$(11,1,0)$.


Hence we may get all possible baskets $B_X$ as packings of corresponding $B^{(0)}$ with $\gamma\geq 0$, which are listed in Table \ref{tab6}. In Table \ref{tab6},
for each basket $B_X$, 
if $r_X\leq 69$ and $r_{\max}\leq 12$, then we apply Lemma \ref{lem 12}(1) (such baskets are marked with {\checkmark} in the last column), otherwise we can compute $M_X$ and $\lambda(M_X)$, then find $n_1$ such that $P_{-n_1}>\lambda(M_X)n_1+1$ where $P_{-n_1}$ is computed by Reid's Riemann--Roch formula. Hence by Proposition \ref{criterion 1}, $\dim \overline{\varphi_{-n_1}(X)}>1$. By assumption, $P_{-2}= 2$, we may take $m_0=2$. Then by Proposition \ref{criterion b}(1), take $m_1=n_1$ and $\mu_0\leq m_0$, we get the integer $n_2$ such that $\varphi_{-m}$ is birational for all $m\geq n_2$. 

{\scriptsize
\begin{longtable}{LLCCCCCCC}
\caption{}\label{tab6}\\
\hline
B_X & -K^3 & M_X & \lambda(M_X) & n_1 & m_0 & r_{\max} & n_2 \\
\hline
\endfirsthead
\multicolumn{3}{l}{{ {\bf \tablename\ \thetable{}} \textrm{-- continued from previous page}}}
 \\
\hline 
B_X & -K^3 & M_X & \lambda(M_X) & n_1 & m_0 & r_{\max} & n_2 \\ \hline 
\endhead

\hline \multicolumn{3}{r}{{\textrm{Continued on next page}}} \\ \hline
\endfoot

\hline \hline
\endlastfoot
\{12\times(1,2)\}& 0& & & & & & \\
\hline
\{11\times(1,2),(1,3)\}& &  &  &  &  &  & \checkmark \\
\{10\times(1,2),(2,5)\}& &  &  &  &  &  & \checkmark \\
\{9\times(1,2),(3,7)\}& &  &  &  &  &  & \checkmark \\
\{8\times(1,2),(4,9)\}& &  &  &  &  &  & \checkmark \\
\{7\times(1,2),(5,11)\}& &  &  &  &  &  & \checkmark \\
\{6\times(1,2),(6,13)\}& 3/13 & 6 & 3 & 9 & 2 & 13& 33 \\
\{5\times(1,2),(7,15)\}& 7/30 & 7 & 7/2 & 9 & 2 & 15 & 33 \\
\{4\times(1,2),(8,17)\}& 4/17 & 8 & 4 & 10 & 2 & 17 & 36 \\
\{3\times(1,2),(9,19)\}& 9/38 & 9 & 4 & 10 & 2 & 19 & 36 \\
\{2\times(1,2),(10,21)\}& 5/21 & 10 & 4 & 10 & 2 & 21 & 36 
\end{longtable} 
}
\medskip 

{\bf Subcase 1-ii. $P_{-2}=1$.} 

In this subcase, inequalities \eqref{2} and \eqref{3} imply that $3P_{-3}+2\geq P_{-4}\geq 2P_{-3}$. Combining with the fact that $1\leq P_{-3}\leq 2$ and inequality \eqref{426}, $(P_{-3}, P_{-4})$ can only take the following values: $(1,2)$, $(1,3)$, $(1,4)$, 
$(2,4)$.
Hence we may compute corresponding values of $B^{(0)}$ with $\gamma(B^{(0)})\geq 0$ 
and list those possible  $B^{(0)}$ in the following table.

{\scriptsize
\begin{longtable}{L}
\caption{}\label{tab7}\\
B^{(0)} \\
\hline
\endfirsthead
\{8\times(1,2),3\times(1,3)\}\\
\{8\times(1,2),2\times(1,3),(1,4)\}\\
\{8\times(1,2),(1,3),2\times(1,4)\}\\
\{7\times(1,2),4\times(1,3)\}\\
\end{longtable} 
}

Hence we may get all possible baskets $B_X$ as packings of corresponding $B^{(0)}$ with $\gamma\geq 0$, which are listed in Table \ref{tab8}. In Table \ref{tab8},
for each basket $B_X$, 
if $r_X\leq 69$ and $r_{\max}\leq 12$, then we apply Lemma \ref{lem 12}(1) (such baskets are marked with {\checkmark} in the last column), otherwise we can compute $M_X$ and $\lambda(M_X)$, then find $n_1$ such that $P_{-n_1}>\lambda(M_X)n_1+1$ where $P_{-n_1}$ is computed by Reid's Riemann--Roch formula. Hence by Proposition \ref{criterion 1}, $\dim \overline{\varphi_{-n_1}(X)}>1$. Again by Reid's Riemann--Roch formula, we may find $m_0$ such that $P_{-m_0}\geq 2$. Then by Proposition \ref{criterion b}(2), take $m_1=n_1$ and $\mu_0\leq m_0$, we get the integer $n_2$ such that $\varphi_{-m}$ is birational for all $m\geq n_2$. (For the values of $n_2$ with the mark ``$*$'', we apply Proposition \ref{criterion b}(1) instead).

{\scriptsize
\begin{longtable}{LLCCCCCCC}
\caption{}\label{tab8}\\
\hline
B_X & -K^3 & M_X & \lambda(M_X) & n_1 & m_0 & r_{\max} & n_2 \\
\hline
\endfirsthead
\multicolumn{3}{l}{{ {\bf \tablename\ \thetable{}} \textrm{-- continued from previous page}}}
 \\
\hline 
B_X & -K^3 & M_X & \lambda(M_X) & n_1 & m_0 & r_{\max} & n_2 \\ \hline 
\endhead

\hline \multicolumn{3}{r}{{\textrm{Continued on next page}}} \\ \hline
\endfoot

\hline \hline
\endlastfoot
\{8\times(1,2),3\times(1,3)\}& 0 & & & & & & \\
\{7\times(1,2),(2,5), 2\times(1,3)\}& &  &  &  &  &  & \checkmark \\
\{6\times(1,2),(3,7), 2\times(1,3)\}& &  &  &  &  &  & \checkmark \\
\{5\times(1,2),(4,9), 2\times(1,3)\}& &  &  &  &  &  & \checkmark \\
\{4\times(1,2),(5,11), 2\times(1,3)\}& &  &  &  &  &  & \checkmark \\
\{3\times(1,2),(6,13), 2\times(1,3)\}& 5/78 & 5 & 3 & 16 & 4 & 13 & 46 \\
\{2\times(1,2),(7,15), 2\times(1,3)\}& 1/15 & 2 & 2 & 13 & 4 & 15 & 47 \\
\{(1,2),(8,17), 2\times(1,3)\}& 7/102 & 7 & 7/2 & 17 & 4 & 17 & 55? \\
\{7\times(1,2),(3,8), (1,3)\}& &  &  &  &  &  & \checkmark \\
\{7\times(1,2),(4,11)\}& &  &  &  &  &  & \checkmark \\
\{6\times(1,2),2\times(2,5),(1,3)\}& &  &  &  &  &  & \checkmark \\
\{5\times(1,2),(3,7),(2,5),(1,3)\}& 17/210 & 17 & 17/3 & 20 & 4 & 7 & 40 \\
\{4\times(1,2),(4,9),(2,5),(1,3)\}& 4/45 & 8 & 4 & 16 & 4 & 9& 38 \\
\{3\times(1,2),(5,11),(2,5),(1,3)\}& 31/330& 31 & 31/4 & 22 & 4 & 11 & 48 \\
\{2\times(1,2),(6,13),(2,5),(1,3)\}& 19/195 & 38 & 8 & 22 & 4& 13 & 52? \\
\{(1,2),(7,15),(2,5),(1,3)\}& 1/10 & 3 & 3 & 13 & 4 & 15 & 47 \\
\{5\times(1,2),(5,12),(1,3)\}& &  &  &  &  &  & \checkmark \\
\{6\times(1,2),(2,5),(3,8)\}& &  &  &  &  &  & \checkmark \\
\{5\times(1,2),(3,7),(3,8)\}& &  &  &  &  &  & \checkmark \\
\{4\times(1,2),(4,9),(3,8)\}& 7/72 & 7 & 7/2 & 14 & 4 & 9 & 36 \\
\{3\times(1,2),(5,11),(3,8)\}& 9/88& 9 & 4 & 15 & 4 & 11 & 41 \\
\{2\times(1,2),(6,13),(3,8)\}& 11/104 & 11 & 4 & 15 & 4 & 13& 45 \\
\{4\times(1,2),2\times(3,7),(1,3)\}& &  &  &  &  &  & \checkmark \\
\{3\times(1,2),(4,9),(3,7),(1,3)\}& 13/126& 13 & 13/3 & 16 & 4 & 9 & 38 \\
\{2\times(1,2),(5,11),(3,7),(1,3)\}& 25/231 & 50 & 10 & 23 & 4 & 11 & 49 \\
\{(1,2),(6,13),(3,7),(1,3)\}& 61/546 & 61 & 61/6 & 23 & 4 & 13 & 53? \\
\{3\times(1,2),(7,16),(1,3)\}& 5/48 & 5 & 3 & 13 & 4 & 16 & 49 \\
\{2\times(1,2),2\times(4,9),(1,3)\}& &  &  &  &  &  & \checkmark \\
\{(1,2),(5,11),(4,9),(1,3)\}& 23/198 & 23 & 6 & 18 & 4 & 11 & 44 \\
\{6\times(1,2),(5,13)\}& 1/13 & 2 & 2 & 12 & 4 & 13 & 42 \\
\{5\times(1,2),3\times(2,5)\}& &  &  &  &  &  & \checkmark \\
\{4\times(1,2),(3,7), 2\times(2,5)\}& 4/35 & 8 & 4 & 14 & 4 & 7 & 32 \\
\{3\times(1,2),(4,9), 2\times(2,5)\}& 11/90 & 11 & 4 & 14 & 4 & 9 & 36 \\
\{2\times(1,2),(5,11), 2\times(2,5)\}& 7/55 & 14 & 14/3 & 15 & 4 & 11& 41 \\
\{4\times(1,2),(5,12), (2,5)\}& &  &  &  &  &  & \checkmark \\
\{4\times(1,2),(7,17)\}& 2/17 & 4 & 3 & 12 & 4 & 17 & 48* \\
\{3\times(1,2),2\times(3,7), (2,5)\}& 9/70 & 9 & 4 & 14 & 4 & 7 & 32 \\
\{2\times(1,2),(4,9),(3,7), (2,5)\}& 43/315 & 86 & 86/7 & 23 & 4 & 9 & 45 \\
\{2\times(1,2),(7,16), (2,5)\}& 11/80 & 11 & 4 & 13 & 4 & 16 & 49 \\
\{2\times(1,2),(4,9),(5,12)\}& &  &  &  &  &  & \checkmark \\
\{3\times(1,2),(3,7),(5,12)\}& 11/84 & 11 & 4 & 13 & 4 & 12 & 41 \\
\{3\times(1,2),(8,19)\}& 5/38 & 5 & 3 & 12 & 4 & 19 & 48* \\
\{2\times(1,2),3\times(3,7)\}& &  &  &  &  &  & \checkmark \\
\hline
\{8\times(1,2),2\times(1,3),(1,4)\}& &  &  &  &  &  & \checkmark \\
\{7\times(1,2),(2,5),(1,3),(1,4)\}& &  &  &  &  &  & \checkmark \\
\{7\times(1,2),(3,8),(1,4)\}& &  &  &  &  &  & \checkmark \\
\{6\times(1,2),(3,7),(1,3),(1,4)\}& 11/84 & 11 & 4 & 13 & 4 & 7 & 31 \\
\{5\times(1,2),(4,9),(1,3),(1,4)\}& &  &  &  &  &  & \checkmark \\
\{4\times(1,2),(5,11),(1,3),(1,4)\}& 19/132& 19 & 6 & 16 & 4 & 11 & 42 \\
\{3\times(1,2),(6,13),(1,3),(1,4)\}& 23/156 & 23 & 6 & 15 & 4 & 13 & 45 \\
\{8\times(1,2),(1,3),(2,7)\}& &  &  &  &  &  & \checkmark \\
\{7\times(1,2),(2,5),(2,7)\}& 9/70 & 9 & 4 & 13 & 4 & 7 & 31 \\
\{6\times(1,2),(3,7),(2,7)\}& &  &  &  &  &  & \checkmark \\
\{5\times(1,2),(4,9),(2,7)\}& 19/126 & 19 & 6 & 15 & 4 & 9 & 37 \\
\{4\times(1,2),(5,11),(2,7)\}&12/77 & 24 & 6 & 15 & 4 & 11 & 41 \\
\{6\times(1,2),2\times(2,5),(1,4)\}& &  &  &  &  &  & \checkmark \\
\{5\times(1,2),(3,7),(2,5),(1,4)\}& 23/140& 23 & 6 & 15 & 4 & 7 & 33 \\
\{5\times(1,2),(5,12),(1,4)\}& &  &  &  &  &  & \checkmark \\
\{4\times(1,2),(4,9),(2,5),(1,4)\}& 31/180 & 31 & 31/4 & 16 & 4 & 9 & 38 \\
\{3\times(1,2),(5,11),(2,5),(1,4)\}& 39/220 & 39 & 8 & 16 & 4 & 11 & 42 \\
\{4\times(1,2),2\times(3,7),(1,4)\}& &  &  &  &  &  & \checkmark \\
\{3\times(1,2),(4,9),(3,7),(1,4)\}& 47/252 & 47 & 47/5 & 17 & 4 & 9 & 39 \\
\{8\times(1,2),(3,10)\}& &  &  &  &  &  & \checkmark \\
\hline
\{8\times(1,2),(1,3),2\times(1,4)\}& &  &  &  &  &  & \checkmark \\
\{7\times(1,2),(2,5),2\times(1,4)\}& &  &  &  &  &  & \checkmark \\
\{6\times(1,2),(3,7),2\times(1,4)\}& &  &  &  &  &  & \checkmark \\
\{5\times(1,2),(4,9),2\times(1,4)\}& &  &  &  &  &  & \checkmark \\
\{8\times(1,2),(2,7),(1,4)\}& &  &  &  &  &  & \checkmark \\
\{8\times(1,2),(3,11)\}& &  &  &  &  &  & \checkmark \\
\hline
\{7\times(1,2),4\times(1,3)\}& &  &  &  &  &  & \checkmark \\
\{6\times(1,2),(2,5),3\times(1,3)\}& &  &  &  &  &  & \checkmark \\
\{5\times(1,2),(3,7),3\times(1,3)\}& &  &  &  &  &  & \checkmark \\
\{4\times(1,2),(4,9),3\times(1,3)\}& &  &  &  &  &  & \checkmark \\
\{3\times(1,2),(5,11),3\times(1,3)\}& &  &  &  &  &  & \checkmark \\
\{2\times(1,2),(6,13),3\times(1,3)\}&3/13 & 18 & 6 & 12 & 3 & 13& 41 \\
\{6\times(1,2),(3,8),2\times(1,3)\}& &  &  &  &  &  & \checkmark \\
\{6\times(1,2),(4,11),(1,3)\}& &  &  &  &  &  & \checkmark \\
\{6\times(1,2),(5,14)\}& 3/14 & 3 & 3 & 9 & 3 & 14 & 36* \\
\{5\times(1,2),2\times(2,5), 2\times(1,3)\}& &  &  &  &  &  & \checkmark \\
\{4\times(1,2),(3,7),(2,5), 2\times(1,3)\}& 26/105 & 52 & 10 & 15 & 3 & 7& 32 \\
\{3\times(1,2),(4,9),(2,5), 2\times(1,3)\}& 23/90 & 23 & 6 & 12 & 3 & 9& 33 \\
\{3\times(1,2),(4,9),(3,8), (1,3)\}& 19/72& 19 & 6 & 12 & 3 & 9 & 33 \\
\{4\times(1,2),(5,12), 2\times(1,3)\}& &  &  &  &  &  & \checkmark \\
\{4\times(1,2),(3,7),(3,8),(1,3)\}& 43/168 & 43 & 43/5 & 14 & 3 & 8& 33 \\
\{4\times(1,2),(3,7),(4,11)\}& 20/77 & 40 & 8 & 13 & 3 & 11& 38 \\
\{3\times(1,2),2\times(3,7), 2\times(1,3)\}& &  &  &  &  &  & \checkmark \\
\{5\times(1,2),(2,5), (3,8),(1,3)\}& 29/120 & 29 & 29/4 & 13 & 3 & 8 & 32 \\
\{5\times(1,2),(5,13),(1,3)\}&19/78 & 19 & 6 & 12 & 3 & 13 & 41 \\
\{5\times(1,2),(2,5), (4,11)\}& 27/110 & 27 & 27/4 & 13 & 3 & 11 & 38 \\
\{5\times(1,2),2\times(3,8)\}& &  &  &  &  &  & \checkmark \\
\{4\times(1,2),3\times(2,5), (1,3)\}& &  &  &  &  &  & \checkmark \\
\{3\times(1,2),(3,7),2\times(2,5), (1,3)\}&59/210 & 59 & 10 & 14 & 3 & 7& 31 \\
\{3\times(1,2),(5,12),(2,5), (1,3)\}& &  &  &  &  &  & \checkmark \\
\{4\times(1,2),2\times(2,5), (3,8)\}& &  &  &  &  &  & \checkmark \\
\{4\times(1,2),(2,5), (5,13)\}&18/65 & 36 & 8 & 13 & 3 & 13& 42 \\
\{4\times(1,2),(7,18)\}&5/18 & 5 & 3 & 8 & 3 & 18& 33* \\
\{3\times(1,2),4\times(2,5)\}& &  &  &  &  &  & \checkmark \\
\end{longtable} 
}
Note that there are 3 cases where the values in the $n_2$ column are marked 
with ``?", of which $n_2=55, 52, 53$ respectively.   We shall discuss them in more details to prove our statement.

If $B_X=\{(1,2), (8,17),2\times(1,3)\}$, we know that $\dim \overline{\varphi_{-m}(X)}>1$ for all $m\geq 17$ from the list. Note that $P_{-10}=13$. Take $m_0=4$. If $|-10K_X|$ and $|-4K_X|$ are not composed with the same pencil, then we may take $m_1=10$ and $\mu_0\leq 4$, and by Proposition \ref{criterion b}(1), $\varphi_{-m}$ is birational for all $m\geq 42$; if $|-10K_X|$ and $|-4K_X|$ are composed with the same pencil, then we may take $m_1=17$ and $\mu_0\leq \frac{10}{12}$ by Remark \ref{5.3}, and by Proposition \ref{criterion b}(2), $\varphi_{-m}$ is birational for all $m\geq 51$.

If $B_X=\{2\times(1,2),(6,13), (2,5),(1,3)\}$, we know that $\dim \overline{\varphi_{-m}(X)}>1$ for all $m\geq 22$ from the list. Note that $P_{-8}=10$. Take $m_0=4$. If $|- 8K_X|$ and $|-4K_X|$ are not composed with the same pencil, then we may take $m_1= 8$ and $\mu_0\leq 4$, and by Proposition \ref{criterion b}(1), $\varphi_{-m,X}$ is birational for all $m\geq 36$; if $|- 8K_X|$ and $|-4K_X|$ are composed with the same pencil, then we may take $m_1=22$ and $\mu_0\leq \frac{8}{9}$ by Remark \ref{5.3}, and by Proposition \ref{criterion b}(2), $\varphi_{-m}$ is birational for all $m\geq 48$.

If $B_X=\{(1,2),(6,13), (3,7),(1,3)\}$, we know that $\dim \overline{\varphi_{-m}(X)}>1$ for all $m\geq 23$ from the list. Note that $P_{-8}=11$. Take $m_0=4$. If $|- 8K_X|$ and $|-4K_X|$ are not composed with the same pencil, then we may take $m_1= 8$ and $\mu_0\leq 4$, and by Proposition \ref{criterion b}(1), $\varphi_{-m,X}$ is birational for all $m\geq 36$; if $|- 8K_X|$ and $|-4K_X|$ are composed with the same pencil, then we may take $m_1=23$ and $\mu_0\leq \frac{8}{10}$ by Remark \ref{5.3}, and by Proposition \ref{criterion b}(2), $\varphi_{-m}$ is birational for all $m\geq 49$.
\medskip

{\bf Case 2.} $\sigma_5=2.$

In this case, by \cite[Proof of Theorem 3.12, Subcase II-2]{CJ16}, we have $(P_{-2}, P_{-3}, P_{-4})=(1,0,2)$ and $$B_X=B^{(0)}=\{9\times (1,2), 2\times (1,5)\}.$$
We may apply Lemma \ref{lem 12}(1).
\medskip

{\bf Case 3.} $\sigma_5=1.$

In this case, all possible baskets $B_X$ are classified in \cite[Proof of Theorem 3.12, Subcase II-3]{CJ16} under the strict inequality $\gamma(B^{(0)})>0$. The output is however the same even under the weaker condition $\gamma(B^{(0)})\geq 0$, which is concluded after checking the original proof word by word.  
So we simply list all possible baskets in Table \ref{tab9}. 
In Table \ref{tab9},
for each basket $B_X$,  
if $r_X\leq 69$ and $r_{\max}\leq 12$, then we may directly apply Lemma \ref{lem 12}(1) (such baskets are marked with {\checkmark} in the last column); if the condition $0<-K_X^3<0.21$ is not satisfied, we mark it with {$\times$} in the last column (such baskets are irrelevant to the proof); otherwise we can compute $M_X$ and $\lambda(M_X)$, then we may find $n_1$ such that $P_{-n_1}>\lambda(M_X)n_1+1$ where $P_{-n_1}$ is computed by Reid's Riemann--Roch formula. Hence by Proposition \ref{criterion 1}, $\dim \overline{\varphi_{-n_1}(X)}>1$. Again by Reid's Riemann--Roch formula, we may find $m_0$ such that $P_{-m_0}\geq 2$. Then by Proposition \ref{criterion b}(2), take $m_1=n_1$ and $\mu_0\leq m_0$, we get the integer $n_2$ such that $\varphi_{-m}$ is birational for all $m\geq n_2$. 

{\scriptsize
\begin{longtable}{LLCCCCCCC}
\caption{}\label{tab9}\\
\hline
B_X & -K^3 & M_X & \lambda(M_X) & n_1 & m_0 & r_{\max} & n_2 \\
\hline
\endfirsthead
\multicolumn{3}{l}{{ {\bf \tablename\ \thetable{}} \textrm{-- continued from previous page}}}
 \\
\hline 
B_X & -K^3 & M_X & \lambda(M_X) & n_1 & m_0 & r_{\max} & n_2 \\ \hline 
\endhead

\hline \multicolumn{3}{r}{{\textrm{Continued on next page}}} \\ \hline
\endfoot

\hline \hline
\endlastfoot
\{11\times (1,2),(1,3),(1,5)\}  & &  &  &  &  &  & \checkmark \\
\{12\times (1,2),(1,5)\}  & &  &  &  &  &  & \checkmark \\
\{12\times (1,2),(1,6)\}  & &  &  &  &  &  & \checkmark \\
\hline
\{9\times (1,2),2\times(1,3),(1,5)\}  & &  &  &  &  &  & \checkmark \\
\hline
 \{10\times (1,2),(1,4),(1,5)\}  & &  &  &  &  &  & \checkmark \\
 \{10\times (1,2),(2,9)\}  & &  &  &  &  &  & \checkmark \\
\hline
 \{10\times (1,2),(1,3),(1,6)\}  & &  &  &  &  &  & \checkmark \\
 \hline
 \{10\times (1,2),(1,3),(1,5)\}  & &  &  &  &  &  & \checkmark \\
 \{9\times (1,2),(2,5),(1,5)\}  & &  &  &  &  &  & \checkmark \\
 \{8\times (1,2),(3,7),(1,5)\} & 18/35& & & & & & \times \\
\hline
\{11\times (1,2),(1,5)\}  & &  &  &  &  &  & \checkmark \\
\hline
\{11\times (1,2),(1,6)\}  & &  &  &  &  &  & \checkmark \\
\hline
\{11\times (1,2),(1,7)\}  & &  &  &  &  &  & \checkmark \\
\hline
\{7\times (1,2),3\times(1,3),(1,5)\}  & &  &  &  &  &  & \checkmark \\
\{6\times (1,2),(2,5),2\times(1,3),(1,5)\}  & &  &  &  &  &  & \checkmark \\
\hline
\{8\times (1,2),(1,3),(1,4),(1,5)\}  & &  &  &  &  &  & \checkmark \\
\{8\times (1,2),(2,7),(1,5)\} & 8/35 &  &  &  &  &  & \times \\
\{8\times (1,2),(1,3),(2,9)\}  & &  &  &  &  &  & \checkmark \\
\{7\times (1,2),(2,5),(1,4),(1,5)\}  & &  &  &  &  &  & \checkmark \\
\hline
\{8\times (1,2),2\times(1,3),(1,6)\}  & &  &  &  &  &  & \checkmark \\
\{7\times (1,2),(2,5),(1,3),(1,6)\}  & &  &  &  &  &  & \checkmark \\
\hline
\{8\times (1,2),2\times(1,3),(1,5)\}  & &  &  &  &  &  & \checkmark \\
\{7\times (1,2),(2,5),(1,3),(1,5)\}  & &  &  &  &  &  & \checkmark \\
\{6\times (1,2),2\times(2,5),(1,5)\}  & &  &  &  &  &  & \checkmark \\
\{6\times (1,2),(3,7),(1,3),(1,5)\} & 19/105& 38 & 8 & 16 & 4 & 7 & 34 \\
\{5\times (1,2),(4,9),(1,3),(1,5)\} & 17/90 & 17 & 17/3 & 13 & 4 & 9 & 35 \\
\{7\times (1,2),(3,8),(1,5)\}  & &  &  &  &  &  & \checkmark \\
\{5\times (1,2),(3,7),(2,5),(1,5)\} & 3/14 &  &  &  &  &  &\times\\
\hline
\{9\times (1,2),(1,4),(1,6)\}  & &  &  &  &  &  & \checkmark \\
\hline
\{9\times (1,2),(1,4),(1,5)\}  & &  &  &  &  &  & \checkmark \\
\{9\times (1,2),(2,9)\}  & &  &  &  &  &  & \checkmark \\
\hline
\{9\times (1,2),(1,3),(1,7)\}  & &  &  &  &  &  & \checkmark \\
\{8\times (1,2),(2,5),(1,7)\} & 2/35 & 4 & 3 & 18 & 6 & 7 & 40 \\
\hline
\{8\times (1,2),(2,5),(1,6)\}  & &  &  &  &  &  & \checkmark \\
\{7\times (1,2),(3,7),(1,6)\}  & &  &  &  &  &  & \checkmark \\
\{6\times (1,2),(4,9),(1,6)\}  & &  &  &  &  &  & \checkmark \\
\hline
\{7\times (1,2),(3,7),(1,5)\} & 1/70 & 1 & 1 & 20 & 6 & 7& 43 \\
\{6\times (1,2),(4,9),(1,5)\} & 1/45 & 2 & 2 & 23 & 6 & 9 & 48 \\
\{5\times (1,2),(5,11),(1,5)\} & 3/110 & 3 & 3 & 25 & 6 & 11 & 53? \\
\{4\times (1,2),(6,13),(1,5)\} & 2/65 & 4 & 3 & 24 & 6 & 13 & 56?
\end{longtable} 
}
Note that there are 2 cases with ``?" marked in the $n_2$ column of Table \ref{tab9}, for which we discuss them in more details as follows.

If $B_X=\{5\times (1,2),(5,11),(1,5)\}$, we know that $\dim \overline{\varphi_{-m}(X)}>1$ for all $m\geq 25$ from the list. Note that $P_{-15}=18$. Take $m_0=6$. If $|-15K_X|$ and $|-6K_X|$ are not composed with the same pencil, then we may take $m_1=15$ and $\mu_0\leq 6$, and by Proposition \ref{criterion b}(2), $\varphi_{-m}$ is birational for all $m\geq 43$; if $|-15K_X|$ and $|-6K_X|$ are composed with the same pencil, then we may take $m_1=25$ and $\mu_0\leq \frac{15}{17}$ by Remark \ref{5.3}, and by Proposition \ref{criterion b}(2), $\varphi_{-m}$ is birational for all $m\geq 47$.

If $B_X=\{4\times (1,2),(6,13),(1,5)\}$, we know that $\dim \overline{\varphi_{-m}(X)}>1$ for all $m\geq 24$ from the list. Note that $P_{-14}=16$. Take $m_0=6$. If $|- 14K_X|$ and $|-6K_X|$ are not composed with the same pencil, then we may take $m_1= 14$ and $\mu_0\leq 6$, and by Proposition \ref{criterion b}(2), $\varphi_{-m}$ is birational for all $m\geq 46$; if $|- 14K_X|$ and $|-6K_X|$ are composed with the same pencil, then we may take $m_1=24$ and $\mu_0\leq \frac{14}{15}$ by Remark \ref{5.3}, and by Proposition \ref{criterion b}(2), $\varphi_{-m}$ is birational for all $m\geq 50$.

Combining all above cases, we complete the proof.
\end{proof}

\subsection{The case $P_{-1}>0$ and $1/30\leq -K_X^3< 0.21$}

\begin{lem}\label{lem 30 1}
Let $(X, Y, Z)$ be a Fano--Mori triple such that $\rho(Y)>1$. Assume that $-K_X^3\geq 1/30$ and $r_X\leq 660$. Then \begin{enumerate}
\item 
$P_{-18}\geq 21$;
\item  $\dim \overline{\varphi_{-m}(X)}>1$ for all $m\geq 35$.
 \end{enumerate}
\end{lem}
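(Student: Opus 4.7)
The plan is to treat the two parts separately using the criterion tools assembled in Section \ref{sec criterion}, exploiting the universal bound $r_{\max}\leq 24$ from Proposition \ref{facts}(5).

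For (1), I will apply Proposition \ref{rr inequality} directly with $n=18$. The constraint $n\geq r_{\max}t/3$, combined with $r_{\max}\leq 24$, permits the choice $t=9/4$ (and $t\leq n$ is trivial). Substituting, together with $-K_X^3\geq 1/30$, yields
\[
P_{-18}\geq \tfrac{1}{12}\cdot 18\cdot 19\cdot 37\cdot(-K_X^3)+1-\tfrac{36}{9/4}\geq \tfrac{2109}{60}-15 = \tfrac{1209}{60}>20,
\]
and since $P_{-18}$ is an integer, we conclude $P_{-18}\geq 21$. This step is essentially a one-line numerical calculation.

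For (2), the idea is to invoke Proposition \ref{criterion 2} with $t=35/8$. First, Lemma \ref{lem l/d}(1) together with $-K_X^3\geq 1/30$ and $r_X\leq 660$ bounds the growth factor:
\[
\frac{\lambda(M_X)}{-K_X^3}\leq \max\bigl\{1,\,90,\,\sqrt{2\cdot 660\cdot 30}\bigr\}=\sqrt{39600}<199.
\]
For $m\geq 35$, both auxiliary conditions $m\geq t$ and $m\geq r_{\max}t/3 = 35\,r_{\max}/24\leq 35$ hold in view of $r_{\max}\leq 24$. The principal inequality of Proposition \ref{criterion 2} then reduces to verifying
\[
-\tfrac{3}{4}+\sqrt{\tfrac{12\cdot 30}{35/8}+6\cdot 199+\tfrac{1}{16}} < -\tfrac{3}{4}+\sqrt{1277}<35,
\]
which is straightforward. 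Hence Proposition \ref{criterion 2} guarantees that $|-mK_X|$ is not composed with a pencil for every $m\geq 35$, which is precisely the assertion $\dim\overline{\varphi_{-m}(X)}>1$ (the hypothesis $\rho(Y)>1$ is exactly what allows Proposition \ref{criterion 1}, and hence Proposition \ref{criterion 2}, to apply here via Proposition \ref{prop new}).

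The main obstacle is the delicate calibration of the parameter $t$ against $r_{\max}\leq 24$: this cap forces $t\leq 9/4$ in (1) and $t\leq 35/8$ in (2), while $r_X\leq 660$ still allows $\lambda(M_X)/(-K_X^3)$ to approach $199$. These two effects combine to make both criterion inequalities hold only by a slim margin at $n=18$ and $m=35$ respectively, so the choice of $t$ is essentially forced. A larger $t$ would violate the hypothesis $m\geq r_{\max}t/3$ at the worst value of $r_{\max}$, while a smaller $t$ would make the $36/t$ term (resp.\ the $12/(t\cdot(-K_X^3))$ term) blow up past the target. There is no other subtlety: the proof is a careful but routine application of the quantitative criteria.
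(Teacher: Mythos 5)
Your proof is correct and follows essentially the same route as the paper: part (1) is Proposition \ref{rr inequality} with $t=9/4=18/8$ (the paper's exact choice), and part (2) is Lemma \ref{lem l/d} giving $\lambda(M_X)/(-K_X^3)<199$ followed by Proposition \ref{criterion 2} with $t=35/8$, again identical to the paper. The numerical verifications check out, so there is nothing to add.
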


\begin{proof}
(1) 
By Proposition \ref{rr inequality}, take $t=18/8$ and use the fact $-K_X^3\geq 1/30$, we have $P_{-18}>20$.

For (2),
by Lemma \ref{lem l/d},
$$\frac{\lambda(M_X)}{-K_X^3}\leq \max\left\{1, \frac{3}{1/30}, \sqrt{\frac{2\times 660}{1/30}}\right\}<199.$$
Take $t=35/8$, then
$$-\frac{3}{4}+\sqrt{\frac{12}{t(-K_X^3)}+\frac{6\lambda(M_X)}{-K_X^3}+\frac{1}{16}}< 35.$$
By Proposition \ref{criterion 2}, $|-mK_X|$ is not composed with a pencil for $m\geq 35.$
\end{proof}

\begin{lem}\label{lem 13-24}
Let $(X, Y, Z)$ be a Fano--Mori triple such that $\rho(Y)>1$. Assume that $P_{-1}>0$ and $1/30\leq -K_X^3< 0.21$.
Assume that one of the following conditions holds:
\begin{enumerate}
\item $11\leq r_{\max}\leq 13$ and  $-K_X^3< 0.12$; or
\item $r_{\max}\geq 14$.
\end{enumerate}
Then $\varphi_{-m,X}$ is birational for all $m\geq 51$.
\end{lem}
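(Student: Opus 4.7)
The plan is to mimic the strategy of Lemma \ref{lem 12}, adapted to the larger range of $r_{\max}$ permitted here. Since the case $r_X=840$ is handled by Theorem \ref{thm r840}, Proposition \ref{facts}(4) gives $r_X\leq 660$, and Lemma \ref{lem 30 1} then yields both $P_{-18}\geq 21$ and the baseline that $|-mK_X|$ is not composed with a pencil for all $m\geq 35$. Proposition \ref{facts}(2) allows us to take $m_0=8$ with $P_{-m_0}\geq 2$, and $P_{-1}>0$ allows us to take $\nu_0=1$. Moreover, Proposition \ref{facts}(5) gives $r_{\max}\leq 24$, and the KMMT inequality \eqref{kwmt} bounds the number of orbifold points.

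The main dichotomy concerns whether $|-8K_X|$ is composed with a pencil. In the first case, we set $m_1=8$ and $\mu_0\leq 8$ and apply Theorem \ref{criterion b}(3), which gives birationality for $m\geq \max\{22,16+2r_{\max}\}$. This already covers all of case (1) and the portion $14\leq r_{\max}\leq 17$ of case (2). In the complementary case, $P_{-18}\geq 21$ together with Remark \ref{5.3} gives $\mu'_0\leq 18/20<1$ whenever $|-18K_X|$ is composed with the same pencil as $|-8K_X|$; otherwise $|-18K_X|$ itself is either not composed with a pencil (so we take $m_1=18$ with $\mu_0\leq 8$) or composed with a different pencil, and the relevant version of Theorem \ref{criterion b} or Theorem \ref{criterion b2} can be invoked with $m_1=35$ from Lemma \ref{lem 30 1}.

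The remaining obstacles are the sub-cases with $r_{\max}\geq 18$, or where $|-8K_X|$ is composed with a pencil and $-K_X^3$ is close to the lower bound $1/30$. In both situations the linear term $2r_{\max}$ (or the implicit dependence of $\mu_0$ on $P_{-8}$) can push the bound above $51$. To resolve them, we classify all geometric weighted baskets compatible with the assumed inequalities by combining Chen--Chen's method (Subsection \ref{cc method}) with the constraints $\sum_i(r_i-1/r_i)\leq 24$, $r_{\max}\leq 24$, $P_{-1}\geq 1$, $P_{-8}\geq 2$, and $1/30\leq -K_X^3<0.21$ (or $<0.12$ in case (1)). Since each remaining basket contains at most a handful of orbifold points with bounded indices, only finitely many baskets survive; for each one we compute $M_X$ and $\lambda(M_X)$, use Reid's Riemann--Roch formula to locate the smallest $n_1$ with $P_{-n_1}>\lambda(M_X)n_1+1$ (which triggers Proposition \ref{criterion 1} to ensure $|-n_1K_X|$ is not composed with a pencil), and then apply the appropriate part of Theorem \ref{criterion b} or Theorem \ref{criterion b2}, using $\mu'_0\leq 18/(P_{-18}-1)$ via Remark \ref{5.3} whenever $|-n_1K_X|$ and $|-18K_X|$ are composed with the same pencil.

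The main obstacle is the bookkeeping for this finite but non-trivial list of remaining baskets, entirely parallel to the tables appearing in the proofs of Theorems \ref{thm P2=0} and \ref{thm P1=0 <0.21}; several borderline baskets will require the two-step split (according to whether $|-m_1K_X|$ and $|-m_0K_X|$ are composed with the same pencil) already illustrated in those earlier arguments, and a few may demand invoking Theorem \ref{criterion b2} with an explicit lower bound on $N_0$ derived from Proposition \ref{prop new}.
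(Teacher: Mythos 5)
Your high-level plan points in the right direction, but as written it has a genuine gap: the part you defer as ``bookkeeping'' is in fact essentially the entire proof, and the quantitative shortcuts you propose to avoid it do not close. Concretely, your first prong (when $|-8K_X|$ is not composed with a pencil, take $m_0=m_1=8$, $\mu_0\leq 8$, Theorem~\ref{criterion b}(3)) is fine and yields $m\geq 16+2r_{\max}$, so it covers only the sub-case where $|-8K_X|$ is not a pencil \emph{and} $r_{\max}\leq 17$. But your fallbacks for the pencil case do not work at the stated level of generality: with $m_1=35$ from Lemma~\ref{lem 30 1} and $\mu_0<1$, Theorem~\ref{criterion b}(3) gives $m\geq 35+2r_{\max}\geq 57$ for every $r_{\max}\geq 11$, and Theorem~\ref{criterion b}(2) gives $m\geq\rounddown{\frac{5}{3}\mu_0+\frac{5}{3}\cdot 35}\geq 58$; with $m_1=18$ and $\mu_0\leq 8$ one gets $26+2r_{\max}\geq 52$ already at $r_{\max}=13$, so even case (1) is not covered; and Theorem~\ref{criterion b2} has the term $\roundup{\mu'_0}+4\nu_0r_{\max}-1\geq 4\cdot 13=52$ at $r_{\max}=13$ unless one can control $N_0$ or drop to smaller $r_{\max}$. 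So none of your ``general'' branches disposes of the pencil case, and the problem is not only when $-K_X^3$ is near $1/30$.

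What remains is therefore the full classification, which is what the paper actually does for this lemma from the start: from \eqref{P1K3} one gets $P_{-1}\leq 2$; for $P_{-1}=2$ one pins down $\sigma(B^{(0)})=3$ and a short list of initial baskets; for $P_{-1}=1$ one shows $(P_{-2},P_{-3})$ takes one of eight values and, in each subcase, enumerates $B^{(0)}$ via the formulas for $n^0_{1,r}$ and then all packings with $\gamma\geq 0$ and $r_{\max}\geq 11$. This produces on the order of a hundred baskets, and for each one a basket-specific $n_1$ (ranging roughly from $8$ to $37$) and $m_0$ (often $2$, $3$ or $4$, not $8$ --- the smaller $\mu_0\leq m_0$ is needed to get under $51$) must be computed from Reid's Riemann--Roch formula. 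Moreover, a dozen or so borderline baskets still return $n_2>51$ on the first pass and require bespoke two-step arguments (e.g.\ the four baskets with $(3,17)$ or $(2,13)$ in Table~\ref{tab30}), sometimes invoking Proposition~\ref{prop new} to force $N_0\geq 2$ before Theorem~\ref{criterion b2} applies. Since your proposal neither carries out the enumeration nor verifies any of these delicate cases, the conclusion $m\geq 51$ is not established.
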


\begin{proof}
We will use Chen--Chen's method to classify all possible baskets.

Consider $B_X=\{(b_i, r_i)\}$, we may always assume that $r_1=r_{\max}$.


By equality \eqref{P1K3}, 
\begin{align}
2P_{-1}{}&=-K_X^3+6-\sum_i\frac{b_i(r_i-b_i)}{r_i}\notag\\
{}&\leq-K_X^3+6-\frac{b_1(r_1-b_1)}{r_1}\label{b1r1}\\
{}&\leq-K_X^3+6-\frac{r_1-1}{r_1}\notag\\
{}&<6.\notag
\end{align}
Hence $P_{-1}\leq 2$.
\medskip

{\bf Case 1.} $P_{-1}=2.$

By \cite[Inequality (4.1)]{CC},
\begin{align*}
0.21>-K_X^3\geq \frac{1}{12}(-1-P_{-1}-P_{-2}+P_{-4}),
\end{align*}
which means that
\begin{align}\label{426'}
P_{-4}\leq P_{-2}+5.
\end{align}

By inequality \eqref{b1r1}, 
$$
 \frac{b_1(r_1-b_1)}{r_1}\leq 2-K_X^3,
$$
which implies that $b_1\leq 2$. If $b_1=1$, then $(1,r_1)\in B_X$, which means that $(1,r_1)\in B^{(0)}$. If $b_1=2$, then $r_1$ is an odd number, we may write $r_1=2s+1$ for some $s\geq 5$, and $(2,r_1)\in B_X$ implies that $\{(1,s), (1,s+1)\}\subset B^{(0)}$. 

Recall that $B^{(0)}=\{n_{1,r}^0\times(1,r)\}$, we have
\begin{align}
\sum_r \frac{n_{1,r}^0\times(r-1)}{r}= 2-K^3(B^{(0)})\leq 2-K_X^3<2.21. \label{B0221}
\end{align}
Note that either $n^0_{1,r_1}\geq 1$, or $n^0_{1,s}\geq 1$ and  $n^0_{1,s+1}\geq 1$ where $r_1=2s+1$. Also we have 
$$\sigma(B^{(0)})=\sigma(B_X)=10-5P_{-1}+P_{-2}=P_{-2}\geq 2P_{-1}-1=3.$$ 
If $\sigma\geq 4$, then 
$$
\sum_r \frac{n_{1,r}^0\times(r-1)}{r}\geq \begin{cases}
3\times\frac{1}{2}+\frac{10}{11}>2.21, & {\rm or }\\
2\times\frac{1}{2}+\frac{4}{5}+\frac{5}{6}>2.21,&
\end{cases} 
$$
a contradiction. Hence $\sigma= 3$.
By easy computation, we know that $B^{(0)}$ satisfying inequality \eqref{B0221} is one of the following:
{\scriptsize
\begin{longtable}{L}
\caption{}\label{tab19}\\
B^{(0)} \\
\hline
\endfirsthead
\{2\times(1,2),(1,r_1)\}\\
\{(1,2), (1,3),(1,r_1)\}\\
\{(1,2),(1,4),(1,r_1)\} \\
\{(1,2),(1,s),(1,s+1)\}, r_1=2s+1\\
\hline
\end{longtable} 
}
Note that the first case is absurd since it admits no packing and $-K^3(B^{(0)})<0$.
From Table \ref{tab19}, we may get all possible packings with $\gamma\geq 0$ and $r_{\max}=r_1\geq 11$, 
which are listed in Table \ref{tab20}. In Table \ref{tab20},
for each basket $B_X$, if $r_X\leq 287$ and $r_{\max}\leq 12$, then we apply Lemma \ref{lem 12}(2) (such baskets are marked with {\checkmark} in the last column);  if $-K_X^3$ does not satisfy the assumption in the lemma, we mark it with the symbol ``{$\times$}'' in the last column; otherwise we can compute $M_X$ and $\lambda(M_X)$, then find $n_1$ such that $P_{-n_1}>\lambda(M_X)n_1+1$ where $P_{-n_1}$ is computed by Reid's Riemann--Roch formula. Hence by Proposition \ref{criterion 1}, $\dim \overline{\varphi_{-m}(X)}>1$ for all $m\geq n_1$ since $P_{-1}>0$. By assumption, $P_{-1}= 2$, we may take $m_0=\nu_0=1$. Then by Proposition \ref{criterion b}(1), take $m_1=n_1$ and $\mu_0\leq m_0$, we get the integer $n_2$ such that $\varphi_{-m}$ is birational for all $m\geq n_2$. (For the value of $n_2$ with a ``$*$'' marked, we have applied Proposition \ref{criterion b}(3)).

{\scriptsize
\begin{longtable}{LLCCCCCCC}
\caption{}\label{tab20}\\
\hline
B_X & -K^3 & M_X & \lambda(M_X) & n_1 & m_0 & r_{\max} & n_2 \\
\hline
\endfirsthead
\multicolumn{3}{l}{{ {\bf \tablename\ \thetable{}} \textrm{-- continued from previous page}}}
 \\
\hline 
B_X & -K^3 & M_X & \lambda(M_X) & n_1 & m_0 & r_{\max} & n_2 \\ \hline 
\endhead

\hline \multicolumn{3}{r}{{\textrm{Continued on next page}}} \\ \hline
\endfoot

\hline \hline
\endlastfoot
\{(1,2), (1,3),(1,11)\}& &  &  &  &  & & \checkmark \\
\{(1,2), (1,3),(1,12)\}& &  &  &  &  & &  \checkmark\\
\{(1,2), (1,3),(1,13)\}&7/78 &7  & 7/2 & 14 &1  &13 &41*  \\
\{(1,2), (1,3),(1,14)\}&2/21 &  4& 3 &  13& 1 &14 & 42 \\
\{(1,2), (1,3),(1,15)\}& 1/10& 3 & 3 & 13 & 1 &15 & 42 \\
\{(1,2), (1,3),(1,16)\}&5/48 & 5 & 3 &13  & 1 &16 & 42 \\
\{(1,2), (1,3),(1,17)\}& 11/102& 11 & 4 & 15 & 1 & 17& 48 \\
\{(1,2), (1,3),(1,18)\}& 1/9&  2& 2 &  10&1  &18 & 33 \\
\{(1,2), (1,3),(1,19)\}&13/114 &13  & 13/3 & 15 &1  & 19& 48 \\
\{(2,5),(1,11)\}& &  &  &  &  & &  \checkmark\\
\{(2,5),(1,12)\}& &  &  &  &  & &  \checkmark\\
\{(2,5),(1,13)\}&8/65 &  &  &  &  & &  \times\\
\{(2,5),(1,14)\}& 9/70& 9 & 4 &  13&1  & 14& 42 \\
\{(2,5),(1,15)\}&2/15 & 2 & 2 & 9 & 1 & 15 & 30 \\
\{(2,5),(1,16)\}&11/80 & 11 & 4 &13  & 1 & 16 & 42 \\
\{(2,5),(1,17)\}&12/85 &12  & 4 &13  &  1&17 &  42\\
\{(2,5),(1,18)\}&13/90 & 13 & 13/3 & 13 & 1 & 18& 42 \\
\{(2,5),(1,19)\}& 14/95&  14& 14/3 & 14 &1  &19 &  45\\
\hline
\{(1,2), (1,4),(1,11)\}& &  &  &  &  & &  \checkmark\\
\{(1,2), (1,4),(1,12)\}& &  &  &  &  & & \checkmark \\
\{(1,2), (1,4),(1,13)\}&9/52 &  &  &  &  & & \times \\
\{(1,2), (1,4),(1,14)\}& 5/28& 5 & 3 & 10 & 1 & 14&  33\\
\{(1,2), (1,4),(1,15)\}& 11/60& 11 &4  & 11 & 1 & 15& 36 \\
\{(1,2), (1,4),(1,16)\}& 3/16&  3& 3 & 10 & 1 & 16& 33 \\
\{(1,2), (1,4),(1,17)\}& 13/68&13  & 13/3 &  12& 1 & 17& 39 \\
\{(1,2), (1,4),(1,18)\}& 7/36& 7 & 7/2 & 10 & 1 & 18& 33 \\
\hline
\{(1,2),(2,11)\} & &  &  &  &  & & \checkmark \\
\{(1,2),(2,13)\} & 5/26&  &  &  &  & & \times \\
\{(1,2),(2,15)\} &7/30 &  &  &  &  & &  \times\\
\{(1,2),(2,17)\} & 9/34&  &  &  &  & &  \times \\
\{(1,2),(2,19)\} & 11/38&  &  &  &  & &  \times \\
\{(1,2),(2,21)\} &13/42 &  &  &  &  & &  \times \\
\end{longtable} 
}
\medskip

{\bf Case 2.} $P_{-1}=1.$

By \cite[Inequality (4.1)]{CC},
\begin{align*}
0.21>-K_X^3\geq \frac{1}{12}(-1-P_{-1}-P_{-2}+P_{-4}),
\end{align*}
which means that
\begin{align}\label{424}
P_{-4}\leq P_{-2}+4.
\end{align}

The basket $B^{(0)}$ has datum
 \begin{numcases}{}
n_{1,2}^0=-1 +4P_{-2}-P_{-3};\notag\\
n_{1,3}^0=2 -2P_{-2}+3P_{-3}-P_{-4};\notag\\
n_{1,4}^0=4 -P_{-2}-2P_{-3}+P_{-4}-\sigma_5.\notag
\end{numcases}

By $n_{1,4}^0\geq 0$ and inequality \eqref{424}, we get $P_{-3}\leq 4$ and $P_{-3}=4$ only if $P_{-4}=P_{-2}+4$ and $\sigma_5=0$. 
By $n_{1,3}^0\geq 0$ and $P_{-4}\geq 2P_{-2}-1$, we have $P_{-2}\leq 3$.  Recall that we also have $n_{1,2}^0\geq 0$ and $P_{-3}\geq P_{-2}$ since $P_{-1}=1$.
Hence the possible values of $(P_{-2}, P_{-3})$ are $(3,4)$, $(2,4)$, $(3,3)$,  $(2,3)$, $(1,3)$,  $(2,2)$,  $(1,2)$, $(1,1)$.
\medskip

{\bf Subcase 2-i.} $(P_{-2}, P_{-3})=(3,4).$

In this case, $P_{-4}=7$ and $\sigma_5=0$. Hence $$B^{(0)}=\{7\times (1,2), (1,3) \},$$
and we may get all possible packings with $\gamma\geq 0$ and $r_{\max}\geq 11$, 
which are the following:
\begin{align*}
&\{3\times (1,2), (5,11)\},&&-K^3=5/22;\\
&\{2\times (1,2), (6,13)\},&&-K^3=3/13;\\
&\{(1,2), (7,15)\},&& -K^3=7/30;\\
&\{(8,17)\}, &&-K^3=4/17.
\end{align*}
Since $-K^3>0.21$, none of the baskets satisfy the assumption of the lemma.  
\medskip

{\bf Subcase 2-ii.} $(P_{-2}, P_{-3})=(2,4).$

In this case, $P_{-4}=6$ and $\sigma_5=0$. Hence $$B^{(0)}=\{3\times (1,2), 4\times (1,3) \},$$
and we may get all possible packings with $\gamma\geq 0$ and $r_{\max}\geq 11$, 
which are the following:
\begin{align*}
{}&\{2\times (1,2), (5,14)\},{}&&-K^3=3/14;\\
{}&\{ (1,2), (5,13), (1,3)\},{}&&-K^3=19/78;\\
{}&\{ (2,5), (5,13)\},{}&&-K^3=18/65;\\
{}&\{(5,12),2\times (1,3)\},{}&&-K^3=1/4;\\
{}&\{(7,18)\},{}&&-K^3=5/18.
\end{align*}
It is clear that, for each above basket, $-K^3>0.21$, which does not satisfy the assumption of the lemma.
\medskip

{\bf Subcase 2-iii.} $(P_{-2}, P_{-3})=(3,3).$

In this case, 
 \begin{numcases}{}
n_{1,2}^0=8;\notag\\
n_{1,3}^0=5-P_{-4};\notag\\
n_{1,4}^0=-5+P_{-4}-\sigma_5.\notag
\end{numcases}
Hence $P_{-4}=5$ and $\sigma_5=0$, and $B^{(0)}=\{8\times (1,2)\}.$
It is clear that $B_X=B^{(0)}$, contradicting to assumption of the local index.  
\medskip

{\bf Subcase 2-iv.} $(P_{-2}, P_{-3})=(2,3).$

In this case, 
 \begin{numcases}{}
n_{1,2}^0=4;\notag\\
n_{1,3}^0=7-P_{-4};\notag\\
n_{1,4}^0=-4+P_{-4}-\sigma_5.\notag
\end{numcases}
By inequality \eqref{424}, $P_{-4}\leq 6$. Hence $(P_{-4},\sigma_5)=(6,0)$, $(6,1)$, $(6,2)$, $(5,0)$, $(5,1)$, $(4,0)$.
Hence the corresponding $B^{(0)}$ is in the following list:
{\scriptsize
\begin{longtable}{L}
\caption{}\label{tab21}\\
B^{(0)} \\
\hline
\endfirsthead
\{4\times(1,2),(1,3), 2\times (1,4)\}\\
\{4\times(1,2),(1,3), (1,4), (1,s)\}, s\geq 5\\
\{4\times(1,2),(1,3), (1,s_1), (1,s_2)\}, 5\leq s_1\leq s_2\\
\{4\times(1,2),2\times (1,3), (1,4)\}\\
\{4\times(1,2),2\times (1,3), (1,s)\}, s\geq 5\\
\{4\times(1,2),3\times(1,3)\}\\
\hline
\end{longtable} 
}

Hence all possible packings $B_X$ of $B^{(0)}$ with $\gamma\geq 0$ and $r_{\max}\geq 11$ are dominated by one of the baskets $B'$  listed in Table \ref{tab31}. 
{\scriptsize
\begin{longtable}{c|L}
\caption{}\label{tab31}\\
No. &B' \\
\hline
\endfirsthead
1&\{(5,11), 2\times (1,4)\}\\
2& \{4\times(1,2),(3,11)\}\\
3&\{(5,11), (1,4), (1,s)\}, 5\leq s\leq 9 \\ 
4&\{4\times(1,2),(1,3), (1,4), (1,11)\}\\
5&\{(5,11), (1,s_1), (1,s_2)\}, 5\leq s_1\leq s_2\leq 7\\
6&\{4\times(1,2),(1,3), (2,r_1)\}, r_1=11,13,15\\
7&  \{(5,11), (1,3), (1,4)\}\\
8& \{(1,2), (5,12), (1,4)\}\\
9&\{4\times(1,2),2\times (1,3), (1,r_1)\}, r_1=11,12 \\
10&  \{(5,11), (1,3), (1,5)\}\\
11&  \{(5,11), (1,3), (1,s)\}, 6\leq s\leq 10\\
12&\{(1,2), (5,12), (1,s)\}, 5\leq s\leq 10\\
13& \{(7,17)\}\\
14& \{(5,11),2\times(1,3)\}\\
15& \{(1,2),(5,12), (1,3)\} \\
16& \{3\times(1,2),(4,11)\}\\
\hline
\end{longtable} 
}
In Table \ref{tab31}, for cases No. 1, 3--6, 9, 11--12, one has  $-K^3(B')>0.21$ by direct calculation. Hence $B_X$ can not be dominated by these baskets. 
For cases No. 2, 8, and 10, they are minimal and all satisfy $-K^3>0.12$, which must be excluded. 
For case No. 7, $B'$ has only one possible packing and both have $-K^3>0.12$ and $r_{\text{max}}=11$. Hence case No. 7 must be excluded. 
Hence all possible $B_X$ appear as packings of  $B'$ in No. 13--16, which are the followings: 
\begin{align*}
{}&\{(7,17)\},\\
{}&\{(5,11),2\times(1,3)\},\\
{}&\{(1,2),(5,12), (1,3)\}, \\
{}&\{2\times(1,2),(5,13)\}, \\
{}&\{3\times(1,2),(4,11)\}.
\end{align*}

If $B_X=\{(7,17)\}$, then $-K_X^3=2/17$, $M_X=\lambda(M_X)=2$. Note that $P_{-9}>9\lambda(M_X)+1$ where $P_{-9}$ is computed by Reid's Riemann--Roch formula. Hence by Proposition \ref{criterion 1}, $\dim \overline{\varphi_{-m}(X)}>1$ for all $m\geq 9$ since $P_{-1}>0$. By assumption, $P_{-2}= 2$, we may take $m_0=2$. Then by Proposition \ref{criterion b}(1), take $m_1=9$, $\mu_0\leq 2$, we get that $\varphi_{-m}$ is birational for all $m\geq 33$. 

If $B_X=\{2\times(1,2),(5,13)\}$, then $-K_X^3=1/13$, $M_X=\lambda(M_X)=2$. Note that $P_{-10}>10\lambda(M_X)+1$ where $P_{-10}$ is computed by Reid's Riemann--Roch formula. Hence by Proposition \ref{criterion 1}, $\dim \overline{\varphi_{-m}(X)}>1$ for all $m\geq 10$ since $P_{-1}>0$. By assumption, $P_{-2}= 2$, we may take $m_0=2$. Then by Proposition \ref{criterion b}(1), take $m_1=10$, $\mu_0\leq 2$, we get that $\varphi_{-m}$ is birational for all $m\geq 36$. 

For the remaining 3 baskets, we may apply Lemma \ref{lem 12}(2) since $r_{\text{max}}\leq 12$.
\medskip

{\bf Subcase 2-v.} $(P_{-2}, P_{-3})=(1,3).$

In this case, 
 \begin{numcases}{}
n_{1,2}^0=0;\notag\\
n_{1,3}^0=9-P_{-4};\notag\\
n_{1,4}^0=-3+P_{-4}-\sigma_5.\notag
\end{numcases}
By inequality \eqref{424}, $P_{-4}\leq 5$. Hence $(P_{-4},\sigma_5)=(5,0)$, $(5,1)$, $(5,2)$, $(4,0)$, $(4,1)$, $(3,0)$.
Hence the corresponding $B^{(0)}$ is in the following list:
{\scriptsize
\begin{longtable}{c|L}
\caption{}\label{tab23}\\
No. &B^{(0)} \\
\hline
\endfirsthead
1 &\{4\times (1,3), 2\times (1,4)\}\\
2& \{4\times(1,3),(1,4), (1,s)\}, s\geq 5\\
3&\{4\times(1,3),(1,s_1), (1,s_2)\}, 5\leq s_1\leq s_2\\
4&\{5\times(1,3),(1,4)\}\\
5&\{5\times(1,3),(1,s)\}, s\geq 5\\
6&\{6\times(1,3)\}\\
\hline
\end{longtable} 
}

Clearly cases No. 5 and 6  are absurd since they admit no packings with $r_{\max}\geq 11$. 

For case No. 2, $B_X$ with $\gamma\geq 0$ and $r_{\max}\geq 11$ is dominated by $B'=\{ (1,3), (4,13), (1,s)\}$ for some $ 5\leq s\leq 8$. But then $-K_X^3\geq -K^3(B')>0.21$, a contradiction.

For case No. 3,  $B_X$ with $\gamma\geq 0$ and $r_{\max}\geq 11$ is $B_X=\{4\times(1,3),(2,r_1)\}$ for $r_1=11, 13$. But then $-K_X^3>0.21$, a contradiction.

For  cases No. 1 and 4,
we may get all possible packings with $\gamma\geq 0$ and $r_{\max}\geq 11$, 
which are listed in Table \ref{tab24}. In Table \ref{tab24},
for each basket $B_X$, if $r_X\leq 287$ and $r_{\max}\leq 12$, then we apply Lemma \ref{lem 12}(2) (such baskets are marked with {\checkmark} in the last column);  if $-K_X^3$ does not satisfy the assumption in the lemma, we mark it with {$\times$} in the last column;  otherwise we can compute $M_X$ and $\lambda(M_X)$, then find $n_1$ such that $P_{-n_1}>\lambda(M_X)n_1+1$ where $P_{-n_1}$ is computed by Reid's Riemann--Roch formula. Hence by Proposition \ref{criterion 1}, $\dim \overline{\varphi_{-m}(X)}>1$ for all $m\geq n_1$ since $P_{-1}>0$. By assumption, $P_{-3}= 3$, we may take $m_0=3$. Then by Proposition \ref{criterion b}(1), take $m_1=n_1$, $\mu_0\leq m_0$, and $\nu_0=1$, we get the integer $n_2$ such that $\varphi_{-m}$ is birational for all $m\geq n_2$. (For the value of $n_2$ with a $*$ mark, we apply Proposition \ref{criterion b}(3)).

{\scriptsize
\begin{longtable}{LLCCCCCCC}
\caption{}\label{tab24}\\
\hline
B_X & -K^3 & M_X & \lambda(M_X) & n_1 & m_0 & r_{\max} & n_2 \\
\hline
\endfirsthead
\multicolumn{3}{l}{{ {\bf \tablename\ \thetable{}} \textrm{-- continued from previous page}}}
 \\
\hline 
B_X & -K^3 & M_X & \lambda(M_X) & n_1 & m_0 & r_{\max} & n_2 \\ \hline 
\endhead

\hline \multicolumn{3}{r}{{\textrm{Continued on next page}}} \\ \hline
\endfoot

\hline \hline
\endlastfoot
\{(5,16), (1,4)\}& 3/16& 3 & 3 &  9& 3 &16 &  36\\
\{ (1,3),  (5,17)\}&10/51 &  10& 4 & 10 & 3 &17 &  39\\
\{ (1,3), (4,13), (1,4)\}&29/156 &  &  &  &  & & \times \\
\{ (4,13), (2,7)\}&18/91 &  &  &  &  & &\times  \\
\{3\times (1,3),  (3,11)\}& &  &  &  &  & &  \checkmark\\
\{2\times(1,3),(4,13)\}&4/39 & 4 & 3 & 12 & 3 & 13& 41* \\
\{(1,3),(5,16)\}& 5/48& 5 & 3 & 12 & 3 &16 & 45 \\
\{(6,19)\}&2/19 & 2 & 2 &  10& 3 &19 & 39 
\end{longtable} 
}
\medskip

{\bf Subcase 2-vi.} $(P_{-2}, P_{-3})=(2,2).$

In this case, 
 \begin{numcases}{}
n_{1,2}^0=5;\notag\\
n_{1,3}^0=4-P_{-4};\notag\\
n_{1,4}^0=-2+P_{-4}-\sigma_5.\notag
\end{numcases}
Hence $(P_{-4},\sigma_5)=(4,0)$, $(4,1)$, $(4,2)$, $(3,0)$, $(3,1)$, $(2,0)$.
Hence the corresponding $B^{(0)}$ is in the following list:
{\scriptsize
\begin{longtable}{c|L}
\caption{}\label{tab25}\\
No. &B^{(0)} \\
\hline
\endfirsthead
1&\{5\times (1,2), 2\times (1,4)\}\\
2&\{5\times (1,2), (1,4), (1,s)\}, s\geq 5\\
3&\{5\times (1,2), (1,s_1), (1,s_2)\}, 5\leq s_1\leq s_2\\
4&\{5\times(1,2),(1,3),(1,4)\}\\
5&\{5\times(1,2), (1,3), (1,s)\}, s\geq 5\\
6&\{5\times(1,2), 2\times(1,3)\}\\
\hline
\end{longtable} 
}

Clearly No.1 is absurd since there is no further packing. 

Case No. 6 is also absurd since all its packing has $$-K^3\leq -K^3(\{(7,16)\})<0.$$
Similarly, case No. 4 is also absurd since all its packing has $-K^3<0$.

For case No. 2, the only possible baskets are $B_X=B^{(0)}=\{5\times (1,2), (1,4), (1,s)\}$ for $s=11,12$ and we may apply Lemma \ref{lem 12}(2).

For case No. 3, possible $B_X$ with $\gamma\geq 0$ and $r_{\max}\geq 11$ are 
\begin{align*}
{}&\{5\times (1,2), (1,5),(1,11)\},\\
{}&\{5\times (1,2), (2,11)\},\\
{}&\{5\times (1,2), (2,13)\},\\
{}&\{5\times (1,2), (2,15)\}.
\end{align*}
For the first two cases, we may apply Lemma \ref{lem 12}(2). The last two cases are absurd since $-K_X^3$ does not satisfy the assumption in the lemma.

For case No. 5, we may get all possible packings with $\gamma\geq 0$ and $r_{\max}\geq 11$, 
which are listed in Table \ref{tab26}. In Table \ref{tab26},
for each basket $B_X$, if $r_X\leq 287$ and $r_{\max}\leq 12$, then we apply Lemma \ref{lem 12}(2) (such baskets are marked with {\checkmark} in the last column); if $-K_X^3$ does not satisfy the assumption in the lemma, we mark it with {$\times$} in the last column; otherwise we can compute $M_X$ and $\lambda(M_X)$, then find $n_1$ such that $P_{-n_1}>\lambda(M_X)n_1+1$ where $P_{-n_1}$ is computed by Reid's Riemann--Roch formula. Hence by Proposition \ref{criterion 1}, $\dim \overline{\varphi_{-m}(X)}>1$ for all $m\geq n_1$ since $P_{-1}>0$. By assumption, $P_{-2}= 2$, we may take $m_0=2$. Then by Proposition \ref{criterion b}(3), take $m_1=n_1$, $\mu_0\leq m_0$, and $\nu_0=1$, we get the integer $n_2$ such that $\varphi_{-m}$ is birational for all $m\geq n_2$. 

{\scriptsize
\begin{longtable}{LLCCCCCCC}
\caption{}\label{tab26}\\
\hline
B_X & -K^3 & M_X & \lambda(M_X) & n_1 & m_0 & r_{\max} & n_2 \\
\hline
\endfirsthead
\multicolumn{3}{l}{{ {\bf \tablename\ \thetable{}} \textrm{-- continued from previous page}}}
 \\
\hline 
B_X & -K^3 & M_X & \lambda(M_X) & n_1 & m_0 & r_{\max} & n_2 \\ \hline 
\endhead

\hline \multicolumn{3}{r}{{\textrm{Continued on next page}}} \\ \hline
\endfoot

\hline \hline
\endlastfoot
\{5\times(1,2), (1,3), (1,11)\}& &  &  &  &  & & \checkmark \\
\{4\times(1,2), (2,5), (1,11)\}& &  &  &  &  & &\checkmark  \\
\{3\times(1,2), (3,7), (1,11)\}& &  &  &  &  & & \checkmark \\
\{2\times(1,2), (4,9), (1,11)\}& &  &  &  &  & & \checkmark \\
\{(1,2), (5,11), (1,11)\}& &  &  &  &  & & \checkmark \\
\{(6,13),(1,11)\}& 20/143&  &  &  &  & & \times \\
\{5\times(1,2), (1,3), (1,12)\}& &  &  &  &  & & \checkmark \\
\{4\times(1,2), (2,5), (1,12)\}& &  &  &  &  & & \checkmark \\
\{3\times(1,2), (3,7), (1,12)\}& &  &  &  &  & & \checkmark \\
\{2\times(1,2), (4,9), (1,12)\}& &  &  &  &  & & \checkmark \\
\{5\times(1,2), (1,3), (1,13)\}&7/78 &7  &7/2  & 15 &2  &13 & 43 \\
\{4\times(1,2), (2,5), (1,13)\}&8/65 &  &  &  &  & & \times \\
\{(1,2),(5,11),(1,5)\}& &  &  &  &  & &  \checkmark\\
\{(1,2),(5,11),(1,6)\}& &  &  &  &  & & \checkmark \\
\{(1,2),(5,11),(1,7)\}& &  &  &  &  & & \checkmark \\
\{(1,2),(5,11),(1,8)\}& &  &  &  &  & & \checkmark \\
\{(1,2),(5,11),(1,9)\}& &  &  &  &  & & \checkmark \\
\{(1,2),(5,11),(1,10)\}& &  &  &  &  & & \checkmark \\
\{(6,13),(1,5)\}&2/65 &  &  &  &  & & \times \\
\{(6,13),(1,6)\}&5/78 & 5 & 3 & 15 & 2 &13 & 43 \\
\{(6,13),(1,7)\}& 8/91& 8 & 4 &16  &2  & 13& 44\\
\{(6,13),(1,8)\}& 11/104& 11 &  4&  &  2&13 &  43\\
\{(6,13),(1,9)\}&14/117 & 14 &14/3  &15  &2  &13 &  43\\
\{(6,13),(1,10)\}&17/130 &  &  &  &  & & \times 
\end{longtable} 
}
\medskip

{\bf Subcase 2-vii.} $(P_{-2}, P_{-3})=(1,2).$

In this case, 
 \begin{numcases}{}
n_{1,2}^0=1;\notag\\
n_{1,3}^0=6-P_{-4};\notag\\
n_{1,4}^0=-1+P_{-4}-\sigma_5.\notag
\end{numcases}
By inequality \eqref{424} and $P_{-4}\geq P_{-3}$, $2\leq P_{-4}\leq 5$. 
Hence $(P_{-4},\sigma_5)=(5,0)$, $(5,1)$, $(5,2)$, $(5,3)$, $(5,4)$, $(4,0)$, $(4,1)$, $(4,2)$, $(4,3)$, $(3,0)$, $(3,1)$, $(3,2)$, $(2,0)$, $(2,1)$.
Hence the corresponding $B^{(0)}$ is in the following list:
{\scriptsize
\begin{longtable}{c|L}
\caption{}\label{tab27}\\
No. & B^{(0)} \\
\hline
\endfirsthead
1&\{(1,2), (1,3), 4\times (1,4)\}\\
2&\{(1,2), (1,3), 3\times (1,4), (1,s)\}, s\geq 5\\
3&\{(1,2), (1,3), 2\times (1,4), (1,s_1), (1,s_2)\}, 5\leq s_1\leq s_2\\
4&\{(1,2), (1,3), (1,4), (1,s_1), (1,s_2), (1,s_3)\}, 5\leq s_1\leq s_2\leq s_3\\
5&\{(1,2), (1,3), (1,s_1), (1,s_2), (1,s_3), (1,s_4)\}, 5\leq s_1\leq s_2\leq s_3\\
6&\{(1,2), 2\times(1,3), 3\times (1,4)\}\\
7&\{(1,2), 2\times(1,3), 2\times (1,4), (1,s)\}, s\geq 5\\
8&\{(1,2), 2\times(1,3),  (1,4), (1,s_1), (1,s_2)\}, 5\leq s_1\leq s_2\\
9&\{(1,2), 2\times(1,3),  (1,s_1), (1,s_2), (1,s_3)\}, 5\leq s_1\leq s_2\leq s_3\\
10&\{(1,2), 3\times(1,3), 2\times (1,4)\}\\
11&\{(1,2), 3\times(1,3),  (1,4), (1,s)\}, s\geq 5\\
12&\{(1,2), 3\times(1,3),  (1,s_1), (1,s_2)\}, 5\leq s_1\leq s_2\\
13&\{(1,2), 4\times(1,3),  (1,4)\}\\
14&\{(1,2), 4\times(1,3), (1,s)\}, s\geq 5\\
\hline
\end{longtable} 
}

For cases No. 2--5 and 9, it is easy to compute that $$-K_X^3\geq -K^3(B^{(0)})>0.21,$$ which is absurd.

For case No. 7, all possible packings $B_X$ with $\gamma\geq 0$ and $r_{\max}\geq 11$ are $\{(1,2), (1,3), (3,11) ,(1,s)\}$ or $\{(2,5), (3,11) ,(1,s)\}$ for some $5\leq s\leq 9$. But in this case, $r_{\max}=11$ and $-K^3>0.12$, which is absurd.

For case No. 10, all possible packings $B_X$ with $r_{\max}\geq 11$ are the following:
\begin{align*}
{}&\{(4,11), 2\times (1,4)\},\\
{}&\{(1,2), (5,17)\},\\
{}&\{(1,2), (4,13), (1,4)\},\\
{}&\{(1,2), 2\times(1,3),  (3,11)\}, \\
{}&\{(2,5), (1,3),  (3,11)\},\\
{}&\{(3,8),  (3,11)\}. 
\end{align*}
The second and third baskets have $-K^3<1/30$, which is absurd. For other baskets we may apply Lemma \ref{lem 12}(2).

For case No. 12, all possible packings $B_X$ with $\gamma\geq 0$ and $r_{\max}\geq 11$ are the following:
\begin{align*}
{}&\{(4,11),  (1,s_1), (1,s_2)\}, 5\leq s_1\leq s_2\leq 8;\\
{}&\{(1,2), 3\times(1,3),  (2,r_1)\}, r_1=11,13; \\
{}&\{(2,5), 2\times(1,3),  (2,r_1)\}, r_1=11,13; \\
{}&\{(3,8), (1,3),  (2,r_1)\}, r_1=11,13; \\
{}&\{(4,11),   (2,r_1)\}, r_1=11,13.
\end{align*}
Since $r_{\max}\leq 13$ and $-K^3>0.12$, all above baskets should be excluded.

For case No. 13, all packings dominate either $B_{\min}=\{(5,14), (1,4)\}$ or $B_{\min}=\{(1,2), (5,16)\}$. But then $-K^3_X\leq -K^3(B_{\min})<0$, which is absurd.

{}Finally, for cases No. 1, 6, 8, 11, 14, we may get all possible packings with $\gamma\geq 0$ and $r_{\max}\geq 11$, 
which are listed in Table \ref{tab28}. In Table \ref{tab28},
for each basket $B_X$, if $r_X\leq 287$ and $r_{\max}\leq 12$, then we apply Lemma \ref{lem 12}(2) (such baskets are marked with {\checkmark} in the last column);  if $-K_X^3$ does not satisfy the assumption in the lemma, we mark it with {$\times$} in the last column; otherwise we can compute $M_X$ and $\lambda(M_X)$, then find $n_1$ such that $P_{-n_1}>\lambda(M_X)n_1+1$ where $P_{-n_1}$ is computed by Reid's Riemann--Roch formula. Hence by Proposition \ref{criterion 1}, $\dim \overline{\varphi_{-m}(X)}>1$ for all $m\geq n_1$ since $P_{-1}>0$. By assumption, $P_{-3}= 2$, we may take $m_0=3$. Then by Proposition \ref{criterion b}(1), take $m_1=n_1$, $\mu_0\leq m_0$, and $\nu_0=1$, we get the integer $n_2$ such that $\varphi_{-m}$ is birational for all $m\geq n_2$. (For the value of $n_2$ with a $*$ mark, we apply Proposition \ref{criterion b}(3)).

{\scriptsize
\begin{longtable}{LLCCCCCCC}
\caption{}\label{tab28}\\
\hline\hline
B_X & -K^3 & M_X & \lambda(M_X) & n_1 & m_0 & r_{\max} & n_2 \\
\hline
\endfirsthead
\multicolumn{3}{l}{{ {\bf \tablename\ \thetable{}} \textrm{-- continued from previous page}}}
 \\
\hline 
B_X & -K^3 & M_X & \lambda(M_X) & n_1 & m_0 & r_{\max} & n_2 \\ \hline 
\endhead

\hline \multicolumn{3}{r}{{\textrm{Continued on next page}}} \\ \hline
\endfoot

\hline \hline
\endlastfoot
\{(1,2), (3,11), 2\times (1,4)\}& &  &  &  &  & & \checkmark \\
\{(1,2), (4,15), (1,4)\}& 11/60&11  & 4 & 11 & 3 &15 &42  \\
\{(1,2), (5,19)\}& 7/38&  7&  7/2& 10 & 3 &19 &  39\\
\hline
\{(1,2), (5,18)\}&1/9 & 2 &2  & 10 & 3 &18 & 39 \\
\{(1,2), (1,3),  (4,15)\}& 1/10&3  &  3& 12 &3  & 15& 45 \\
\{(2,5),  (4,15)\}& 2/15& 2 &2  &  9& 3 & 15& 36 \\
\{(1,2), (1,3), (3,11) ,(1,4)\}& &  &  &  &  & &\checkmark  \\
\{(2,5), (3,11) ,(1,4)\}& &  &  &  &  & & \checkmark \\
\{(1,2),(3,11) ,(2,7)\}& &  &  &  &  & & \checkmark \\
\hline
\{(1,2), (3,11), 2\times(1,5)\}& &  &  &  &  & & \checkmark \\
\{(1,2), 2\times(1,3),  (3,14)\}&4/21 & 8 &4  & 11 & 3 &14 & 42 \\
\{(1,2), (3,11), (1,5), (1,6)\}&52/165 &  &  &  &  & & \times \\
\{(1,2), 2\times(1,3),  (1,4), (2,11)\}& &  &  &  &  & & \checkmark \\
\{(2,5), (1,3),  (1,4), (2,11)\}&167/660 &  &  &  &  & & \times \\
\{(3,8),  (1,4), (2,11)\}& &  &  &  &  & & \checkmark \\
\{(1,2), (1,3),  (2,7), (2,11)\}&107/462 &  &  &  &  & & \times \\
\{(1,2), (3,11), (2,11)\}& &  &  &  &  & &  \checkmark\\
\{(2,5), (2,7), (2,11)\}& 102/385&  &  &  &  & & \times \\
\{(1,2), 2\times(1,3),  (1,4), (2,13)\}&43/156 &  &  &  &  & & \times \\
\{(1,2),(1,3), (2,7), (2,13)\}&157/546 &  &  &  &  & & \times \\
\hline
\{(4,11), (1,4), (1,5)\}& &  &  &  &  & &  \checkmark\\
\{(4,11), (2,9)\}& &  &  &  &  & & \checkmark \\
\{(1,2), (4,13), (1,5)\}& 9/130& 9 & 4 & 18 &  3& 13& 47* \\
\{(4,11), (1,4), (1,6)\}& &  &  &  &  & & \checkmark \\
\{(1,2), (4,13), (1,6)\}& 4/39&  8& 4 & 15 &  3&13 & 44* \\
\{(4,11), (1,4), (1,7)\}& 47/308&  &  &  &  & &  \times\\
\{(1,2), (4,13), (1,7)\}& 23/182&  &  &  &  & & \times \\
\{(4,11), (1,4), (1,8)\}& &  &  &  &  & &  \checkmark\\
\{(1,2), (4,13), (1,8)\}& 15/104&  &  &  &  & & \times \\
\{(4,11), (1,4), (1,9)\}&73/396 &  &  &  &  & & \times \\
\{(1,2), (4,13), (1,9)\}&37/234 &  &  &  &  & &  \times\\
\hline
\{(5,14),  (1,5)\}& 1/70&  &  &  &  & & \times \\
\{(4,11),(1,3),  (1,5)\}& &  &  &  &  & &  \checkmark\\ 
\{(5,14),  (1,6)\}& 1/21& 2 &2  &14  & 3 &14 & 45* \\
\{(4,11),(1,3),  (1,6)\}& &  &  &  &  & &\checkmark  \\
\{(5,14),  (1,7)\}& 1/14&1  & 1 &8  & 3 & 14&  33\\
\{(4,11),(1,3),  (1,7)\}& &  &  &  &  & & \checkmark \\
\{(5,14),  (1,8)\}&5/56 &  5&  3&14  &3  &14 & 45* \\
\{(4,11),(1,3),  (1,8)\}& &  &  &  &  & & \checkmark \\
\{(5,14),  (1,9)\}& 13/126& 13 & 13/3 & 16 & 3 &14 &  47*\\
\{(4,11),(1,3),  (1,9)\}& &  &  &  &  & & \checkmark \\
\{(5,14),  (1,10)\}&4/35 & 8 &  4& 14 & 3 &14 &  45*\\
\{(4,11),(1,3),  (1,10)\}& 37/330& 37 & 8 &20  & 3 &11 &  45*\\  
\{(1,2), 4\times(1,3), (1,11)\}& &  &  &  &  & & \checkmark \\
\{(2,5), 3\times(1,3), (1,11)\}& &  &  &  &  & &  \checkmark
\end{longtable} 
}
\medskip

{\bf Subcase 2-viii.} $(P_{-2}, P_{-3})=(1,1).$

In this case, 
 \begin{numcases}{}
n_{1,2}^0=2;\notag\\
n_{1,3}^0=3-P_{-4};\notag\\
n_{1,4}^0=1+P_{-4}-\sigma_5.\notag
\end{numcases}
Hence $(P_{-4},\sigma_5)=(3,0)$, $(3,1)$, $(3,2)$, $(3,3)$, $(3,4)$, $(2,0)$, $(2,1)$, $(2,2)$, $(2,3)$, $(1,0)$, $(1,1)$, $(1,2)$. Hence the corresponding $B^{(0)}$ is in the following list:
{\scriptsize
\begin{longtable}{L}
\caption{}\label{tab29}\\
B^{(0)} \\
\hline
\endfirsthead
\{2\times (1,2), 4\times (1,4)\}\\
\{2\times (1,2),  3\times (1,4), (1,s)\}, s\geq 5\\
\{2\times (1,2),  2\times (1,4), (1,s_1), (1,s_2)\}, 5\leq s_1\leq s_2\\
\{2\times (1,2),  (1,4), (1,s_1), (1,s_2), (1,s_3)\}, 5\leq s_1\leq s_2\leq s_3\\
\{2\times (1,2),  (1,s_1), (1,s_2), (1,s_3), (1,s_4)\}, 5\leq s_1\leq s_2\leq s_3\\
\{2\times (1,2), (1,3), 3\times (1,4)\}\\
\{2\times(1,2), (1,3), 2\times (1,4), (1,s)\}, s\geq 5\\
\{2\times(1,2), (1,3),  (1,4), (1,s_1), (1,s_2)\}, 5\leq s_1\leq s_2\\
\{2\times(1,2), (1,3),  (1,s_1), (1,s_2), (1,s_3)\}, 5\leq s_1\leq s_2\leq s_3\\
\{2\times(1,2), 2\times(1,3), 2\times (1,4)\}\\
\{2\times(1,2), 2\times(1,3),  (1,4), (1,s)\}, s\geq 5\\
\{2\times(1,2), 2\times(1,3),  (1,s_1), (1,s_2)\}, 5\leq s_1\leq s_2\\
\hline\end{longtable} 
}

Hence we may get all possible packings with $\gamma\geq 0$ and $r_{\max}\geq 11$, 
which are listed in Table \ref{tab30}. In Table \ref{tab30},
for each basket $B_X$, if $r_X\leq 287$ and $r_{\max}\leq 12$, then we apply Lemma \ref{lem 12}(2) (such baskets are marked with {\checkmark} in the last column); if $-K_X^3$ does not satisfy the assumption in the lemma, we mark it with {$\times$} in the last column; otherwise we can compute $M_X$ and $\lambda(M_X)$, then find $n_1$ such that $P_{-n_1}>\lambda(M_X)n_1+1$ where $P_{-n_1}$ is computed by Reid's Riemann--Roch formula. Hence by Proposition \ref{criterion 1}, $\dim \overline{\varphi_{-m}(X)}>1$ for all $m\geq n_1$ since $P_{-1}>0$. Again by Reid's Riemann--Roch formula, we may take $m_0$ such that $P_{-m_0}\geq 2$. Then by Proposition \ref{criterion b}(3), take $m_1=n_1$, $\mu_0\leq m_0$, and $\nu_0=1$, we get the integer $n_2$ such that $\varphi_{-m}$ is birational for all $m\geq n_2$. (For the value of $n_2$ with a $*$ mark, we apply Proposition \ref{criterion b}(1)).

{\scriptsize
\begin{longtable}{LLCCCCCCC}
\caption{}\label{tab30}\\
\hline
B_X & -K^3 & M_X & \lambda(M_X) & n_1 & m_0 & r_{\max} & n_2 \\
\hline
\endfirsthead
\multicolumn{3}{l}{{ {\bf \tablename\ \thetable{}} \textrm{-- continued from previous page}}}
 \\
\hline 
B_X & -K^3 & M_X & \lambda(M_X) & n_1 & m_0 & r_{\max} & n_2 \\ \hline 
\endhead

\hline \multicolumn{3}{r}{{\textrm{Continued on next page}}} \\ \hline
\endfoot

\hline \hline
\endlastfoot
\{2\times (1,2),   (1,4), (3,13)\}& 3/52 &3  &3  & 17 & 4 &13 & 47 \\
\{2\times (1,2),   (4,17)\}& 1/17& 2 &  2&  13& 4 & 17&  51\\
\hline
\{2\times (1,2),   (1,4), (3,14)\}& 3/28& 3 & 3 &12  &4  &14 & 44 \\
\{2\times (1,2),   (3,13), (1,5)\}&7/65 & 14 &  14/3& 16 &  4& 13& 46 \\
\{2\times (1,2),   (3,13), (1,6)\}&11/78 &  &  &  &  & &  \times\\
\{2\times (1,2),  2\times (1,4), (2,11)\}& &  &  &  &  & &  \checkmark\\
\{2\times (1,2),   (3,13), (1,7)\}& 15/91&  &  &  &  & & \times \\
\{2\times (1,2),   (3,13), (1,8)\}&19/104 &  &  &  &  & & \times \\
\{2\times (1,2),  2\times (1,4), (2,13)\}&5/26 &  &  &  &  & &  \times\\
\hline
\{2\times (1,2),  (3,14), (1,5)\}&11/70 & 11 & 4 & 12 &4  & 14&  44\\
\{2\times (1,2),  (4,19)\}& 3/19&  6& 3 &  10 & 4 &19 & 42* \\
\{2\times (1,2),  (3,14), (1,6)\}&4/21 & 8 & 4 & 11 & 4 &14 & 43 \\
\{2\times (1,2),  (1,4), (1,5), (2,11)\}& &  &  &  &  & &\checkmark  \\
\{2\times (1,2),  (2,9), (2,11)\}& &  &  &  &  & &\checkmark  \\
\{2\times (1,2),  (1,4), (3,16)\}& 3/16&  3& 3 & 10 & 4 &16 &  42*\\
\{2\times (1,2),  (3,14), (1,7)\}&3/14 &  &  &  &  & &  \times\\
\{2\times (1,2),  (1,4), (2,11),(1,6)\}& &  &  &  &  & & \checkmark \\
\{2\times (1,2),  (1,4), (3,17)\}& 15/68&  &  &  &  & & \times \\
\hline
\{2\times (1,2),  (1,5), (3,16)\}& 19/80&  &  &  &  & & \times \\
\{2\times (1,2),  (4,21)\}&5/21 &  &  &  &  & &\times  \\
\hline
\{2\times (1,2), (3,11),  (1,4)\}& &  &  &  &  & &  \checkmark\\
\{2\times (1,2), (4,15)\}& <0&  &  &  &  & &\times  \\
\hline
\{2\times (1,2), (3,11),  (1,5)\}& &  &  &  &  & & \checkmark \\
\{2\times(1,2), (1,3),  (3,13)\}& <0&  &  &  &  & &\times  \\
\{(1,2), (2,5),  (3,13)\}&1/130 &  &  &  &  & &  \times\\
\{(3,7),  (3,13)\}&2/91 &  &  &  &  & &\times  \\
\{2\times (1,2), (3,11),  (1,6)\}& &  &  &  &  & &  \checkmark\\
\{2\times (1,2), (3,11),  (1,7)\}& &  &  &  &  & &  \checkmark\\
\{2\times (1,2), (3,11),  (1,8)\}& &  &  &  &  & &  \checkmark\\
\{2\times (1,2), (3,11),  (1,9)\}& &  &  &  &  & & \checkmark \\
\{2\times (1,2), (3,11),  (1,10)\}& &  &  &  &  & &  \checkmark\\
\hline
\{2\times(1,2), (1,3),  (3,14)\}& 1/42&  &  &  &  & & \times \\
\{(1,2), (2,5),  (3,14)\}&2/35 &  4& 3 & 17 &  4&14 & 49 \\
\{(3,7),  (3,14)\}& 1/14& 1 & 1 &  9& 4 &14 & 39* \\
\{2\times(1,2), (1,3),  (1,4), (2,11)\}& &  &  &  &  & & \checkmark \\
\{2\times(1,2), (2,7), (2,11)\}& &  &  &  &  & &\checkmark  \\
\{(1,2), (2,5),  (1,4), (2,11)\}& &  &  &  &  & & \checkmark \\
\{(3,7),  (1,4), (2,11)\}&31/308 &  31&31/4  &21  & 4 &11 &  47\\
\{2\times(1,2), (1,3),  (1,4), (2,13)\}& 17/156& 17 & 17/3 & 17 &  4&13 & 47 \\
\{2\times(1,2), (2,7), (2,13)\}& 11/91&  &  &  &  & &  \times\\
\{(1,2), (2,5),  (1,4), (2,13)\}& 37/260&  &  &  &  & & \times \\
\{(3,7),  (1,4), (2,13)\}& 57/364&  &  &  &  & &\times  \\
\hline
\{2\times(1,2), (1,3),  (1,5), (2,11)\}& 17/165& 34& 8 &  21& 2 & 11& 47 \\
\{(1,2), (2,5),  (1,5), (2,11)\}& &  &  &  &  & &\checkmark  \\
\{(3,7),  (1,5), (2,11)\}& 58/385&  &  &  &  & &  \times\\
\{2\times(1,2), (1,3),  (3,16)\}& 5/48&5  &  3& 13 & 4 &16 & 49 \\
\{(1,2), (2,5),  (3,16)\}& 11/80&  11& 4 & 13 &4  &16 &  49\\
\{(3,7),  (3,16)\}& 17/112& 17 & 3 & 15 & 4 & 16&  51\\
\{2\times(1,2), (1,3),  (2,11), (1,6)\}& &  &  &  &  & & \checkmark \\
\{(1,2), (2,5),  (2,11), (1,6)\}&28/165 &  &  &  &  & & \times \\
\{(3,7),  (2,11), (1,6)\}&85/462 &  &  &  &  & & \times \\
\{2\times(1,2), (1,3),  (3,17)\}&7/51 & 14 &14/3  & 14 &4  &17 & 52? \\
\{(1,2), (2,5),  (3,17)\}& 29/170&  29& 29/4 & 16 & 4 &17 & 54? \\
\{(3,7),  (3,17)\}& 22/119&  22&6  & 14 &4  &17 & 52? \\
\{2\times(1,2), (1,3),  (2,11), (1,7)\}&37/231 &  &  &  &  & &\times  \\
\{2\times(1,2), (1,3),  (1,5), (2,13)\}&31/195 &  &  &  &  & &\times  \\
\hline
\{2\times(1,2), (1,3),  (3,11)\}& &  &  &  &  & &  \checkmark\\
\{(1,2), (2,5),  (3,11)\}& &  &  &  &  & & \checkmark \\
\{(3,7),  (3,11)\}& &  &  &  &  & & \checkmark \\
\hline
\{2\times(1,2), 2\times(1,3),  (1,4), (1,11)\}& &  &  &  &  & & \checkmark \\
\{(1,2), (2,5), (1,3),  (1,4), (1,11)\}& 17/660&  &  &  &  & & \times \\
\{2\times(1,2), 2\times(1,3),  (1,4), (1,12)\}& &  &  &  &  & & \checkmark \\
\hline
\{2\times(1,2), 2\times(1,3),  (2,11)\}& &  &  &  &  & & \checkmark \\
\{(1,2), (2,5), (1,3),  (2,11)\}&1/330 &  &  &  &  & &\times  \\
\{(3,7), (1,3),  (2,11)\}& &  &  &  &  & & \checkmark \\
\{(1,2), (3,8),  (2,11)\}& &  &  &  &  & & \checkmark \\
\{ 2\times(2,5), (2,11)\}& &  &  &  &  & & \checkmark \\
\{2\times(1,2), 2\times(1,3),  (2,13)\}&1/39 &  &  &  &  & &  \times\\
\{(1,2), (2,5), (1,3),  (2,13)\}& 23/390& 23 &  6& 24 &  5&13 &  55?\\
\{(3,7), (1,3),  (2,13)\}& 20/273& 20 & 6 & 22 & 5 & 13& 53 \\
\{(1,2), (3,8),  (2,13)\}& 7/104& 7 & 7/2 & 17 & 5 & 13& 48 \\
\{ 2\times(2,5), (2,13)\}& 6/65&  6& 3 & 14 & 5 &13 & 45 \\
\{2\times(1,2), 2\times(1,3),  (2,15)\}&1/15 & 2 & 2 & 14 & 6 & 15& 50 \\
\{(1,2), (2,5), (1,3),  (2,15)\}&1/10 & 3 & 3 & 14 &5  &15 & 49 
\end{longtable} 
}

There are 4 cases with ``?" marked in the $n_2$ column of Table \ref{tab30}, where the values of  $n_2$ are larger than what we expect. So we discuss them in details in the following.

If $B_X$ is among $\{2\times(1,2), (1,3),  (3,17)\}$, $\{(1,2), (2,5),  (3,17)\}$,  and $\{(3,7),  (3,17)\}$, we know that $\dim \overline{\varphi_{-m}(X)}>1$ for all $m\geq 16$ from the list. Note that $P_{-7}\geq 11$. Take $m_0=4$. If $|-7K_X|$ and $|-4K_X|$ are not composed with the same pencil, then we may take $m_1=7$ and $\mu_0\leq 4$, and by Proposition \ref{criterion b}(1), $\varphi_{-m}$ is birational for all $m\geq 33$; if $|-7K_X|$ and $|-4K_X|$ are composed with the same pencil, then we may take $m_1=16$ and $\mu_0\leq \frac{7}{10}$ by Remark \ref{5.3}, and by Proposition \ref{criterion b}(3), $\varphi_{-m}$ is birational for all $m\geq 50$.

If $B_X=\{(1,2), (2,5), (1,3),  (2,13)\}$, we know that $\dim \overline{\varphi_{-m}(X)}>1$ for all $m\geq 24$ from the list. Note that $P_{-10}\geq 13$. Take $m_0=5$. If $|-10K_X|$ and $|-5K_X|$ are not composed with the same pencil, then we may take $m_1=10$ and $\mu_0\leq 5$, and by Proposition \ref{criterion b}(3), $\varphi_{-m}$ is birational for all $m\geq 41$; if $|-10K_X|$ and $|-5K_X|$ are composed with the same pencil, then we may take $m_1=24$ and $\mu_0\leq \frac{10}{12}$ by Remark \ref{5.3}, and by Proposition \ref{criterion b}(3), $\varphi_{-m}$ is birational for all $m\geq 50$.

Combining all above cases, the proof is completed.
\end{proof}

\begin{thm}\label{thm P1>0 1/30<}
Let $(X, Y, Z)$ be a Fano--Mori triple such that $\rho(Y)>1$. Assume that $P_{-1}>0$ and $1/30\leq -K_X^3< 0.21$.
Then
$\varphi_{-m}$ is birational for all $m\geq 51$.
\end{thm}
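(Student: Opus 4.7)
The plan is to use the earlier results of Section~6 to reduce to a narrow residual case and then apply the criteria of Section~5, using Proposition~\ref{prop new} as a bootstrap to force a lower bound on $N_0$. First, by Theorem~\ref{thm >=0.21}(1) I may assume $-K_X^3<0.12$, and by Lemma~\ref{lem 13-24} I may assume $r_{\max}\leq 10$. Combined with Lemma~\ref{lem 12}(2) and the standing hypothesis $r_X\leq 660$ (carried through from the proof of Theorem~\ref{thm2}), the residual case becomes $1/30\leq -K_X^3<0.12$, $3\leq r_{\max}\leq 10$, and $287<r_X\leq 660$. I will always take $m_0=8$ (valid since $P_{-8}\geq 2$ by Proposition~\ref{facts}) and $\nu_0=1$ (valid since $P_{-1}>0$). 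By Lemma~\ref{lem 30 1}, $P_{-18}\geq 21$ and $|-mK_X|$ is not composed with a pencil for $m\geq 35$; and Proposition~\ref{rr inequality} with $t=3$, $n=30$ gives $P_{-30}\geq 139$.

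I then split on pencil coincidences. If $|-18K_X|$ and $|-8K_X|$ are not composed with the same pencil, I take $m_1=18$ with $\mu_0\leq 8$; Theorem~\ref{criterion b}(3) yields birationality for $m\geq\max\{32,\,26+2r_{\max}\}\leq 46$. Otherwise Remark~\ref{5.3} gives $\mu_0\leq 18/20=9/10$, and I split once more at $m=30$. If $|-30K_X|$ and $|-8K_X|$ are not composed with the same pencil, I take $m_1=30$; Theorem~\ref{criterion b}(3) gives birationality for $m\geq\max\{44,\,30+2r_{\max}\}\leq 50$. If they are composed with the same pencil, Proposition~\ref{prop new} applied to $|-30K_X|$ reads $(P_{-30}-1)/30\leq\max\{3,\,-K_X^3,\,2N_0\}$; since $-K_X^3<3$ and $(P_{-30}-1)/30\geq 138/30>4.6$, this forces $N_0\geq 3$. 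Then, with $m_1=35$ ensuring Assumption~\ref{assum 1}, Theorem~\ref{criterion b2} gives birationality for
$$m\geq \max\bigl\{14,\ 4r_{\max},\ \rounddown{9/10+\sqrt{8r_X/3}}\bigr\}\leq \max\{14,40,42\}=42,$$
using $\sqrt{8\cdot 660/3}<42$. In every subcase, $\varphi_{-m,X}$ is birational for $m\geq 51$.

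The main obstacle is the sub-case where $|-8K_X|$ and $|-18K_X|$ are composed with the same pencil and $r_{\max}\in\{9,10\}$: the naive choice $m_1=35$ in Theorem~\ref{criterion b}(3) would give only $m\geq 35+2r_{\max}\geq 53$, which is too weak. The key idea is the dichotomy at $m=30$: either $|-30K_X|$ leaves the pencil, providing a smaller $m_1$, or it remains in the pencil, in which case the size of $P_{-30}$ forces $N_0\geq 3$ through Proposition~\ref{prop new}; the latter bound is exactly what Theorem~\ref{criterion b2} needs to bypass the $2r_{\max}$ penalty of Theorem~\ref{criterion b}(3).
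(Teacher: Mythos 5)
Your proof is correct, and for the hard cases it takes a genuinely different route from the paper's. Both proofs reduce to $r_{\max}\leq 10$ and $-K_X^3<0.12$ via Lemma \ref{lem 13-24} and Theorem \ref{thm >=0.21}, and for $r_{\max}\leq 8$ the paper runs essentially your first dichotomy (taking $m_1=35$ when $|-18K_X|$ and $|-8K_X|$ share a pencil, so that $35+2r_{\max}\leq 51$). The divergence is at $r_{\max}\in\{9,10\}$, where $35+2r_{\max}>51$: the paper escapes by a finer analysis of the Gorenstein index via inequality \eqref{kwmt}, showing $r_X\leq 504$ or $r_X=630$ when $r_{\max}=9$ (resp.\ $r_X\leq 210$ or $r_X=420$ when $r_{\max}=10$), which either improves the non-pencil threshold to $33$ or leaves finitely many explicit baskets to be checked by Reid's Riemann--Roch formula. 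You replace all of this by a second pencil dichotomy at $m=30$: if $|-30K_X|$ leaves the pencil then $m_1=30$ gives $30+2r_{\max}\leq 50$ in Theorem \ref{criterion b}(3); if it stays, $P_{-30}\geq 139$ fed into Proposition \ref{prop new} forces $N_0\geq 3$ (legitimately the same $N_0$ as in Assumption \ref{assum 1}, since the two systems define the same fibration), and Theorem \ref{criterion b2} with $\mu_0'=9/10$, $\nu_0=1$ yields $\max\{14,\,4r_{\max},\,\rounddown{0.9+\sqrt{8\cdot 660/3}}\}=42$. Your argument is uniform in $r_{\max}\leq 10$ and $r_X\leq 660$ and avoids the basket classification entirely; what it buys is brevity, at the price of leaning on the newer criterion \ref{criterion b2} where the paper mostly uses Theorem \ref{criterion b}. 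Two presentational points: the case $r_X=840$ (compatible with the hypotheses by Lemma \ref{lem r840}) should be dismissed inside the proof by citing Theorem \ref{thm r840}, not as a standing hypothesis imported from elsewhere; and the initial reduction should be ordered as $r_{\max}\geq 14$ by Lemma \ref{lem 13-24}(2), then $r_{\max}\leq 13$ with $-K_X^3\geq 0.12$ by Theorem \ref{thm >=0.21}(1), then $11\leq r_{\max}\leq 13$ with $-K_X^3<0.12$ by Lemma \ref{lem 13-24}(1), since Theorem \ref{thm >=0.21}(1) requires $r_{\max}\leq 13$ before it can be invoked.
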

\begin{proof}
If $r_X=840$, then we are done by Theorem \ref{thm r840}(2). Hence we may always assume that $r_X\leq 660$. Since $P_{-1}>0$, we may always take $\nu_0=1$.

We discuss by the value of $r_{\max}$. 
\medskip

{\bf Case 1.} $r_{\max}\leq 8$.

Recall that $P_{-8}\geq 2$ by Proposition \ref{facts}. We may take $m_0=8$.
Note that by Lemma \ref{lem 30 1}(1), $P_{-18}\geq 21$.
If $|-18K_X|$ and $|-8K_X|$ are not composed with the same pencil, we may take $m_1=18$ and $\mu_0\leq 8$. By Proposition \ref{criterion b}(3), $\varphi_{-m}$ is birational for all $m\geq 42$.
 If $|-18K_X|$ and $|-8K_X|$ are composed with the same pencil, we may take $m_0=35$ by Lemma \ref{lem 30 1}(2) and $\mu_0\leq \frac{18}{20}$ by Remark \ref{5.3}. By Proposition \ref{criterion b}(3), $\varphi_{-m}$ is birational for all $m\geq 51$.
\medskip

{\bf Case 2}. $r_{\max}=9$. 

In this case, by inequality \eqref{kwmt}, arguing as \cite[Proof of Proposition 2.4]{CJ16}, one can show that either $r_X\leq 504=9\times 8\times 7$ or $r_X=630$, moreover, in the latter case, the set of local indices is either $\{2,5,7,9\}$ or $\{2,2,5,7,9\}$. (We leave this to interested readers as an exercise!)

If $r_X\leq 504$, then by Lemma \ref{lem l/d},
$$\frac{\lambda(M_X)}{-K_X^3}\leq \max\left\{1, \frac{3}{1/30}, \sqrt{\frac{2\times 504}{1/30}}\right\}<174.$$
Take $t=11$, then
$$-\frac{3}{4}+\sqrt{\frac{12}{t(-K_X^3)}+\frac{6\lambda(M_X)}{-K_X^3}+\frac{1}{16}}< 33.$$
By Proposition \ref{criterion 2}, $|-mK_X|$ is not composed with a pencil for $m\geq 33.$
Recall that $P_{-8}\geq 2$ by Proposition \ref{facts}. We may take $m_0=8$.
Note that by Lemma \ref{lem 30 1}(1), $P_{-18}\geq 21$.
If $|-18K_X|$ and $|-8K_X|$ are not composed with the same pencil, we may take $m_1=18$ and $\mu_0\leq 8$. By Proposition \ref{criterion b}(3), $\varphi_{-m}$ is birational for all $m\geq 44$.
 If $|-18K_X|$ and $|-8K_X|$ are composed with the same pencil, we may take $m_1=33$ by Lemma \ref{lem 30 1}(2) and $\mu_0\leq \frac{18}{20}$ by Remark \ref{5.3}. By Proposition \ref{criterion b}(3), $\varphi_{-m}$ is birational for all $m\geq 51$.

Now we consider the case $r_X=630$. In this case, $B_X$ is either $\{(1,2),(a, 5),(b,7),(c,9)\}$ or $\{2\times(1,2),(a, 5),(b,7),(c,9)\}$ for some $a\in\{1,2\}$, $b\in\{1,2,3\}$, and $c\in\{1,2,4\}$. Note that by Theorem \ref{thm >=0.21}(1), we may assume that $1/30\leq -K_X^3< 0.12$.

First we consider $B_X=\{(1,2),(a, 5),(b,7),(c,9)\}$. If $P_{-1}\geq 2$, then by the equality \eqref{P1K3}, 
\begin{align*}
-K_X^3{}&=2P_{-1}+\frac{1}{2}+\frac{a(5-a)}{5}+\frac{b(7-b)}{7}+\frac{c(9-c)}{9}-6\\
{}&\geq \frac{1}{2}+\frac{4}{5}+\frac{6}{7}+\frac{8}{9}-2>1,
\end{align*}
a contradiction. Hence $P_{-1}=1$ and
\begin{align*}
-K_X^3{}&=\frac{1}{2}+\frac{a(5-a)}{5}+\frac{b(7-b)}{7}+\frac{c(9-c)}{9}-4.
\end{align*}
It is easy to check that $1/30\leq -K_X^3< 0.12$ if and only if $(a,b,c)=(2,1,2)$, that is, $B_X=\{(1,2),(2, 5),(1,7),(2,9)\}$, $-K_X^3= 71/630$, $M_X=71$, and $\lambda(M_X)=71/5$. Note that $P_{-25}>25\lambda(M_X)+1$ where $P_{-25}$ is computed by Reid's Riemann--Roch formula. Hence by Proposition \ref{criterion 1}, $\dim \overline{\varphi_{-m}(X)}>1$ for all $m\geq 25$ since $P_{-1}>0$. Again by Reid's Riemann--Roch formula, we may take $m_0=4$ since $P_{-4}= 2$. Then by Proposition \ref{criterion b}(3), take $m_1=25$, $\mu_0\leq 4$, and $\nu_0=1$, we get that $\varphi_{-m}$ is birational for all $m\geq 47$.

Then we consider $B_X=\{2\times(1,2),(a, 5),(b,7),(c,9)\}$. If $P_{-1}\geq 2$, then by the equality \eqref{P1K3}, 
\begin{align*}
-K_X^3{}&=2P_{-1}+2\times\frac{1}{2}+\frac{a(5-a)}{5}+\frac{b(7-b)}{7}+\frac{c(9-c)}{9}-6\\
{}&\geq 2\times\frac{1}{2}+\frac{4}{5}+\frac{6}{7}+\frac{8}{9}-2>1,
\end{align*}
a contradiction. Hence $P_{-1}=1$ and
\begin{align*}
-K_X^3{}&=1+\frac{a(5-a)}{5}+\frac{b(7-b)}{7}+\frac{c(9-c)}{9}-4.
\end{align*}
It is easy to check that $1/30\leq -K_X^3< 0.12$ if and only if $(a,b,c)=(1,2,1)$, that is, $B_X=\{2\times (1,2),(1, 5),(2,7),(1,9)\}$, $-K_X^3= 37/315$, $M_X=74$, and $\lambda(M_X)=12$. Note that $P_{-25}>25\lambda(M_X)+1$ where $P_{-25}$ is computed by Reid's Riemann--Roch formula. Hence by Proposition \ref{criterion 1}, $\dim \overline{\varphi_{-m}(X)}>1$ for all $m\geq 25$ since $P_{-1}>0$. Again by Reid's Riemann--Roch formula, we may take $m_0=4$ since $P_{-4}= 2$. Then by Proposition \ref{criterion b}(3), take $m_1=25$, $\mu_0\leq 4$, and $\nu_0=1$, we get that $\varphi_{-m}$ is birational for all $m\geq 47$.
\medskip

{\bf Case 3}. $r_{\max}=10$.

In this case, by inequality \eqref{kwmt}, arguing as \cite[Proof of Proposition 2.4]{CJ16}, one can show that either $r_X\leq 210=10\times 7\times 3$ or $r_X=420$, moreover, in the latter case, the set of local indices is $\{3,4,7,10\}$.
 (We leave this again as an exercise.)

If $r_X\leq 210$, then we may apply Lemma \ref{lem 12}(2). Hence we only consider the case $r_X=420$. Note that by Theorem \ref{thm >=0.21}(1), we may assume that $1/30\leq -K_X^3< 0.12$.

In this case, 
$B_X$ is  $\{(1,3),(1, 4),(a,7),(b,10)\}$ 
for some $a\in\{1,2,3\}$, $b\in\{1,3\}$. 
 If $P_{-1}\geq 2$, then by the equality \eqref{P1K3}, 
\begin{align*}
-K_X^3{}&=2P_{-1}+\frac{2}{3}+\frac{3}{4}+\frac{a(7-a)}{7}+\frac{b(10-b)}{10}-6\\
{}&\geq \frac{2}{3}+\frac{3}{4}+\frac{6}{7}+\frac{9}{10}-2>1,
\end{align*}
a contradiction. Hence $P_{-1}=1$ and
\begin{align*}
-K_X^3{}&=\frac{2}{3}+\frac{3}{4}+\frac{a(7-a)}{7}+\frac{b(10-b)}{10}-4.
\end{align*}
One can directly check that $1/30\leq -K_X^3< 0.12$ could never happen.
\medskip

{\bf Case 4}. $r_{\max}\geq 11$.

This case is proved by Lemma \ref{lem 13-24} and Theorem \ref{thm >=0.21}.

Combining all above discussions, the proof is completed.
\end{proof}

\subsection{The case $P_{-2}>0$ and $-K_X^3<1/30$}
\begin{thm}\label{thm <1/30}
Let $(X, Y, Z)$ be a Fano--Mori triple such that $\rho(Y)>1$. Assume that $P_{-2}>0$ and $-K_X^3<1/30$.
Then
\begin{enumerate}
\item $\dim \overline{\varphi_{-m}(X)}>1$ for all $m\geq 37$;
\item $\varphi_{-m}$ is birational for all $m\geq 52$.
\end{enumerate}
\end{thm}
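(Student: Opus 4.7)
The plan is to classify all geometric weighted baskets $\mathbb{B}=(B_X,P_{-1})$ satisfying the two hypotheses $P_{-2}>0$ and $-K_X^3<1/30$, then apply the criteria from Section \ref{sec criterion} case by case, exactly in parallel with the tables in Lemma \ref{lem 13-24}. First I would combine $\sigma(B_X)=10-5P_{-1}+P_{-2}$ from \eqref{105} with the volume identity $-K_X^3=2P_{-1}+\sigma(B_X)-\sigma'(B_X)-6$ and the inequality \eqref{vol} to force $P_{-1}\in\{0,1\}$ under $-K_X^3<1/30$. Using the explicit formulas for $n^0_{1,r}$ in terms of $P_{-1},\dots,P_{-4},\sigma_5$ together with $\gamma(B^{(0)})\geq 0$ from \eqref{kwmt}, I would enumerate all admissible $B^{(0)}$ by splitting on $(P_{-1},P_{-2},P_{-3},P_{-4},\sigma_5)$. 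The bound $P_{-4}\leq P_{-2}+2$ (from $-K_X^3<1/30$ and Chen--Chen's inequality used at the start of Theorem \ref{thm P1=0 <0.21}) and Proposition \ref{facts} will keep the number of subcases finite.

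Next, for each $B^{(0)}$, I would produce every prime packing $B^{(0)}\succeq B_X$ with $-K^3(B_X)<1/30$, $\gamma(B_X)\geq 0$, and $r_{\max}\leq 24$. For each $B_X$ in the resulting (short) list, two mechanisms apply. When $r_X\leq 287$ and $r_{\max}\leq 12$, Lemma \ref{lem 12}(2) delivers birationality for $m\geq 51$ directly (the first item $\dim\overline{\varphi_{-m}(X)}>1$ for $m\geq 37$ follows from Lemma \ref{lem 12 pencil} under $r_X\leq 165$, and otherwise from a direct check using Proposition \ref{criterion 1}). When $r_X$ or $r_{\max}$ is larger, I would compute $M_X$ and $\lambda(M_X)$, locate the smallest integer $n_1$ with $P_{-n_1}>\lambda(M_X)n_1+1$ using Reid's Riemann--Roch formula, and invoke Proposition \ref{criterion 1} to deduce the non-pencil property from $n_1$ onward (this typically satisfies $n_1\leq 37$). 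Then the usual choice of $m_0$ with $P_{-m_0}\geq 2$ and $\mu_0\leq m_0$ feeds into Theorem \ref{criterion b} (or Theorem \ref{criterion b2} when $r_X,r_{\max}$ are small) to yield birationality at a threshold $n_2$.

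The main obstacle will be the extremal basket $B_X=\{(1,2),(2,5),(1,3),(2,11)\}$, which has $-K_X^3=1/330$ and $r_X=330$; this is precisely the basket and volume of Fletcher's $X_{66}\subset\PPP(1,5,6,22,33)$ in Example \ref{ex 33}, and it is where the bound $m\geq 52$ is forced (rather than $m\geq 51$). Here the naive choice $m_1=n_1$ is not sharp enough, so I would proceed as in the last paragraph of the proof of Theorem \ref{thm P8=2}: take $m_0$ minimal with $P_{-m_0}\geq 2$, and split according to whether $|-m_1K_X|$ and $|-m_0K_X|$ are composed with the same pencil. In the non-pencil case Theorem \ref{criterion b}(3) applies with $\mu_0\leq m_0$; in the composed-with-same-pencil case, Remark \ref{5.3} gives the sharper estimate $\mu'_0\leq m_1/(P_{-m_1}-1)$, and then for a well-chosen auxiliary $m$ either $|-mK_X|$ is not composed with a pencil (giving a usable $m_1$) or, if it is, Proposition \ref{prop new} combined with the inequality $(P_{-m}-1)/m>3$ forces $N_0\geq 2$, after which Theorem \ref{criterion b2} concludes birationality for $m\geq 52$. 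For the statement (1), the threshold $m\geq 37$ follows uniformly from Proposition \ref{criterion 1} applied to this short list, since $\lambda(M_X)$ is computable and Reid's RR-formula explicitly estimates $P_{-m}$ on each basket.

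Finally I would assemble the tables (analogous to Tables \ref{tab28}--\ref{tab30}) recording $(-K_X^3,M_X,\lambda(M_X),n_1,m_0,r_{\max},n_2)$ for each basket to verify that $n_2\leq 52$ holds uniformly, with equality only on the Fletcher basket; this matches the sharpness statement in the remark following Theorem \ref{thm2}.
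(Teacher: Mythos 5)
Your overall strategy is the paper's: classify the geometric baskets with $P_{-2}>0$ and $-K_X^3<1/30$ (the paper does this by reusing the case analysis of Chen--Chen's Theorem 4.4 rather than re-enumerating $B^{(0)}$ from scratch, but the machinery is the same), dispatch the small-index baskets via Lemmas \ref{lem 12 pencil} and \ref{lem 12}, run Proposition \ref{criterion 1} and Theorems \ref{criterion b}/\ref{criterion b2} on the rest, and isolate $B_X=\{(1,2),(2,5),(1,3),(2,11)\}$ as the unique basket forcing the threshold $52$ via the same-pencil dichotomy, Remark \ref{5.3}, and the $N_0\geq 2$ trick from Proposition \ref{prop new}. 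That part is all correct and matches the paper.

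There is, however, a genuine gap at the very first step: your claim that $\sigma(B_X)=10-5P_{-1}+P_{-2}$, the volume identity, and \eqref{vol} ``force $P_{-1}\in\{0,1\}$'' is false. The basket $B_X=\{(1,2),(1,3),(1,7)\}$ has $\sigma=3$, $\sigma'=41/42$, hence $-K_X^3=1/42<1/30$ and $P_{-1}=2$, so it satisfies every hypothesis of the theorem with $P_{-1}=2$; your enumeration would silently omit it. The correct bound is $P_{-1}\leq 2$, which the paper gets from \cite[Proof of Theorem 4.4, Case IV]{CC} ($P_{-1}\geq 3$ implies $-K_X^3\geq 1/2$); no combination of \eqref{105}, the volume identity, and \eqref{vol} alone yields $P_{-1}\leq 1$, precisely because of this counterexample. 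The omission is easily repaired (this single $P_{-1}=2$ basket has $r_X=42$ and $r_{\max}=7$, so Lemmas \ref{lem 12 pencil} and \ref{lem 12}(2) dispose of it immediately), and your subsidiary bound $P_{-4}\leq P_{-2}+2$ from Chen--Chen's inequality $-K_X^3\geq\frac{1}{12}(-1-P_{-1}-P_{-2}+P_{-4})$ also needs adjusting to $P_{-4}\leq P_{-2}+3$ once $P_{-1}=2$ is admitted --- but as written the classification is incomplete, which is fatal for a proof whose logic is exhaustion of cases.
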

\begin{proof}
We follow the argument of \cite[Proof of Theorem 4.4]{CC} to classify all possible geometric baskets with $-K^3<1/30$. 

If $P_{-1}=0$, by  \cite[Proof of Theorem 4.4, Case I]{CC}, since $P_{-2}>0$ and $-K^3<1/30$, all possible geometric baskets are dominated by $\{7\times (1,2), (3,7), (1,5)\}$, which we already treated as the last four cases in Table \ref{tab9} in the proof of Theorem \ref{thm P1=0 <0.21}. 

Now we consider $P_{-1}\geq 1$. 

By \cite[Proof of Theorem 4.4, Case IV]{CC}, if $P_{-1}\geq 3$, then $-K_X^3\geq 1/2$, a contradiction. Hence $P_{-1}=1$ or $2$.
\medskip

{\bf Case 1.} $P_{-1}=1$.  By \cite[Proof of Theorem 4.4, Subcase II-1, Subcase II-2]{CC}), one has $-K_X^3\geq 1/12$ when $P_{-2}\geq 3$.  Hence we have $P_{-2}\leq 2$. 
\medskip

{\bf Subcase 1-i.} $P_{-2}=2$. Note that by \cite[Proof of Theorem 4.4, Subcase II-3]{CC}, either 
$$
B^{(0)}=\{5\times(1,2), (1,3),(1,s)\} 
$$
for some $s\geq 7$, or $B_X$ is dominated by $\{3\times(1,2), (3,7),(1,5)\}$. 

In the former case, if $s\geq 8$, then 
$$
-K^3(B_X)\geq -K^3(B^{(0)})\geq \frac{1}{24},
$$
which is absurd. So $s=7$, and one-step packing has $-K^3>1/30$, hence $B_X=B^{(0)}=\{5\times(1,2), (1,3),(1,7)\}.$ In the latter case, all packings of $\{3\times(1,2), (3,7),(1,5)\}$ has $-K^3< 1/30$.
Hence we may get all possible packings, 
which are listed in Table \ref{tab10}.

{\scriptsize
\begin{longtable}{LLCCCCCCC}
\caption{}\label{tab10}\\
\hline
B_X & -K^3 & M_X & \lambda(M_X) & n_1 & m_0 & r_{\max} & n_2 \\
\hline
\endfirsthead
\multicolumn{3}{l}{{ {\bf \tablename\ \thetable{}} \textrm{-- continued from previous page}}}
 \\
\hline 
B_X & -K^3 & M_X & \lambda(M_X) & n_1 & m_0 & r_{\max} & n_2 \\ \hline 
\endhead

\hline \multicolumn{3}{r}{{\textrm{Continued on next page}}} \\ \hline
\endfoot

\hline \hline
\endlastfoot
\{5\times(1,2), (1,3),(1,7)\}& 1/42 &  &  &  &  & & \checkmark \\
\hline
 \{3\times(1,2), (3,7),(1,5)\}& 1/70 &  &  &  &  & & \checkmark \\
 \{2\times(1,2), (4,9),(1,5)\}& 1/45&  &  &  &  & & \checkmark \\
 \{(1,2), (5,11),(1,5)\}& 3/110 &  &  &  &  & & \checkmark \\
 \{ (6,13),(1,5)\}& 2/65 & 2 & 2 & 17 & 2 & 13 & 45 
\end{longtable} 
}

In Table \ref{tab10},
for each basket $B_X$, if $r_X\leq 165$ and $r_{\max}\leq 12$, then we apply Lemmas \ref{lem 12 pencil} and \ref{lem 12}(2) (such baskets are marked with {\checkmark} in the last column), otherwise we can compute $M_X$ and $\lambda(M_X)$, then find $n_1$ such that $P_{-n_1}>\lambda(M_X)n_1+1$ where $P_{-n_1}$ is computed by Reid's Riemann--Roch formula. Hence by Proposition \ref{criterion 1}, $\dim \overline{\varphi_{-m}(X)}>1$ for all $m\geq n_1$ since $P_{-1}>0$. By assumption, $P_{-2}= 2$, we may take $m_0=2$. Then by Proposition \ref{criterion b}(3), take $m_1=n_1$, $\mu_0\leq m_0$, and $\nu_0=1$, we get the integer $n_2$ such that $\varphi_{-m}$ is birational for all $m\geq n_2$. 
\medskip

{\bf Subcase 1-ii.} $P_{-2}=1$. By \cite[Proof of Theorem 4.4, Subcase II-4]{CC},  since $-K^3_X<1/30$, $(P_{-3}, P_{-4})$ can only take the values $(2,3)$, $(2,2)$, $(1,2)$ and $(1,1)$.

If $(P_{-3}, P_{-4})=(2,3)$, then $B_X$ is dominated by 
$$\{(1,2), 2\times(1,3), (2,7),(1,4)\}$$ by \cite[Proof of Theorem 4.4, Case II-4b]{CC}. 
Hence we may get all possible packings with $0<-K^3(B_X)<1/30$, 
which are listed in Table \ref{tab11}. In Table \ref{tab11},
for each basket $B_X$, if $r_X\leq 165$ and $r_{\max}\leq 12$, then we apply Lemmas \ref{lem 12 pencil} and \ref{lem 12}(2) (such baskets are marked with {\checkmark} in the last column), otherwise  
we can compute $M_X$ and $\lambda(M_X)$, then find $n_1$ such that $P_{-n_1}>\lambda(M_X)n_1+1$ where $P_{-n_1}$ is computed by Reid's Riemann--Roch formula. Hence by Proposition \ref{criterion 1}, $\dim \overline{\varphi_{-m}(X)}>1$ for all $m\geq n_1$ since $P_{-1}>0$. By assumption, $P_{-3}= 2$, we may take $m_0=3$. Then by Proposition \ref{criterion b}(3), take $m_1=n_1$, $\mu_0\leq m_0$, and $\nu_0=1$, we get the integer $n_2$ such that $\varphi_{-m}$ is birational for all $m\geq n_2$. (For the values of $n_2$ with a $*$ mark, we apply Proposition \ref{criterion b}(1)).

{\scriptsize
\begin{longtable}{LLCCCCCCC}
\caption{}\label{tab11}\\
\hline
B_X & -K^3 & M_X & \lambda(M_X) & n_1 & m_0 & r_{\max} & n_2 \\
\hline
\endfirsthead
\multicolumn{3}{l}{{ {\bf \tablename\ \thetable{}} \textrm{-- continued from previous page}}}
 \\
\hline 
B_X & -K^3 & M_X & \lambda(M_X) & n_1 & m_0 & r_{\max} & n_2 \\ \hline 
\endhead

\hline \multicolumn{3}{r}{{\textrm{Continued on next page}}} \\ \hline
\endfoot

\hline \hline
\endlastfoot

\{(1,2), 2\times(1,3), (2,7),(1,4)\}& 1/84 &  &  &  &  & & \checkmark \\
\{(1,2), (1,3), (3,10),(1,4)\}& 1/60 &  &  &  &  & & \checkmark \\
\{(1,2), (4,13),(1,4)\}& 1/52 & 1 & 1 & 13 & 3 & 13 & 42 \\
\{(1,2), 2\times(1,3), (3,11)\}&1/66 &  &  &  &  & & \checkmark \\
\{(1,2), (1,3), 2\times (2,7)\}& 1/42 &  &  &  &  & & \checkmark \\
\{(1,2), (3,10), (2,7)\}& 1/35 &  &  &  &  & & \checkmark \\
\{(1,2), (5,17)\}& 1/34 & 1 & 1 & 11 & 3 & 17 & 42* 
\end{longtable} 
}

If $(P_{-3}, P_{-4})=(2,2)$, then by \cite[Proof of Theorem 4.4, Case II-4d]{CC}, either 
$$
B^{(0)}=\{(1,2), 4\times(1,3),(1,s)\} 
$$
for some $s\geq 7$, or $B_X$ is dominated by $\{(3,8),2\times(1,3),(1,5)\}$. In the former case, if $s\geq 8$, then 
$$
-K^3(B_X)\geq -K^3(B^{(0)})\geq \frac{1}{24},
$$
which is absurd. So $s=7$, and since its one-step packing has $-K^3>1/30$, $B_X=B^{(0)}=\{(1,2), 4\times(1,3),(1,7)\} .$
Hence we may get all possible packings with $0<-K^3(B_X)<1/30$, 
which are listed in Table \ref{tab12}. In Table \ref{tab12},
for each basket $B_X$, if $r_X\leq 165$ and $r_{\max}\leq 12$, then we apply Lemmas \ref{lem 12 pencil} and \ref{lem 12}(2) (such baskets are marked with {\checkmark} in the last column), otherwise 
we can compute $M_X$ and $\lambda(M_X)$, then find $n_1$ such that $P_{-n_1}>\lambda(M_X)n_1+1$ where $P_{-n_1}$ is computed by Reid's Riemann--Roch formula. Hence by Proposition \ref{criterion 1}, $\dim \overline{\varphi_{-m}(X)}>1$ for all $m\geq n_1$ since $P_{-1}>0$. By assumption, $P_{-3}= 2$, we may take $m_0=3$. Then by Proposition \ref{criterion b}(3), take $m_1=n_1$, $\mu_0\leq m_0$, and $\nu_0=1$, we get the integer $n_2$ such that $\varphi_{-m}$ is birational for all $m\geq n_2$.

{\scriptsize
\begin{longtable}{LLCCCCCCC}
\caption{}\label{tab12}\\
\hline
B_X & -K^3 & M_X & \lambda(M_X) & n_1 & m_0 & r_{\max} & n_2 \\
\hline
\endfirsthead
\multicolumn{3}{l}{{ {\bf \tablename\ \thetable{}} \textrm{-- continued from previous page}}}
 \\
\hline 
B_X & -K^3 & M_X & \lambda(M_X) & n_1 & m_0 & r_{\max} & n_2 \\ \hline 
\endhead

\hline \multicolumn{3}{r}{{\textrm{Continued on next page}}} \\ \hline
\endfoot

\hline \hline
\endlastfoot

\{(1,2), 4\times(1,3),(1,7)\}&1/42 &  &  &  &  & & \checkmark \\
\hline
\{(3,8),2\times(1,3),(1,5)\}& 1/120 &  &  &  &  & & \checkmark \\
\{(4,11),(1,3),(1,5)\}& 2/165 &  &  &  &  & & \checkmark \\
\{(5,14),(1,5)\}& 1/70 & 1 & 1 & 16 & 3 & 14 & 47 
\end{longtable} 
}

If $(P_{-3}, P_{-4})=(1,2)$, then by \cite[Proof of Theorem 4.4, Case II-4e]{CC}, $1\leq \sigma_5\leq 3$.

If $\sigma_5\geq 2$, then $B^{(0)}$ is either $$
B^{(0)}=\{2\times (1,2), (1,3),(1,4),(1,s_1),(1,s_2)\} 
$$
for some $s_2\geq s_1\geq 5$ or $$
B^{(0)}=\{2\times (1,2), (1,3),(1,s_1),(1,s_2), (1,s_3)\} 
$$
for some $s_3\geq s_2\geq s_1\geq 5$. For the former case, if $s_2\geq 6$, then $$
-K^3(B_X)\geq -K^3(B^{(0)})\geq \frac{1}{20},
$$
hence $s_1=s_2=5$ and $$B^{(0)}=\{2\times (1,2), (1,3),(1,4),2\times (1,5)\}. 
$$ For the latter one, we always have 
$$
-K^3(B_X)\geq -K^3(B^{(0)})\geq \frac{1}{15}.
$$

If $\sigma_5= 1$, then by \cite[Proof of Theorem 4.4, Case II-4e]{CC}, either 
$$
B^{(0)}=\{2\times (1,2), (1,3),2\times (1,4),(1,s)\} 
$$
for some $s\geq 7$, or $B_X$ is dominated by one of the following baskets:
\begin{align*}
{}&\{2\times(1,2),(2,7), (1,4),(1,6)\},\\
{}&\{(3,7),2\times(1,4),(1,5)\},\\
{}&\{(1,2),(2,5),(1,4),(2,9)\}.
\end{align*} 
In the former case, if $s\geq 8$, then 
$$
-K^3(B_X)\geq -K^3(B^{(0)})\geq \frac{1}{24},
$$
which is absurd. So $s=7$, and since its one-step packing has $-K^3>1/30$, $B_X=B^{(0)}=\{2\times (1,2), (1,3),2\times (1,4),(1,7)\} .$
Hence we may get all possible packings with $0<-K^3(B_X)<1/30$, 
which are listed in Table \ref{tab13}. In Table \ref{tab13},
for each basket $B_X$, if $r_X\leq 165$ and $r_{\max}\leq 12$, then we apply Lemmas \ref{lem 12 pencil} and \ref{lem 12}(2)  (such baskets are marked with {\checkmark} in the last column), otherwise  
we can compute $M_X$ and $\lambda(M_X)$, then find $n_1$ such that $P_{-n_1}>\lambda(M_X)n_1+1$ where $P_{-n_1}$ is computed by Reid's Riemann--Roch formula. Hence by Proposition \ref{criterion 1}, $\dim \overline{\varphi_{-m}(X)}>1$ for all $m\geq n_1$ since $P_{-1}>0$. By assumption, $P_{-4}= 2$, we may take $m_0=4$. Then by Proposition \ref{criterion b}(3), take $m_1=n_1$, $\mu_0\leq m_0$, and $\nu_0=1$, we get the integer $n_2$ such that $\varphi_{-m}$ is birational for all $m\geq n_2$. 

{\scriptsize
\begin{longtable}{LLCCCCCCC}
\caption{}\label{tab13}\\
\hline
B_X & -K^3 & M_X & \lambda(M_X) & n_1 & m_0 & r_{\max} & n_2 \\
\hline
\endfirsthead
\multicolumn{3}{l}{{ {\bf \tablename\ \thetable{}} \textrm{-- continued from previous page}}}
 \\
\hline 
B_X & -K^3 & M_X & \lambda(M_X) & n_1 & m_0 & r_{\max} & n_2 \\ \hline 
\endhead

\hline \multicolumn{3}{r}{{\textrm{Continued on next page}}} \\ \hline
\endfoot

\hline \hline
\endlastfoot
\{2\times (1,2), (1,3),(1,4),2\times (1,5)\}& 1/60&  &  &  &  & & \checkmark \\
\{2\times (1,2), (2,7),2\times (1,5)\}& 1/35&  &  &  &  & & \checkmark \\
\{2\times (1,2), (1,3),(2,9),(1,5)\}& 1/45&  &  &  &  & & \checkmark \\
\{2\times (1,2), (1,3),(3,14)\}& 1/42& 1 & 1 & 14 & 4 & 14 & 46 \\
\hline
\{2\times (1,2), (1,3),2\times (1,4),(1,7)\}& 1/42 &  &  &  &  & & \checkmark \\
\hline
\{2\times(1,2),(2,7), (1,4),(1,6))\}&1/84&  &  &  &  & & \checkmark \\
\{2\times(1,2),(3,11),(1,6))\}&1/66 &  &  &  &  & & \checkmark \\
\hline
\{(3,7),2\times(1,4),(1,5)\}& 1/70 &  &  &  &  & & \checkmark \\
\{(1,2),(2,5),(1,4),(2,9)\}& 1/180& 1 & 1 & 26 & 4 & 9 & 48 \\
\{(3,7),(1,4),(2,9)\}& 5/252 & 5 & 3 & 28 & 4 & 9& 50 \\
\{(3,7),(3,13)\}& 2/91 & 2 & 2 & 21 & 4 & 13 & 51 \\
\{(1,2),(2,5),(3,13)\}& 1/130& 1 & 1 & 22 & 4 & 13 & 52?
\end{longtable} 
}

Note that there is one case with ``?" mark in the $n_2$ value column of Table \ref{tab13} where we get bigger $n_2$ value than we expect. So we discuss it in details. 
If $B_X=\{(1,2),(2,5),(3,13)\}$, we know that $\dim \overline{\varphi_{-m}(X)}>1$ for all $m\geq 22$ from the list. Note that $P_{-12}=7$. Take $m_0=4$. If $|-12K_X|$ and $|-4K_X|$ are not composed with the same pencil, then we may take $m_1=12$ and $\mu_0\leq 4$, and by Proposition \ref{criterion b}(3), $\varphi_{-m}$ is birational for all $m\geq 42$; if $|-12K_X|$ and $|-4K_X|$ are composed with the same pencil, then we may take $m_1=22$ and $\mu_0\leq 2$ by Remark \ref{5.3}, and by Proposition \ref{criterion b}(3), $\varphi_{-m}$ is birational for all $m\geq 50$.

If $(P_{-3}, P_{-4})=(1,1)$, then by \cite[Proof of Theorem 4.4, Case II-4f]{CC}, any $B_X$ satisfying $-K^3(B_X)<1/30$ and $\gamma(B_X)\geq 0$ is either dominated by one of the following baskets:
{\scriptsize
\begin{longtable}{L}
\caption{}\label{tab14}\\
B \\
\hline
\endfirsthead
\{2\times (1,2), (1,3),(2,7),(1,11)\}\\
\{(1,2), (2,5),(1,3),(1,4),(1,s)\}, s=9,10,11\\
\{(3,7),(1,3),(1,4),(1,8)\}\\
\{(1,2),(2,5),(2,7),(1,8)\}\\
\{2\times (2,5),(1,4),(1,s)\}, s=7,8\\
\{(1,2),(2,5),(1,3),(1,5),(1,7)\}\\
\{(3,7),(1,3),(1,5),(1,6)\}\\
\{(1,2),(3,8),(1,5),(1,6)\}\\
\{(1,2),(2,5),(1,3),(2,11)\}\\
\hline
\end{longtable} }
\noindent or its initial basket $$B^{(0)}=\{2\times(1,2),2\times(1,3),(1,s_1),(1,s_2)\}$$ for some $s_2\geq s_1\geq 5$ and $s_1+s_2\geq 13$. Note that in the latter case, by $-K^3(B^{(0)})<1/30,$ the possible values of $(s_1, s_2)$ are $(5,8)$, $(5,9)$, $(6,7)$.

Hence we may get all possible packings with $0<-K^3(B_X)<1/30$ with $\gamma\geq 0$, 
which are listed in Table \ref{tab15}. In Table \ref{tab15},
for each basket $B_X$, if $r_X\leq 165$ and $r_{\max}\leq 12$, then we apply Lemmas \ref{lem 12 pencil} and \ref{lem 12}(2)  (such baskets are marked with {\checkmark} in the last column), otherwise 
we can compute $M_X$ and $\lambda(M_X)$, then find $n_1$ such that $P_{-n_1}>\lambda(M_X)n_1+1$ where $P_{-n_1}$ is computed by Reid's Riemann--Roch formula. Hence by Proposition \ref{criterion 1}, $\dim \overline{\varphi_{-m}(X)}>1$ for all $m\geq n_1$ since $P_{-1}>0$. Again by Reid's Riemann--Roch formula, we may find $m_0$ such that $P_{-m_0}\geq 2$. Then by Proposition \ref{criterion b}(3), take $m_1=n_1$, $\mu_0\leq m_0$, and $\nu_0=1$, we get the integer $n_2$ such that $\varphi_{-m}$ is birational for all $m\geq n_2$. 

{\scriptsize
\begin{longtable}{LLCCCCCCC}
\caption{}\label{tab15}\\
\hline
B_X & -K^3 & M_X & \lambda(M_X) & n_1 & m_0 & r_{\max} & n_2 \\
\hline
\endfirsthead
\multicolumn{3}{l}{{ {\bf \tablename\ \thetable{}} \textrm{-- continued from previous page}}}
 \\
\hline 
B_X & -K^3 & M_X & \lambda(M_X) & n_1 & m_0 & r_{\max} & n_2 \\ \hline 
\endhead

\hline \multicolumn{3}{r}{{\textrm{Continued on next page}}} \\ \hline
\endfoot

\hline \hline
\endlastfoot

\{2\times (1,2), (1,3),(2,7),(1,11)\}& 1/231 & & & & & & ? \\
\{2\times (1,2), (3,10),(1,11)\}& 1/110 & & & & & & ? \\
\hline
\{(1,2), (2,5),(1,3),(1,4),(1,9)\}&1/180 & 1 & 1 & 30 & 8 & 9& 56? \\
\{(1,2), (2,5),(2,7),(1,9)\}& 11/630 & 11 & 4 & 36 & 7 & 9 & 61? \\
\{(3,7),(2,7),(1,9)\}& 2/63&  &  &  &  & & \checkmark \\
\{(3,7), (1,3),(1,4),(1,9)\}& 5/252 & 5 & 3 & 29 & 7 & 9 & 54? \\
\{(1,2), (3,8),(1,4),(1,9)\}& 1/72&  &  &  &  & & \checkmark \\
\hline
\{(1,2), (2,5),(1,3),(1,4),(1,10)\} & 1/60 &  &  &  &  & & \checkmark \\
\{(3,7),(1,3),(1,4),(1,10)\} &13/420 & 13 & 13/3 & 29 & 7 & 10 & 56? \\
\{(1,2),(3,8),(1,4),(1,10)\} & 1/40&  &  &  &  & & \checkmark \\
\{(1,2), (2,5),(2,7),(1,10)\} & 1/35&  &  &  &  & & \checkmark \\
\hline
\{(1,2), (2,5),(1,3),(1,4),(1,11)\} & 17/660 & 17 & 17/3 & 36 & 8 & 11& 66? \\
\hline
\{(3,7),(1,3),(1,4),(1,8)\}& 1/168 & 1 & 1 & 28 & 7 & 8 & 51 \\
\{(1,2),(2,5),(2,7),(1,8)\}&1/280 & 1 & 1 & 36 & 7 & 8& 59? \\
\{(3,7),(2,7),(1,8)\}& 1/56 &  &  &  &  & & \checkmark \\
\hline
\{2\times (2,5),(1,4),(1,7)\} & 1/140 &  &  &  &  & & \checkmark \\
\hline
\{2\times (2,5),(1,4),(1,8)\} & 1/40 &  &  &  &  & & \checkmark \\
\hline
\{(1,2),(2,5),(1,3),(1,5),(1,7)\}& 1/42 & 5 & 3 & 26 & 5 & 7 & 45 \\
\{(1,2),(3,8),(1,5),(1,7)\}& 9/280 & 9 & 4 & 26 & 5 & 8 & 47 \\
\hline
\{(3,7),(1,3),(1,5),(1,6)\}& 1/70 & 3 & 3 & 34 & 5 & 7 & 53? \\
\{(3,7),(1,3),(2,11)\}& 4/231& 4 & 3 & 31 & 5 & 11 & 58? \\
\{(1,2),(3,8),(1,5),(1,6)\}& 1/120&  &  &  &  & & \checkmark \\
\{(1,2),(3,8),(2,11)\}& 1/88 &  &  &  &  & & \checkmark \\
\{(1,2),(2,5),(1,3),(2,11)\}& 1/330 & 1 & 1 & 37 & 5 & 11 & 64? \\
\hline
\{2\times(1,2),2\times(1,3),(1,5),(1,8)\}&1/120 &  &  &  &  & & \checkmark \\
\hline
\{2\times(1,2),2\times(1,3),(1,5),(1,9)\}& 1/45 &  &  &  &  & & \checkmark \\
\hline
\{2\times(1,2),2\times(1,3),(1,6),(1,7)\} & 1/42 &  &  &  &  & & \checkmark \\
\{2\times(1,2),2\times(1,3),(2,13)\}&1/39 & 2 & 2 & 20 & 6 & 13& 52? 
\end{longtable} 
}
There are 12 cases with ``?" marked in the $n_2$ column of Table \ref{tab15}. The first two baskets are non-geometric since they have $P_{-1}=1$ but $P_{-5}=0$. For the rest $10$ baskets with ``?'' marked, we discuss them in more details as followings.

If $B_X=\{(1,2), (2,5),(1,3),(1,4),(1,9)\}$, then $P_{-8}=2$ and we dealt it in Theorem \ref{thm P8=2}.

If $B_X=\{(1,2), (2,5),(2,7),(1,9)\}$, we know that $\dim \overline{\varphi_{-m}(X)}>1$ for all $m\geq 36$ from the list. Note that $P_{-18}=22$. Take $m_0=7$. If $|-18K_X|$ and $|-7K_X|$ are not composed with the same pencil, then we may take $m_1=18$ and $\mu_0\leq 7$, and by Proposition \ref{criterion b}(3), $\varphi_{-m,X}$ is birational for all $m\geq 43$. Hence we may assume that $|-18K_X|$ and $|-7K_X|$ are composed with the same pencil, then we may take $\mu_0\leq \mu'_0= \frac{18}{21}$ by Remark \ref{5.3}. We go on studying this situation by considering $|-31K_X|$. We have $P_{-31}=96$. If $|-31K_X|$ is not composed with a pencil, then we may take $m_1=31$ and by Proposition \ref{criterion b}(3), $\varphi_{-m}$ is birational for $m\geq 49$; if $|-31K_X|$ is composed with a pencil, then by Proposition \ref{prop new}, keep the same notation as in Subsection \ref{b setting},
$$
3<\frac{96}{31}=\frac{P_{-31}-1}{31}\leq \max\{3, -K_X^3, 2N_0\}\leq 2N_0,
$$
where $N_0=r_X(\pi^*(K_X)^2\cdot S)$. This implies that $N_0\geq 2$, and by Proposition \ref{criterion b2}, $\varphi_{-m,X}$ is birational for $m\geq 51$.

If $B_X=\{(3,7), (1,3),(1,4),(1,9)\}$, we know that $\dim \overline{\varphi_{-m}(X)}>1$ for all $m\geq 29$ from the list. Note that $P_{-17}=20$. Take $m_0=7$. If $|- 17K_X|$ and $|-7K_X|$ are not composed with the same pencil, then we may take $m_1= 17$ and $\mu_0\leq 7$, and by Proposition \ref{criterion b}(3), $\varphi_{-m}$ is birational for all $m\geq 42$; if $|- 17K_X|$ and $|-7K_X|$ are composed with the same pencil, then we may take $m_1=29$ and $\mu_0\leq \frac{17}{19}$ by Remark \ref{5.3}, and by Proposition \ref{criterion b}(3), $\varphi_{-m}$ is birational for all $m\geq 47$.

If $B_X=\{(3,7),(1,3),(1,4),(1,10)\}$, we know that $\dim \overline{\varphi_{-m}(X)}>1$ for all $m\geq 29$ from the list. Note that $P_{-14}=17$. Take $m_0=7$. If $|- 14K_X|$ and $|-7K_X|$ are not composed with the same pencil, then we may take $m_1= 14$ and $\mu_0\leq 7$, and by Proposition \ref{criterion b}(3), $\varphi_{-m}$ is birational for all $m\geq 41$; if $|- 14K_X|$ and $|-7K_X|$ are composed with the same pencil, then we may take $m_1=29$ and $\mu_0\leq \frac{14}{16}$ by Remark \ref{5.3}, and by Proposition \ref{criterion b}(3), $\varphi_{-m}$ is birational for all $m\geq 49$.

If $B_X=\{(1,2), (2,5),(1,3),(1,4),(1,11)\}$, then $P_{-8}=2$ and we dealt it in Theorem \ref{thm P8=2}.

If $B_X=\{(1,2),(2,5),(2,7),(1,8)\}$, take $m_0=7$. Note that $P_{-22}=12$. If $|-22K_X|$ and $|-7K_X|$ are not composed with the same pencil, then we may take $m_1= 22$ and $\mu_0\leq 7$, and by Proposition \ref{criterion b}(3), $\varphi_{-m}$ is birational for all $m\geq 45$; if $|- 22K_X|$ and $|-7K_X|$ are composed with the same pencil, then we may take $\mu'_0= \frac{22}{P_{-22}-1}=2$ by Remark \ref{5.3}, and by Proposition \ref{criterion b2}, $\varphi_{-m}$ is birational for all $m\geq 49$.

If $B_X=\{(3,7),(1,3),(1,5),(1,6)\}$, take $m_0=5$. Then we may take $\mu'_0\leq m_0=5$ by Remark \ref{5.3}, and by Proposition \ref{criterion b2}, $\varphi_{-m}$ is birational for all $m\geq 45$.

If $B_X=\{(3,7),(1,3),(2,11)\}$, take $m_0=5$. Then we may take $\mu'_0\leq m_0=5$ by Remark \ref{5.3}, and by Proposition \ref{criterion b2}, $\varphi_{-m}$ is birational for all $m\geq 48$.

If $B_X=\{(1,2),(2,5),(1,3),(2,11)\}$, take $m_0=5$. Note that $P_{-24}=16$. If $|- 24K_X|$ and $|-5K_X|$ are not composed with the same pencil, then we may take $m_1= 24$ and $\mu_0\leq 5$, and by Proposition \ref{criterion b}(3), $\varphi_{-m}$ is birational for all $m\geq 51$; if $|- 24K_X|$ and $|-5K_X|$ are composed with the same pencil, then we may take $\mu'_0=\frac{24}{P_{-24}-1}= \frac{24}{15}$ by Remark \ref{5.3}, and by Proposition \ref{criterion b2}, $\varphi_{-m}$ is birational for all $m\geq 52$. Note that this is in fact the only case that $\varphi_{-51}$ might not be birational.

If $B_X=\{2\times(1,2),2\times(1,3),(2,13)\}$, we know that $\dim \overline{\varphi_{-m}(X)}>1$ for all $m\geq 20$ from the list. Note that $P_{-13}=14$. Take $m_0=6$. If $|- 13K_X|$ and $|-6K_X|$ are not composed with the same pencil, then we may take $m_1= 13$ and $\mu_0\leq 6$, and by Proposition \ref{criterion b}(3), $\varphi_{-m}$ is birational for all $m\geq 45$; if $|- 13K_X|$ and $|-6K_X|$ are composed with the same pencil, then we may take $m_1=20$ and $\mu_0\leq 1$ by Remark \ref{5.3}, and by Proposition \ref{criterion b}(3), $\varphi_{-m}$ is birational for all $m\geq 47$.
\medskip

{\bf Case 2.} $P_{-1}=2.$ In this case, by \cite[Proof of Theorem 4.4, Case III]{CC}, 
 $$B^{(0)}=\{(1,2),(1,3),(1,s)\}$$ for some $s\geq 7$. Note that only $$B_X=B^{(0)}=\{(1,2),(1,3),(1,7)\}$$ satisfies $-K^3(B_X)<1/30$. In this case, we may apply Lemmas \ref{lem 12 pencil} and \ref{lem 12}(2).
 
 Combining all above discussions, the proof is completed.
\end{proof}

\section*{\bf Acknowledgment}

This project started while the authors were enjoying the workshop ``Higher Dimensional Algebraic Geometry, Holomorphic Dynamics and Their Interactions" at Institute for Mathematical Sciences, National University of Singapore. The authors are grateful for the hospitality and support of IMS-NUS. Many ideas of this paper came out during the second author's visit to Fudan University in March 2017 and he would like to thank School of Mathematical Sciences and Shanghai Center for Mathematical Sciences for the support and hospitality. 


\end{document}